\subjclass[2020]{42C15, 46L08, 46L65}
\keywords{Gabor Frames, Heisenberg Modules, Noncommutative Tori, $C^*$-Algebras}
\thanks{The first named author's work was supported by the German Research Foundation (DFG) grant no 465189426;
it was carried out as a postdoctoral researcher at Saarland University, during the tenure of an ERCIM “Alain
Bensoussan” Fellowship Programme at NTNU Trondheim, and as a guest researcher at Saarland University in the scope
of the SFB-TRR 195.
}
\thanks{
This research was made possible through financial support from the OeAD-GmbH (Austria’s Agency for Education and Internationalisation) under the Ernst-Mach Grant ASEA-UNINET, which funded the PhD studies of the second named author.}
\newcommand{\orcidlogo}{{\includegraphics[width=\fontcharht\font`l]{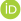}}}
\newcommand{\Addresses}{
\setlength{\parindent}{0pt}{
\bigskip
\footnotesize

    Malte Gerhold \par\nopagebreak
    \textsc{Department of Mathematics, Saarland University, D-66123 Saarbrücken, Germany}\par\nopagebreak
    \textit{E-mail address}: \texttt{gerhold@math.uni-sb.de}\par\nopagebreak
    \href{https://orcid.org/0000-0003-4029-1108}{ 
   \orcidlogo\,0000-0003-4029-1108}
   
}

{
\bigskip
\footnotesize

    Arvin Lamando\par\nopagebreak

     \textsc{Acoustics Research Institute, Austrian Academy of Sciences, Dominikanerbastei 16,
    1010 Vienna, Austria} \par\nopagebreak
     
    \textsc{Faculty of Mathematics, University of Vienna, Oskar-Morgenstern-Platz 1, 1090 Vienna, Austria}\par\nopagebreak

     \textsc{Institute of Mathematics, University of the Philippines, Diliman, 1101 Quezon City, Philippines}\par\nopagebreak
    \textit{E-mail address}: 
    \texttt{alamando@math.upd.edu.ph; arvin.lamando@univie.ac.at}
    \par\nopagebreak
    \href{https://orcid.org/0009-0001-2969-4857}{ 
   \orcidlogo\,0009-0001-2969-4857}
}

{
\bigskip
\footnotesize

    Franz Luef \par\nopagebreak
    \textsc{Department of Mathematical Sciences, NTNU Trondheim, Alfred Getz' vei 1, 7491 Trondheim, Norway}\par\nopagebreak
    \textit{E-mail address}: \texttt{franz.luef@ntnu.no}
    \par\nopagebreak
    \href{https://orcid.org/0000-0001-7413-8350}{ 
   \orcidlogo\,0000-0001-7413-8350}
}}
\title{Linear Deformations of Heisenberg modules and Gabor frames}
\author{Malte Gerhold, Arvin Lamando, and Franz Luef}
\date{}
\newlength{\bibitemsep}\setlength{\bibitemsep}{.2\baselineskip plus .05\baselineskip minus .05\baselineskip}
\newlength{\bibparskip}\setlength{\bibparskip}{0pt}
\let\oldthebibliography\thebibliography
\renewcommand\thebibliography[1]{%
  \oldthebibliography{#1}%
  \setlength{\parskip}{\bibitemsep}%
  \setlength{\itemsep}{\bibparskip}%
}
\tikzset{commutative diagrams/.cd}
\tikzset{commutative diagrams/.cd}
\newtheorem{exmp}{Example}[section]
\newtheoremstyle{mystyle}
{\topsep}
{\topsep}
{\itshape}
{0pt}
{\bfseries\scshape}
{.}
{ }
{\thmname{#1}\thmnumber{ #2}\thmnote{ (#3)}}
\newtheorem{proposition}[exmp]{Proposition}
\newtheorem{observation}[exmp]{Observation}
\newtheorem{lemma}[exmp]{Lemma}
\newtheorem{theorem}[exmp]{Theorem}
\newtheorem{corollary}[exmp]{Corollary}
\theoremstyle{definition}
\newtheorem{definition}[exmp]{Definition}
\newtheorem{remark}[exmp]{Remark}
\newtheorem{example}[exmp]{Example}
\theoremstyle{remark}
\newtheorem*{notation}{Notation}
\theoremstyle{definition}
\def\hmath$#1${\texorpdfstring{{\rmfamily\textit{#1}}}{#1}}
\newcommand{\ie}{i.e.\ }
\newcommand{\eg}{e.g.\ }
\newcommand{\mycomment}[1]{}
\newcommand{\op}[1]{\operatorname{#1}}
\newcommand{\lin}[3]{\leftindex_{#1}{\left \langle #2,#3 \right\rangle}}
\newcommand{\rin}[3]{\left\langle #2,#3 \right\rangle_{#1}}
\newcommand{\comment}[1]{}
\renewcommand{\d}[1]{\ensuremath{\operatorname{d}\!{#1}}}
\begin{document}
\begin{abstract}
Heisenberg modules over noncommutative tori may also be viewed as Gabor frames. Building on this fact, we relate to deformations of noncommutative tori a bundle of Banach spaces induced by Heisenberg modules. The construction of this bundle of Banach spaces rests on deformation results of Gabor frames with windows in Feichtinger's algebra due to Feichtinger and Kaiblinger. We extend some of these results to Heisenberg modules, \eg we establish an analog of the results by Feichtinger-Kaiblinger and a Balian-Low theorem. Finally, we extend our results to several generators on the bundle of Heisenberg modules and show that they provide a generalized Fell's condition on the bundle of noncommutative tori.
\end{abstract}

\maketitle
\section{Introduction}
Heisenberg modules have first been introduced by Connes in \cite{co80} in his work on differential geometry for noncommutative $C^*$-algebras and since then they have found applications in number theory, physics and engineering, more concretely in time-frequency analysis. The link between time-frequency analysis and Heisenberg modules was reported for the first time in \cite{Lu07}, which was based on Rieffel's description of Heisenberg modules in \cite{Ri88}. In this paper we discuss the basic question if "near-by" noncommutative tori may be equipped with "near-by" Heisenberg modules. We address this by using the notion of Banach bundles and the fact that Heisenberg modules over noncommutative tori are just Gabor frames for lattices in phase space and the seminal result by Feichtinger-Kaiblinger on linear deformations of Gabor frames \cite{FeKa04}. In addition, we establish that the Feichtinger-Kaiblinger approach also works for functions in Heisenberg modules, which is based on the work of Austad and Enstad in \cite{AuEn20}, where they show that the Janssen representation holds for functions in Heisenberg modules. Another central part of our proof of the main result is a dilation theorem for noncommutative tori \cite{GPSS21}, which allows us to show the continuity property of our bundle of Heisenberg modules. 

Gabor frames provide representations of functions in terms of a family of functions, where the latter is generated by a fixed function, often called the Gabor atom, and countably many time-frequency shifts of the Gabor atom. Dennis Gabor studied systems of this type in \cite{Ga47} with the Gaussian as Gabor atom. 

One views Gabor systems from the perspective of frame theory and it has been realized that the well-behaved Gabor systems are those which are frames for $L^2(\mathbb{R}^d)$: Given $g\in L^2(\mathbb{R}^d)$ and a set of points $\{(x_j,\omega_j)\}_{j\in J}$ in $\mathbb{R}^{2d}$. Then we consider the family $\{\pi(x_j,\omega_j)g\}_{j\in J}$, where $\pi(x,\omega)g(t)=e^{2\pi it\omega}g(t-x)$ denotes a time-frequency shift by $(x,\omega)$. We call $\mathcal{G}(g,\{x_j,\omega_j\}_{j\in J})=\{\pi(x_j,\omega_j)g\}_{j\in J}$ a Gabor frame for $L^2(\mathbb{R}^d)$ if there exists constants $A,B>0$ such that
\begin{align}\label{the-frame-q}
 A\|f\|_2^2\le\sum_{j\in J}|\langle f,\pi(x_j,\omega_j)g\rangle|^2 \le B\|f\|^2_2\qquad\text{for all}~~f\in L^2(\mathbb{R}^d).   
\end{align}
 
 Note that $\pi(x,\omega)=M_\omega T_x$, where $M_\omega g(y)=e^{2\pi iy\cdot\omega}g(t)$ and $T_xg(y)=g(y-x)$ denote the modulation and translation operator, respectively. One calls a Gabor system $\mathcal{G}(g,\{x_j,\omega_j\}_{j\in J})$ a Bessel family if in the frame definition just the upper bound holds, we refer to $B$ as a Bessel bound for $\mathcal{G}(g,\{x_j,\omega_j\}_{j\in J})$. 

In case that the point set $\{x_j,\omega_j\}_{j\in J}$ has some additional structure, then the associated Gabor system $\mathcal{G}(g,\{x_j,\omega_j\}_{j\in J})$ inherits some extra features. In the following we are going to study Gabor systems, where the point set is a lattice $\Lambda$ in $\mathbb{R}^{2d}$. Note that a lattice in $\mathbb{R}^{2d}$ is of the form $L\mathbb{Z}^{2d}$ for some invertible 2d$\times$2d-matrix $L$.   

There is a class of ``nice" Gabor systems $\mathcal{G}(g,\Lambda)$, where the Gabor atom is an element of the Feichtinger algebra $M^1(\mathbb{R}^d)$, a Banach space of continuous functions, which is densely and continuously embedded into  $L^2(\mathbb{R}^d)$. A Gabor system $\mathcal{G}(g,\Lambda)$ with $g\in M^1(\mathbb{R}^d)$ is \emph{always} a Bessel family for \emph{any} lattice $\Lambda$ \cite[Theorem 3.3.1]{FeZi98}. A myriad of other properties of $M^1(\mathbb{R}^d)$ make it an ideal function space for time-frequency analysis, but we shall focus on Feichtinger and Kaiblinger's stability result \cite[Thm. 3.8]{FeKa04} that allows deformations of the Gabor atom and of the lattice:
\begin{theorem}\label{feichkaib}
    Let $g_0\in M^1(\mathbb{R}^{d})$ and $L_0\in \op{GL}_{2d}(\mathbb{R})$ such that $\mathcal{G}(g_0, L_0\mathbb{Z}^{2d})$ is a Gabor frame, then there exist open sets $U_0\ni g_0$ in $M^1(\mathbb{R}^d)$ and $V_0\ni L_0$ in $\op{GL}_{2d}(\mathbb{R})$ such that $\mathcal{G}(g,L\mathbb{Z}^{2d})$ is a Gabor frame for all $(g,L)\in U_0\times V_0.$
\end{theorem}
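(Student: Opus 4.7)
The plan is to reformulate the Gabor frame property in operator-theoretic terms and then exploit the openness of the invertible operators on $L^2(\mathbb{R}^d)$. Recall that $\mathcal{G}(g,\Lambda)$ is a Gabor frame precisely when its frame operator $S_{g,\Lambda}f=\sum_{\lambda\in\Lambda}\langle f,\pi(\lambda)g\rangle\pi(\lambda)g$ is bounded and boundedly invertible on $L^2(\mathbb{R}^d)$. Since $\mathrm{GL}(L^2(\mathbb{R}^d))$ is open in $\mathcal{B}(L^2(\mathbb{R}^d))$ (any operator within operator-norm distance $\|S_{g_0,L_0\mathbb{Z}^{2d}}^{-1}\|^{-1}$ of $S_{g_0,L_0\mathbb{Z}^{2d}}$ is invertible by Neumann series), the whole problem reduces to proving that
\[
 \Phi\colon M^1(\mathbb{R}^d)\times\mathrm{GL}_{2d}(\mathbb{R})\longrightarrow\mathcal{B}(L^2(\mathbb{R}^d)),\qquad (g,L)\longmapsto S_{g,L\mathbb{Z}^{2d}}
\]
is operator-norm continuous at $(g_0,L_0)$.

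The decisive tool here is the Janssen (fundamental identity of Gabor analysis) representation
\[
 S_{g,L\mathbb{Z}^{2d}}=\frac{1}{|\det L|}\sum_{\mu\in(L\mathbb{Z}^{2d})^{\circ}}\langle g,\pi(\mu)g\rangle\,\pi(\mu),
\]
which, for $g\in M^1(\mathbb{R}^d)$, converges absolutely in operator norm because $V_g g\in M^1(\mathbb{R}^{2d})$ and the samples of any $M^1$-function along a lattice form an $\ell^1$-sequence, with an $\ell^1$-bound that is uniform for lattices in a compact neighborhood of $L_0$ (a Feichtinger--Zimmermann sampling estimate). I would then split the increment $\Phi(g_0,L_0)-\Phi(g,L)$ into a pure window-perturbation at fixed lattice $\Lambda_0=L_0\mathbb{Z}^{2d}$ and a pure lattice-perturbation at fixed window.

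For the window part, continuity of the bilinear map $(g,h)\mapsto V_g h$ from $M^1\times M^1$ into $M^1(\mathbb{R}^{2d})$ together with the uniform $\ell^1$-sampling bound shows that the Janssen coefficients depend continuously on $g$ in $\ell^1((L_0\mathbb{Z}^{2d})^{\circ})$, hence $S_{g,\Lambda_0}\to S_{g_0,\Lambda_0}$ in operator norm. For the lattice part, I would use that $L\mapsto (L\mathbb{Z}^{2d})^{\circ}$ is continuous in the obvious sense: for every compact $K\subset\mathbb{R}^{2d}$, the finite sets $(L\mathbb{Z}^{2d})^{\circ}\cap K$ are obtained from $(L_0\mathbb{Z}^{2d})^{\circ}\cap K$ by small perturbations. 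Combining this with the uniform tail estimate provided by $V_{g_0}g_0\in M^1(\mathbb{R}^{2d})$, one may truncate the Janssen sum to a large compact window with uniformly small remainder, then handle the finite head sum using joint continuity of $\mu\mapsto\pi(\mu)$ (even in operator norm on finite linear combinations) and of $L\mapsto 1/|\det L|$.

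The main obstacle is the lattice-perturbation step. Keeping the tail remainder uniformly small while the summation index set itself is moving requires both the $M^1$-decay of $V_{g_0}g_0$ and the uniform $\ell^1$-sampling constant over a neighborhood of $L_0$; once this uniform tail control is in place, the finite-head piece is elementary. Putting the two continuity statements together yields $\Phi(g,L)\to\Phi(g_0,L_0)$ in operator norm, and the openness of $\mathrm{GL}(L^2(\mathbb{R}^d))$ produces the desired neighborhoods $U_0\times V_0$.
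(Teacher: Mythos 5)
Your toolkit (Janssen's representation, $\ell^1$-continuity of the sampled STFT, openness of the invertibles) is the right one, but the reduction you make at the outset --- proving that $(g,L)\mapsto S_{g,L\mathbb{Z}^{2d}}$ is \emph{operator-norm} continuous at $(g_0,L_0)$ --- cannot work, and the step where it breaks is exactly the one you call ``elementary''. Time-frequency shifts are strongly but not norm continuous in their parameter: for $\mu\neq\mu'$ one has $\|\pi(\mu)-\pi(\mu')\|=\|\op{Id}-\bar c\,\pi(\mu'-\mu)\|=2$, because the spectrum of any nontrivial time-frequency shift is the full unit circle (it is invariant under conjugation by translations/modulations, hence under rotation). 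So the ``finite head'' of the Janssen sum does \emph{not} converge in operator norm as the adjoint lattice $(L\mathbb{Z}^{2d})^{\circ}$ moves, and your claimed ``joint continuity of $\mu\mapsto\pi(\mu)$ in operator norm on finite linear combinations'' is false. This is not a repairable technicality of your tail/head splitting: as discussed in the paper before Lemma \ref{local-cont-of-varying-rep}, following \cite{FeKo98}, the maps $L\mapsto\overline{\pi}^{*}_{L^{\circ}}(\mathbf b)$ are in general not norm continuous, and the same obstruction applies to $L\mapsto S_{g,L}$. Only the window-perturbation half of your argument (fixed lattice, varying $g\in M^1(\mathbb{R}^d)$) is sound, since there the operators $\pi(\mu)$ do not move and only the $\ell^1$ coefficients change.

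The standard repair --- used in \cite{FeKa04} and in this paper's generalization, Theorem \ref{stability-heis} --- is to run your scheme on the \emph{mixed} frame operator $S_{g,h_0,L}$ with $h_0=S_{g_0,L_0}^{-1}g_0$ the canonical dual window (which lies in $M^1(\mathbb{R}^d)$), rather than on $S_{g,L}$. By the Wexler--Raz relations the Janssen coefficients of $S_{g_0,h_0,L_0}=\op{Id}$ are $\det L_0\cdot\delta_0$; splitting off the $k=0$ term, which is a scalar multiple of the identity, the remainder is bounded by $\frac{1}{\det L}\sum_{k\neq0}|\langle h_0,\pi(L^{\circ}k)g\rangle|$, which tends to $0$ by exactly the $\ell^1$-continuity you already invoke --- no norm continuity of $\mu\mapsto\pi(\mu)$ is needed because the limiting coefficients vanish off $k=0$. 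Then $\|S_{g,h_0,L}-\op{Id}\|<1$ on a neighborhood of $(g_0,L_0)$, so $S_{g,h_0,L}=C^{*}_{h_0,L}C_{g,L}$ is invertible there, which gives the lower frame bound for $\mathcal{G}(g,L\mathbb{Z}^{2d})$; the upper bound is automatic for $g\in M^1(\mathbb{R}^d)$.
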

\noindent This type of deformation is termed \emph{linear} because the lattice $\Lambda_0=L_0\mathbb Z^{2d}$ (or its generating matrix $L_0$) is perturbed by a linear transformation, namely multiplication with $LL_0^{-1}\in\op{GL}_{2d}$. Note that Gabor stability has been considered in terms of generalized deformations. In \cite{GrOrJo15}, a generalized notion of `deformation` of non-uniform sampling sets (\eg $\Lambda \subseteq \mathbb{R}^{2d}$ possibly not a lattice), which encompasses linear deformations on lattices as a special case, was studied. On the other hand, linear deformations for lattices in the abstract time-frequency plane $G\times\widehat{G}$ for $G$ a locally compact abelian group were studied in \cite{EnJaLuOm22} by considering the Braconnier topology on the automorphism group of $G\times \widehat{G}$.

In this paper, our main concern is to extend Theorem \ref{feichkaib} to the so-called Heisenberg modules \cite{co80}, where we follow the construction by Rieffel in \cite{Ri88}, mainly to study finitely generated projective modules for noncommutative tori. Originally, the Heisenberg modules were constructed by completing the Schwartz space $\mathcal{S}(\mathbb{R}^d)$ with respect to a suitable norm. The observation that we can replace $\mathcal{S}(\mathbb{R}^d)$ by Feichtinger's algebra $M^1(\mathbb{R}^d)$ in their construction, and that they are connected to Gabor analysis was first made by Luef in \cite{Lu07}. Further probing in this direction can be found in \cite{Lu09,JaLu21}. Recently, it was shown by Austad and Enstad in \cite{AuEn20} that the Heisenberg modules can be seen as function spaces, in that they can be constructed by completing Feichtinger's algebra $M^1(\mathbb{R}^d)$ in the following norm:
\begin{align*}
\|f\|_{\mathcal{E}_L(\mathbb{R}^d)}:=\op{inf}\{C^{\frac{1}{2}}:C\text{ is a Bessel bound for $\mathcal{G}(f,L\mathbb{Z}^{2d})$}\},
\end{align*}
for each $f\in M^1(\mathbb{R}^d)$ and $L\in \op{GL}_{2d}(\mathbb{R}).$ In this description, it is now apparent that the Heisenberg module, which we denote by $\mathcal{E}_L(\mathbb{R}^d)$, depends on the generating matrix $L\in \operatorname{GL}_{2d}(\mathbb{R})$.
Unfortunately, we are now also met with an immediate ambiguity trying to implement our idea of extending Theorem \ref{feichkaib} to the Heisenberg modules. 
Suppose $g_{L_0} \in \mathcal{E}_{L_0}(\mathbb{R}^d)$ such that $\mathcal{G}(g_{L_0},L_0\mathbb{Z}^{2d})$ is a frame, it is not clear what we mean for values $g$ such that $g$ is in some Heisenberg module `near' $g_{L_0}$, where we have that $\mathcal{G}(g,L)$ are all frames in $L^2(\mathbb{R}^d)$ whenever $L$ is close to $L_0$. 
In general, because the construction of a Heisenberg module depends on the generating matrix $L$, we should expect the `nearby' $g$'s to vary continuously with the matrices $L\in \op{GL}_{2d}(\mathbb{R})$ as well. 
With this in mind, what we would like to do is to look at families of the Heisenberg modules $\{\mathcal{E}_{L}(\mathbb{R}^d)\}_{L}$ for $L$ `near' $L_0$ such that there also are families of elements $g_{L}\in \mathcal{E}_L(\mathbb{R}^d)$ that are `sufficiently near' $g_{L_0}\in \mathcal{E}_{L_0}(\mathbb{R}^d)$. 
What we mean by `nearness' between different Banach spaces is not clear at this point, but it can be made rigorous by the language of bundles. To make it precise, what we want is to be able to define a Banach bundle 
\[\kappa\colon \mathcal{E} \to \op{GL}_{2d}(\mathbb{R}), \quad \displaystyle\mathcal{E} = \bigsqcup_{L\in \op{GL}_{2d}(\mathbb{R})}\mathcal{E}_{L}(\mathbb{R}^d)\] 
such that, for each $L\in \op{GL}_{2d}(\mathbb{R})$, we have the fiber $\kappa^{-1}(L) = \mathcal{E}_{L}(\mathbb{R}^d)$ and such that the (constant) sections associated with $f\in M^1(\mathbb R^d)\subset \mathcal{E}_L(\mathbb R^d)$ are continuous. The construction of such a Banach bundle is lifted directly from corresponding deformation results on higher dimensional noncommutative tori (in this case, the noncommutative tori generated by the unitaries $T_x,M_\omega$ with $(x,\omega)\in\Lambda)$. 
Once $\kappa\colon \mathcal{E}\to \op{GL}_{2d}(\mathbb{R})$ has been constructed, we now have access to its continuous sections $\Gamma(\kappa)$, which are `continuous' maps $\Upsilon\colon \op{GL}_{2d}(\mathbb{R}^d)\to \mathcal{E}$ with the following property: $\Upsilon(L)\in \mathcal{E}_L(\mathbb{R}^d)$ for all $L\in \op{GL}_{2d}(\mathbb{R})$. 
The Gabor-stability theorem for the Heisenberg modules can now be stated in terms of these continuous sections, which is one of our main results (see Theorem \ref{stability-heis}). Moreover, we shall study the algebraic properties of the space $\Gamma_0(\kappa)$, the space of continuous section of $\kappa$ that vanish at infinity. We shall show that we can equip it with a structure which shows that it is an imprimitivity bimodule over certain $C^*$-algebras.

Let us mention related results concerning the convergence of Heisenberg modules in different settings. Latremoliere has shown that Heisenberg modules over noncommutative 2-tori convergence with respect to the modular Gromov-Hausdorff propinquity \cite{La20}, which provides stability results of a different flavor but quite close in spirit and it might be worth clarifying the exact relation between the results in \cite{La20} and our main statements.  

The paper is structured as follows. 
Section \ref{section: prelims} provides a detailed overview of the preliminary knowledge necessary for this paper, covering topics that may seem unrelated at first glance. 
Section \ref{subsec:framegab} touches upon the basics of Gabor analysis using the framework of frame theory. 
Section \ref{subsec:hilbcmodule} is a rundown on the basic notions of Hilbert $C^*$-modules and imprimitivity bimodules. 
This is also where we shall discuss the Heisenberg modules, which we present concretely as a subspace of $L^2(\mathbb{R}^d)$. 
We end this section with a discussion of the extension of the well-known Janssen's representation and Wexler-Raz relations to the Heisenberg modules. 
In Section \ref{subsec: bbunldles}, we finally give a formal definition of a Banach bundle, including some of its basic properties. 
The most relevant result of this section is the `canonical' Banach bundle construction theorem of Fell (Theorem \ref{bbundleconstruction}). 
Lastly, in Section \ref{subsec:nctori} we give a detailed discussion of the noncommutative tori $\mathcal A_\Theta$ (for skew-symmetric $n{\times}n$-matrices $\Theta$), the universal $C^*$-algebras generated by $n$ unitaries $U_i(\Theta)$ ($i=1,\ldots,n$) such that $U_i(\Theta),U_j(\Theta)$ commute up to the phase factor $e^{2\pi i \Theta_{ij}}$; here we also incorporated computations of Gjertsen \cite{Gj23}, which yield an explicit description of noncommutative tori as twisted group $C^*$-algebras, in particular, we get that the map $\mathbb Z^n \to \mathcal A_\Theta$, via $k\mapsto U_1(\Theta)^{k_1}\cdots U_n(\Theta)^{k_n}$ extends to a $C^*$-isomorphism $\pi_{\Theta}\colon C^*(\mathbb{Z}^n,\zeta_{\Theta^{\op{low}}})\to \mathcal{A}_{\Theta}$ for a certain cocycle $\zeta_{\Theta^{\op{low}}}$.

Section \ref{section:results} contains all of our main results. In Section \ref{subsec:general-nctori-deform}, we review various deformation results for noncommutative tori in the literature. We end with a proof of our basic result involving the $\frac{1}{2}$-H\"older continuity of the map $\Theta\mapsto \|\pi_{\Theta}(\mathbf a)\|$ for $\mathbf a$ from the weighted $\ell^1$-space $\ell^1_\nu$, which may be identified with a common dense subspace of the different noncommutative tori; this will be the basis of all our subsequent constructions. In Section \ref{subsec: nctori by lattices}, we recognize that each lattice $L\mathbb{Z}^{2d}$ has an associated noncommutative torus $\mathcal{A}_{\Theta_L}\cong C^*(\{\pi(Le_i)\}_{i=1}^{2d})$, and we show in Lemma \ref{lemma: tf-cocycle-cohom-canoncocycle}, that the $2$-cocycle that naturally appears in the commutation-relations of the time-frequency shifts is cohomologous to the $2$-cocycle $\zeta_{\Theta_L^{\op{low}}}$ of the twisted $C^*$-algebra in the domain of the representation $\pi_{\Theta_L}\colon C^*(\mathbb{Z}^n,\zeta_{\Theta_L^{\op{low}}})\to \mathcal{A}_{\Theta_L}$. This connection will allow us to finally prove a deformation theorem, showing that the coefficient $C^*$-algebras of the Heisenberg modules generate a $C^*$-bundle (Theorem \ref{fundamental-c-bundle}) with the lattice generators $\op{GL}_{2d}(\mathbb{R})$ as the base space. Then in Section \ref{algbundle} we lift the main construction of Section \ref{subsec: nctori by lattices} to a proof of the existence of a Banach bundle of Heisenberg modules over $\op{GL}_{2d}(\mathbb{R})$ (Theorem \ref{main-construction}) such that every constant section associated with some $f\in M^1(\mathbb R^d)$ is continuous (Proposition \ref{for-refinement-of-sections}). We also show various refinement results concerning the continuous sections of the said bundle. In Corollary \ref{corollary: imp-bim-from-bundles}, we prove that the associated Banach space of continuous sections of our Banach bundle of Heisenberg modules forms a natural imprimitivity bimodule. Finally in Section \ref{subsec: gabanalysis app}, we now capitalize in our efforts and use the constructed Banach bundle to prove Gabor-stability of the Heisenberg modules in Theorem \ref{stability-heis}. We also include a Balian-low type result as corollary. 
In Section \ref{subsec: modulus}, we show that constant sections associated with $f$ in some weighted versions of Feichtinger's algebra (that is if we add some regularity condition) are not only continuous, but actually $\frac{1}{2}$-Hölder continuous. Finally, in Section \ref{subsec: continuous-trace}, we extend the Balian-Low theorem to several generators on the bundle of Heisenberg modules. As a corollary, we obtain in Theorem \ref{theorem: local-projections} a non-trivial generalization of Fell's condition on our bundle of noncommutative tori. We end with a discussion of a possible generalization of continuous-trace $C^*$-algebras to general $C^*$-bundles via Hilbert $C^*$-modules. 
\section{Preliminaries}\label{section: prelims}
In this section, we shall collect all the material relevant for the discussion of our main results.
We assume that the reader is familiar with the basic theory of $C^*$-algebras, including the notion of universal $C^*$-algebras (see \cite{Bl85} for an introduction).
Throughout the article, for any $n\in \mathbb{N}_{>0}$, we implicitly identify elements of $\mathbb{R}^n$ as either a column vector or a row vector. The standard inner product of $x,y\in\mathbb R^d$ is denoted by $x\cdot y$.  In formulae involving matrices, the standard interpretation of vectors in $\mathbb R^d$ is as a column vector, row vectors are indicated by a transpose, \eg $x^T J y=x\cdot (J y)$ for $x,y\in\mathbb R^d$ and $J$ a $d{\times}d$-matrix.
Hilbert space inner products are linear in the first argument and are denoted by angle brackets.
For a Hilbert space $H$, we denote the space of all bounded and linear operators by $\mathcal{L}(H)$.
We shall denote by $\|\cdot\|$ the operator norm on $\mathcal{L}(H)$, and we extend the use of this notation to matrices.
Hence for any matrix $L$, we choose $\|L\|$ to mean the operator norm.
Furthermore, sequences are denoted by $\mathbf{a}=\{a_i\}_{i\in I}$, $\mathbf{b}=\{b_i\}_{i\in I}$, etc. Finally, if $S$ is a family of elements of a $C^*$-algebra $A$, we denote the $C^*$-algebra \textbf{generated} by $S$ with $C^*(S)$, and we have by definition that $C^*(S)\subseteq A$.

\subsection{Frame Theory and Gabor Analysis}\label{subsec:framegab}

We shall recall a few notions from frame theory and apply them to Gabor analysis, see \cite{Gr13, Ch03} for a more in-depth discussion.

\begin{definition}
    Let $\Psi:=\{f_i\}_{i\in I}$ be a countable family of vectors in a Hilbert space $H$ for which there exist constants $A,B>0$ such that for all $f\in H:$
    \begin{align}\label{frame-ineq}
        A \|f\|_2^2 \leq \sum_{i\in I}|\rin{H}{f}{f_i}|^2 \leq B\|f\|_2^2.
    \end{align}
    In this case, $\Psi$ is called a \textbf{frame} for $H$. If only the rightmost inequality is true in \eqref{frame-ineq}, then we say that $\Psi$ is a \textbf{Bessel family} in $H$, with \textbf{Bessel bound} $B$.
\end{definition}

Given a Bessel family $\Psi = \{f_i\}_{i\in I}$, the associated \textbf{analysis operator}, defined via $C_{\Psi}(f) = \{\rin{H}{f}{f_i}\}_{i\in I}$ for each $f\in H$, is a bounded linear operator $C_{\Psi}\colon H\to \ell^2(I)$. 
The adjoint map $C_{\Psi}^*\colon \ell^2(I)\to H$ is called the \textbf{synthesis operator}, and one has that $C_{\Psi}^*(\mathbf{x}) = \sum_{i\in I}x_if_i $ for all $\mathbf{x}=\{x_i\}_{i\in I}\in \ell^2(I)$. 
Finally, the associated \textbf{frame operator}, denoted by $S_{\Psi}$, is given by the composition $S_{\Psi}:= C_{\Psi}^*C_{\Psi}\colon H\to H$. An elementary computation gives that $S_{\Psi}f:=\sum_{i\in I}\rin{H}{f}{f_i}f_i$ for all $f\in H$. 
By the definition of $S_{\Psi}$, the frame operator is a self-adjoint and (semi-)positive operator in $\mathcal{L}(H)$. 

Let us state a well-known characterization of frames $\Psi$ in terms of $S_{\Psi}$.
\begin{observation}\label{frame-invertible}
    $\Psi$ is a frame if and only if $S_{\Psi}$ is an invertible operator in $\mathcal{L}(H)$. 
\end{observation}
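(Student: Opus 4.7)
The plan is to rewrite the frame inequality entirely in terms of the frame operator, via the identity
\[
\langle S_{\Psi}f,f\rangle \;=\; \Bigl\langle \sum_{i\in I}\langle f,f_i\rangle f_i,\,f\Bigr\rangle \;=\; \sum_{i\in I}|\langle f,f_i\rangle|^2.
\]
With this observation, the frame condition \eqref{frame-ineq} is equivalent to the operator inequality $A\cdot I \leq S_{\Psi} \leq B\cdot I$ in $\mathcal{L}(H)$. Both implications then reduce to standard spectral-theoretic facts for positive self-adjoint operators.

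For the forward direction, I would assume $\Psi$ is a frame with bounds $A,B>0$. The upper bound together with the definition $S_{\Psi}=C_{\Psi}^*C_{\Psi}$ ensures $S_{\Psi}\in\mathcal{L}(H)$, and from $\langle S_{\Psi}f,f\rangle \geq A\|f\|^2$ together with the Cauchy–Schwarz inequality $\|S_{\Psi}f\|\|f\|\geq \langle S_{\Psi}f,f\rangle$, one obtains $\|S_{\Psi}f\|\geq A\|f\|$. Hence $S_{\Psi}$ is bounded below, therefore injective with closed range. Self-adjointness then yields $\operatorname{ran}(S_{\Psi}) = (\ker S_{\Psi})^\perp = H$, so $S_{\Psi}$ is invertible in $\mathcal{L}(H)$.

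For the converse, I would assume $\Psi$ is Bessel (so that $S_{\Psi}$ is a well-defined bounded operator) and $S_{\Psi}$ is invertible. Boundedness immediately gives the upper frame bound $B=\|S_{\Psi}\|$, since $\sum_i|\langle f,f_i\rangle|^2 = \langle S_{\Psi}f,f\rangle \leq \|S_{\Psi}\|\|f\|^2$. For the lower bound I would use that $S_{\Psi}$ is positive, invertible and self-adjoint, so by the continuous functional calculus $S_{\Psi}^{1/2}$ and $S_{\Psi}^{-1/2}$ exist in $\mathcal{L}(H)$, and
\[
\|f\|^2 \;=\; \|S_{\Psi}^{-1/2}S_{\Psi}^{1/2}f\|^2 \;\leq\; \|S_{\Psi}^{-1}\|\,\|S_{\Psi}^{1/2}f\|^2 \;=\; \|S_{\Psi}^{-1}\|\,\langle S_{\Psi}f,f\rangle,
\]
yielding the lower frame bound $A=\|S_{\Psi}^{-1}\|^{-1}$.

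There is no genuine obstacle here; the only subtlety is to be careful that the equivalence implicitly presumes $\Psi$ is Bessel so that $S_{\Psi}$ makes sense as an element of $\mathcal{L}(H)$. The whole argument is a short exercise once the identity $\sum_i|\langle f,f_i\rangle|^2 = \langle S_{\Psi}f,f\rangle$ is in place and one invokes the standard fact that a positive self-adjoint operator on a Hilbert space is invertible iff it is bounded below.
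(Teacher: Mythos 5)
Your argument is correct. The paper states this observation as a well-known fact and gives no proof, so there is nothing to compare against; your proof is the standard one, and you rightly flag the one subtlety, namely that the equivalence presupposes $\Psi$ is a Bessel family so that $S_{\Psi}=C_{\Psi}^*C_{\Psi}$ is a well-defined element of $\mathcal{L}(H)$ (which is exactly the setting in which the paper defines $S_{\Psi}$).
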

This observation above has an important consequence: If $\Psi$ is a frame, then we have the following reconstruction formula:
\begin{align}\label{reconstruction-formula}
f = \sum_{i\in I}\rin{H}{f}{S^{-1}_{\Psi}f_i}f_i = \sum_{i\in I}\rin{H}{f}{f_i}S^{-1}_{\Psi}f_i\qquad\text{for all}~~f\in H.
\end{align}
We are also sometimes interested in analyzing and synthesizing using possibly different families of elements in $H$. 
Suppose we have another Bessel family $\Phi = \{h_i\}_{i\in I}\subseteq H$, then we have the \textbf{mixed-frame operator} $S_{\Psi,\Phi}=C_{\Phi}^*C_{\Psi}\colon H \to H$.
It can also be shown that $S_{\Psi,\Phi}f = \sum_{i\in I}\rin{H}{f}{f_i}h_i$ for all $f\in H$. 
If $\Psi$ and $\Phi$ are both frames, then we say that $\Psi$ and $\Phi$ are \textbf{dual frames} whenever $S_{\Psi,\Phi}=\op{Id}_{H}$. 
Note that it follows from the reconstruction formula \eqref{reconstruction-formula} that every frame $\Psi = \{f_i\}_{i\in I}$ has a dual frame given by $\Phi = \{S_{\Psi}^{-1}f_i\}_{i\in I}$, called the \textbf{canonical dual frame}.

Let us review the basics of frame theory relevant for Gabor analysis. 
We consider $\mathbb{R}^d\times \mathbb{R}^d = \mathbb{R}^{2d}$ as the \textbf{time-frequency plane}, aka in physics as phase space. 
For a point $z=(x,\omega)$ in the time-frequency plane $\mathbb{R}^{2d}$, we consider two fundamental unitary operators on $L^2(\mathbb{R}^d)$: The translation operator $T_x$, and the modulation operator $M_{\omega}$, which are defined by 
\begin{align*}
    (T_xf)(y) &= f(y-x);\\
    (M_{\omega}f)(y) &= e^{2\pi i\, y\cdot \omega}f(y).
\end{align*}
Their compositions are called the \textbf{time-frequency shifts}, given by:
\begin{align}\label{TFS}
\pi(x,\omega)=M_{\omega}T_x.
\end{align}
Note that 
\begin{align}\label{eq:mod-trans-commutation}
    (T_x M_\omega f) (y)= e^{2\pi i\, (y-x)\cdot \omega} f(y-x) = e^{-2\pi i\, x\cdot \omega} (M_\omega T_x f)(y),
\end{align} 
which implies 
\begin{align}\label{eq:TFS-commutation}
\pi(z_1)\pi(z_2)=e^{-2\pi i\, (x_1\cdot \omega_2-\omega_1\cdot x_2)} \pi(z_2)\pi(z_1)
=e^{-2\pi i\, z_1^T Jz_2} \pi(z_2)\pi(z_1),
\end{align}
for $z_1=(x_1,\omega_1),\,z_2=(x_2,\omega_2)$ where $J$ denotes the $2d\times 2d$-matrix 
\(J=\begin{pmatrix}
    0& I_d\\-I_d &0
\end{pmatrix}\). 
The matrix $J$ implements the standard symplectic form on $\mathbb{R}^{2d}$.

The \textbf{short-time Fourier transform} (STFT) of $f\in L^2(\mathbb{R}^d)$ with respect to a \textbf{window} $g\in L^2(\mathbb{R}^d)$,  denoted by $\mathcal{V}_g f \in L^2(\mathbb{R}^{2d})$, is given by
\begin{align*}
    \mathcal{V}_g f(x,\omega) := \int_{\mathbb{R}^d}f(t)\overline{g(t-x)}e^{-2\pi i\, t\cdot \omega}\d t = \rin{2}{f}{\pi(x,\omega)g}.
\end{align*}
Gabor analysis aims to construct reconstruction formulas for $f$ in terms of the time-frequency representation given by the STFT, as we shall see later. 

A \textbf{lattice} in $\mathbb{R}^{2d}$ is a discrete subset of $\mathbb{R}^{2d}$ of the form $\Lambda=L\mathbb{Z}^{2d}$ for $L\in \op{GL}_{2d}(\mathbb{R})$, we call $L$ a \textbf{matrix generator} for $\Lambda$. 
For a fixed $g\in L^2(\mathbb{R}^d)$, we define the so-called \textbf{Gabor system} $\mathcal{G}(g,L)$ via:
\begin{align*}
    \mathcal{G}(g,L):=\left\{\pi(Lk)g: k\in \mathbb{Z}^{2d} \right\}\subseteq L^2(\mathbb{R}^d). 
\end{align*}
In other words, $\mathcal{G}(g,L)$ is the family of time-frequency shifts of $g$ coming from points of the lattice $L\mathbb{Z}^{2d}$.

Suppose that $\mathcal{G}(g,L)$ is a Bessel family in $L^2(\mathbb{R}^{d})$. Then the associated analysis operator is denoted by  $C_{g,L}\colon L^2(\mathbb{R}^d)\to \ell^2(\mathbb{Z}^{2d})$, while the associated frame operator is denoted by $S_{g,L}$, and called a \textbf{Gabor frame operator}. 
In general, if there is another Bessel family $\mathcal{G}(h,L)$ for $h\in L^2(\mathbb{R}^d)$ (but the same $L$), we have the \textbf{mixed (Gabor) frame operator} given by $S_{g,h,L} = C^*_{h,L}C_{g,L}$. Note that using our notation: $S_{g,g,L}= S_{g,L}$. In the case that $\mathcal{G}(g,L)$ is a frame for $L^2(\mathbb{R}^d)$, we say that $\mathcal{G}(g,L)$ forms a \textbf{Gabor frame}, equivalently the Gabor frame operator $S_{g,L}$ must be invertible according to Observation \ref{frame-invertible}. If we denote $h:= S^{-1}_{g,L}g\in L^2(\mathbb{R}^d)$, then the reconstruction formula \eqref{reconstruction-formula} reads, for all $f\in L^2(\mathbb{R}^d)$,
\begin{align*}
    f &= \sum_{k\in \mathbb{Z}^{2d}}\rin{2}{f}{S^{-1}_{g,L}\pi(Lk)g}\pi(Lk)g = \sum_{k\in \mathbb{Z}^{2d}}\mathcal{V}_hf(Lk)\pi(Lk)g \\
    &=\sum_{k\in \mathbb{Z}^{2d}}\rin{2}{f}{\pi(Lk)g}S^{-1}_{g,L}\pi(Lk)g = \sum_{k\in \mathbb{Z}^{2d}}\mathcal{V}_g f(Lk)\pi(Lk)h.
\end{align*}
We see that the existence of a Gabor frame $\mathcal{G}(g,L)$ amounts to a representation of $f$ as above, with coefficients given by STFTs with respect to the \textbf{Gabor atom} $g$, or its \textbf{canonical dual atom} $h=S^{-1}_{g,L}g$. 
In general, we call any $h\in L^2(\mathbb{R}^d)$ a \textbf{dual atom} for $\mathcal{G}(g,L)$ if $S_{g,h,L}=\op{Id}_{L^2(\mathbb{R}^d)}$. 

The frame theory of Gabor systems is quite intricate and there are several conditions on $g$ and $L$ that are implied by the frame property of a Gabor system $\mathcal{G}(g,L)$. We start with the fundamental density theorem, see \eg \cite[Theorem 5.6]{JaLe16}: 
\begin{theorem}\label{basic-density}
    If $g\in L^2(\mathbb{R}^d)$ and $L\in \op{GL}_{2d}(\mathbb{R})$ generate a Gabor frame $\mathcal{G}(g,L)$ for $L^2(\mathbb{R}^d)$, then $|\det L| \leq 1$.
\end{theorem}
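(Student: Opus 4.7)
The density theorem is classical (the paper cites \cite[Theorem 5.6]{JaLe16}), and my plan organizes itself around the von Neumann algebraic viewpoint that is natural to the rest of the paper. The guiding idea is that the existence of a Gabor frame with window $g$ forces $L^2(\mathbb{R}^d)$ to be a ``small'' module over a specific $\mathrm{II}_1$ factor, and ``small'' here means Murray--von Neumann coupling constant at most $1$.

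First I would introduce the von Neumann algebra $\mathcal{M}_L := \{\pi(Lk) : k \in \mathbb{Z}^{2d}\}''$ acting on $L^2(\mathbb{R}^d)$ and use the commutation relation \eqref{eq:TFS-commutation} to identify its commutant as $\mathcal{M}_L' = \{\pi(\lambda) : \lambda \in \Lambda^\circ\}''$, where $\Lambda^\circ = JL^{-T}\mathbb{Z}^{2d}$ is the adjoint lattice (note $|\det L^\circ| = |\det L|^{-1}$). After a preliminary reduction (by a small perturbation and continuity, or directly) to $L$ for which the associated cocycle is sufficiently generic, $\mathcal{M}_L$ is a $\mathrm{II}_1$ factor carrying the canonical faithful tracial state $\tau(\pi(Lk)) = \delta_{k,0}$, extended by weak continuity.

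Next I would invoke Rieffel's coupling constant computation, $\dim_{\mathcal{M}_L} L^2(\mathbb{R}^d) = |\det L|$, which in the Heisenberg-module language of Section \ref{subsec:hilbcmodule} is just the normalized trace of the projection that implements $L^2(\mathbb{R}^d)$ as an $\mathcal{M}_L$-submodule of its standard form. The Gabor frame hypothesis says exactly that the map $a \mapsto a g$ extends to a bounded, bounded-below $\mathcal{M}_L'$-linear map from the standard form $L^2(\mathcal{M}_L,\tau)$ (coupling constant $1$) into $L^2(\mathbb{R}^d)$: the frame operator $S_{g,L}$ lies in $\mathcal{M}_L'$ by virtue of commuting with every $\pi(Lk)$, and Observation \ref{frame-invertible} gives its invertibility. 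Consequently $L^2(\mathbb{R}^d)$ embeds as a sub-$\mathcal{M}_L$-module of $L^2(\mathcal{M}_L,\tau)$, forcing $|\det L| = \dim_{\mathcal{M}_L} L^2(\mathbb{R}^d) \leq 1$.

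The main obstacle is the coupling-constant identity $\dim_{\mathcal{M}_L} L^2(\mathbb{R}^d) = |\det L|$ together with the precise translation of the frame inequality into the language of bounded cyclic vectors; both require a careful handling of the standard form and of the trace on $\mathcal{M}_L$. A more elementary alternative that bypasses the factor theory is the Ramanathan--Steger counting argument: inside a box $Q_N \subset \mathbb{R}^{2d}$ of volume $N^{2d}$ one counts $\#(L\mathbb{Z}^{2d} \cap Q_N) \sim N^{2d}/|\det L|$ lattice points, and by the homogeneous approximation property of the frame this must dominate the effective number of time-frequency degrees of freedom of the unit ball of $L^2(\mathbb{R}^d)$ localized to $Q_N$, which is $\sim N^{2d}$; letting $N \to \infty$ gives $|\det L| \leq 1$. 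I would favor the von Neumann approach as it foreshadows the Heisenberg-module machinery developed later in the paper.
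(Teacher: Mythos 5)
The paper offers no proof of Theorem \ref{basic-density}: it is invoked as the classical density theorem with a pointer to \cite[Theorem 5.6]{JaLe16}. Your overall strategy --- Rieffel's computation $\dim_{\mathcal M_L}L^2(\mathbb R^d)=|\det L|$ for $\mathcal M_L=\{\pi(Lk):k\in\mathbb Z^{2d}\}''$, combined with the principle that a frame vector forces this von Neumann dimension to be at most $1$ --- is a legitimate and well-documented route (Rieffel, and later Bekka), and the Ramanathan--Steger counting argument you mention is the standard alternative. The plan is therefore sound in outline, but the central step is written in a way that would prove the \emph{reverse} inequality.

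Concretely, for an overcomplete frame the synthesis-type map $a\mapsto ag$ from $L^2(\mathcal M_L,\tau)\cong\ell^2(\mathbb Z^{2d})$ into $L^2(\mathbb R^d)$ is bounded and surjective but \emph{not} bounded below: its kernel is the orthogonal complement of the range of the analysis operator, which is nonzero whenever $\det L<1$ (the system is a Riesz basis only for $\det L=1$). Moreover, a bounded-below map \emph{from} the standard form \emph{into} $L^2(\mathbb R^d)$ would exhibit $L^2(\mathcal M_L,\tau)$ as a submodule of $L^2(\mathbb R^d)$ and give $|\det L|\geq 1$, not $\leq 1$. What the frame inequality actually provides is that the analysis operator $C_{g,L}$ is bounded below; since $C_{g,L}$ intertwines the operators $\pi(Lm)$ with a twisted shift action on $\ell^2(\mathbb Z^{2d})$ unitarily equivalent to the standard representation of $\mathcal M_L$, it realizes $L^2(\mathbb R^d)$ as a closed $\mathcal M_L$-submodule of $L^2(\mathcal M_L,\tau)$, whence $|\det L|=\dim_{\mathcal M_L}L^2(\mathbb R^d)\leq 1$ by monotonicity of the $\tau$-dimension. (Equivalently: your map is surjective, so its adjoint --- the analysis operator --- is the bounded-below embedding you need; also note the map $a\mapsto ag$ is $\mathcal M_L$-linear, not $\mathcal M_L'$-linear.) A second genuine gap is the proposed reduction to the factor case ``by a small perturbation and continuity'': for a general window $g\in L^2(\mathbb R^d)$ the frame property is not stable under perturbations of $L$ --- that stability is precisely the content of Theorem \ref{feichkaib} and requires $g\in M^1(\mathbb R^d)$ --- so the hypothesis cannot be transferred to a nearby generic lattice. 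The non-factor case must be handled directly, e.g.\ by working throughout with the canonical trace $\tau(\pi(Lk))=\delta_0(k)$ and the associated center-valued or $\tau$-dimension, as Rieffel does.
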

\noindent The theorem above is a fundamental limit to the sparseness of the lattice if we want a single atom $g$  to span the whole of $L^2(\mathbb{R})$ via time-frequency shifts. 
For a subtler discussion on the existence of possibly finitely many Gabor atoms generating a frame and the sufficient density requirements, see \cite{JaLu21} and \cite{AuJaMaLu20}.

Further additional structure is related to the existence of an ``adjoint'' Gabor system  generated by the so-called \textbf{adjoint lattice}. 
For a matrix generator $L$ of the lattice $\Lambda=L\mathbb{Z}^{2d}$, we consider $L^{\circ}=J(L^{-1})^TJ^T$, the lattice generator of the adjoint lattice
\begin{align*}
L^{\circ}\mathbb{Z}^{2d}=\Lambda^{\circ}:=\left\{(x,\omega)\in \mathbb{R}^{2d}: \pi(x,\omega)\pi(Lk)=\pi(Lk)\pi(x,\omega),\ \text{for all $k\in \mathbb{Z}^{2d}$} \right\},
\end{align*}
which follows from the commutation relation \eqref{eq:TFS-commutation}.
We shall denote by $\delta_m\in \ell^1(\mathbb{Z}^n)$ for $m\in \mathbb{Z}^n$ the sequence defined via:
\begin{align*}
        \delta_m(k) = \begin{cases}
            1, \qquad \text{if $k=m$},\\
            0, \qquad \text{otherwise}.
        \end{cases}
\end{align*}

\begin{theorem}[Wexler-Raz Relations]\label{wexler-raz}
    Let $g,h\in L^2(\mathbb{R}^d)$ and $L\in \op{GL}_{2d}(\mathbb{R})$. The Gabor systems $\mathcal{G}(g,L)$ and $\mathcal{G}(h,L)$ are dual frames if and only if for all $k\in \mathbb{Z}^{2d}$:
    \begin{align*}
        \rin{2}{h}{\pi(L^{\circ}k)g}=|\det L | \cdot  \delta_0(k).
    \end{align*}
\end{theorem}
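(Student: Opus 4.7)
My plan is to deduce the Wexler--Raz relations from a Janssen-type representation of the mixed frame operator $S_{g,h,L}$, which bridges the time--frequency lattice $L\mathbb Z^{2d}$ and its adjoint $L^\circ \mathbb Z^{2d}$. First I would reformulate: by polarization, the duality condition $S_{g,h,L}=\op{Id}_{L^2(\mathbb R^d)}$ is equivalent to the weak identity $\langle S_{g,h,L}f_1,f_2\rangle=\langle f_1,f_2\rangle$ for every pair $f_1,f_2\in L^2(\mathbb R^d)$. Since $\mathcal G(g,L)$ and $\mathcal G(h,L)$ are in particular Bessel, a Cauchy--Schwarz estimate against the two Bessel bounds ensures that the defining series
\[
\langle S_{g,h,L}f_1,f_2\rangle \;=\; \sum_{k\in\mathbb Z^{2d}} \mathcal V_g f_1(Lk)\,\overline{\mathcal V_h f_2(Lk)}
\]
converges absolutely, so that the left--hand side is unambiguously attached to the lattice $L\mathbb Z^{2d}$.

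Next I would establish the Janssen-type identity
\[
\langle S_{g,h,L}f_1,f_2\rangle \;=\; \frac{1}{\det L}\sum_{k\in\mathbb Z^{2d}} \langle h,\pi(L^\circ k)g\rangle\,\langle \pi(L^\circ k)f_1,f_2\rangle
\]
by applying the symplectic Poisson summation formula to the $L^1(\mathbb R^{2d})$ function $F(z)=\mathcal V_g f_1(z)\overline{\mathcal V_h f_2(z)}$. The symplectic Fourier transform of such a product of STFTs has a known closed form in terms of matrix coefficients of time--frequency shifts (the content of the \emph{fundamental identity of Gabor analysis}), and evaluating the Poisson sum on the dual lattice, which by construction of $L^\circ$ equals $L^\circ\mathbb Z^{2d}$, yields the right--hand side above; the normalizing factor $1/\det L$ comes from the covolume of $L\mathbb Z^{2d}$.

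Given this representation, the theorem falls out in two short steps. The ``if'' direction is immediate: substituting $\langle h,\pi(L^\circ k)g\rangle=\det L\cdot\delta_0(k)$ collapses the Janssen sum to the single term $\pi(0)=\op{Id}$, so $S_{g,h,L}=\op{Id}$. For the ``only if'' direction I would rearrange $\langle S_{g,h,L}f_1,f_2\rangle=\langle f_1,f_2\rangle$ as
\[
\sum_{k\in\mathbb Z^{2d}} \bigl(\det L\cdot\delta_0(k)-\langle h,\pi(L^\circ k)g\rangle\bigr)\,\langle \pi(L^\circ k)f_1,f_2\rangle \;=\; 0
\]
for all $f_1,f_2\in L^2(\mathbb R^d)$, and then invoke the weak linear independence of the distinct time--frequency shifts $\{\pi(L^\circ k)\}_{k\in\mathbb Z^{2d}}$ to conclude that each coefficient vanishes; one such isolation procedure uses Hilbert--Schmidt pairings against $\pi(L^\circ m)^{*}$ together with the fact that matrix coefficients of the Schrödinger representation separate distinct phase--space points. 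The step I expect to be the main obstacle is rigorously justifying Janssen's representation and this coefficient extraction at bare $L^2$ regularity, where the Tolimieri--Orr summability $\sum_k|\langle h,\pi(L^\circ k)g\rangle|<\infty$ is not available \emph{a priori}; I would circumvent this by first proving everything on Schwartz class test functions $f_1,f_2$, where the symplectic Poisson summation is classical, and then extending by density using the Bessel bounds that uniformly control the left--hand side.
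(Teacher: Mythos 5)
The paper does not actually prove Theorem \ref{wexler-raz}; it quotes it as a classical result of Gabor duality theory (Wexler--Raz, made rigorous by Janssen, Daubechies--Landau--Landau, Ron--Shen), so there is no in-paper argument to compare against. Your route --- a weak Janssen-type representation obtained from the FIGA/symplectic Poisson summation, then substitution for ``if'' and coefficient extraction for ``only if'' --- is exactly the standard one, and it matches the machinery the paper itself relies on elsewhere (Janssen's representation \eqref{jan-rep} and Corollary \ref{gen-for-heis}). However, two steps of your sketch have genuine gaps as written. First, justifying the identity by ``classical'' Poisson summation on Schwartz test functions does not work: even when $f_1,f_2$ are Schwartz, the function you periodize is $F=\mathcal V_gf_1\cdot\overline{\mathcal V_hf_2}$, which still contains the bare $L^2$ windows $g,h$ and is only in $L^1\cap L^\infty(\mathbb R^{2d})$; pointwise Poisson summation is unreliable for such $F$. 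The rigorous substitute --- and the place where the published proofs put all the work --- is a periodization/Parseval argument: the Bessel bounds for $\mathcal G(g,L)$ and $\mathcal G(h,L)$ control the $L\mathbb Z^{2d}$-periodizations of $|\mathcal V_gf_1|^2$ and $|\mathcal V_hf_2|^2$, one shows the periodization of $F$ lies in $L^1$ of a fundamental domain with Fourier coefficients given by samples of the symplectic Fourier transform of $F$ on $L^\circ\mathbb Z^{2d}$, and then applies Parseval rather than pointwise Poisson summation.

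Second, in the ``only if'' direction, ``matrix coefficients separate distinct phase-space points'' is not enough to conclude that every coefficient of the absolutely convergent sum $\sum_k c_k\,\rin{2}{\pi(L^\circ k)f_1}{f_2}=0$ vanishes; you must isolate a single term of an infinite sum. Two concrete mechanisms that do work: (a) replace $(f_1,f_2)$ by $(\pi(z)f_1,\pi(z)f_2)$, which multiplies the $k$-th term by the character $z\mapsto c_s(L^\circ k,z)$, and integrate $z$ over a fundamental domain of the appropriate dual lattice to project onto $k=m$; or (b) invoke surjectivity of the restriction operator $M^1(\mathbb R^{2d})\to\ell^1(\mathbb Z^{2d})$, $F\mapsto F|_{L^\circ\mathbb Z^{2d}}$, together with Corollary \ref{for-fullness} --- the very devices the paper uses in Lemma \ref{heis-const-remin} --- to realize $\delta_m$ as $\sum_i\mathcal V_{g_i}f_i|_{L^\circ\mathbb Z^{2d}}$. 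Finally, a smaller but real point: for the ``if'' direction the Bessel property of both systems must be a standing hypothesis (the biorthogonality relations alone do not imply it, and without it $S_{g,h,L}$ is not a well-defined bounded operator), and once $S_{g,h,L}=\op{Id}_{L^2(\mathbb R^d)}$ is established you still owe the short Cauchy--Schwarz argument showing that two Bessel systems with identity mixed frame operator are in fact both frames, so that ``dual frames'' in the sense of the paper's definition is really attained.
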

\noindent The Wexler-Raz relation is a fundamental incarnation of \emph{duality principles} in Gabor analysis, \ie where a frame theoretic statement in $\Lambda$ amounts to some equivalent statement involving the adjoint lattice $\Lambda^{\circ}$.

We now introduce Feichtinger's algebra, denoted by $M^1(\mathbb{R}^d)$. For a detailed account, one may consult the original paper by Feichtinger \cite{Fe81}, or the recent survey by Jakobsen \cite{Ja18}.
\begin{definition}
    Consider the Gaussian $\phi(t) = e^{-\pi \|t\|^2}$ for $t\in \mathbb{R}^d$. We set
    \begin{align*}
        M^1(\mathbb{R}^d):=\left\{f\in L^2(\mathbb{R}^d): \mathcal{V}_{\phi}f\in L^1(\mathbb{R}^{2d}) \right\}.
    \end{align*}
\end{definition}
\noindent Note that $\phi\in M^1(\mathbb{R}^d)$ and, furthermore, $\mathcal{V}_{g}f\in M^1(\mathbb{R}^{2d})$ for any $f,g\in M^1(\mathbb{R}^{d})$ \cite[Proposition 4.5]{Ja18}. Feichtinger's algebra is a Banach space when equipped with the norm $\|f\|_{M^1,\phi}:= \|\mathcal{V}_{\phi}f\|_1$.  
In fact, we can replace $\phi$ with any other $g\in M^1(\mathbb{R}^d)$ and then the norm $\|f \|_{M^1,g}=\|\mathcal{V}_g f\|_1$ defines an equivalent Banach space norm on $M^1(\mathbb{R}^d)$, \ie there exist constants $A,B$ such that $A\|f \|_{M^1,\phi}\leq\|f \|_{M^1,g}\leq B\|f \|_{M^1,\phi}$.
In light of this, unless the window function $\phi$ is explicitly needed, we shall ease our notation for the norm on $M^1(\mathbb{R}^d)$, and denote it by $\|\cdot\|_{M^1}$. 
There are plenty of properties of $M^1(\mathbb{R}^d)$ which are quite useful for time-frequency analysis, however the most relevant for us will be the fact that, for any $L\in \op{GL}_{2d}(\mathbb{R})$, $\mathcal{G}(g,L)$ is always a Bessel family for any $g\in M^1(\mathbb{R}^d)$ \cite[Theorem 3.3.1]{FeZi98}. 
Consequently, this gives for Gabor systems $\mathcal{G}(g,L)$ for $g\in M^1(\mathbb{R}^d)$ that the analysis operator $C_{g,L}$ is bounded, hence the Gabor frame operator $S_{g,L}$ is bounded, too. Furthermore, another instance of the duality theory of Gabor frames can be described for Gabor systems with atoms in $M^1(\mathbb{R}^d)$.
\begin{theorem}
    For all $g,h\in M^1(\mathbb{R}^d)$, Janssen's representation of the mixed Gabor frame operator holds, \ie
    \begin{align}\label{jan-rep}
        S_{g,h,L} = \frac{1}{|\det L|} \sum_{k\in \mathbb{Z}^{2d}}\rin{2}{\pi(L^{\circ}k)h}{g}\pi(L^{\circ}k).
    \end{align}
\end{theorem}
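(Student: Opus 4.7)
The plan is to derive \eqref{jan-rep} from the \emph{fundamental identity of Gabor analysis} (FIGA): for every $f,f_0,g,h\in M^1(\mathbb R^d)$ and every lattice $\Lambda=L\mathbb Z^{2d}$,
\[
\sum_{k\in\mathbb Z^{2d}}\rin{2}{f}{\pi(Lk)g}\rin{2}{\pi(Lk)h}{f_0}=\frac{1}{\det L}\sum_{k\in\mathbb Z^{2d}}\rin{2}{\pi(L^\circ k)h}{g}\rin{2}{\pi(L^\circ k)f}{f_0}.
\]
Once FIGA is established, \eqref{jan-rep} follows by recognising the left-hand side as the sesquilinear form $\rin{2}{S_{g,h,L}f}{f_0}$ and the right-hand side as the sesquilinear form of the operator claimed in \eqref{jan-rep}.

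\textbf{Setup and boundedness.} I would first verify that both sides of \eqref{jan-rep} are bounded operators on $L^2(\mathbb R^d)$. Boundedness of $S_{g,h,L}$ is immediate, since $\mathcal G(g,L)$ and $\mathcal G(h,L)$ are Bessel families. For the right-hand side, note that $\rin{2}{\pi(L^\circ k)h}{g}=\overline{\mathcal V_h g(L^\circ k)}$, and $\mathcal V_h g\in M^1(\mathbb R^{2d})$ by \cite[Proposition 4.5]{Ja18}. Feichtinger's algebra is continuously embedded in the Wiener amalgam $W(C_0,\ell^1)$, so the samples $\{\mathcal V_h g(L^\circ k)\}_{k\in\mathbb Z^{2d}}$ are $\ell^1$-summable; since $\|\pi(L^\circ k)\|=1$ on $L^2$, the series converges absolutely in operator norm. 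By density of $M^1(\mathbb R^d)$ in $L^2(\mathbb R^d)$, it then suffices to verify the identity on $M^1\times M^1$, i.e.\ to prove FIGA.

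\textbf{Main step: Poisson summation and symplectic Fourier transform.} I would set $F(z):=\mathcal V_g f(z)\,\overline{\mathcal V_h f_0(z)}$, which lies in $M^1(\mathbb R^{2d})$ because both factors do (by \cite[Proposition 4.5]{Ja18}) and $M^1$ is a pointwise Banach algebra. The left-hand side of FIGA is then $\sum_k F(Lk)$, and Poisson summation in $M^1(\mathbb R^{2d})$ yields
\[
\sum_{k\in\mathbb Z^{2d}}F(Lk)=\frac{1}{\det L}\sum_{k\in\mathbb Z^{2d}}\mathcal F_s F(L^\circ k),
\]
where $\mathcal F_s$ denotes the symplectic Fourier transform; the lattice on the right is $\Lambda^\circ=JL^{-T}\mathbb Z^{2d}$, precisely because $\Lambda^\circ$ is the symplectic dual of $\Lambda$. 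It then remains to establish the pointwise identity
\[
\mathcal F_s F(\zeta)=\rin{2}{\pi(\zeta)h}{g}\,\rin{2}{\pi(\zeta)f}{f_0},
\]
which is a consequence of Moyal's orthogonality relation $\int_{\mathbb R^{2d}}\mathcal V_g f\,\overline{\mathcal V_h f_0}=\rin{2}{f}{f_0}\,\overline{\rin{2}{g}{h}}$ combined with the covariance $\mathcal V_g(\pi(\zeta)f)(z)=c(\zeta,z)\,\mathcal V_g f(z-\zeta)$ of the STFT under time-frequency shifts, a direct consequence of \eqref{eq:TFS-commutation}, with $c(\zeta,z)$ a unimodular cocycle.

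\textbf{Expected obstacle.} The delicate point is the last computation of $\mathcal F_s F$: one must book-keep the cocycle $c(\zeta,z)$ carefully so that the pointwise product of two STFTs transforms, under the symplectic Fourier transform, into the clean product of two inner products displayed above, and verify that the resulting phases collapse to give exactly $\rin{2}{\pi(\zeta)h}{g}\rin{2}{\pi(\zeta)f}{f_0}$ with the sign conventions matching those of $L^\circ$. Everything else (Wiener-amalgam summability of $M^1$-samples, Poisson summation for $M^1$, and Moyal's identity) is standard $M^1$-machinery and can be invoked from \cite{Ja18}.
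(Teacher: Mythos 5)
The paper offers no proof of this theorem: it is quoted in the preliminaries as a known result, with the underlying Fundamental Identity of Gabor Analysis attributed to \cite{Ja94,FeLu06}. Your proposal reconstructs precisely the standard argument from that literature — reduce the operator identity to a sesquilinear-form identity on the dense subspace $M^1(\mathbb{R}^d)$, note that the right-hand side converges absolutely in operator norm since $\mathcal V_h g\in M^1(\mathbb{R}^{2d})\subseteq W(C_0,\ell^1)$, and deduce FIGA from Poisson summation applied to $F=\mathcal V_g f\,\overline{\mathcal V_h f_0}\in M^1(\mathbb{R}^{2d})$ together with the symplectic-Fourier computation of the FIGA kernel — and it is correct. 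The step you flag as delicate (the phase bookkeeping in $\mathcal F_s F$) is indeed the only nontrivial computation; it is carried out in the cited references and is consistent with the paper's conventions, e.g.\ your $JL^{-T}\mathbb Z^{2d}$ coincides with the paper's $L^{\circ}\mathbb Z^{2d}=J(L^{-1})^TJ^T\mathbb Z^{2d}$ because $J^T\mathbb Z^{2d}=\mathbb Z^{2d}$.
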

\noindent Janssen's representation is perhaps one of the most important identities in Gabor analysis, for a myriad of reasons. Foremost, it connects the Heisenberg modules of Rieffel \cite{Ri88} to Gabor analysis \cite{Lu09}, and it is a crucial ingredient in Feichtinger and Kaiblinger's proof of the stability of Gabor frames with atoms in $M^1(\mathbb{R}^d)$ under perturbations of lattices and Gabor atoms \cite{FeKa04}. In the following, we shall combine these two facts into stability results for Heisenberg modules.

Before we proceed, we shall discuss a few more properties of Feichtinger's algebra of the time-frequency plane $\mathbb{R}^{2d}$, \ie $M^1(\mathbb{R}^{2d})$. We introduce the following maps, denoted by $\mathcal{F}_2\colon M^1(\mathbb{R}^{2d})\to M^1(\mathbb{R}^{2d})$ and $\tau_{a}\colon M^1(\mathbb{R}^{2d})\to M^1(\mathbb{R}^{2d})$, called the \textbf{partial Fourier transform} and the \textbf{asymmetric coordinate transform} respectively, defined via:
\begin{align*}
    (\mathcal{F}_2F)(x,\omega) &=\int_{\mathbb{R}^d}F(x,t)e^{-2\pi i t\cdot \omega}\d t, \qquad \forall (x,\omega)\in \mathbb{R}^{2d},\\
    (\tau_aF)(x,t) &= F(t,t-x), \qquad \forall (x,t)\in \mathbb{R}^{2d},
\end{align*}
for $F\in M^1(\mathbb{R}^{2d})$. 
That these maps, along with the complex-conjugation map, are well-defined Banach space isomorphisms on $M^1(\mathbb{R}^{2d})$, follow from \cite[Example 5.2 (ii), (iv), and (viii)]{Ja18}. 
The \textbf{tensor product} of two functions $f,g\colon \mathbb{R}^d\to \mathbb{C}$ is denoted by $f\otimes g:\mathbb{R}^{2d}\to \mathbb{C}$ to be $(f\otimes g) (x,t) = f(x)g(t)$ for $(x,t)\in \mathbb{R}^{2d}.$
\begin{lemma}\label{tensor-conseq}
    For any $f,g\in M^1(\mathbb{R}^{d})$, we have $f\otimes g\in M^1(\mathbb{R}^{2d})$ and $\mathcal{V}_g f = (\mathcal{F}_2\circ \tau_a)(f\otimes \overline{g})\in M^1(\mathbb{R}^{2d})$.
\end{lemma}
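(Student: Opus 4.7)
The plan is to verify both assertions by direct computation, using the tensor factorization of the STFT for the first and the definition of $\mathcal{V}_g f$ for the second.

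First, for $f\otimes g\in M^1(\mathbb R^{2d})$, I will use the standard fact that with the tensor product Gaussian $\Phi=\phi\otimes\phi$ on $\mathbb R^{2d}$, the STFT factors as
\[
\mathcal{V}_{\Phi}(f\otimes g)\bigl((x_1,x_2),(\omega_1,\omega_2)\bigr)=\mathcal{V}_{\phi}f(x_1,\omega_1)\cdot\mathcal{V}_{\phi}g(x_2,\omega_2),
\]
which is immediate from writing out the defining integral and splitting the product of exponentials. Applying Fubini then gives
\[
\|\mathcal{V}_{\Phi}(f\otimes g)\|_1=\|\mathcal{V}_{\phi}f\|_1\,\|\mathcal{V}_{\phi}g\|_1<\infty,
\]
so $f\otimes g\in M^1(\mathbb R^{2d})$. (Since $\Phi\in M^1(\mathbb R^{2d})$, any $M^1$-norm defined with a different window is equivalent, so this suffices.)

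For the second assertion, I will simply unwind the definitions. Starting from $(f\otimes\overline g)(x,t)=f(x)\overline{g(t)}$, the asymmetric coordinate transform yields
\[
\tau_a(f\otimes\overline g)(x,t)=(f\otimes\overline g)(t,t-x)=f(t)\overline{g(t-x)},
\]
and applying $\mathcal F_2$ in the second variable produces
\[
\bigl(\mathcal F_2\tau_a(f\otimes\overline g)\bigr)(x,\omega)=\int_{\mathbb R^d}f(t)\overline{g(t-x)}e^{-2\pi i\,t\cdot\omega}\,\d t=\mathcal V_g f(x,\omega),
\]
which is exactly the definition of the STFT. This establishes the identity $\mathcal V_g f=(\mathcal F_2\circ\tau_a)(f\otimes\overline g)$.

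Finally, membership $\mathcal V_g f\in M^1(\mathbb R^{2d})$ follows by combining the two parts: $\overline g\in M^1(\mathbb R^d)$ because complex conjugation is a Banach space isomorphism on $M^1$, so by the first part $f\otimes\overline g\in M^1(\mathbb R^{2d})$, and then invariance of $M^1(\mathbb R^{2d})$ under $\mathcal F_2$ and $\tau_a$ (both of which are cited as Banach space isomorphisms on $M^1(\mathbb R^{2d})$ from \cite[Example 5.2]{Ja18}) yields the conclusion. There is no real obstacle here: everything is a bookkeeping computation together with invocation of the cited $M^1$-invariance properties; the only thing worth being careful about is getting the argument shuffling in $\tau_a$ and the complex conjugate on $g$ correct, which is what makes $\mathcal F_2\tau_a$ land on $\mathcal V_g f$ rather than some variant.
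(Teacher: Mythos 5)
Your proof is correct and follows essentially the same route as the paper: the paper cites \cite[Theorem 5.3]{Ja18} for $f\otimes g\in M^1(\mathbb{R}^{2d})$ and leaves the identity $\mathcal{V}_g f=(\mathcal{F}_2\circ\tau_a)(f\otimes\overline g)$ as an ``elementary calculation,'' which is exactly the computation you carry out. The only difference is that you prove the tensor-product membership directly via the factorization of the STFT with the product Gaussian window, which is just the standard proof of the cited fact spelled out.
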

\begin{proof}
    We have from \cite[Theorem 5.3]{Ja18} that $f\otimes g\in M^1(\mathbb{R}^{2d})$. The rest follows from an elementary calculation and from the fact that $\mathcal{F}_2$, $\tau_{a}$, and the complex-conjugation map are Banach space isomorphisms.
\end{proof}
\noindent It was shown in \cite[Theorem 7.4]{Ja18}, as a refinement of the tensor product result of Lemma \ref{tensor-conseq}, that we can represent $M^1(\mathbb{R}^{2d})$, as the \textbf{projective tensor product} (see \eg \cite[Chapter~2]{Ry02}): $M^1(\mathbb{R}^{2d})= M^1(\mathbb{R}^d)\widehat{\otimes}M^1(\mathbb{R}^d)$, i.e.
\begin{align*}
    M^1(\mathbb{R}^{d})\widehat{\otimes} M^1(\mathbb{R}^d):= \left\{F=\sum_{i\in \mathbb{N}}f_i\otimes g_i\in M^1(\mathbb{R}^{2d}): \{f_i\}_{i\in \mathbb{N}}, \{g_i\}_{i\in \mathbb{N}}\ \text{are admissible} \right\},
\end{align*}
where we say that $\{f_i\}_{i\in \mathbb{N}}, \{g_i\}_{i\in \mathbb{N}}$ are \textbf{admissible} if $\sum_{i\in \mathbb{N}}\|f_i\|_{M^1}\|g_i\|_{M^1}<\infty.$ The result says that for $F\in M^1(\mathbb{R}^{2d})$, the following is an equivalent norm on $M^1(\mathbb{R}^{2d}):$ $$\op{inf}\left\{\sum_{i\in \mathbb{N}}\|f_i\|_{M^1} \|g_i\|_{M^1}\right\}$$ with the infimum taken over all admissible $\{f_i\}_{i\in \mathbb{N}},\{g_i\}_{i\in \mathbb{N}}\subseteq M^1(\mathbb{R}^d)$ such that $F= \sum_{i\in \mathbb{N}}f_i\otimes g_i$. We finally have the following result:
\begin{corollary}\label{for-fullness}
    For any $F\in M^1(\mathbb{R}^{2d})$, there exist admissible $\{f_i\}_{i\in \mathbb{N}}$, $\{g_i\}_{i\in \mathbb{N}}\subseteq M^1(\mathbb{R}^{d})$ such that $F = \sum_{i\in \mathbb{N}}\mathcal{V}_{g_i}f_i$ converges absolutely in the $M^1(\mathbb{R}^{2d})$-norm.
\end{corollary}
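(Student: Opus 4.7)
The plan is to pull $F$ back through the composition $\mathcal{F}_2\circ\tau_a$ (which, by Lemma \ref{tensor-conseq}, is the very map that turns a tensor product of the form $f\otimes\overline{g}$ into an STFT $\mathcal{V}_g f$), express the pullback as a tensor series using the projective tensor product representation of $M^1(\mathbb{R}^{2d})$, and then push the series back.

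Concretely, since $\mathcal{F}_2$, $\tau_a$, and the complex-conjugation map are Banach space isomorphisms of $M^1(\mathbb{R}^{2d})$, so is their composition; hence $\Phi:=(\mathcal{F}_2\circ\tau_a)^{-1}$ is a bounded isomorphism $M^1(\mathbb{R}^{2d})\to M^1(\mathbb{R}^{2d})$. Apply $\Phi$ to $F$ to obtain $\Phi(F)\in M^1(\mathbb{R}^{2d})$, and invoke the projective tensor product description $M^1(\mathbb{R}^{2d})=M^1(\mathbb{R}^d)\widehat\otimes M^1(\mathbb{R}^d)$ from \cite[Theorem 7.4]{Ja18} to write
\[
\Phi(F)=\sum_{i\in\mathbb N} f_i\otimes h_i
\]
with $\{f_i\},\{h_i\}\subseteq M^1(\mathbb R^d)$ admissible, i.e.\ $\sum_i \|f_i\|_{M^1}\|h_i\|_{M^1}<\infty$. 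Set $g_i:=\overline{h_i}\in M^1(\mathbb R^d)$; complex conjugation is an isometric Banach space isomorphism on $M^1(\mathbb R^d)$, so $\{f_i\},\{g_i\}$ is again admissible. Applying $\mathcal{F}_2\circ\tau_a$ term by term and invoking the identity $\mathcal{V}_{g_i}f_i=(\mathcal{F}_2\circ\tau_a)(f_i\otimes\overline{g_i})$ from Lemma \ref{tensor-conseq} gives
\[
F=\sum_{i\in\mathbb N}\mathcal{V}_{g_i}f_i.
\]

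It remains to check that this series converges absolutely in the $M^1(\mathbb R^{2d})$-norm. The projective tensor-norm inequality $\|f\otimes g\|_{M^1}\le C\|f\|_{M^1}\|g\|_{M^1}$ (a consequence of \cite[Theorem 5.3]{Ja18}), combined with the boundedness of $\mathcal{F}_2\circ\tau_a$ with operator norm, say, $K$, yields
\[
\sum_{i\in\mathbb N}\|\mathcal{V}_{g_i}f_i\|_{M^1}
\le K\sum_{i\in\mathbb N}\|f_i\otimes\overline{g_i}\|_{M^1}
\le KC\sum_{i\in\mathbb N}\|f_i\|_{M^1}\|g_i\|_{M^1}<\infty,
\]
so the representation $F=\sum_i\mathcal{V}_{g_i}f_i$ converges absolutely in $M^1(\mathbb R^{2d})$, as required. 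There is no real obstacle here: the only conceptual point is realizing that $(\mathcal{F}_2\circ\tau_a)^{-1}$ is a bounded self-isomorphism of $M^1(\mathbb R^{2d})$, so that the projective tensor product theorem of \cite{Ja18} can be applied on the pullback side and the admissibility condition transports intact through the (bi)continuous operations involved.
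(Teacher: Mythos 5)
Your proposal is correct and follows essentially the same route as the paper's proof: pull $F$ back through $\mathcal{F}_2\circ\tau_a$, decompose the preimage via the projective tensor product representation of $M^1(\mathbb{R}^{2d})$ with the conjugation adjustment, and push back using Lemma \ref{tensor-conseq}, with absolute convergence coming from admissibility and the tensor-norm estimate of \cite[Theorem 5.3]{Ja18}. The only difference is that you spell out the boundedness of the operators involved in the convergence estimate a bit more explicitly than the paper does.
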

\begin{proof}
    Let $F$ be $M^1(\mathbb{R}^{2d})$. Since $\mathcal{F}_2$ and $\tau_a$ are Banach space isomorphisms, there exists a $G\in M^1(\mathbb{R}^{2d})$ such that $F=(\mathcal{F}_2\circ \tau_a)(G)$, it follows from the fact that complex conjugation is a Banach space isomorphism in $M^1(\mathbb{R}^d)$ and the tensor product decomposition of $M^1(\mathbb{R}^{2d})$ that we can find admissible $\{f_i\}_{i\in \mathbb{N}},\{g_i\}_{i\in \mathbb{N}}\subseteq M^1(\mathbb{R}^d)$ such that $G = \sum_{i\in \mathbb{N}}f_i\otimes \overline{g_i}.$ It follows from Lemma \ref{tensor-conseq} that $F=\sum_{i\in \mathbb{N}}\mathcal{V}_{g_i}f_i$. Absolute convergence of the sum follows from the norm-relationship \cite[Theorem 5.3 (ii)]{Ja18} and admissibility of $\{f_i\}_{i\in \mathbb{N}},\{g_i\}_{i\in \mathbb{N}}\subseteq M^1(\mathbb{R}^d)$.
\end{proof}

We shall, for technical reasons, also consider weighted variants of Feichtinger's algebra. We denote the $\ell^1$-norm of $x=(x_1,\ldots, x_n)\in \mathbb{R}^n$ by $|x|$, and define the \textbf{weight} $\nu\colon \mathbb{R}^n\to \mathbb{R}$ by
\begin{align}\label{weight}
    \nu(x) = 1+ |x|, \qquad x\in \mathbb{R}^n.
\end{align}
In particular for $n=2d$, we now obtain the \textbf{weighted Feichtinger's algebra} $M_{\nu}^1(\mathbb{R}^d)$:
\begin{align}
    M_{\nu}^1(\mathbb{R}^d):= \left\{f\in L^2(\mathbb{R}^d): \mathcal{V}_{\phi}f \cdot \nu \in L^1(\mathbb{R}^{2d}) \right\}.
\end{align}
Some of the properties of the unweighted case are inherited by  $M^1_{\nu}(\mathbb{R}^d)$, in particular we have the following \cite{AuLu21}:
\begin{proposition}\label{prop:cont-embed-weight}
    $M^1_{\nu}(\mathbb{R}^d)$ is a Banach space with the norm $\|f\|_{M^1_{\nu}}:= \|\mathcal{V}_{\phi}f \cdot \nu\|_1$. If $\phi$ is replaced with any $g\in M^1_{\nu}(\mathbb{R}^d)$, then we obtain an equivalent norm. Furthermore, $M^1_{\nu}(\mathbb{R}^d)$ is continuously and densely embedded in $M^1(\mathbb{R}^d)$ satisfying
    \begin{align}\label{eq:cont-embed-weight}
        \|f\|_{M^1}\leq \|f\|_{M^1_{\nu}}, \qquad \text{for all}~~ f\in M^1_{\nu}(\mathbb{R}^d).
    \end{align}
\end{proposition}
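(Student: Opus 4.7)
The plan is to verify in order: (i) $\|\cdot\|_{M^1_\nu}$ is a norm; (ii) completeness; (iii) independence of the window choice; and (iv) the continuous embedding into $M^1(\mathbb R^d)$ together with density.

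For (i), homogeneity and the triangle inequality are inherited from the $L^1$-norm together with linearity of $\mathcal V_\phi$; positive-definiteness follows from injectivity of the STFT on $L^2(\mathbb R^d)$ with the nonzero window $\phi$. For (ii), the cleanest route is to recognize that $\mathcal V_\phi$ realizes $M^1_\nu(\mathbb R^d)$ isometrically as the range of the Gabor reproducing operator (twisted convolution with $\|\phi\|_2^{-2}\mathcal V_\phi\phi$) on $L^1(\mathbb R^{2d},\nu\,dm)$. Boundedness of this operator on $L^1(\nu\,dm)$ follows from a Young-type estimate once one checks that $\nu$ is submultiplicative, i.e.\ $\nu(z_1+z_2)\leq\nu(z_1)\nu(z_2)$, which is immediate from $1+|z_1+z_2|\leq 1+|z_1|+|z_2|\leq(1+|z_1|)(1+|z_2|)$. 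The image is closed, so $M^1_\nu(\mathbb R^d)$ inherits completeness from $L^1(\nu\,dm)$.

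For (iii), the pointwise estimate
\begin{equation*}
|\mathcal V_g f(z)|\leq \|\phi\|_2^{-2}\bigl(|\mathcal V_\phi f|\ast|\mathcal V_g\phi|\bigr)(z),
\end{equation*}
a direct consequence of the STFT reproducing formula, combined with submultiplicativity of $\nu$ and Young's inequality in weighted $L^1$, yields
\begin{equation*}
\|\mathcal V_g f\cdot\nu\|_1\leq \|\phi\|_2^{-2}\,\|\mathcal V_g\phi\cdot\nu\|_1\,\|\mathcal V_\phi f\cdot\nu\|_1.
\end{equation*}
Because both $g$ and $\phi$ lie in $M^1_\nu(\mathbb R^d)$, the factor $\|\mathcal V_g\phi\cdot\nu\|_1$ is finite; interchanging the roles of $g$ and $\phi$ gives the reverse estimate and hence the claimed norm equivalence.

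For (iv), the inequality $\|f\|_{M^1}\leq\|f\|_{M^1_\nu}$ is immediate from $\nu\geq 1$, yielding continuity of the embedding. Density follows by observing that $\mathcal S(\mathbb R^d)\subset M^1_\nu(\mathbb R^d)$, since for $\psi\in\mathcal S(\mathbb R^d)$ the STFT $\mathcal V_\phi\psi$ is itself Schwartz on $\mathbb R^{2d}$ and therefore lies in $L^1(\nu\,dm)$, together with the known density of $\mathcal S(\mathbb R^d)$ in $M^1(\mathbb R^d)$. The main technical hurdle is step (ii), where one needs closedness of the STFT image in the weighted $L^1$ space; once this is settled, the remaining steps are standard consequences of the STFT calculus.
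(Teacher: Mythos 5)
Your argument is correct and is the standard modulation-space proof (submultiplicativity of $\nu$, weighted Young, the window-change estimate $|\mathcal V_g f|\leq\|\phi\|_2^{-2}|\mathcal V_\phi f|\ast|\mathcal V_g\phi|$, and the identification of $\mathcal V_\phi\bigl(M^1_\nu(\mathbb R^d)\bigr)$ with the closed range of the reproducing projection on $L^1(\nu\,dm)$); the paper itself gives no proof and simply cites the literature, which proceeds along exactly these lines. The only point you should spell out in step (ii) is that the range of the projection is not merely closed but coincides with $\mathcal V_\phi\bigl(M^1_\nu(\mathbb R^d)\bigr)$ — the nontrivial inclusion requires producing a preimage via the STFT inversion formula — but you correctly flag this as the technical hurdle.
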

\subsection{Hilbert \texorpdfstring{$C^*$}{C*}-Modules and the Heisenberg Modules}\label{subsec:hilbcmodule}
We start with a discussion of the basic theory of Hilbert $C^*$-modules, imprimitivity bimodules, and Morita equivalence. One may consult \cite{RaWi98} and \cite{La95} for reference. The latter half of the section is dedicated to a particular imprimitivity bimodule called the Heisenberg module, which we present as a dense subspace of $L^2(\mathbb{R}^d)$ following \cite{AuEn20}.
\begin{definition}\label{in-prod}
Let $A$ be a $C^*$-algebra and $Z$ a complex vector space. $Z$ is said to be an \textbf{inner product left} $\mathbf{A}$\textbf{-module} if it is a left $A$-module (with the module action denoted by juxtaposition) compatible with complex scalar multiplication in the sense that:
        \begin{align*}
            \lambda (a z) = (\lambda a) z = a (\lambda z), \qquad \forall (\lambda, a, z)\in \mathbb{C} \times A \times Z
        \end{align*}
        together with map $\lin{A}{\cdot}{\cdot}\colon  Z\times Z\to A$ (called \textbf{inner product}) such that for all $z,w,y\in Z$, $a\in A$, and scalars $\lambda,\mu \in \mathbb{C}$:
		\begin{enumerate}
			\item $\lin{A}{\lambda z+ \mu y}{w}=\lambda \lin{A}{z}{w} + \mu\lin{A}{y}{w} $ 
			\item $\lin{A}{a z}{w}= a \lin{A}{z}{w}$
			\item $\lin{A}{z}{w}^* = \lin{A}{w}{z}$
			\item $\lin{A}{z}{z}\geq 0$ 
                \item $\lin{A}{z}{z}=0 \implies z=0$.
		\end{enumerate}
\end{definition}
\noindent We see that $Z$ can be seen as a generalization of an inner-product space where we replace each instance of $\mathbb{C}$ with a general $C^*$-algebra $A$. In light of this, we can also induce a norm on $Z$. If we denote by $\|\cdot\|$ the norm of $A$, then we define, using the same notation, the induced norm on $Z$ via $\|z\|:= \|{\lin{A}{z}{z}}\|^{1/2}$. We say that $Z$ is a \textbf{Hilbert left $A$-module} or \textbf{Hilbert left $C^*$-module over $A$} if $Z$ is complete with the just defined induced norm. In this case, we also say that $Z$ is \textbf{full} if the ideal $\op{span}\{\lin{A}{z}{w}:z,w\in Z\}$ is dense in $A$. We also have the following basic estimates, showing the continuity of the operations involved.
\begin{proposition}\label{cauchy-sch}
    If $Z$ is an inner-product $A$-module, then for any $a\in A$ and $z,w\in Z:$
    \begin{enumerate}
        \item $\|{\lin{A}{z}{w}}\|\leq \|z\| \|w\|;$
        \item $\|az\|\leq \|a\| \|z\|$.
    \end{enumerate}
\end{proposition}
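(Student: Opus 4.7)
The plan is to prove (1) by the standard states technique of C*-algebra theory, and (2) by a direct calculation using the two axioms that describe the compatibility of the module action with the inner product.

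For part (1), I would fix $z,w\in Z$ and exploit the fact that for a C*-algebra $A$ the norm of any $b\in A$ satisfies $\|b\|=\sup_\varphi|\varphi(b)|$, where the supremum runs over all states of $A$. The key observation is that, for each state $\varphi$ of $A$, the map $(z,w)\mapsto \varphi(\lin{A}{z}{w})$ is a positive-semidefinite sesquilinear form on the complex vector space $Z$: sesquilinearity follows from axiom (1) combined with axiom (3) (linearity in the first argument is given, and conjugate-linearity in the second is obtained by $\lin{A}{z}{\lambda w}=\lin{A}{\lambda w}{z}^*=\overline\lambda \lin{A}{w}{z}^*=\overline\lambda\lin{A}{z}{w}$), while positive semidefiniteness on the diagonal follows from axiom (4) together with the fact that states are positive. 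The ordinary scalar Cauchy--Schwarz inequality for such a form then yields
\[
|\varphi(\lin{A}{z}{w})|^2\ \le\ \varphi(\lin{A}{z}{z})\,\varphi(\lin{A}{w}{w})\ \le\ \|\lin{A}{z}{z}\|\,\|\lin{A}{w}{w}\|\ =\ \|z\|^2\|w\|^2,
\]
using $\|\varphi\|=1$ in the second inequality. Taking the supremum over states gives $\|\lin{A}{z}{w}\|^2\le \|z\|^2\|w\|^2$, which is (1).

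For part (2), I would compute $\lin{A}{az}{az}$ in closed form. By axiom (2) applied to the first slot and axiom (3) applied to move the $a$ out of the second slot, we get
\[
\lin{A}{az}{az}=a\lin{A}{z}{az}=a\bigl(\lin{A}{az}{z}\bigr)^*=a\bigl(a\lin{A}{z}{z}\bigr)^*=a\,\lin{A}{z}{z}\,a^*,
\]
where we also used that $\lin{A}{z}{z}$ is self-adjoint by axiom (3). Taking norms and using the standard C*-estimate $\|a b a^*\|\le \|a\|^2\|b\|$ for $b\ge 0$ in $A$, we obtain
\[
\|az\|^2=\|\lin{A}{az}{az}\|=\|a\lin{A}{z}{z}a^*\|\le \|a\|^2\,\|\lin{A}{z}{z}\|=\|a\|^2\|z\|^2,
\]
and taking square roots gives (2).

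I do not expect a real obstacle: the only nontrivial ingredient is the well-known identification $\|b\|=\sup_\varphi|\varphi(b)|$ for a C*-algebra, which I would cite as standard. Everything else is a bookkeeping exercise with the five axioms in Definition \ref{in-prod}. The mildest subtlety is making sure the signs/conjugates are in the right places when transferring a module action between the two slots of $\lin{A}{\cdot}{\cdot}$, but that is handled once and for all by the identity $\lin{A}{z}{aw}=\lin{A}{z}{w}a^*$ derived from axioms (2) and (3).
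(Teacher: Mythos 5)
Your part (2) is correct and is the standard argument: the identity $\lin{A}{az}{az}=a\lin{A}{z}{z}a^*$ follows exactly as you derive it from axioms (2) and (3), and the norm estimate is then immediate. (The paper itself states Proposition \ref{cauchy-sch} without proof, as a standard fact from the theory of Hilbert $C^*$-modules.)

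Part (1), however, contains a genuine error. The identity $\|b\|=\sup_\varphi|\varphi(b)|$ over states $\varphi$ holds for \emph{normal} (in particular self-adjoint) elements $b$, but fails for general $b$: the right-hand side is the numerical radius of $b$, which can be as small as $\|b\|/2$. For instance, for the nilpotent matrix $b=\left(\begin{smallmatrix}0&1\\0&0\end{smallmatrix}\right)\in M_2(\mathbb C)$ one has $\|b\|=1$ while $\sup_\varphi|\varphi(b)|=\tfrac12$. Since $\lin{A}{z}{w}$ is in general not normal (take $Z=A=M_2(\mathbb C)$ with $\lin{A}{z}{w}=zw^*$ and $w$ the identity), your argument only yields $\|\lin{A}{z}{w}\|\le 2\|z\|\,\|w\|$. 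The standard repair keeps your two ingredients (states and the scalar Cauchy--Schwarz inequality for the semi-inner product $\varphi(\lin{A}{\cdot}{\cdot})$) but applies them to the \emph{positive} element $aa^*$ with $a:=\lin{A}{z}{w}$, for which the state supremum does recover the norm. Concretely, axiom (2) gives $aa^*=\lin{A}{z}{w}\lin{A}{w}{z}=\lin{A}{aw}{z}$, so scalar Cauchy--Schwarz yields
\[
\varphi(aa^*)=\varphi(\lin{A}{aw}{z})\le \varphi(\lin{A}{aw}{aw})^{1/2}\varphi(\lin{A}{z}{z})^{1/2}
=\varphi(a\lin{A}{w}{w}a^*)^{1/2}\varphi(\lin{A}{z}{z})^{1/2}
\le \|w\|\,\varphi(aa^*)^{1/2}\,\|z\|,
\]
using $a\lin{A}{w}{w}a^*\le\|\lin{A}{w}{w}\|\,aa^*$ and positivity of $\varphi$; dividing by $\varphi(aa^*)^{1/2}$ and taking the supremum over states gives $\|a\|^2=\|aa^*\|\le\|z\|^2\|w\|^2$. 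With this modification your proof is complete.
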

We note here that there are properties of Hilbert spaces that no longer hold in this generalization, for example we have the existence of non-adjointable bounded linear maps between Hilbert $C^*$-modules, and the existence of (orthogonally) non-complemented subspaces \cite{La95}. In general the existence of adjoint is supposed to be assumed for the proper maps between Hilbert $C^*$-modules. We denote the space of all \textbf{adjointable operators} for a Hilbert $A$-module $Z$ via $\mathcal{L}_A(Z):= \{T:Z\to Z \ | \ \text{$T$ has an adjoint} \}$. We can also talk about \textbf{compact operator}s on $Z$, a subalgebra of $\mathcal{L}_A(Z),$ whose definition is completely analogous to the Hilbert space case. We denote the algebra of compact operators on $Z$ via $\mathcal{K}_A(Z).$
\begin{remark}
    It is straightforward to give an analogous definition for $Z$ when it is being acted upon by a $C^*$-algebra from the \emph{right} so that we obtain a \textbf{Hilbert right $B$-module}. Everything that we have just discussed for Hilbert left modules carry over, except that the associated $B$-valued inner-product $\rin{B}{\cdot}{\cdot}\colon Z\times Z\to B$ must be linear in the second argument instead. 
\end{remark}
\begin{definition}\label{imprimitivity-bimodule}
        Let $A,B$ be $C^*$-algebras. If $Z$ is a complex vector space satisfying the following:
        \begin{enumerate}
            \item $Z$ is a full Hilbert left $A$-module and a full Hilbert right $B$-module;
            \item For all $a\in A$, $b\in B$, $z,w\in Z$:
            \begin{align*}
                \rin{B}{a z}{w} = \rin{B}{z}{a^*w},\ \text{and } \lin{A}{zb}{w} = \lin{A}{z}{wb^*};
            \end{align*}
            \item For all $z,y,w\in Z$:
            \begin{align}\label{for-janssen}
            \lin{A}{z}{y}w = z \rin{B}{y}{w},
            \end{align}
        \end{enumerate}
        then $Z$ is called an {\boldmath\textbf{$A$-$B$-imprimitivity bimodule}}. If there exists an $A$-$B$-imprimitivity bimodule between two $C^*$-algebras, then they are said to be \textbf{Morita equivalent}. 
\end{definition}
\noindent Note that, in this case, there is no ambiguity on the Banach space structure of $Z$ since $\|z\| = \|{\lin{A}{z}{z}}\|^{1/2}=\|{\rin{B}{z}{z}}\|^{1/2}$ for all $z\in Z$. That is, the norm induced by $A$ and the norm induced by $B$ on $Z$ coincide.\footnote{Actually, one can avoid Hilbert left modules altogether and define Morita equivalence via correspondences, see \cite[Section 5]{Skeide-Morita-equivalence}. While this has some advantages from a theoretical perspective, for the concrete bimodules we deal with, the Heisenberg modules, the left inner product appears quite naturally, so we prefer to work with Rieffel's original definition.} 

Introduced by Rieffel \cite{Rie74}, $A$-$B$-imprimitivity bimodules turned out to be quite an important tool for studying $C^*$-algebras, as they reveal certain equivalence of structures between $A$ and $B$, namely their ideal structures or representation theories. We shall study a particular imprimitivity bimodule called the \textbf{Heisenberg module}. 
To describe it, we introduce the \textbf{Heisenberg cocycle} $c\colon  \mathbb{R}^{2d}\times \mathbb{R}^{2d}\to \mathbb{T}$: for $z_1=(x_1,\omega_1), z_2=(x_2,\omega_2)\in\mathbb{R}^{2d}$, we define
\begin{align}\label{heis-cos}
    c(z_1,z_2) = e^{-2\pi i\, x_1\cdot \omega_2}=e^{-2\pi i\, z_1^T Kz_2},
\end{align}
with $K=\begin{pmatrix}
    0& I_d\\0 &0
\end{pmatrix}$. Clearly, the Heisenberg cocycle $c$ is a continuous map. It further satisfies the following (for all $z_1,z_2,z_3\in \mathbb{R}^{2d}$):
\begin{align}
c(z_1,z_2)c(z_1+z_2,z_3)&=c(z_1,z_2+z_3)c(z_2,z_3)\label{cocycle 1} ;\\
c(z_1,0)=c(0,0)&=c(0,z_2)=1; \label{cocycle 2}\\
\overline{c(z_1,z_2)} = c(-z_1,z_2)&=c(z_1,-z_2).\nonumber
\end{align}
Note that Equations \eqref{cocycle 1} and \eqref{cocycle 2} above are what defines a general \textbf{normalized $2$-cocycle} on any locally compact abelian group, like $\mathbb{R}^{2d}$. The \textbf{symplectic cocycle} associated with $c$ is denoted by $c_s\colon  \mathbb{R}^{2d}\times \mathbb{R}^{2d}\to \mathbb{T}$ and given by
\begin{align}\label{symp-cos}
    c_s(z_1,z_2) = c(z_1,z_2)\overline{c(z_2,z_1)}=e^{2\pi i\, (x_1\cdot \omega_2-x_2\cdot\omega_1) }=e^{-2\pi i\, z_1^T Jz_2}.
\end{align}
The relevance of the Heisenberg $2$-cocycle to Gabor analysis can be traced back to the commutation\hyp{}relations of the translation and modulation operators; indeed, \eqref{eq:TFS-commutation} shows that
\begin{align}
    \pi(z_1)\pi(z_2) &= c(z_1,z_2) \pi(z_1+z_2); \label{cov-rep} \\
    \pi(z_1)\pi(z_2)&=c_s(z_1,z_2)\pi(z_2)\pi(z_1). \label{symp-comm-rel}
\end{align}
The adjoint of the time-frequency shift is related to its reflection via:
\begin{align}
    \pi^*(z_1)=c(z_1,z_1)\pi(-z_1). \label{adj-cocycle}
\end{align}
This concludes our review of the relationship between the $2$-cocycle and the time-frequency shifts, these will be relevant in our discussion of noncommutative tori later.

For a fixed $L\in \op{GL}_{2d}(\mathbb{R}^d)$, we now introduce two $C^*$-algebras. We define  $c_L\colon \mathbb{Z}^{2d}\times \mathbb{Z}^{2d}\to \mathbb{T}$ by $c_L(k,m) = c(Lk,Lm)$ for all $(k,m)\in \mathbb{Z}^{2d}\times \mathbb{Z}^{2d}$. 
That is, $c_L$ is the restriction of the Heisenberg $2$-cocycle to the lattice $L\mathbb{Z}^{2d}$. 
To describe the relevant $C^*$-algebras in this paper, we introduce the \textbf{weighted} $\mathbf{\ell^1}$\textbf{-space}:
\begin{align*}
    \ell^1_{\nu}(\mathbb{Z}^{2d}):= \biggl\{\mathbf{a}=\{a(k)\}_{k\in \mathbb{Z}^{2d}}:\|\mathbf{a}\|_{\ell^1_{\nu}}:=\sum_{k\in \mathbb{Z}^{2d}}|a(k)|\nu(k)<\infty \biggr\}
\end{align*}
(recall that $\nu(k)=1+|k|$).
We have a result analogous to Proposition \ref{prop:cont-embed-weight} for the sequence spaces.

\begin{proposition}\label{ellnu-ell1}
    The weighted space $\ell^1_{\nu}(\mathbb{Z}^{2d})$ is a Banach space with norm $\|\cdot\|_{\ell^1_{\nu}}$, and is densely continuously embedded in $\ell^1(\mathbb{Z}^{2d})$ satisfying:
    \begin{align}\label{cont-embed-ell1}
        \|\mathbf{a}\|_{\ell^1}\leq \|\mathbf{a}\|_{\ell^1_{\nu}}, \qquad \forall \mathbf{a}\in \ell^1_{\nu}(\mathbb{Z}^{2d}).
    \end{align}
\end{proposition}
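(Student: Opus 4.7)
The plan is to exploit the fact that multiplication by the weight $\nu$ gives an isometric isomorphism with an ordinary (unweighted) $\ell^1$-space, so all three assertions — norm/Banach structure, continuous embedding, and density — reduce to classical facts about $\ell^1(\mathbb{Z}^{2d})$.

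First I would verify that $\|\cdot\|_{\ell^1_\nu}$ is a norm: homogeneity and the triangle inequality are inherited term-by-term from $\ell^1$, and positive definiteness uses that $\nu(k) = 1 + |k| \geq 1 > 0$ for every $k \in \mathbb{Z}^{2d}$, so $\|\mathbf{a}\|_{\ell^1_\nu} = 0$ forces $a(k) = 0$ for all $k$. For completeness, the cleanest route is to observe that the pointwise multiplication map
\[
M_\nu \colon \ell^1_\nu(\mathbb{Z}^{2d}) \longrightarrow \ell^1(\mathbb{Z}^{2d}), \qquad (M_\nu \mathbf{a})(k) := a(k)\nu(k),
\]
is by definition an isometric linear bijection (its inverse is multiplication by $1/\nu$, which is well defined since $\nu(k)\ge 1$). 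Completeness of $\ell^1(\mathbb{Z}^{2d})$ then transfers to $\ell^1_\nu(\mathbb{Z}^{2d})$.

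The continuous embedding $\ell^1_\nu(\mathbb{Z}^{2d}) \hookrightarrow \ell^1(\mathbb{Z}^{2d})$ with inequality \eqref{cont-embed-ell1} is immediate from the lower bound $\nu(k)\geq 1$: for any $\mathbf{a}\in\ell^1_\nu(\mathbb{Z}^{2d})$,
\[
\|\mathbf{a}\|_{\ell^1} \;=\; \sum_{k\in\mathbb{Z}^{2d}} |a(k)| \;\leq\; \sum_{k\in\mathbb{Z}^{2d}} |a(k)|\nu(k) \;=\; \|\mathbf{a}\|_{\ell^1_\nu}.
\]

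For density, it suffices (as the authors remark) to show that the space $c_{00}(\mathbb{Z}^{2d})$ of finitely supported sequences is dense in $\ell^1(\mathbb{Z}^{2d})$, since $c_{00}(\mathbb{Z}^{2d}) \subseteq \ell^1_\nu(\mathbb{Z}^{2d})$ trivially. Given $\mathbf{a}\in\ell^1(\mathbb{Z}^{2d})$ and $\varepsilon>0$, absolute summability guarantees a finite set $F\subset\mathbb{Z}^{2d}$ with $\sum_{k\notin F}|a(k)|<\varepsilon$; the truncation $\mathbf{a}\,\mathbf{1}_F$ then lies in $c_{00}(\mathbb{Z}^{2d})$ and satisfies $\|\mathbf{a}-\mathbf{a}\,\mathbf{1}_F\|_{\ell^1}<\varepsilon$. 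There is no real obstacle here — every step is a direct transfer of standard $\ell^1$ facts through the weight, and the only point that requires the specific form of $\nu$ is the lower bound $\nu\geq 1$ used for both the continuous embedding and positive-definiteness.
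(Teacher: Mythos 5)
Your proof is correct and matches the route the paper intends: the paper gives no written proof, only the remark that density is obtained exactly as for $c_{00}(\mathbb{Z}^{2d})$ in $\ell^1(\mathbb{Z}^{2d})$, which is precisely your truncation argument, and the norm inequality and completeness (via the isometry $M_\nu$ onto $\ell^1$) are the standard observations the authors are taking for granted.
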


Now we equip the unweighted sequence space $\ell^1(\mathbb{Z}^{2d})$ with the following \emph{twisted} convolution and involution, respectively:
\begin{equation}\label{twisted-structure-a}
    \begin{split}
       (a_1 *_{c_L}a_2)(n) &:= \sum_{k\in \mathbb{Z}^{2d}}a_1(k)a_2(n-k)c_L(k,n-k);\\
a_1^{*_{c_L}}(n) &:= \overline{a_1(-n)}c_L(n,n), 
    \end{split}
\end{equation}
for all $\mathbf{a}_1=\{a_1(k)\}_{k\in \mathbb{Z}^{2d}}$ and $\mathbf{a}_2 = \{a_2(k)\}_{k\in \mathbb{Z}^{2d}}$. The space $\ell^1(\mathbb{Z}^{2d})$ with this structure and its original norm $\|\cdot\|_1$ gives it a Banach $*$-algebra structure, we denote the corresponding enveloping $C^*$-algebra (also known as $C^*$\emph-completion \cite[Definition 10.4]{1fell88}) by $A_L$, with the $C^*$-norm $\|\cdot\|_{A_L}$. There is yet another way to give a \emph{twisted} $*$-algebra structure to $\ell^1(\mathbb{Z}^{2d})$ which uses the matrix $L^{\circ}$ instead, given by:
\begin{equation}\label{twisted-structure-b}
    \begin{split}
        (b_1 *_{\overline{c}_{L^{\circ}}}b_2)(n) &:= \frac{1}{|\det L|}\sum_{k\in \mathbb{Z}^{2d}}b_1(k)b_2(n-k)\overline{c_{L^{\circ}}(k,n-k)};\\
b_1^{*_{\overline{c}_{L^{\circ}}}}(n) &:= \overline{b_1(-n)}\overline{c_{L^{\circ}}(n,n)}.
    \end{split}
\end{equation}
for $\mathbf{b}_1,\mathbf{b}_2\in \ell^1(\mathbb{Z}^{2d})$. This again turns $\ell^1(\mathbb{Z}^{2d})$ into a Banach $*$-algebra, and we denote the associated enveloping $C^*$-algebra by $B_L$ with corresponding $C^*$-norm $\|\cdot\|_{B_L}$. 
Both $A_L$ and $B_L$ can be identified with sequence spaces, where we have dense embeddings $\ell^1(\mathbb{Z}^{2d})\hookrightarrow C \hookrightarrow \ell^2(\mathbb{Z}^{2d})$ for $C$ either $A_L$ or $B_L$. The inner dense embedding is due to the fact that $C$ has a faithful $*$-representation (see Theorems \ref{thm: faithful-a} and \ref{thm: faithful-b}) which makes $\ell^1(\mathbb{Z}^{2d})$ with the twisted structure of either \eqref{twisted-structure-a} or \eqref{twisted-structure-b} a \textbf{reduced} $*$-algebra \cite[Definition 10.2]{1fell88}. On the other hand, the outer dense embedding is due to \cite[Proposition 3.3]{AuEn20}. 

We can actually see the $C^*$-algebras $A_L$ and $B_L$ as \textbf{twisted crossed-products} in the sense of Packer and Raeburn \cite{PaRa89}. For example, since the automorphism group (with respect to $C^*$-algebras) of $\mathbb{C}$, denoted by $\op{Aut}(\mathbb{C})$ is trivial, we have that any continuous homomorphism $\op{id}\colon \mathbb{Z}^{2d}\to \op{Aut}(\mathbb{C})$ must be $\op{id}(k,m)(\lambda)=\lambda$ for all $(k,m)\in \mathbb{Z}^{2d}$ and $\lambda\in \mathbb{C}$. Following the construction of Packer and Raeburn, we find that $A_L$ is the twisted crossed-product generated by the twisted $C^*$-dynamical system $(\mathbb{C}, \mathbb{Z}^{2d},\op{id}, c_L)$ \cite[Definition 2.1]{PaRa89}, where we interpret $\mathbb{Z}^{2d}$ as a locally compact group prescribed with a Haar measure coinciding with the usual counting measure. More importantly, formula \eqref{cov-rep} implies that the 
 time-frequency shifts induce a \textbf{covariant representation} \cite[Definition 2.3]{PaRa89} for the twisted $C^*$-dynamical system $(\mathbb{C},\mathbb{Z}^{2d},\op{id},c_L)$ via $\mathbb{Z}^{2d}\ni k\mapsto \pi(Lk)\in \mathcal{L}(L^2(\mathbb{R}^d))$. For each covariant representation on a $C^*$-dynamical system, there is a corresponding \textbf{integrated form} \cite[Definition 2.4 (b)]{PaRa89} which is an actual representation for the associated twisted crossed-product. We denote the integrated form of $k\mapsto \pi(Lk)$ by $\overline{\pi}_L\colon  A_L\to \mathcal{L}(L^2(\mathbb{R}^d))$, which is an $\ell^1$-norm decreasing representation, densely defined by
 \begin{align}\label{a-integrated-rep}
 \overline{\pi}_L(\mathbf{a})= \sum_{k\in \mathbb{Z}^{2d}}a(k)\pi(Lk), \qquad \forall \mathbf{a}\in \ell^1(\mathbb{Z}^{2d}).
 \end{align}
 An important property of this particular representation, shown by Rieffel \cite{Ri88}, is that it is faithful:
 \begin{theorem}\label{thm: faithful-a}
     The integrated form $\overline{\pi}_L\colon  A_L\to \mathcal{L}(L^2(\mathbb{R}^d))$ is a faithful $*$\hyp{}representation. In particular, it is an isometry: $\|\mathbf{a}\|_{A_L}=\|\overline{\pi}_L(\mathbf{a})\|$ for all $\mathbf{a}\in A_L$.
 \end{theorem}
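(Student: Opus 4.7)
Proof proposal. The plan is to separate the claimed identity into the two inequalities for the enveloping norm $\|\cdot\|_{A_L}$ and the operator norm $\|\overline{\pi}_L(\cdot)\|$. One direction, namely $\|\overline{\pi}_L(\mathbf{a})\| \leq \|\mathbf{a}\|_{A_L}$ for all $\mathbf{a} \in A_L$, is automatic from the universal property defining $A_L$ as the enveloping $C^*$-algebra of the twisted Banach $*$-algebra $(\ell^1(\mathbb{Z}^{2d}), *_{c_L}, {}^{*_{c_L}})$: every $*$-representation of this algebra on a Hilbert space is contractive with respect to the enveloping $C^*$-norm, and $\overline{\pi}_L$ is such a representation by \eqref{a-integrated-rep} together with the cocycle identities \eqref{cocycle 1}, \eqref{cocycle 2} and \eqref{adj-cocycle}. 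Since $\overline{\pi}_L$ is $\ell^1$-norm decreasing and $\ell^1(\mathbb{Z}^{2d})$ is dense in $A_L$, the reverse inequality only needs to be checked on $\mathbf{a} \in \ell^1(\mathbb{Z}^{2d})$.

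For the reverse inequality the plan is to exploit amenability of $\mathbb{Z}^{2d}$. Under the Packer--Raeburn identification sketched just before the statement, $A_L$ is the full twisted group $C^*$-algebra $C^*(\mathbb{Z}^{2d}, c_L)$. Since $\mathbb{Z}^{2d}$ is discrete abelian, hence amenable, Zeller--Meier's theorem yields that the full and reduced twisted group $C^*$-algebras coincide, so that the left regular twisted representation $\lambda_{c_L}$ on $\ell^2(\mathbb{Z}^{2d})$ is already isometric: $\|\lambda_{c_L}(\mathbf{a})\| = \|\mathbf{a}\|_{A_L}$. It therefore suffices to show that $\overline{\pi}_L$ is unitarily equivalent to a (possibly infinite) multiple of $\lambda_{c_L}$, or at the very least that $\overline{\pi}_L$ weakly contains $\lambda_{c_L}$. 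The most direct route is via a Zak-transform decomposition adapted to the lattice $L\mathbb{Z}^{2d}$: one writes $L^2(\mathbb{R}^d)$ as a direct integral of Hilbert spaces indexed by a fundamental domain for the adjoint lattice $L^{\circ}\mathbb{Z}^{2d}$, with the time-frequency shifts $\pi(Lk)$ acting fibrewise as $\lambda_{c_L}(\delta_k)$ up to a phase. Integrating this fibrewise equivalence preserves $C^*$-norms and delivers the missing inequality.

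The main technical obstacle is making the Zak/direct-integral argument rigorous across the regimes $\det L < 1$, $\det L = 1$, and $\det L > 1$, where the fibrewise representation is respectively a multiple of, equal to, or a summand of $\lambda_{c_L}$. An alternative that avoids direct integrals is Rieffel's original method in \cite{Ri88}: take a Gaussian $\phi \in L^2(\mathbb{R}^d)$, verify using \eqref{cov-rep}--\eqref{adj-cocycle} that the vector functional $\omega_\phi \circ \overline{\pi}_L$ induces, after a suitable normalisation, the canonical tracial state $\tau(\mathbf{a}) = a(0)$ on $\ell^1(\mathbb{Z}^{2d})$, and then show $\tau$ extends to a faithful tracial state on $A_L$ whose GNS representation is contained in $\overline{\pi}_L$. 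Either route yields faithfulness, hence the asserted isometry $\|\mathbf{a}\|_{A_L} = \|\overline{\pi}_L(\mathbf{a})\|$.
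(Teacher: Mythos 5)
The paper does not actually prove this statement --- it is quoted from Rieffel \cite{Ri88} with no internal argument --- so there is no proof of the paper's to compare against; your proposal has to stand on its own. Its skeleton is sound: the inequality $\|\overline{\pi}_L(\mathbf a)\|\leq\|\mathbf a\|_{A_L}$ is indeed automatic from the universal property once one checks that $\overline{\pi}_L$ is a $*$-representation of the twisted $\ell^1$-algebra, and amenability of $\mathbb{Z}^{2d}$ correctly reduces everything to showing that $\overline{\pi}_L$ weakly contains the twisted regular representation $\lambda_{c_L}$.

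The gap is that neither of the two routes you offer for that remaining step goes through as described. The Zak-transform/direct-integral decomposition of $L^2(\mathbb R^d)$ into fibres on which $\pi(Lk)$ acts as $\lambda_{c_L}(\delta_k)$ is only available for rational (e.g.\ separable) lattices; for irrational $L$ the von Neumann algebra generated by $\{\pi(Lk)\}_k$ is a $\mathrm{II}_1$ factor and no such fibrewise identification exists --- this is exactly the regime you flag as ``the main technical obstacle'' without resolving it. The Gaussian alternative is wrong as stated: $\omega_\phi(\overline{\pi}_L(\mathbf a))=\sum_k a(k)\langle\pi(Lk)\phi,\phi\rangle$, and the ambiguity function of a Gaussian is a nowhere-vanishing Gaussian on $\mathbb R^{2d}$, so no normalisation of a single vector state yields $\tau(\mathbf a)=a(0)$. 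The standard repair is to \emph{average} vector states over the adjoint lattice: since $\pi(z)^*\pi(Lk)\pi(z)=c_s(Lk,z)\pi(Lk)$ and $z\mapsto c_s(Lk,z)$ descends to a character of $\mathbb R^{2d}/\Lambda^\circ$ which is nontrivial precisely when $k\neq 0$, one gets, for any unit vector $g$ and any fundamental domain $D^\circ$ of $\Lambda^\circ$,
\begin{align*}
    \frac{1}{|D^\circ|}\int_{D^\circ}\bigl\langle\overline{\pi}_L(\mathbf a)\pi(z)g,\pi(z)g\bigr\rangle\,\d z=a(0)=\tau(\mathbf a),\qquad \mathbf a\in\ell^1(\mathbb Z^{2d}).
\end{align*}
Hence $|\tau(\mathbf a)|\leq\|\overline{\pi}_L(\mathbf a)\|$, so $\tau$ descends to a state on $\overline{\pi}_L(A_L)$, its GNS representation $\lambda_{c_L}$ factors through $\overline{\pi}_L$, and therefore $\|\mathbf a\|_{A_L}=\|\lambda_{c_L}(\mathbf a)\|\leq\|\overline{\pi}_L(\mathbf a)\|$ by amenability and faithfulness of $\tau$ on the reduced algebra. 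With this substitution your outline becomes a complete proof.
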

 As usual, there are corresponding analogues for $B_L$. It can also be obtained as the twisted crossed-product of a twisted $C^*$-dynamical system $(\mathbb{C},\frac{1}{|\det L|}\mathbb{Z}^{2d},\op{id},\overline{c_{L^{\circ}}})$ where $\frac{1}{|\det L|}\mathbb{Z}^{2d}$ is the locally compact group $\mathbb{Z}^{2d}$ prescribed with the Haar measure coinciding with the counting measure scaled by $1/{|\det L|}$. The unitary map of interest for us is the one given by the adjoint of the time-frequency shifts $\mathbb{Z}^{2d}\ni k\mapsto \pi^*(L^{\circ}k)\in \mathcal{L}(\mathbb{R}^d)$. However, unlike the situation in $A_L$, Equations \eqref{cov-rep} and \eqref{adj-cocycle} show that $\pi^*$ is not\footnote{since $\pi^*(L^{\circ}k_1)\pi^*(L^{\circ}k_2)\neq \overline{c}(L^{\circ}k_1,L^{\circ}k_2)\pi^*(L^{\circ}k_1+L^{\circ}k_2)$} exactly a covariant representation for the twisted $C^*$-dynamical system. The integrated form of $\pi^*$, given by $\overline{\pi}^{*}_{L^{\circ}}$ densely defined via:
 \begin{align*}
     \overline{\pi}^*_{L^{\circ}}(\mathbf{b}):= \frac{1}{|\det L|} \sum_{k\in \mathbb{Z}^{2d}}b(k)\pi^*(L^{\circ}k), \qquad \forall \mathbf{b}\in \ell^1(\mathbb{Z}^{2d})
 \end{align*}
 is not a $*$-representation for $B_{L}$ but is instead a $\mathbf{*}$\textbf{-antirepresentation}. This means that it reverses the multiplicative structure on $B_{L}$, while still preserving the involutive structure. It is possible to choose a different $2$-cocycle instead of $\overline{c}$ such that $\overline{\pi}^*_{L^{\circ}}$ becomes a $*$-representation in its associated twisted group crossed-product. In our case however, the choice of cocycle $\overline{c}$ for $B_{L}$ is deliberate. Though not exactly explained in terms of $*$-antirepresentations, this was discussed briefly in \cite{Ri88,JaLu21}, the reason for this choice is roughly because $B_{L}$ acts to the right of the Heisenberg module (see Theorem \ref{heisenberg-module}), and we would then want a proper `representation' for this right module action. In any case, there is still an analogue of Theorem \ref{thm: faithful-a} for $B_L$.
 \begin{theorem}\label{thm: faithful-b}
     The integrated form $\overline{\pi}^*_{L^{\circ}}\colon  B_L\to \mathcal{L}(L^2(\mathbb{R}^d))$ is a faithful $*$\hyp{}antirepresentation. In particular, it is an isometry: $\|\mathbf{b}\|_{B_L}=\|\overline{\pi}^*_{L^{\circ}}(\mathbf{b})\|$ for all $\mathbf{b}\in B_L$.
 \end{theorem}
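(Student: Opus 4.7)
The plan is to reduce Theorem \ref{thm: faithful-b} to the already established Theorem \ref{thm: faithful-a} by a two-step procedure: first, rewrite $\overline{\pi}^*_{L^\circ}$ concretely in terms of time-frequency shifts $\pi(L^\circ k)$, and second, transfer the faithfulness across the Heisenberg bimodule.

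First I would use the adjoint identity $\pi^*(z)=c(z,z)\pi(-z)$ from \eqref{adj-cocycle}, together with the change of summation variable $k\mapsto -k$ and the elementary fact that $c(-z,-z)=c(z,z)$, to obtain, for $\mathbf b\in\ell^1(\mathbb Z^{2d})$,
\[
\overline{\pi}^*_{L^\circ}(\mathbf b) = \sum_{k\in\mathbb Z^{2d}} \tilde b(k)\,\pi(L^\circ k), \qquad \tilde b(k) := \frac{c(L^\circ k,\,L^\circ k)}{\det L}\,b(-k).
\]
In particular, the image of $\overline{\pi}^*_{L^\circ}$ in $\mathcal L(L^2(\mathbb R^d))$ is contained in the $C^*$-subalgebra generated by $\{\pi(L^\circ k):k\in\mathbb Z^{2d}\}$, which by Theorem \ref{thm: faithful-a} applied to the lattice generated by $L^\circ$ is isometrically $*$-isomorphic to $A_{L^\circ}$ via $\overline{\pi}_{L^\circ}$.

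The cleanest way to upgrade this to faithfulness of $\overline{\pi}^*_{L^\circ}$ on all of $B_L$ is to invoke the Heisenberg module $\mathcal E_L(\mathbb R^d)\subseteq L^2(\mathbb R^d)$ as an $A_L$-$B_L$ imprimitivity bimodule, on which the left $A_L$-action is implemented by $\overline{\pi}_L$ and the right $B_L$-action is implemented (essentially) by $\overline{\pi}^*_{L^\circ}$; the mixed Janssen identity \eqref{jan-rep} expresses exactly the compatibility \eqref{for-janssen} of the two actions via the module inner products. Under the Rieffel correspondence between ideals of Morita equivalent $C^*$-algebras, closed two-sided ideals of $A_L$ correspond bijectively to closed two-sided ideals of $B_L$, and under this correspondence kernels of the two representations are interchanged. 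Since Theorem \ref{thm: faithful-a} says $\ker\overline{\pi}_L=\{0\}$, the correspondence forces $\ker\overline{\pi}^*_{L^\circ}=\{0\}$ in $B_L$, so $\overline{\pi}^*_{L^\circ}$ is a faithful $C^*$-representation and hence isometric.

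The main obstacle is twofold: carefully tracking the cocycle phase factor $c(L^\circ k,L^\circ k)$ and the volume factor $(\det L)^{-1}$ through the change of variables, and verifying that the right action on the Heisenberg bimodule is indeed realised by $\overline{\pi}^*_{L^\circ}$ — the latter being the substantive content of the construction in \cite{AuEn20}. A fully self-contained alternative would be to argue directly that the phase-twisted reflection $\Phi\colon \mathbf b\mapsto\tilde{\mathbf b}$ induces a $C^*$-isomorphism between $B_L$ and the opposite $C^*$-algebra of $A_{L^\circ}$ (the opposite intervening because the cocycles $\overline{c_{L^\circ}}$ and $c_{L^\circ}$ are not cohomologous in general, only related by inversion), after which the $C^*$-identity $\|T\|=\|T^*\|$ for operators on $L^2(\mathbb R^d)$ combined with Theorem \ref{thm: faithful-a} yields the desired isometry.
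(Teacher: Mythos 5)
The paper does not actually prove this theorem: it is quoted as the $B_L$-analogue of Rieffel's result, with no argument beyond ``Similarly''. So you are supplying a proof where the paper supplies a citation; both of your strategies can be made to work, but they have different caveats. In your first (Morita) route, the weak step is the assertion that the Rieffel ideal correspondence ``interchanges the kernels of the two representations'' --- that is not an off-the-shelf fact and is precisely what needs an argument. The cleaner observation is that the right action of $B_L$ on a \emph{full} right Hilbert $B_L$-module is automatically faithful: if $f\mathbf b=0$ for all $f$, then $\rin{B_L}{g}{f}\,\mathbf b=\rin{B_L}{g}{f\mathbf b}=0$ for all $f,g$, and fullness forces $\mathbf b=0$; since the action is $f\mathbf b=\overline{\pi}^*_{L^\circ}(\mathbf b)f$, any $\mathbf b$ with $\overline{\pi}^*_{L^\circ}(\mathbf b)=0$ in $\mathcal L(L^2(\mathbb R^d))$ dies. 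The more serious caveat is that this route presupposes the full imprimitivity-bimodule structure of Theorem \ref{heisenberg-module}, and in the sources (\cite{Ri88}, \cite{AuEn20}) that structure is established \emph{together with} the isometry $\|\mathbf b\|_{B_L}=\|\overline{\pi}^*_{L^\circ}(\mathbf b)\|$; as a self-contained derivation it is therefore close to circular.

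Your second route is the robust one. Organize it as $\overline{\pi}^*_{L^\circ}(\mathbf b)=\overline{\pi}_{L^\circ}\bigl(\tfrac{1}{\det L}\overline{\mathbf b}\bigr)^{*}$, so that $\|\overline{\pi}^*_{L^\circ}(\mathbf b)\|=\|\overline{\pi}_{L^\circ}(\tfrac{1}{\det L}\overline{\mathbf b})\|=\|\tfrac{1}{\det L}\overline{\mathbf b}\|_{A_{L^\circ}}$ by $\|T\|=\|T^*\|$ and Theorem \ref{thm: faithful-a} applied to the lattice $L^\circ\mathbb Z^{2d}$. What remains is the purely algebraic check that $\mathbf b\mapsto\tfrac{1}{\det L}\overline{\mathbf b}$ is a conjugate-linear $*$-isomorphism from the $\overline{c_{L^\circ}}$-twisted, $\tfrac{1}{\det L}$-scaled $\ell^1$-algebra of \eqref{twisted-structure-b} onto the $c_{L^\circ}$-twisted $\ell^1$-algebra underlying $A_{L^\circ}$ (a direct computation), and hence preserves enveloping $C^*$-norms. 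Your instinct that an ``opposite'' must intervene is correct: with the convolution \eqref{twisted-structure-b}, the map $\overline{\pi}^*_{L^\circ}$ reverses products (as any right action must), so the identification is $B_L\cong A_{L^\circ}^{\mathrm{op}}$, equivalently the conjugate-linear isomorphism above; the isometry statement is unaffected since opposite and conjugate algebras carry the same norm.
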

 
 \begin{remark}
     We note here that if we equip $\mathbb{Z}^{2d}$ with the counting measure scaled by $1/|\det L|$, then $\ell^1(\mathbb{Z}^{2d})$ has the following $\ell^1$-norm: $\|\mathbf{b}\|_1=\frac{1}{|\det L|} \sum_{k\in \mathbb{Z}^{2d}}|b(k)|$. So $\overline{\pi}^*_{L^{\circ}}$ being $\ell^1$-norm decreasing explicitly amounts to $\|\overline{\pi}^*_{L^{\circ}}(\mathbf{b})\|\leq \frac{1}{|\det L|}\sum_{k\in \mathbb{Z}^{2d}}|b(k)|$ for each $\mathbf{b}\in \ell^1(\mathbb{Z}^{2d})$.
 \end{remark}

The construction above descends to the weighted case, following Equation \eqref{cont-embed-ell1} and the fact that $\overline{\pi}_L$ and $\overline{\pi}_{L^{\circ}}^*$ are both $\ell^1$-norm decreasing representations. 
The following result can be found in \cite{AuLu21}.

\begin{proposition}
     For a fixed $L\in \op{GL}_{2d}(\mathbb{R})$, $\ell^1_{\nu}(\mathbb{Z}^{2d})$ is a Banach $*$-algebra when equipped with twisted convolution and involution as in \eqref{twisted-structure-a} (resp. \eqref{twisted-structure-b}). The $C^*$-enveloping algebra of $\ell^1_{\nu}(\mathbb{Z}^{2d})$ with such a structure coincides with $A_L$ (resp. $B_L$). 
\end{proposition}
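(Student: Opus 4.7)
The plan is to first verify the Banach $*$-algebra axioms using submultiplicativity of the weight, and then to identify the $C^*$-enveloping algebra by comparing universal $C^*$-norms on $\ell^1_{\nu}(\mathbb{Z}^{2d})$. The key observation is that $\nu(k)=1+|k|$ satisfies $\nu(k+m)\le\nu(k)\nu(m)$ (from $(1+|k|)(1+|m|)=1+|k|+|m|+|k||m|\ge 1+|k+m|$) and $\nu(-k)=\nu(k)$, combined with the fact that all cocycle values have modulus one.

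For the Banach $*$-algebra axioms with the structure \eqref{twisted-structure-a}, switching the order of summation yields
\[
\|\mathbf{a}_1*_{c_L}\mathbf{a}_2\|_{\ell^1_{\nu}}\le\sum_{n,k}|a_1(k)||a_2(n-k)|\,\nu(k)\nu(n-k)=\|\mathbf{a}_1\|_{\ell^1_{\nu}}\|\mathbf{a}_2\|_{\ell^1_{\nu}},
\]
and isometry of the involution is immediate from $\nu(-n)=\nu(n)$ and $|c_L(n,n)|=1$. The same calculation handles \eqref{twisted-structure-b} after carrying along the constant factor $(\det L)^{-1}$.

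To identify the $C^*$-enveloping algebra with $A_L$, I would compare the universal $C^*$-norm $\|\cdot\|_{C^*}$ on $\ell^1_{\nu}$ with the restriction of $\|\cdot\|_{A_L}$. The inequality $\|\cdot\|_{A_L}\le\|\cdot\|_{C^*}$ is immediate: by Theorem \ref{thm: faithful-a}, the restriction of $\overline{\pi}_L$ to $\ell^1_{\nu}$ is a $*$-representation whose operator norm coincides with the $A_L$-norm. The reverse inequality is the main obstacle and requires showing that every non-degenerate $*$-representation $\rho$ of $\ell^1_{\nu}$ is actually $\ell^1$-norm bounded, and hence factors through $A_L$.

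To unwrap such a $\rho$, I would set $U_k:=\rho(\delta_k)$. The elementary identities $\delta_k *_{c_L}\delta_m=c_L(k,m)\delta_{k+m}$ and $\delta_k *_{c_L}\delta_k^{*_{c_L}}=\delta_k^{*_{c_L}}*_{c_L}\delta_k=\delta_0$, which follow directly from \eqref{twisted-structure-a}, show that $\delta_0$ acts as the unit and, under a non-degenerate $\rho$, that the $U_k$ are unitaries satisfying the $c_L$-projective relation $U_kU_m=c_L(k,m)U_{k+m}$. Automatic continuity of $\rho$ on the Banach $*$-algebra $\ell^1_{\nu}$, together with density of finitely supported sequences in $\ell^1_{\nu}$, then yields $\rho(\mathbf{a})=\sum_{k\in\mathbb{Z}^{2d}}a(k)U_k$ with convergence in operator norm, whence
\[
\|\rho(\mathbf{a})\|\le\sum_{k\in\mathbb{Z}^{2d}}|a(k)|=\|\mathbf{a}\|_1.
\]
So $\rho$ extends continuously from $(\ell^1_{\nu},\|\cdot\|_1)$ to a $*$-representation of $\ell^1(\mathbb{Z}^{2d})$ with twisted structure \eqref{twisted-structure-a}, which factors through $A_L$ by definition of the enveloping $C^*$-algebra, giving $\|\rho(\mathbf{a})\|\le\|\mathbf{a}\|_{A_L}$. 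Combining this equality of norms with the chain of dense inclusions $\ell^1_{\nu}\subset\ell^1\subset A_L$ (Proposition \ref{ellnu-ell1} and density of $\ell^1$ in $A_L$) shows that the completion of $\ell^1_{\nu}$ in the universal $C^*$-norm is $A_L$. The case of $B_L$ follows identically, substituting Theorem \ref{thm: faithful-b} for Theorem \ref{thm: faithful-a} and the cocycle $\overline{c_{L^{\circ}}}$ for $c_L$.
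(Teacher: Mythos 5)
Your argument is correct, and it fills in precisely the reasoning the paper leaves implicit: the paper does not prove this proposition but cites \cite{AuLu21}, remarking only that ``the construction descends to the weighted case'' because $\overline{\pi}_L$ and $\overline{\pi}^*_{L^\circ}$ are $\ell^1$-norm decreasing — which is exactly the content of your key step. The crux, correctly identified and executed, is that every non-degenerate $*$-representation of the twisted $\ell^1_\nu$ sends each $\delta_k$ to a unitary (up to the normalization below) and is therefore automatically bounded by the \emph{unweighted} $\ell^1$-norm, so the universal $C^*$-norm on $\ell^1_\nu$ agrees with the one inherited from $\ell^1$, and density of $\ell^1_\nu$ in $A_L$ does the rest. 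One small caveat for the $B_L$ case: with the convolution \eqref{twisted-structure-b} the unit is $\det L\cdot\delta_0$, so $\rho(\delta_k)$ has norm $(\det L)^{-1}$ rather than $1$, and the unweighted (resp.\ weighted) $\ell^1$-norm is submultiplicative only after rescaling by $(\det L)^{-1}$ — i.e.\ one must use the norm coming from the Haar measure $\tfrac{1}{\det L}\times$ counting measure, as in the paper's remark following Theorem \ref{thm: faithful-b}; your phrase ``carrying along the constant factor'' covers this, but it is worth making explicit since for $\det L<1$ the unscaled norm fails submultiplicativity.
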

 
We are now ready to describe the Heisenberg module associated with $L\in \op{GL}_{2d}(\mathbb{R}^d)$. We combine the construction in \cite{AuLu21} and the result of \cite{AuEn20}, obtained by \emph{localization}, to give a concrete description of the Heisenberg module as a function space that takes into account the weight $\nu$.

\begin{theorem}\label{heisenberg-module}
    Fix $L\in \op{GL}_{2d}(\mathbb{R})$, we denote the associated Heisenberg module by $\mathcal{E}_L(\mathbb{R}^d)$. It can be obtained by completing the weighted Feichtinger's algebra $M^1_{\nu}(\mathbb{R}^d)$ on $L^2(\mathbb{R}^d)$ with respect to the norm
    \begin{align*}
        \|f\|_{\mathcal{E}_L(\mathbb{R}^d)}:= \inf\left\{C^{1/2}:C\text{ is a Bessel bound for $\mathcal{G}(f,L)$} \right\}.
    \end{align*}
    We also have the following sequence of continuous dense embeddings: 
    \[M^1_{\nu}(\mathbb{R}^d)\hookrightarrow M^1(\mathbb{R}^d)\hookrightarrow \mathcal{E}_L(\mathbb{R}^d)\hookrightarrow L^2(\mathbb{R}^d).\]
    Furthermore, $\mathcal{E}_L(\mathbb{R}^d)$ is an $A_L$-$B_L$-imprimitivity bimodule with the following Hilbert left $A_L$-module structure: for $k\in \mathbb{Z}^{2d}$,  $\mathbf{a}\in A_L$ and $f,g\in \mathcal{E}_L(\mathbb{R}^d)$,
    \begin{multicols}{2}
        \begin{enumerate}
            \item $\lin{L}{f}{g}(k):=\rin{2}{f}{\pi(Lk)g}$;
            \item $\mathbf{a} f := \overline{\pi}_L(\mathbf{a})f$.
        \end{enumerate}
    \end{multicols}
    \noindent On the other hand, we find that $\mathcal{E}_L(\mathbb{R}^d)$ has the following Hilbert right $B_L$-module structure: for $\mathbf{b}\in B_L$,
    \begin{multicols}{2}
        \begin{enumerate}
            \item $\rin{L^{\circ}}{f}{g}(k):= \rin{2}{g}{\pi^*(L^{\circ}k)f}$;
            \item $f \mathbf{b} := \overline{\pi}^*_{L^{\circ}}(\mathbf{b})f$.
        \end{enumerate}
    \end{multicols}
\end{theorem}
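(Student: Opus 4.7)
The strategy is to identify the defining norm with a Hilbert $C^*$-module norm and invoke the construction of Austad-Enstad \cite{AuEn20}, extended to the weighted setting as in \cite{AuLu21}. On the dense subspace $M^1_\nu(\mathbb{R}^d)$, the sampled STFT $\lin{L}{f}{f}(k)=\rin{2}{f}{\pi(Lk)f}$ lies in $\ell^1_\nu(\mathbb{Z}^{2d})\subset A_L$ by the decay of the STFT at the lattice $L\mathbb{Z}^{2d}$ for $M^1_\nu$-windows. The key identification, established in \cite{AuEn20} via a duality argument involving the analysis operator $C_{f,L}$, is
\begin{equation*}
\|f\|_{\mathcal{E}_L(\mathbb{R}^d)}^2 \;=\; \|S_{f,L}\| \;=\; \|\lin{L}{f}{f}\|_{A_L},
\end{equation*}
so the infimum-of-Bessel-bounds norm coincides with the Hilbert left $A_L$-module norm, and the completion is automatically a Hilbert $C^*$-module over $A_L$.

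Next, I would verify the chain of continuous dense embeddings. Continuity of $M^1_\nu\hookrightarrow M^1$ is Proposition \ref{prop:cont-embed-weight}. The Feichtinger-Zimmermann theorem \cite[Theorem 3.3.1]{FeZi98} yields a Bessel bound for $\mathcal{G}(f,L)$ controlled by $C_L\|f\|_{M^1}^2$, giving $M^1\hookrightarrow \mathcal{E}_L(\mathbb{R}^d)$. For $\mathcal{E}_L(\mathbb{R}^d)\hookrightarrow L^2$, the $k=0$ term of the Bessel sum forces $\rin{2}{S_{f,L}f}{f}\geq \|f\|_2^4$, hence $\|f\|_{\mathcal{E}_L(\mathbb{R}^d)}^2=\|S_{f,L}\|\geq \|f\|_2^2$; density in $L^2$ is classical for $M^1_\nu$. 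Consistency with \cite{AuEn20}, which completes the unweighted $M^1$, follows because $M^1_\nu$ is $M^1$-dense in $M^1$ (Proposition \ref{prop:cont-embed-weight}), hence $\mathcal{E}_L$-dense by the weaker norm, so both completions coincide.

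For the imprimitivity bimodule structure, the right inner product $\rin{L^\circ}{f}{g}(k)=\rin{2}{g}{\pi^*(L^\circ k)f}$ lies in $\ell^1_\nu(\mathbb{Z}^{2d})\subset B_L$ by the analogous STFT-decay estimate. Compatibility axioms (1) and (2) of Definition \ref{imprimitivity-bimodule} follow by direct computation using the covariance of the integrated representations $\overline{\pi}_L$ and $\overline{\pi}^*_{L^\circ}$ together with the adjoint relation \eqref{adj-cocycle}. The crucial associativity axiom $\lin{L}{f}{g}\,h=f\,\rin{L^\circ}{g}{h}$ is a recasting of Janssen's representation \eqref{jan-rep}: the left-hand side writes the mixed Gabor frame operator $S_{g,h,L}f$ as a sum of time-frequency shifts over $L\mathbb{Z}^{2d}$, while the right-hand side writes the same operator as a sum over the adjoint lattice $L^\circ\mathbb{Z}^{2d}$, and these coincide by \eqref{jan-rep}. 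Fullness of both inner-product ranges follows by standard $M^1_\nu$-approximation arguments as in \cite{AuEn20}. All algebraic identities extend from $M^1_\nu$ to $\mathcal{E}_L(\mathbb{R}^d)$ by continuity of the inner products (Proposition \ref{cauchy-sch}). The main technical obstacle is the Bessel-duality identification $\|S_{f,L}\|=\|\lin{L}{f}{f}\|_{A_L}$, which encodes the non-trivial passage between the operator norm on $L^2$ and the $C^*$-norm on $A_L$; once this (proved in \cite{AuEn20}) is in place, the weighted refinement reduces to verifying that the twisted $\ell^1_\nu$-structure is preserved under the operations and invoking classical Gabor duality.
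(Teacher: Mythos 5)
Your proposal is correct and matches the paper's treatment: the paper states this theorem without proof as a synthesis of \cite{AuEn20} (the identification of the infimum-of-Bessel-bounds norm with the $A_L$-module norm and the realization of the completion inside $L^2(\mathbb{R}^d)$) and \cite{AuLu21} (the weighted refinement), which is exactly the route you sketch, including the observation that associativity is Janssen's representation. The only point worth making explicit is that the estimate $\|f\|_{\mathcal{E}_L(\mathbb{R}^d)}\geq\|f\|_2$ gives a continuous map from the abstract completion into $L^2(\mathbb{R}^d)$ but not by itself its injectivity, which is the substantive content of \cite{AuEn20} you are implicitly relying on.
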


\begin{remark}\label{resps-embed}
    The module structure of $\mathcal{E}_L(\mathbb{R}^d)$ respects the continuous dense embeddings 
    \[M^1_{\nu}(\mathbb{R}^d)\hookrightarrow M^1(\mathbb{R}^d)\hookrightarrow \mathcal{E}_L(\mathbb{R}^d).\] 
    For example, if $f,g\in M^1_{\nu}(\mathbb{R}^d)$ and $\mathbf{a}\in \ell^1_{\nu}(\mathbb{Z}^{2d})$, then $\lin{L}{f}{g}\in \ell^1_{\nu}(\mathbb{Z}^{2d})$ and $\mathbf{a}f\in M^1_{\nu}(\mathbb{R}^d)$ \cite[Proposition 3.14]{AuLu21}.
\end{remark} 

That $\mathcal{E}_L(\mathbb{R}^d)$ is a viable function space for time-frequency analysis comes from the fact that it also inherits the similar property of being Bessel bounded just like $M^1(\mathbb{R}^d)$, albeit it only holds for the particular lattice $L\mathbb{Z}^{2d}$  \cite[Theorem 3.10 and Theorem 3.15]{AuEn20}.

\begin{corollary}
    For each $g\in \mathcal{E}_L(\mathbb{R}^d)$, the family $\mathcal{G}(g,L)$ forms a Bessel family. 
    Consequently, the Gabor analysis, Gabor synthesis, and Gabor mixed-frame operators are all continuous when the atoms are taken from $\mathcal{E}_L(\mathbb{R}^d)$. 
    Furthermore, if $f,g\in \mathcal{E}_L(\mathbb{R}^d)$ then the Gabor mixed-frame operator satisfies the following restriction property:
    \begin{align}\label{mixed-frame-restrict}
        (S_{f,g,L})_{|\mathcal{E}_L(\mathbb{R}^d)}\colon  \mathcal{E}_L(\mathbb{R}^d)\to \mathcal{E}_L(\mathbb{R}^d)
    \end{align}
    and it is continuous with respect to the $\mathcal{E}_L(\mathbb{R}^d)$-norm.
\end{corollary}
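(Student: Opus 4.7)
The plan is to first extend the analysis operator from $M^1_\nu(\mathbb{R}^d)$ to $\mathcal{E}_L(\mathbb{R}^d)$ by an isometric completion argument, and then recognize the mixed-frame operator as left-module multiplication, which will automatically give the restriction property.

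For the Bessel claim, I observe that the identity $\|g\|_{\mathcal{E}_L(\mathbb{R}^d)} = \|C_{g,L}\|_{\mathrm{op}}$ holds for every $g \in M^1_\nu(\mathbb{R}^d)$: the square of the operator norm of the analysis operator is, by general frame theory, the smallest Bessel bound, so taking the square root gives exactly the defining infimum of the $\mathcal{E}_L$-norm. Consequently, the assignment $g \mapsto C_{g,L}$ is an isometry $M^1_\nu(\mathbb{R}^d) \to \mathcal{L}(L^2(\mathbb{R}^d),\ell^2(\mathbb{Z}^{2d}))$. Since the target is a Banach space and $M^1_\nu(\mathbb{R}^d)$ is dense in $\mathcal{E}_L(\mathbb{R}^d)$, this isometry extends uniquely to $\mathcal{E}_L(\mathbb{R}^d)$. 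To identify the extension with the ``true'' analysis operator of $g \in \mathcal{E}_L(\mathbb{R}^d) \subset L^2(\mathbb{R}^d)$, I approximate $g$ by $g_n \in M^1_\nu(\mathbb{R}^d)$ in the $\mathcal{E}_L$-norm; the continuous embedding $\mathcal{E}_L(\mathbb{R}^d) \hookrightarrow L^2(\mathbb{R}^d)$ yields $g_n \to g$ in $L^2$, so $\langle f, \pi(Lk) g_n \rangle_2 \to \langle f, \pi(Lk) g \rangle_2$ for every $f \in L^2(\mathbb{R}^d)$ and $k \in \mathbb{Z}^{2d}$, matching the extended operator pointwise with $C_{g,L}$. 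This exhibits $\mathcal{G}(g,L)$ as Bessel with bound $\|g\|_{\mathcal{E}_L(\mathbb{R}^d)}^2$.

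The second statement about continuity of the synthesis and mixed-frame operators follows immediately: the synthesis operator $C^*_{g,L}$ is the Hilbert-space adjoint of a bounded operator and hence bounded, and $S_{f,g,L} = C^*_{g,L} C_{f,L}$ is a composition of two bounded operators, with operator norms controlled by $\|f\|_{\mathcal{E}_L}$ and $\|g\|_{\mathcal{E}_L}$.

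For the restriction property, the idea is to rewrite $S_{f,g,L}$ in terms of the left $A_L$-module structure of $\mathcal{E}_L(\mathbb{R}^d)$. Explicitly, using $\lin{L}{h}{f}(k) = \langle h, \pi(Lk) f \rangle_2$ and $\overline{\pi}_L(\mathbf{a})g = \sum_k a(k) \pi(Lk) g$, one obtains for $h \in M^1_\nu(\mathbb{R}^d)$ the identity
\begin{align*}
S_{f,g,L}(h) = \sum_{k\in\mathbb{Z}^{2d}} \langle h, \pi(Lk) f\rangle_2\, \pi(Lk) g = \overline{\pi}_L(\lin{L}{h}{f})\, g = \lin{L}{h}{f}\cdot g,
\end{align*}
where the rightmost expression is the $A_L$-module action on $\mathcal{E}_L(\mathbb{R}^d)$. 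Both sides are continuous in $h$ with respect to the $\mathcal{E}_L$-norm (the left-hand side as the composition of bounded operators on $L^2$ composed with the embedding $\mathcal{E}_L\hookrightarrow L^2$, the right-hand side via the module operations), and $M^1_\nu(\mathbb{R}^d)$ is dense in $\mathcal{E}_L(\mathbb{R}^d)$, so the identity extends to all $h \in \mathcal{E}_L(\mathbb{R}^d)$. From Proposition \ref{cauchy-sch} applied to the $A_L$-inner product and action I then deduce
\begin{align*}
\|S_{f,g,L}(h)\|_{\mathcal{E}_L} \le \|\lin{L}{h}{f}\|_{A_L}\,\|g\|_{\mathcal{E}_L} \le \|h\|_{\mathcal{E}_L}\,\|f\|_{\mathcal{E}_L}\,\|g\|_{\mathcal{E}_L},
\end{align*}
which simultaneously gives the restriction $(S_{f,g,L})_{|\mathcal{E}_L(\mathbb{R}^d)} \colon \mathcal{E}_L(\mathbb{R}^d) \to \mathcal{E}_L(\mathbb{R}^d)$ and its $\mathcal{E}_L$-continuity. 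The main subtlety, and where I expect to need care, is in justifying that the $L^2$-valued series representing $S_{f,g,L}(h)$ and the module-theoretic expression $\lin{L}{h}{f}\cdot g$ genuinely coincide in $\mathcal{E}_L$; this is handled by working first on the dense subspace $M^1_\nu(\mathbb{R}^d)$ (where Remark \ref{resps-embed} ensures all quantities live in the appropriate weighted spaces) and then exploiting that both sides are continuous in the $\mathcal{E}_L$-norm.
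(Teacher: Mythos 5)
The paper does not actually prove this corollary: it is stated as an import from Austad--Enstad (\cite[Theorems 3.10 and 3.15]{AuEn20}), so your self-contained argument is necessarily a different route, and it is essentially the right one. Your first step --- the observation that $\|g\|_{\mathcal{E}_L(\mathbb{R}^d)}=\|C_{g,L}\|_{\mathrm{op}}$ on $M^1_\nu(\mathbb{R}^d)$, so that $g\mapsto C_{g,L}$ is a (conjugate-linear) isometry which extends to the completion and is identified with the genuine analysis operator via the embedding $\mathcal{E}_L(\mathbb{R}^d)\hookrightarrow L^2(\mathbb{R}^d)$ --- is exactly the mechanism behind the cited results, and your derivation of the restriction property from $S_{f,g,L}h=\lin{L}{h}{f}\cdot g$ together with Proposition \ref{cauchy-sch} is the standard module-theoretic argument and gives the clean bound $\|S_{f,g,L}h\|_{\mathcal{E}_L}\leq\|f\|_{\mathcal{E}_L}\|g\|_{\mathcal{E}_L}\|h\|_{\mathcal{E}_L}$.

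One step needs more care than you give it. You establish the identity $S_{f,g,L}h=\overline{\pi}_L(\lin{L}{h}{f})g$ for $h\in M^1_\nu(\mathbb{R}^d)$ and then pass to the limit in $h$ only, keeping $f,g\in\mathcal{E}_L(\mathbb{R}^d)$ fixed and general. But for general $f$ the coefficient sequence $\lin{L}{h}{f}$ is only known to lie in $A_L\subset\ell^2(\mathbb{Z}^{2d})$, not in $\ell^1(\mathbb{Z}^{2d})$, so $\overline{\pi}_L(\lin{L}{h}{f})$ is defined by continuous extension rather than by the absolutely convergent series, and the equality of $C^*_{g,L}C_{f,L}h$ with the module expression is not immediate even on your ``dense subspace''. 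The fix is the one you in fact gesture at when invoking Remark \ref{resps-embed}: prove the identity for $f,g,h$ all in $M^1_\nu(\mathbb{R}^d)$ (where all sums converge absolutely and all objects live in the weighted spaces) and then extend by the joint continuity of both sides in all three arguments --- the left side into $L^2(\mathbb{R}^d)$ via the operator-norm bounds $\|C_{f,L}\|\leq\|f\|_{\mathcal{E}_L}$, $\|C^*_{g,L}\|\leq\|g\|_{\mathcal{E}_L}$, and the right side into $\mathcal{E}_L(\mathbb{R}^d)$ via Proposition \ref{cauchy-sch}. With that adjustment the argument is complete.
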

Our previous discussions about imprimitivity bimodules also hold. For example, it is also true that for any $f\in \mathcal{E}_L(\mathbb{R}^d)$:
\begin{equation}\label{equiv-norm-for-heis}
    \begin{split}
        \|f\|_{\mathcal{E}_L(\mathbb{R}^d)} 
        &= \|\lin{L}{f}{f}\|_{A_L}^{1/2} =\| \overline{\pi}_{L}\left(\lin{L}{f}{f}\right)\|^{1/2} = \left\|\sum_{k\in \mathbb{Z}^{2d}}\rin{2}{f}{\pi(Lk)f}\pi(Lk) \right\|^{1/2} \\
        &=\|\rin{L^{\circ}}{f}{f}\|_{B_L}^{1/2} =\| \overline{\pi}^*_{L^{\circ}}\left(\rin{L^{\circ}}{f}{f}\right)\|^{1/2} = \left\|\frac{1}{|\det L|}\sum_{k\in \mathbb{Z}^{2d}}\rin{2}{f}{\pi^*(L^{\circ}k)f}\pi^*(L^{\circ}k) \right\|^{1/2}.
    \end{split}    
\end{equation}
More importantly, the associativity condition Equation \eqref{for-janssen} amounts to an extension of the so-called \emph{Fundamental Identity of Gabor Analysis} (FIGA) \cite{Ja94,FeLu06}. For any $f,g,h\in \mathcal{E}_L(\mathbb{R}^d)$, ${\lin{L}{f}{g}}h = f{\rin{L^{\circ}}{g}{h}}$ explicitly reads as:
\begin{align*}
    \sum_{k\in \mathbb{Z}^{2d}}\rin{2}{f}{\pi(Lk)g}\pi(Lk)h = \frac{1}{|\det L|}\sum_{k\in \mathbb{Z}^{2d}}\rin{2}{h}{\pi^*(L^{\circ}k)g}\pi^*(L^{\circ}k)f.
\end{align*}
Equivalently, the density of $\mathcal{E}_L(\mathbb{R}^d)$ in $L^2(\mathbb{R}^d)$ and Equation \eqref{adj-cocycle} implies that we have an extension of Janssen's representation \eqref{jan-rep} for the Heisenberg modules. Moreover, the function space characterization of $\mathcal{E}_L(\mathbb{R}^d)$ also shows that the Heisenberg modules obey the Wexler-Raz relations of Theorem \ref{wexler-raz}. We write these crucial observations below for later reference.

\begin{corollary}\label{gen-for-heis}
    For $f,g,h\in \mathcal{E}_L(\mathbb{R}^d)$, the following holds:
    \begin{enumerate}
        \item \emph{\textbf{(Janssen's Representation).}} The mixed Gabor frame operator $S_{g,h,L}$ is given by 
        \begin{align*}
        S_{g,h,L} = \overline{\pi}^*_{L^{\circ}}(\rin{L^{\circ}}{g}{h})&=\frac{1}{|\det L|}\sum_{k\in \mathbb{Z}^{2d}}\rin{2}{h}{\pi^*(L^{\circ}k)g}\pi^*(L^{\circ}k)\\
        &=\frac{1}{|\det L|}\sum_{k\in \mathbb{Z}^{2d}}\rin{2}{h}{\pi(L^{\circ}k)g}\pi(L^{\circ}k).
        \end{align*}
        \item \emph{\textbf{(Wexler-Raz Relation).}} The Gabor systems $\mathcal{G}(g,L)$ and $\mathcal{G}(h,L)$ are dual frames if and only if
        \begin{align*}
            \rin{L^{\circ}}{g}{h}=\rin{L^{\circ}}{h}{g}=|\det L| \cdot \delta_{0}.
        \end{align*}
    \end{enumerate}
\end{corollary}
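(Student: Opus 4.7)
The plan is to derive both parts from the associativity relation \eqref{for-janssen} of the $A_L$--$B_L$-imprimitivity bimodule $\mathcal{E}_L(\mathbb{R}^d)$, together with the faithfulness of $\overline{\pi}^*_{L^{\circ}}$ given by Theorem~\ref{thm: faithful-b}. For part~(1), I would unwind the identity $\lin{L}{f}{g}\, h = f \rin{L^{\circ}}{g}{h}$ into concrete bounded operators in $f$. Using \eqref{a-integrated-rep} and the definition of the left inner product, the left-hand side rewrites as $\overline{\pi}_L(\lin{L}{f}{g})h = \sum_{k \in \mathbb{Z}^{2d}} \rin{2}{f}{\pi(Lk)g}\pi(Lk)h = S_{g,h,L}f$, while the right-hand side is $\overline{\pi}^*_{L^{\circ}}(\rin{L^{\circ}}{g}{h}) f$ by definition of the right module action. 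This yields the operator identity $S_{g,h,L} = \overline{\pi}^*_{L^{\circ}}(\rin{L^{\circ}}{g}{h})$ on the dense subspace $\mathcal{E}_L(\mathbb{R}^d)$, and by boundedness of both sides it extends to all of $L^2(\mathbb{R}^d)$. The alternative formula in terms of $\pi(L^{\circ}k)$ then follows from $\pi^*(z) = c(z,z)\pi(-z)$ in \eqref{adj-cocycle}: the unimodular factor $c(L^{\circ}k, L^{\circ}k)$ appears once in the inner product (complex-conjugated) and once in the operator, so they cancel, and a reindexing $k \mapsto -k$ replaces $\pi(-L^{\circ}k)$ by $\pi(L^{\circ}k)$.

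For part~(2), I would combine part~(1) with injectivity of $\overline{\pi}^*_{L^{\circ}}$. The dual-frame condition $S_{g,h,L} = \op{Id}_{L^2(\mathbb{R}^d)}$ is, by part~(1), equivalent to $\overline{\pi}^*_{L^{\circ}}(\rin{L^{\circ}}{g}{h}) = \op{Id}$. Because the Haar measure on $\mathbb{Z}^{2d}$ underlying $B_L$ is the counting measure scaled by $1/\det L$, the multiplicative unit of $B_L$ corresponds to the sequence $\det L \cdot \delta_0 \in \ell^1(\mathbb{Z}^{2d})$, and one verifies directly that $\overline{\pi}^*_{L^{\circ}}(\det L \cdot \delta_0) = \op{Id}$. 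Faithfulness thus forces $\rin{L^{\circ}}{g}{h} = \det L \cdot \delta_0$. The symmetric relation $\rin{L^{\circ}}{h}{g} = \det L \cdot \delta_0$ then follows from the conjugate-symmetry axiom for the $B_L$-valued inner product, the involution formula in \eqref{twisted-structure-b}, and $c_{L^{\circ}}(0,0) = 1$.

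The main obstacle I anticipate is careful bookkeeping of the $1/\det L$ factor coming from the scaled Haar measure convention on $\mathbb{Z}^{2d}$ used to define $B_L$ and $\overline{\pi}^*_{L^{\circ}}$; the normalization must be consistent between the identification of the unit of $B_L$, the $1/\det L$ prefactor in Janssen's representation, and the matching of the two expressions (with $\pi^*$ and $\pi$) which differ by a cocycle-twisted reindexing on the lattice.
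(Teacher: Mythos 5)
Your proposal is correct. For Janssen's representation your argument is essentially the paper's: the paper likewise unwinds the associativity axiom \eqref{for-janssen} into the operator identity $S_{g,h,L}f=\overline{\pi}^*_{L^{\circ}}(\rin{L^{\circ}}{g}{h})f$ on the dense subspace $\mathcal{E}_L(\mathbb{R}^d)\subset L^2(\mathbb{R}^d)$ and then uses density plus \eqref{adj-cocycle} to pass between the $\pi^*$- and $\pi$-forms; your cocycle cancellation and reindexing $k\mapsto -k$ is exactly the intended step. For the Wexler--Raz relation you take a slightly different route: the paper simply observes that $g,h$ are honest $L^2$-functions via the embedding $\mathcal{E}_L(\mathbb{R}^d)\hookrightarrow L^2(\mathbb{R}^d)$ and invokes the classical Theorem \ref{wexler-raz}, translating the condition $\rin{2}{h}{\pi(L^{\circ}k)g}=\det L\cdot\delta_0(k)$ into the module inner product, whereas you re-derive the equivalence internally from part (1) together with the faithfulness of $\overline{\pi}^*_{L^{\circ}}$ (Theorem \ref{thm: faithful-b}) and the identification of $\det L\cdot\delta_0$ as the unit of $B_L$. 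Your version is more self-contained within the module framework and makes the role of faithfulness explicit; the paper's is shorter because the $L^2$-theorem is already available. Both are valid; the only point worth making explicit in your part (2) is that $S_{g,h,L}=\op{Id}$ together with the automatic Bessel property of $\mathcal{G}(g,L)$ and $\mathcal{G}(h,L)$ for $g,h\in\mathcal{E}_L(\mathbb{R}^d)$ is what guarantees both systems are in fact frames, so that "dual frames" in the paper's sense is indeed equivalent to $S_{g,h,L}=\op{Id}$.
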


\subsection{Banach Bundles}\label{subsec: bbunldles}
Bundles are ubiquitous in topology and geometry, and in this paper, we shall study bundles whose fibers are in general infinite-dimensional Banach spaces, with some emphasis on the case when they are $C^*$-algebras. The basics of Banach bundles can be found in the first volume of Fell's monograph \cite{1fell88}, and the classic book by Dixmier \cite{dix82} where they are called ``Continuous Fields of Banach Spaces''. For a more abstract approach on the case of $C^*$-bundles, one may want to consult Appendix C of Williams' book \cite{Will07}. 

Here we define some basic notions. A \textbf{bundle} is a triple $\mathscr{B}=(\mathcal{B},\rho,X)$ where $\rho\colon  \mathcal{B}\to X$ is an open continuous surjection from a Hausdorff space $\mathcal{B}$ to a Hausdorff space $X$. We call $X$ the \textbf{base space}, while $\mathcal{B}$ the \textbf{bundle space}, or \textbf{total space}, and the map $\rho$, the \textbf{bundle projection map}. For each $x\in X$, we call $\rho^{-1}(x):=\mathcal{B}_x$ the \textbf{fiber} over $x$. A \textbf{section} of the bundle $\mathscr{B}=(\mathcal{B},\rho,X)$ is any function $\gamma\colon X\to \mathcal{B}$ such that $(\rho\circ \gamma)(x)=x$ for all $x\in X$ (which implies that  $\gamma(x)\in \rho^{-1}(x) = \mathcal{B}_x$). 
We say that $\gamma$ \textbf{passes through} $s\in \mathcal{B}$ if $s\in \operatorname{Im}(\gamma)$. 
We say that the bundle $\mathscr{B}=(\mathcal{B},\rho,X)$ has \textbf{enough continuous sections} if for each $s\in \mathcal{B}$ there is a continuous section $\gamma\colon X\to \mathcal{B}$ that passes through $s$. The map $\rho\colon \mathcal{B}\to X$ is enough to determine a bundle, and we will usually refer to a bundle by the projection map itself.
\begin{definition}\label{bundledef}
	A \textbf{Banach bundle} $\mathscr{B}$ over $X$ is a bundle $\mathscr{B}=(\mathcal{B},\rho,X)$ over $X$, together with operations and norms making each fiber $\mathcal{B}_x$ $(x\in X)$ into a Banach space, and satisfying the following conditions:
	\begin{enumerate}
		\item The map $s\mapsto \|s\|_{\mathcal{B}_x}$ is continuous from $\mathcal{B}$ to $\mathbb{R}$;
		\item The operation $+$ is continuous as a function on $\{(s,t)\in \mathcal{B}\times \mathcal{B}: \rho(s)=\rho(t) \}$ to $\mathcal{B}$ for each $s,t\in X;$
		\item For each complex $\lambda$, the map $s\mapsto \lambda\cdot s$ is continuous from $\mathcal{B}$ to $\mathcal{B}$;
		\item If $x\in X$ and $\{s_i\}_{i}$ is any net of elements in $B$ such that $\|s_i\|_{\mathcal{B}_{x_i}}\to 0$ and $\rho(s_i)\to x$, then $s_i\to 0_x$, where $0_x$ is the additive identity on $\mathcal{B}_x$ and $s_i \in \mathcal{B}_{x_i}$ for each $i$.
	\end{enumerate}
	Furthermore, if each fiber $\mathcal{B}_x$ is a $C^*$-algebra, then with the added hypothesis that 
	\begin{enumerate}[resume]
        \item The multiplication $\cdot$ is continuous as a function on $\{(s,t)\in \mathcal{B}\times \mathcal{B}: \rho(s)=\rho(t)\}$ to $\mathcal{B};$
	    \item The involution $s\mapsto s^*$ is continuous from $\mathcal{B}$ to $\mathcal{B}$,
	\end{enumerate}
	then $\mathscr{B}$ is called a $C^*$\textbf{-bundle}. We shall drop the subscripts $\mathcal{B}_x$ when the context is clear that we are taking norms with respect to fibers.
\end{definition}
 The Banach space structure on the fibers gives our projection map some basic mapping properties, if $s\in \mathcal{B}$ such that $\rho(s)=x$, then it is in the fiber $s\in \mathcal{B}_x$, but in particular $\mathcal{B}_x$ is a linear space, and so $\lambda s+b \in \mathcal{B}_x$ for any $\lambda\in \mathbb{C}$ and $b\in \mathcal{B}_x$ (this is implicitly used to make sense of axiom $3$ above). This linear structure yields 
 \begin{align*}
     \rho(\lambda s+b)=x = \rho(s)=\rho(b).
 \end{align*} 
For the rest of the document, given a bundle $\rho\colon \mathcal{B} \to X$, we denote the complex linear space of continuous sections (where the operations are given pointwise) by $\Gamma(\rho)$. Furthermore, we denote by $\Gamma_0(\rho)$ the subspace of $\Gamma(\rho)$ consisting of \textbf{sections that vanish at infinity} (\ie $\gamma\in \Gamma_0(\rho)$ if and only if $x\mapsto \|\gamma(x)\|$ vanishes at infinity). Note that $\Gamma_0(\rho)$ is actually a Banach space with the supremum norm, since all such sections will be bounded. It is also a basic result that when $\rho$ is a $C^*$-bundle, then the associated section space $\Gamma_0(\rho)$ is a $C^*$-algebra with the necessary operations defined pointwise. 
To reiterate, continuous sections of $\rho$ are continuous maps $\gamma\colon X\to \mathcal{B}$ with the property that $\gamma(x)\in \mathcal{B}_x$.

We give a description of how we shall encounter bundles in time-frequency analysis. Suppose we have a Hausdorff space $X$ and we have a family of Banach spaces $\{\mathcal{B}_x\}_{x\in X}$, and we fix $\mathcal{B} := \bigsqcup_{x\in X}\mathcal{B}_x:=\bigcup_{x\in X}\{(s,x):s\in B_x\}$. 
Now we simply define $\rho\colon \mathcal{B}\to X$ to be the usual projection, \ie $\rho(s,x)=x$. 
In practice, we shall omit the the index $x$ in $(s,x)\in \mathcal{B}$ and work with the implicit understanding that each $s$ is an element in some unique $\mathcal{B}_x$. So, we shall write $\rho(s)=x \iff s\in \mathcal{B}_x$. This definition will give us fibers that we want, that is: $\rho^{-1}(x):=\mathcal{B}_x$. A section $\gamma\colon X\to \mathcal{B}$ with respect to $\rho$ satisfies $\gamma(x)\in \mathcal{B}_x$ for each $x\in X$, and can be succinctly denoted by the product notation: $\gamma \in \prod_{x\in X}\mathcal{B}_x$.  Our bundle construction problem now becomes: Finding a proper topology for $\mathcal{B}$ such that $\rho$ is a Banach bundle. There is a canonical answer to this problem if one also starts with initial sections possessing nice properties by Fell \cite[Theorem 13.18]{1fell88}.
\begin{theorem}\label{bbundleconstruction}
If $\rho\colon \mathcal{B}\to X$ is a surjective map such that $\rho^{-1}(x):=\mathcal{B}_x$ is a complex Banach space for each $x\in X$, and there exists a complex linear space of sections $\Gamma$ of $\rho$, with the added properties that
\begin{enumerate}
    \item For all $\gamma\in \Gamma$, the function $x\mapsto \|\gamma(x)\|$ is a continuous function from $X$ to $\mathbb{R}$;
	\item For each $x\in X$, $\Gamma^x := \{\gamma(x)\mid \gamma\in \Gamma \}$ is dense in $\mathcal{B}_x$.
\end{enumerate}
Then there exists a unique topology on $\mathcal{B}$ that makes $\rho\colon \mathcal{B}\to X$ a Banach bundle over $X$ such that the elements of $\Gamma$ are continuous sections of $\rho$, \ie $\Gamma\subseteq \Gamma(\rho)$. Explicitly, a basis for the topology on $\mathcal{B}$ is given by
	\begin{align}\label{basic-opens}
	    W(f,U,\varepsilon)=\{s\in \mathcal{B}: \rho(s)\in U, \|s-f(\rho(s))\|<\varepsilon \}
	\end{align}
	for $f\in \Gamma$, $U$ an open set in $X$ and $\varepsilon>0$.
\end{theorem}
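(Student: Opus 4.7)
The plan is to declare the collection $\{W(f,U,\varepsilon)\}$ a basis for the topology on $\mathcal{B}$ and then verify, in turn, the basis property, the Banach bundle axioms, continuity of the sections in $\Gamma$, and finally uniqueness. The key technical engine is that $\Gamma$ is a \emph{linear} space, so applying hypothesis (i) to $f-g\in\Gamma$ makes each map $y\mapsto\|f(y)-g(y)\|$ continuous on $X$; combined with the density hypothesis (ii), this lets me replace any $s\in\mathcal{B}_x$ by a nearby value $f(x)$ with $f\in\Gamma$ and then transport a fiberwise estimate to a neighborhood of $x$.

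First I would check that the sets $W(f,U,\varepsilon)$ form a basis. If $s\in W(f_1,U_1,\varepsilon_1)\cap W(f_2,U_2,\varepsilon_2)$ with $x=\rho(s)$ and $\eta:=\min_i(\varepsilon_i-\|s-f_i(x)\|)>0$, density yields $g\in\Gamma$ with $\|g(x)-s\|<\eta/3$, and continuity of $y\mapsto\|f_i(y)-g(y)\|$ at $x$ produces an open $V\subseteq U_1\cap U_2$ containing $x$ on which $W(g,V,\eta/3)$ sits inside both original sets while still containing $s$. Openness of $\rho$ is immediate from $\rho(W(f,U,\varepsilon))=U$. The Hausdorff property is obtained by separating the base points if they differ, and otherwise by separating $s\neq t$ in a common fiber via sections $f,g\in\Gamma$ near $s$ and $t$, shrinking $U\ni x$ so that continuity of $\|f-g\|$ keeps the basic opens disjoint.

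Next I would verify the four bundle axioms. For (i), on $W(f,U,\varepsilon)$ the reverse triangle inequality gives $\bigl|\|s\|-\|f(\rho(s))\|\bigr|\le\varepsilon$, while $y\mapsto\|f(y)\|$ is continuous by assumption. For (ii) and (iii), a basic neighborhood $W(h,U_0,\varepsilon_0)$ of $s_1+s_2$ (resp.\ $\lambda s_1$) is matched by approximating $s_i$ with $f_i(x_0)$, noting that $f_1+f_2\in\Gamma$ (resp.\ $\lambda f_1\in\Gamma$), and using continuity of $\|f_1+f_2-h\|$ (resp.\ $\|\lambda f_1-h\|$) to transfer the fiberwise estimate to a neighborhood of $x_0$. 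Axiom (iv) is automatic once we observe that the zero section $0\in\Gamma$ produces the basic neighborhoods $\{s:\rho(s)\in U,\ \|s\|<\varepsilon\}$ of every $0_x$. That each $f\in\Gamma$ is a continuous section is the identity $f^{-1}(W(g,U,\varepsilon))=U\cap\{y:\|f(y)-g(y)\|<\varepsilon\}$, open by hypothesis (i). For uniqueness, in any candidate Banach bundle topology with $\Gamma\subseteq\Gamma(\rho)$, the map $s\mapsto s-f(\rho(s))$ is continuous (as $f\circ\rho$ is continuous and subtraction on the fiber product is continuous), so composition with the norm shows each $W(f,U,\varepsilon)$ is open; density combined with the axioms then forces these sets to form a neighborhood basis, so the topology agrees with the constructed one.

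The main obstacle I anticipate is the continuity of addition: it requires two simultaneous density approximations and a careful accounting of slack so that the fiberwise bound $\|s_1+s_2-h(x_0)\|<\varepsilon_0$ survives being replaced by $\|t_1+t_2-h(y)\|$ for $y$ near $x_0$ and $t_i$ close to $f_i(y)$. Once this diagonal argument is set up cleanly, scalar multiplication and the uniqueness step are variants of the same scheme, and axioms (i), (iv), the basis check, and Hausdorffness each reduce to a single application of the density-plus-norm-continuity mechanism.
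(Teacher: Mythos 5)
The paper does not prove this statement at all: it is quoted verbatim from Fell \cite[Theorem 13.18]{1fell88} as background, so there is no in-paper proof to compare against. Your outline is the standard direct verification (the one in Fell's monograph and in Appendix~C of \cite{Will07}), and the mechanism you isolate --- linearity of $\Gamma$ turning hypothesis (1) into continuity of $y\mapsto\|f(y)-g(y)\|$ for all $f,g\in\Gamma$, combined with fiberwise density --- is exactly the right engine; the basis check, Hausdorffness, the four bundle axioms, and continuity of the sections in $\Gamma$ all go through as you describe.

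The one place where your sketch is genuinely thinner than the argument requires is uniqueness. Showing that each $W(f,U,\varepsilon)$ is open in any competing Banach-bundle topology $\tau$ with $\Gamma\subseteq\Gamma(\rho)$ only gives the containment ``constructed topology $\subseteq\tau$''. The reverse containment is where the work is: given $O\in\tau$ and $s_0\in O$ with $\rho(s_0)=x_0$, you must produce some $W(f,U,\varepsilon)$ with $s_0\in W(f,U,\varepsilon)\subseteq O$, and ``density combined with the axioms'' hides a net argument. One assumes no such $W$ exists, picks $t_{(f,U,\varepsilon)}\in W(f,U,\varepsilon)\setminus O$ for every basic set containing $s_0$, directs the index set so that $\varepsilon\to0$, $U$ shrinks to $x_0$, and $\|f(x_0)-s_0\|\to0$, and then writes $t=(t-f(\rho(t)))+f(\rho(t))$; axiom (4) forces the first summand to $0_{x_0}$, continuity of $f$ and of addition in $\tau$ forces $t\to s_0$, contradicting $t\notin O$. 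You should spell this out, since it is the only step that actually uses axiom (4) of Definition \ref{bundledef} in the uniqueness direction. With that supplied, the proof is complete and agrees with the cited source.
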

\noindent Theorem \ref{bbundleconstruction}  above will be the most relevant result for the subsequent section, as all bundles that we shall construct will be obtained this way.
\begin{definition}\label{local-approx}
    Let $\rho\colon \mathcal{B}\to X$ be a Banach bundle constructed through Theorem \ref{bbundleconstruction} with initial section space $\Gamma\in \Gamma(\rho)$. We call $\Gamma$ the \textbf{local approximating sections} for $\rho$.
\end{definition}
The local approximating sections given by $\Gamma$ above play an important role, as they are used to verify that a section is indeed continuous if and only if they can be locally approximated by sections that belong in $\Gamma$. We shall show this in the following proposition, for the sake of completeness.
\begin{proposition}\label{approxbycontinuity}
    Let $\rho\colon X\to \mathcal{B}$ be a Banach bundle via Theorem \ref{bbundleconstruction} with local approximating sections $\Gamma$. 
    Then for any section $\gamma\colon X\to \mathcal{B}$, $\gamma\in \Gamma(\rho)$ if and only if for each $x_0\in X$ and $\varepsilon>0$, there exists a $\gamma'\in \Gamma$ such that for some open neighborhood $U_0\ni x_0$
    \begin{align*}
        \|\gamma(x) - \gamma'(x)\|<\varepsilon, \qquad \forall x\in U_0.
    \end{align*}
\end{proposition}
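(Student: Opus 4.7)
The plan is to verify both directions using the explicit basis \eqref{basic-opens} for the bundle topology on $\mathcal{B}$, whose typical element has the form $W(f,U,\varepsilon)$ with $f\in\Gamma$.

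For the forward direction, suppose $\gamma\in\Gamma(\rho)$ and fix $x_0\in X$ and $\varepsilon>0$. Density of $\Gamma^{x_0}$ in $\mathcal{B}_{x_0}$ (hypothesis (2) of Theorem \ref{bbundleconstruction}) lets me pick $\gamma'\in\Gamma$ with $\|\gamma(x_0)-\gamma'(x_0)\|<\varepsilon$, so that $\gamma(x_0)\in W(\gamma',X,\varepsilon)$. Continuity of $\gamma$ then makes $U_0:=\gamma^{-1}(W(\gamma',X,\varepsilon))$ an open neighborhood of $x_0$ on which, directly from the definition of $W(\gamma',X,\varepsilon)$, one has $\|\gamma(x)-\gamma'(x)\|<\varepsilon$.

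For the converse, to check continuity of $\gamma$ at an arbitrary $x_0\in X$, I would verify that the preimage of any basic neighborhood $W(f,U,\varepsilon)$ of $\gamma(x_0)$ is a neighborhood of $x_0$. Membership of $\gamma(x_0)$ in this set gives $x_0\in U$ and $\|\gamma(x_0)-f(x_0)\|<\varepsilon$, so set $\eta:=\varepsilon-\|\gamma(x_0)-f(x_0)\|>0$. Invoke the hypothesis with tolerance $\eta/3$ to produce $\gamma'\in\Gamma$ and an open $V_0\ni x_0$ with $\|\gamma(x)-\gamma'(x)\|<\eta/3$ on $V_0$. Since $\Gamma$ is a complex linear space, $\gamma'-f\in\Gamma$, and hypothesis (1) of Theorem \ref{bbundleconstruction} makes $x\mapsto\|\gamma'(x)-f(x)\|$ continuous; a triangle inequality at $x_0$ bounds it by $\eta/3+(\varepsilon-\eta)=\varepsilon-2\eta/3$, so continuity provides an open neighborhood $V_1\ni x_0$ on which $\|\gamma'(x)-f(x)\|<\varepsilon-\eta/3$. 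On $V_0\cap V_1\cap U$, a final triangle inequality yields $\|\gamma(x)-f(x)\|<\varepsilon$ together with $\rho(\gamma(x))=x\in U$, placing $\gamma(x)\in W(f,U,\varepsilon)$ as required.

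The only subtlety is coordinating the two triangle inequalities with compatible tolerances; the $\eta/3$ split is slightly slack but keeps the bookkeeping transparent. No deeper feature of Banach bundles is needed beyond the explicit form of the basis and the continuity of $x\mapsto\|h(x)\|$ for $h\in\Gamma$.
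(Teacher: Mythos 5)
Your proof is correct and follows essentially the same route as the paper's: both directions are handled via the explicit basis $W(f,U,\varepsilon)$ from Theorem \ref{bbundleconstruction} together with triangle-inequality estimates (the paper simply declares the forward direction trivial, which you spell out via density of $\Gamma^{x_0}$). In the converse your bookkeeping — measuring the slack $\eta=\varepsilon-\|\gamma(x_0)-f(x_0)\|$ and using continuity of $x\mapsto\|\gamma'(x)-f(x)\|$ via linearity of $\Gamma$ — is in fact slightly more careful than the paper's $\varepsilon/2$ split, but it is the same underlying argument.
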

\begin{proof}
    To start with, we can apply Theorem \ref{bbundleconstruction} to the triple $\mathscr{B} = (\mathcal{B},\rho,X)$ with $\Gamma$. Because our  $\Gamma$ satisfies the hypothesis of Theorem \ref{bbundleconstruction}, we have an explicit basis for the topology on $\mathcal{B}$:
    \begin{align*}
        W(\gamma'',U,\varepsilon):= \{s\in \mathscr{B}: \rho(s)\in U, \|s-\gamma''(\rho(s))\|<\varepsilon  \}
    \end{align*}
    for $\gamma''\in \Gamma$, $U$ an open set in $X$ and $\varepsilon>0$. 
    
    If $\gamma \in \Gamma(\rho)$, \ie if $\gamma$ is continuous, then the conclusion is trivial. 
    Suppose now that the converse hypothesis holds for the section $\gamma\colon X\to  \mathcal{B}$. 
    We want to show that $\gamma$ is continuous at any point, we take $x_0\in X$ and take a basic open set $W(\gamma'',U_0,\frac{\varepsilon}{2})\ni \gamma(x_0)$. 
    By hypothesis, there exists $\gamma'\in \Gamma$ and an open set $V_0'\ni x_0$ such that we have for all $x\in V_0'$:
    \begin{align*}
        \|\gamma(x)-\gamma'(x)\|<\frac{\varepsilon}{2}.
    \end{align*}
    Now, we know that $\gamma'\in \Gamma\subseteq \Gamma(\mathscr{B})$, therefore it is immediate that there exists an open set $V_0''\ni x_0$ such that $\gamma'(V_0'')\subseteq W(\gamma'',U_0,\frac{\varepsilon}{2})$. Take the open set $x_0\in V_0 := V_0' \cap V_0''$, then we have for all $x\in V_0:$
    \begin{align*}
        \|\gamma(x) - \gamma''(x)\|\leq \|\gamma(x)-\gamma'(x)\|+\|\gamma'(x)-\gamma''(x)\| <\frac{\varepsilon}{2}+\frac{\varepsilon}{2}=\varepsilon.
    \end{align*}
    Therefore $\gamma(V_0)\subseteq W(\gamma'',U_0,\varepsilon)$, hence $\gamma$ is continuous. 
\end{proof}
\begin{remark}\label{approx-section-refinement}
    By the triangle inequality, we can refine Proposition \ref{approxbycontinuity}. We have that a section $\gamma$ is continuous if and only if it can it can be locally approximated by sections in $\Gamma(\rho)$ (instead of just $\Gamma$).
\end{remark}
The results given above show that Banach bundles constructed via Theorem \eqref{bbundleconstruction} are \emph{equivalent} to the continuous fields of Banach spaces defined by Dixmier in \cite[Definition 10.1.2]{dix82}. It then follows that we have the crucial result \cite[Proposition 10.1.10]{dix82}.
\begin{corollary}\label{enough-c-sections}
    Every Banach bundle constructed by Theorem \ref{bbundleconstruction} will always have enough continuous sections.
\end{corollary}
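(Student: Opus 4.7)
The statement is exactly the translation, under the equivalence noted in the sentence preceding the corollary, of Dixmier's result \cite[Proposition 10.1.10]{dix82}: in a continuous field of Banach spaces, every vector in a fiber is realized as the value at that point of some continuous vector field. My plan is therefore twofold: first make the equivalence precise, then either cite Dixmier or, for a self-contained argument, unwind his proof in the language of Theorem \ref{bbundleconstruction}.

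For the self-contained route, fix $s \in \mathcal{B}$ and set $x_0 := \rho(s)$. I would first apply hypothesis (2) of Theorem \ref{bbundleconstruction} to select a sequence $(\gamma_n)_n \subseteq \Gamma$ with $\|\gamma_n(x_0) - s\| < 2^{-n}$, which forces $\|\gamma_{n+1}(x_0) - \gamma_n(x_0)\| < 2^{-n+1}$. Since $\gamma_{n+1} - \gamma_n \in \Gamma$, hypothesis (1) ensures the map $x \mapsto \|\gamma_{n+1}(x) - \gamma_n(x)\|$ is continuous, so a shrinking chain of open neighborhoods $U_1 \supseteq U_2 \supseteq \ldots$ of $x_0$ can be chosen on which $\|\gamma_{n+1}(x)-\gamma_n(x)\| < 2^{-n+1}$.

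The patching step is the substantive part. Using that the base space (in our application $\operatorname{GL}_{2d}(\mathbb{R})$, which is a smooth manifold and hence admits abundant continuous cutoffs) is normal enough to provide continuous $\phi_n\colon X \to [0,1]$ with $\phi_n(x_0) = 1$ and $\operatorname{supp}(\phi_n) \subseteq U_n$, I would define
\begin{align*}
    \gamma(x) := \gamma_1(x) + \sum_{n=1}^{\infty} \phi_n(x)\bigl(\gamma_{n+1}(x) - \gamma_n(x)\bigr).
\end{align*}
Each $\phi_n \cdot (\gamma_{n+1} - \gamma_n)$ is a continuous section by axioms (2) and (3) of Definition \ref{bundledef}, and the construction forces the tail to be dominated in norm (again via the continuity of the norm function, axiom (1)) by a geometric series on all of $X$, so the partial sums converge uniformly in the bundle topology. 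Hence $\gamma$ is itself a continuous section, and telescoping at $x_0$ yields $\gamma(x_0) = \lim_n \gamma_n(x_0) = s$.

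The main potential obstacle is the availability of continuous cutoff functions, which is invisible in the abstract statement but essential to the patching construction; for the base spaces of interest in this paper this is automatic. The cleaner alternative, since the preceding paragraph already establishes the equivalence with Dixmier's continuous fields of Banach spaces, is simply to observe that the hypotheses of Theorem \ref{bbundleconstruction} reproduce Dixmier's definition of a continuous field and invoke \cite[Proposition 10.1.10]{dix82} verbatim, which is the route the authors take.
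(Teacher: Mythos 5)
Your proposal is correct and matches the paper's approach: the corollary is justified there solely by the observation (in the sentence preceding it) that bundles built via Theorem \ref{bbundleconstruction}, together with the characterization of continuous sections in Proposition \ref{approxbycontinuity}, satisfy Dixmier's axioms for a continuous field of Banach spaces, followed by a citation of \cite[Proposition 10.1.10]{dix82} --- exactly your ``cleaner alternative.'' Your supplementary telescoping-with-cutoffs sketch is a reasonable unwinding of that result for base spaces admitting Urysohn-type functions, such as the locally compact $\operatorname{GL}_{2d}(\mathbb{R})$, and you correctly flag that the availability of cutoffs is the only point that does not come for free over an arbitrary Hausdorff base.
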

\noindent A more general result exists for base spaces that are locally compact. It was communicated by Fell in \cite[Appendix C]{1fell88}, that by Douady and Soglio-H\'erault \cite{SoDo}: every Banach bundle whose base space is locally compact Hausdorff automatically has enough continuous sections. In the sequel we shall construct our Banach bundles with base space $\op{GL}_{2d}(\mathbb{R})$, which is locally compact, so we are fully vindicated in invoking the existence of a continuous section that passes through any point in $\op{GL}_{2d}(\mathbb{R})$.

The following result from \cite[Proposition C.24]{Will07} or \cite[Corollary 14.7]{1fell88} regarding global approximations of sections in $\Gamma_0(\rho)$ holds in general even if the Banach bundle is not constructed from Theorem \ref{bbundleconstruction}.
\begin{proposition}\label{globalapprox}
    Let $\rho\colon  \mathcal{B}\to X$ be a Banach bundle such that there exists a linear subspace $\Gamma_0$ of $\Gamma_0(\rho)$ that satisfies:
    \begin{enumerate}
        \item For all $\varphi \in C_0(X)$, and $\gamma\in \Gamma_0$, we have $\varphi \cdot \gamma \in \Gamma_0$.
        \item For each $x\in X$, $\{\gamma(x):\gamma\in \Gamma_0 \}$ is dense in $\mathcal{B}_x$.
    \end{enumerate}
    Then $\Gamma_0$ is dense in $\Gamma_0(\rho)$.
\end{proposition}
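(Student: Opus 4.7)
The plan is to prove this by a standard partition of unity / gluing argument, which is the bundle-theoretic analog of the classical proof that compactly supported continuous functions are dense in $C_0(X)$. Fix $\gamma \in \Gamma_0(\rho)$ and $\varepsilon>0$; the target is to produce $\gamma_0 \in \Gamma_0$ with $\sup_x \|\gamma(x)-\gamma_0(x)\| < C\varepsilon$ for some fixed constant $C$ (say $2$), which suffices by the linearity of $\Gamma_0$.

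First I would localize the problem. Since $x\mapsto \|\gamma(x)\|$ is continuous (axiom (1) of Definition \ref{bundledef}) and vanishes at infinity, the set $K_\varepsilon := \{x\in X : \|\gamma(x)\|\geq \varepsilon\}$ is compact. For each $x_0\in K_\varepsilon$, the fiberwise density hypothesis (2) gives some $\eta_{x_0}\in \Gamma_0$ with $\|\gamma(x_0)-\eta_{x_0}(x_0)\| < \varepsilon$; because $\gamma-\eta_{x_0}$ is a continuous section and the norm map is continuous on $\mathcal{B}$, there exists an open neighborhood $U_{x_0}\ni x_0$ on which $\|\gamma(x)-\eta_{x_0}(x)\|<\varepsilon$. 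Extract a finite subcover $U_{x_1},\dots,U_{x_n}$ of $K_\varepsilon$, and write $\eta_i := \eta_{x_i}$.

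Next I would glue using scalars from $C_0(X)$. Working in the locally compact Hausdorff base space $X$ (which is our standing setting; indeed the bundles we construct live over $\op{GL}_{2d}(\mathbb{R})$), choose by Urysohn-type arguments a partition of unity $\varphi_1,\dots,\varphi_n \in C_c(X) \subset C_0(X)$ subordinate to $\{U_{x_i}\}$ with $\sum_{i=1}^n \varphi_i \leq 1$ everywhere and $\sum_{i=1}^n \varphi_i \equiv 1$ on $K_\varepsilon$. Define
\[
\gamma_0 := \sum_{i=1}^n \varphi_i\, \eta_i.
\]
By hypothesis (1), each $\varphi_i \eta_i \in \Gamma_0$, and so $\gamma_0\in \Gamma_0$ by linearity. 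Writing
\[
\gamma - \gamma_0 = \Bigl(1 - \sum_{i=1}^n \varphi_i\Bigr)\gamma + \sum_{i=1}^n \varphi_i(\gamma-\eta_i),
\]
I would estimate fiberwise: at $x\in K_\varepsilon$ the first term vanishes and the second is bounded by $\varepsilon$ since $\varphi_i(x)\neq 0$ forces $x\in U_{x_i}$; at $x\notin K_\varepsilon$ the first term is bounded by $\|\gamma(x)\|<\varepsilon$ and the second again by $\varepsilon$. This yields $\|\gamma-\gamma_0\|_\infty < 2\varepsilon$.

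The main technical point to be careful about is the partition of unity: one needs $\varphi_i \in C_0(X)$ (not just continuous) so that axiom (1) on $\Gamma_0$ is applicable, and one needs $\sum \varphi_i \leq 1$ so that the off-$K_\varepsilon$ estimate closes. Both are available from standard Urysohn-type constructions on a locally compact Hausdorff space. A minor subtlety is that Proposition \ref{globalapprox} is stated with no explicit local compactness hypothesis; however, all our applications (and Williams' and Fell's references) take $X$ locally compact Hausdorff, which is the setting in which the partition of unity argument is natural.
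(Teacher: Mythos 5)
Your argument is correct. Note that the paper does not actually prove this proposition -- it is quoted from \cite[Proposition C.24]{Will07} (equivalently \cite[Corollary 14.7]{1fell88}) -- and your partition-of-unity proof is precisely the standard argument given in those references: compactness of $K_\varepsilon=\{x:\|\gamma(x)\|\geq\varepsilon\}$, fiberwise approximation on a finite cover, gluing with a subordinate partition of unity in $C_c(X)\subseteq C_0(X)$ via hypothesis (1), and the two-term estimate splitting off $(1-\sum_i\varphi_i)\gamma$. Your caveat about local compactness is the right one to make: the cited sources (and every bundle in this paper, all over $\op{GL}_{2d}(\mathbb{R})$) work over locally compact Hausdorff base spaces, which is exactly what the Urysohn-type construction of the $\varphi_i$ requires, so there is no gap in the intended setting.
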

\begin{remark}
    All of the preceding results hold in particular for the $C^*$-bundle case.
\end{remark}

The following results hold just for $C^*$-bundles. The next proposition can be deduced from \cite[Proposition 10.3.3]{dix82}.

\begin{proposition}\label{positive-sections}
    Let $\rho\colon \mathcal{A} \to X$ be a $C^*$-bundle. If $\gamma \in \Gamma_0(\rho)$ fulfills $\gamma(x)\geq 0$ for all $x\in X$, then $\gamma$ is a positive element in the $C^*$-algebra $\Gamma_0(\rho)$.
\end{proposition}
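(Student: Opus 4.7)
The plan is to reduce positivity in the section algebra $\Gamma_0(\rho)$ to the existence of a self-adjoint square root, constructed fiberwise via functional calculus and then shown to live in $\Gamma_0(\rho)$ by uniform polynomial approximation. Concretely, I would exploit the fact that a self-adjoint element $\eta$ in any $C^*$-algebra gives rise to a positive element $\eta^2$, and it suffices to produce $\eta \in \Gamma_0(\rho)$ with $\eta^2 = \gamma$.

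First I would observe that $\gamma$ is self-adjoint in $\Gamma_0(\rho)$: since each fiber value $\gamma(x) \geq 0$ is self-adjoint in $\mathcal{A}_x$ and involution in $\Gamma_0(\rho)$ acts pointwise (this is what it means to be a $C^*$-bundle by Definition \ref{bundledef}), we get $\gamma^* = \gamma$. Next, set $M := \|\gamma\|_{\Gamma_0(\rho)} = \sup_{x\in X}\|\gamma(x)\|$, which is finite. By the Weierstrass approximation theorem, the continuous function $t\mapsto \sqrt{t}$ can be approximated uniformly on $[0,M]$ by a sequence of polynomials $p_n$. Since $\sqrt{0}=0$, I may additionally arrange $p_n(0)=0$ by subtracting the constant term (the uniform convergence is preserved).

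Now define $\eta_n := p_n(\gamma)$ pointwise, i.e.\ $\eta_n(x) = p_n(\gamma(x))$. Because $p_n$ has no constant term, $\eta_n$ is a polynomial in $\gamma$ with zero constant, hence $\eta_n$ lies in the $C^*$-subalgebra of $\Gamma_0(\rho)$ generated by $\gamma$; in particular $\eta_n\in\Gamma_0(\rho)$. Moreover $\eta_n$ is self-adjoint since the $p_n$ have real coefficients. Using the spectral theorem in each fiber and the fact that $\sigma_{\mathcal{A}_x}(\gamma(x))\subseteq [0,\|\gamma(x)\|]\subseteq[0,M]$, we get
\begin{equation*}
\|\eta_n(x) - \gamma(x)^{1/2}\|_{\mathcal{A}_x} = \|p_n(\gamma(x)) - \gamma(x)^{1/2}\|_{\mathcal{A}_x} \leq \sup_{t\in [0,M]} |p_n(t)-\sqrt{t}|,
\end{equation*}
uniformly in $x$. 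Hence $\{\eta_n\}$ is Cauchy in the supremum norm of $\Gamma_0(\rho)$, and since $\Gamma_0(\rho)$ is a Banach space its limit $\eta\in \Gamma_0(\rho)$ satisfies $\eta(x) = \gamma(x)^{1/2}$ for every $x$.

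Finally, $\eta^* = \eta$ as a norm limit of self-adjoint elements, and $\eta^2(x) = \gamma(x)^{1/2}\gamma(x)^{1/2} = \gamma(x)$ for all $x$, so $\eta^2 = \gamma$ in $\Gamma_0(\rho)$. Therefore $\gamma = \eta^*\eta$, proving that $\gamma$ is a positive element of the $C^*$-algebra $\Gamma_0(\rho)$. The only delicate point in this plan is ensuring that the approximating polynomials can be chosen with vanishing constant term so that $\eta_n$ actually remains in the non-unital algebra $\Gamma_0(\rho)$; this is settled by the simple truncation argument above, so I do not expect any serious obstacle.
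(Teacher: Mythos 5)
Your proof is correct. Note that the paper does not actually prove this proposition: it simply records it as a consequence of Dixmier's Proposition 10.3.3 on continuous fields of $C^*$-algebras, which characterizes self-adjointness, positivity, and spectra of elements of the section algebra fiberwise. What you have written is a self-contained replacement for that citation, via the standard square-root construction: approximate $t\mapsto\sqrt{t}$ uniformly on $[0,M]$ by polynomials with vanishing constant term, apply them to $\gamma$ (each $\gamma^k$, $k\geq 1$, lies in $\Gamma_0(\rho)$ because multiplication in the bundle is continuous and the section algebra is closed under pointwise products), and use completeness of $\Gamma_0(\rho)$ in the supremum norm to obtain $\eta\in\Gamma_0(\rho)$ with $\eta(x)=\gamma(x)^{1/2}$, whence $\gamma=\eta^*\eta\geq 0$. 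All the delicate points are handled: the truncation $p_n(0)=0$ keeps everything inside the (possibly non-unital) algebra generated by $\gamma$, the fiberwise functional-calculus bound $\|p_n(\gamma(x))-\gamma(x)^{1/2}\|\leq\sup_{t\in[0,M]}|p_n(t)-\sqrt{t}|$ is uniform in $x$, and continuity plus vanishing at infinity of the limit section come for free from norm convergence in $\Gamma_0(\rho)$. The trade-off is only one of economy: the citation to Dixmier yields the statement (and more, e.g.\ the spectral description) in one line, while your argument is longer but elementary and makes no appeal to the machinery of continuous fields beyond the axioms of Definition \ref{bundledef}.
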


\begin{lemma}[{\cite[Lemma 10.4.2]{dix82}}]\label{ideal-section}
    Let $\rho\colon  \mathcal{A}\to  X$ be a $C^*$-bundle and $I$ a closed two-sided ideal of $\Gamma_0(\rho)$. Put $I_x:=\{\gamma(x)\in \mathcal{A}_x: \gamma\in I\}$. Then $I = \{\gamma\in \Gamma_0(\rho): \gamma(x)\in I_x \text{ for all 
 } x\in X\}$.
\end{lemma}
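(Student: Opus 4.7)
The inclusion $I\subseteq\{\gamma\in\Gamma_0(\rho):\gamma(x)\in I_x\text{ for all }x\in X\}$ is immediate from the very definition of $I_x$, so the entire content of the lemma lies in proving the reverse inclusion. Since $I$ is norm-closed in the $C^*$-algebra $\Gamma_0(\rho)$, it suffices to show that any $\gamma\in\Gamma_0(\rho)$ satisfying $\gamma(x)\in I_x$ for every $x\in X$ can be approximated in the supremum norm by elements of $I$ to within any prescribed $\varepsilon>0$.

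The plan is to glue local lifts via a partition of unity. For every $x\in X$ the definition of $I_x$ provides a section $\delta_x\in I$ with $\delta_x(x)=\gamma(x)$, and by the continuity of the bundle norm (Definition~\ref{bundledef}(1)) together with the continuity of $\gamma$ and $\delta_x$, the set $U_x:=\{y\in X:\|\gamma(y)-\delta_x(y)\|<\varepsilon\}$ is an open neighborhood of $x$. Because $\gamma$ vanishes at infinity, $K:=\{y\in X:\|\gamma(y)\|\ge\varepsilon\}$ is compact, hence covered by finitely many $U_{x_1},\dots,U_{x_n}$. Appealing to local compactness of $X$ (in the intended later application the base space is the smooth manifold $\op{GL}_{2d}(\mathbb R)$), I would choose a subordinate partition of unity $\phi_1,\dots,\phi_n\in C_c(X)$ with $\operatorname{supp}\phi_i\subseteq U_{x_i}$, $\sum_i\phi_i=1$ on $K$ and $0\le\sum_i\phi_i\le 1$ throughout $X$, and set $\eta:=\sum_{i=1}^n\phi_i\delta_{x_i}$.

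The pointwise decomposition
\[\gamma-\eta=\Bigl(1-\sum_{i=1}^n\phi_i\Bigr)\gamma+\sum_{i=1}^n\phi_i(\gamma-\delta_{x_i})\]
then yields $\|\gamma(y)-\eta(y)\|<2\varepsilon$ uniformly in $y$: on $K$ the first summand vanishes, off $K$ one has $\|\gamma(y)\|<\varepsilon$, and wherever $\phi_i(y)\neq 0$ one has $y\in U_{x_i}$, so $\|\gamma(y)-\delta_{x_i}(y)\|<\varepsilon$. Letting $\varepsilon\to 0$ places $\gamma$ in the norm-closure of $I$ and therefore in $I$ itself.

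I expect the main obstacle to be the verification that $\eta\in I$, \ie that $\phi\cdot\delta\in I$ whenever $\phi\in C_c(X)$ and $\delta\in I$. The cleanest route is to regard $C_0(X)$ as embedded into the center of the multiplier algebra $M(\Gamma_0(\rho))$, with sections multiplied pointwise, and then to invoke the standard fact that any closed two-sided ideal of a $C^*$-algebra $A$ is automatically invariant under $M(A)$: if $\{e_\lambda\}$ is an approximate unit of $I$ and $m\in M(A)$, then $m\delta=\lim_\lambda(me_\lambda)\delta$ lies in $\overline{I}=I$ because $me_\lambda\in A$ and $\delta\in I$. With this invariance in hand, each $\phi_i\delta_{x_i}$, and hence their finite sum $\eta$, lies in $I$, completing the approximation.
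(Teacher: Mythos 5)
Your argument is correct. The paper does not prove this lemma at all --- it is quoted verbatim from Dixmier \cite[Lemma 10.4.2]{dix82} --- and your partition-of-unity gluing is essentially the standard proof of that result: the only inclusion with content is handled by locally lifting $\gamma$ through sections of $I$, patching with a subordinate partition of unity over the compact set where $\|\gamma\|\geq\varepsilon$, and using closedness of $I$. The one step that genuinely needs care, namely that $\phi\cdot\delta\in I$ for $\phi\in C_c(X)$ and $\delta\in I$, you also handle correctly via the approximate-unit argument showing ideals are invariant under (central) multipliers; the only tacit ingredient there is the standard fact that pointwise multiplication by a function in $C_0(X)$ maps $\Gamma_0(\rho)$ into itself, which is exactly the property the paper itself takes as a hypothesis in Proposition \ref{globalapprox} and which holds for every Banach bundle.
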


\begin{corollary}\label{cor-ideal-section}
    Let $\rho\colon  \mathcal{A} \to X$ be a $C^*$-bundle. If $I$ is a closed two-sided ideal of $\Gamma_0(\rho)$ such that $I_x=\mathcal A_x$ for each $x\in X$, then $I= \Gamma_0(\rho)$.
\end{corollary}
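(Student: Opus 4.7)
The proof plan is essentially a one-line invocation of Lemma \ref{ideal-section}, so I will describe it concisely.

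My approach is to apply Lemma \ref{ideal-section} directly to the ideal $I$. That lemma provides the characterization
\[
I = \{\gamma \in \Gamma_0(\rho) : \gamma(x) \in I_x \text{ for all } x \in X\},
\]
where $I_x := \{\gamma(x) : \gamma \in I\} \subseteq \mathcal{A}_x$. Under the standing hypothesis $I_x = \mathcal{A}_x$ for every $x \in X$, the membership condition $\gamma(x) \in I_x$ becomes the tautology $\gamma(x) \in \mathcal{A}_x$, which holds for \emph{every} section $\gamma \in \Gamma_0(\rho)$ by the very definition of a section. Hence the right-hand side of the displayed equation is all of $\Gamma_0(\rho)$, giving $\Gamma_0(\rho) \subseteq I$. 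Combined with the trivial inclusion $I \subseteq \Gamma_0(\rho)$, this yields $I = \Gamma_0(\rho)$.

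There is no real obstacle here: all the actual content is encapsulated in Lemma \ref{ideal-section}, which itself rests on standard results about $C^*$-bundles from Dixmier \cite{dix82}. The only point to flag is that the definition of $I_x$ uses evaluation at $x$, so the hypothesis $I_x = \mathcal{A}_x$ is a genuine fiberwise condition (not merely a statement about $I$ being dense in each fiber in some weak sense); once this is properly understood, the conclusion is immediate.
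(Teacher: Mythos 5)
Your proof is correct and follows exactly the same route as the paper: apply Lemma \ref{ideal-section} to get $I=\{\gamma\in\Gamma_0(\rho):\gamma(x)\in I_x\text{ for all }x\}$ and observe that the hypothesis $I_x=\mathcal A_x$ makes the membership condition vacuous. Nothing to add.
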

\begin{proof}
We know from Lemma \ref{ideal-section} that $I = \{\gamma\in \Gamma_0(\rho): \gamma(x)\in I_x\}$, but $I_x=\mathcal{A}_x$, so $I= \{\gamma\in \Gamma_0(\rho): \gamma(x)\in \mathcal{A}_x\}$ which is exactly $\Gamma_0(\rho)$.
\end{proof}

\subsection{Noncommutative Tori}\label{subsec:nctori}

We denote by $\op{Skew}_n(\mathbb{R})$ the set of skew-symmetric matrices in $M_n(\mathbb{R})$.

\begin{definition}
    Let $\Theta\in \op{Skew}_n(\mathbb{R})$, then the \textbf{noncommutative $n$-torus determined by} $\mathbf{\Theta}$ is the universal unital $C^*$-algebra generated by $n$-generators $U_1(\Theta),\ldots,U_{n}(\Theta)$ satisfying the relations:
    \begin{enumerate}
        \item $U_i^*(\Theta)=U_{i}^{-1}(\Theta)$ for all $i=1,\ldots,n;$
        \item $U_i(\Theta)U_j(\Theta) = e^{2\pi i\, \Theta_{ij}}U_j(\Theta)U_i(\Theta)$ for all $i,j=1,\ldots,n$.
    \end{enumerate}
    We denote by $\mathcal{A}_{\Theta}$ the noncommutative $n$-torus determined by $\Theta$. 
\end{definition}
\begin{remark}\label{skew}
   There is an equivalent way of determining a noncommutative $n$-torus by just looking at elements of $\mathbb{T}^{n(n-1)/2}\ni \lambda= (\lambda_{i,j})_{1\leq i<j\leq n}$, and from here we can define the relations on the generators $U_1(\lambda),\ldots,U_n(\lambda)$ of noncommutative $n$-torus determined by $\lambda\in \mathbb{T}^{n(n-1)/2}$ to be
    \begin{enumerate}
        \item $U_i(\lambda)^*=U_i^{-1}(\lambda)$ for all $i=1,\ldots,n$;
        \item $U_i(\lambda)U_j(\lambda) = \lambda_{i,j}U_j(\lambda)U_i(\lambda)$ for all $1\leq i<j\leq n$.
    \end{enumerate}
    Similar to the original definition, we denote by $\mathcal{A}_{\lambda}$ the noncommutative $n$-torus determined by $\lambda\in \mathbb{T}^{n(n-1)/2}$. 
    Note that one can go from $\Theta\in \op{Skew}_n(\mathbb{R})$ to $\lambda =\mathbb{T}^{n(n-1)/2}$ simply by the map $\Theta \mapsto \lambda = (e^{2\pi i\Theta_{1,2}}, e^{2\pi i\Theta_{1,3}},\ldots,e^{2\pi i\, \Theta_{n-1,1}})$, so that $\mathcal{A}_{\Theta}\cong \mathcal{A}_{\lambda}$. 
    Note however that while $\Theta\mapsto \lambda$ is surjective, we will never find a global inverse for this map since the complex exponential function itself is not globally invertible. 
\end{remark}

There is yet another way to see these higher dimensional noncommutative tori. For this, we introduce the so-called \textbf{twisted group} $\mathbf{C^*}$\textbf{-algebras} on $\mathbb{Z}^n$. Fix the following: Let $\zeta\colon \mathbb{Z}^{n}\times \mathbb{Z}^{n}\to  \mathbb{T}$ be a normalized $2$-cocycle on $\mathbb{Z}^n$ (\ie it satisfies formulae \eqref{cocycle 1} and \eqref{cocycle 2} on $\mathbb{Z}^{n}$), equip $\mathbb{Z}^{n}$ with the counting measure\footnote{The construction can be done for any Haar measure on $\mathbb{Z}^n$ of course.} and construct $\ell^1(\mathbb{Z}^{n})$. We equip $\ell^1(\mathbb{Z}^n)$ with a $\zeta$-twisted convolution and involution given exactly by replacing $c_L$ with $\zeta$ in Equations \eqref{twisted-structure-a}, that is:
\begin{align*}
    (a_1 *_{\zeta} a_2)(m) &:= \sum_{k\in \mathbb{Z}^n}a_1(k)a_2(m-k)\zeta(k,m-k),\\
    a_1^{*_{\zeta}}(m) &:= \overline{a_1(-m)} \overline{\zeta(m,-m)}
\end{align*}
for all $k,m\in \mathbb{Z}^n$, and $\mathbf{a}_1,\mathbf{a}_2\in \ell^1(\mathbb{Z}^n).$ The corresponding enveloping $C^*$-algebra is denoted $C^*(\mathbb{Z}^n,\zeta)$. We see that $A_L = C^*(\mathbb{Z}^{2d},c_L)$, and it can also be shown that $B_L = C^*\left(\frac{1}{|\det L|}\mathbb{Z}^{2d},\overline{c_{L^{\circ}}}\right)$ if we again denote by $\frac{1}{|\det L|}\mathbb{Z}^{2d}$ the space $\mathbb{Z}^{2d}$ equipped with the counting measure scaled by $\frac{1}{|\det L|}$.
\begin{remark}
    Note that the $C^*$-algebra $C^*(\mathbb{Z}^n, \zeta)$ is exactly the twisted crossed-product generated by the twisted $C^*$-dynamical system $(\mathbb{C},\mathbb{Z}^{n},\op{id},\zeta)$ of Packer and Raeburn \cite{PaRa89} (see the paragraph succeeding Proposition \ref{cont-embed-ell1}). It is useful to realize that their definition of twisted crossed-products are quite general that they include the twisted group $C^*$-algebras.
\end{remark}
\begin{remark}
    The $\zeta$-twisted $\ell^1(\mathbb{Z}^n)$ Banach $*$-algebra that we use to obtain $C^*(\mathbb{Z}^n,\zeta)$ is also a reduced $*$-algebra since its regular representation $\lambda: \ell^1(\mathbb{Z}^n)\to \mathcal{L}(\ell^2(\mathbb{Z}^n))$ $$\lambda(\mathbf{a}_1)\mathbf{a} = \mathbf{a_1}*_{\zeta}\mathbf{a}$$ for $\mathbf{a}_1\in \ell^1(\mathbb{Z}^n)$ and $\mathbf{a}\in \ell^2(\mathbb{Z}^n)$ is faithful (see \cite{GrLe16} and \cite{SaWi20}). Therefore even in the most general case, we have that the $\zeta$-twisted Banach $*$-algebra $\ell^1(\mathbb{Z}^n)$ is densely embedded in $C^*(\mathbb{Z}^n,\zeta).$ We find this to be a useful fact, since then we can always densely define $*$-homomorphisms on $C^*(\mathbb{Z}^n,\zeta)$ through $\ell^1(\mathbb{Z}^n)$. 
\end{remark}
\begin{remark}
    For $M\in \op{M}_n(\mathbb{R})$, we shall use the notation $\zeta_{M}\colon\mathbb{Z}^n\times \mathbb{Z}^n \to \mathbb{T}$ to denote the $2$-cocycle induced by $M$:
    \begin{align}\label{remark:induced cocycle}
        \zeta_M(k,m) = e^{2\pi i k^T M m}, \qquad \forall k,m\in \mathbb{Z}^n.
    \end{align}
\end{remark}

Before we proceed, we recognize, and we will make references to the work of Gjertsen in \cite{Gj23}, who carried out very explicit computations on the noncommutative tori and twisted $C^*$-algebras. Now, if we have a noncommutative $n$-torus $\mathcal{A}_{\Theta}$, then by definition of $\mathcal{A}_{\Theta}$ as a free $*$-algebra generated by $\{U_i(\Theta)\}_{i=1}^n$ and its inherent commutation relations, a typical element in $\mathcal{A}_{\Theta}$ is the norm-limit of the elements of the following form:
\begin{align}\label{typical-nctori}
    \sum_{k\in \mathbb{Z}^n}a(k)U_{1}^{k_1}(\Theta)\cdot \ldots \cdot U_n^{k_n}(\Theta)
\end{align}
with $k=(k_1,...,k_n)\in \mathbb{Z}^n$, and $\mathbf{a}=\{a(k)\}_{k\in \mathbb{Z}^{n}}\in c_{00}(\mathbb{Z}^n).$ If we use the notation $U^k(\Theta)=U^{k_1}_1\cdot \ldots \cdot U_{n}^{k_n}(\Theta)$, then by a careful book-keeping of the appearing phase-factors, as was done in \cite[Lemma 4.2.3]{Gj23}, one finds that
\begin{align}\label{gives-cocycle}
U^k(\Theta)U^m(\Theta) = \op{exp}\left(2\pi i \sum_{i=1}^{n-1}\sum_{j=i+1}^{n}m_i\Theta_{j,i}k_j \right)U^{k+m}.
\end{align}
Notice that 
\begin{align*}
    \sum_{i=1}^{n-1}\sum_{j=i+1}^{n}m_i\Theta_{j,i}k_j&=-\sum_{i=1}^{n-1}\sum_{j=i+1}^{n}m_i\Theta_{i,j}k_j \\
    &=-m^T \Theta^{\op{up}}k\\
    &=k^T(-\Theta^{\op{up}})^Tm=k^T\Theta^{\op{low}}m
\end{align*}
where $\Theta^{\op{up}}$ and $\Theta^{\op{low}}$ are the matrices coinciding with the strictly upper and strictly lower triangular parts of $\Theta$ respectively. We obtain $\zeta_{\Theta^{\op{low}}}\colon \mathbb{Z}^n\times \mathbb{Z}^n\to \mathbb{T}$, the $2$-cocycle induced by $\Theta^{\op{low}}$ using the notation of \eqref{remark:induced cocycle}, and see an emerging structure on $\mathcal{A}_{\Theta}$. Consider two typical elements $\mathbf{a}(\Theta):=\sum_{k\in \mathbb{Z}^n}a(k)U^k(\Theta)$ and $\mathbf{b}(\Theta):=\sum_{m\in \mathbb{Z}^n}b(m)U^m(\Theta)$, then we find that
\begin{align*}
    \mathbf{a}(\Theta)\cdot\mathbf{b}(\Theta) &= \left(\sum_{k\in \mathbb{Z}^n}a(k)U^k(\Theta)\right)\cdot \left(\sum_{k\in \mathbb{Z}^n}b(m)U^m(\Theta)\right)\\
    &=\sum_{k,m\in \mathbb{Z}^n}a(k)b(m)U^k(\Theta)U^m(\Theta) \\
    &=\sum_{k,m\in \mathbb{Z}^n}a(k)b(m)\zeta_{\Theta^{\op{low}}}(k,m) U^{k+m} \\
    &= \sum_{k,m\in \mathbb{Z}^n}a(k)b(m-k)\zeta_{\Theta^{\op{low}}}(k,m-k)U^m.
\end{align*}
The last expression should remind us of twisted convolutions, in fact if we make then the identification $\delta_k\mapsto U^k$ for all $k\in \mathbb{Z}^n$ then we find that the multiplication above is exactly the multiplication on the twisted $C^*$-algebra $C^*(\mathbb{Z}^n,\zeta_{\Theta^{\op{low}}}).$ The explicit isomorphism is provided by $\pi_{\Theta}\colon C^*(\mathbb{Z}^n,\zeta_{\Theta^{\op{low}}})\to \mathcal{A}_{\Theta}$, densely defined via:
\begin{align}\label{canonical-identification}
    \pi_{\Theta}(\mathbf{a}) = \sum_{k\in\mathbb{Z}^n}a(k)U^k(\Theta), \qquad\forall \mathbf{a}\in\ell^1(\mathbb{Z}^n)\hookrightarrow C^*(\mathbb{Z}^n,\zeta_{\Theta^{\op{low}}}).
\end{align}
We can easily verify this by checking that the map is a $*$-homomorphism on the generators. Let $i,j=1,...,n$ then we compute:
\begin{align*}
    \pi_{\Theta}(\delta_{e_i}*\delta_{e_j}) = \pi_{\Theta}(\zeta_{\Theta^{\op{low}}}(e_i,e_j)\delta_{e_i+e_j}) = \zeta_{\Theta^{\op{low}}}(e_i,e_j) \pi_{\Theta}(\delta_{e_i+e_j}).
\end{align*}
Suppose $i\leq j$, then we find that $\zeta_{\Theta^{\op{low}}}(e_i,e_j)=e^{2\pi i 0}=1$, and so $\pi_{\Theta}(\delta_{e_i}*\delta_{e_j})=1\cdot \pi_{\Theta}(\delta_{e_i+e_j})=U_iU_j = \pi_{\Theta}(\delta_{e_i})\pi_{\Theta}(\delta_{e_j}).$ On the other hand, if $i\geq j$, we find that $\zeta_{\Theta^{\op{low}}}(e_i,e_j)=e^{2\pi i \Theta_{i,j}}$, and so $\pi_{\Theta}(\delta_{e_i}*\delta_{e_j})=e^{2\pi i \Theta_{i,j}}U_jU_i=U_iU_j = \pi_{\Theta}(\delta_i)\pi_{\Theta}(\delta_j).$ Furthermore, we also have:
\begin{align*}
    \pi_{\Theta}(\delta_{e_i}^*)=\overline{\zeta_{\Theta}(e_i,-e_i)} \pi_{\Theta}(\delta_{-e_i}) =e^{-2\pi i \Theta_{i,i}^{\op{low}}}U_i^{-1} = U_i^* = \pi_{\Theta}(\delta_{e_i})^*.
\end{align*}
We summarize what we have in a proposition:
\begin{proposition}\label{pi-theta}
    The map $\pi_{\Theta}$ of Equation \eqref{canonical-identification} extends to an isomorphism $\pi_{\Theta}\colon C^*(\mathbb{Z}^n,\zeta_{\Theta^{\op{low}}})\to \mathcal{A}_{\Theta}.$
\end{proposition}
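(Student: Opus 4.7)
The plan is to upgrade the computation the authors have already carried out on the generators $\delta_{e_i}$ into a full $C^*$-isomorphism by exploiting two universal properties at once: the enveloping $C^*$-algebra structure on $C^*(\mathbb Z^n,\zeta_{\Theta^{\op{low}}})$, and the defining universal property of $\mathcal A_\Theta$.

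First I would verify that $\pi_\Theta$, defined on $\ell^1(\mathbb Z^n)$ with its $\zeta_{\Theta^{\op{low}}}$-twisted convolution and involution by the formula \eqref{canonical-identification}, is in fact a $*$-algebra homomorphism on all of $\ell^1(\mathbb Z^n)$. The authors have already done the work on the generating $\delta_{e_i}$; since $\pi_\Theta$ is linear by construction, the multiplicativity extends to the dense subspace $c_{00}(\mathbb Z^n)$ of finitely supported sequences by the cocycle identity combined with \eqref{gives-cocycle} (which is exactly the reason $\zeta_{\Theta^{\op{low}}}$ was produced in the first place). Continuity then allows one to pass from $c_{00}(\mathbb Z^n)$ to $\ell^1(\mathbb Z^n)$: since each $U^k(\Theta)$ is a unitary in $\mathcal A_\Theta$, we have $\|\pi_\Theta(\mathbf a)\|_{\mathcal A_\Theta}\le \|\mathbf a\|_1$, so $\pi_\Theta$ is an $\ell^1$-norm decreasing $*$-homomorphism from the twisted Banach $*$-algebra $\ell^1(\mathbb Z^n)$ into $\mathcal A_\Theta$.

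Next, by the very definition of the enveloping $C^*$-algebra, $\pi_\Theta$ extends uniquely to a $C^*$-homomorphism $\pi_\Theta\colon C^*(\mathbb Z^n,\zeta_{\Theta^{\op{low}}})\to \mathcal A_\Theta$. Surjectivity is immediate: the image contains every $U_i(\Theta)=\pi_\Theta(\delta_{e_i})$, hence (since $*$-homomorphisms of $C^*$-algebras have closed range) it is all of $\mathcal A_\Theta$.

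The remaining, and main, step is injectivity, for which I would construct an inverse using the universal property of $\mathcal A_\Theta$. Writing $u_i:=\delta_{e_i}\in C^*(\mathbb Z^n,\zeta_{\Theta^{\op{low}}})$, one checks directly from the twisted convolution/involution that $u_i^*=u_i^{-1}$ and, for $i<j$, that $u_j u_i=\zeta_{\Theta^{\op{low}}}(e_j,e_i)\,\delta_{e_i+e_j}=e^{2\pi i\Theta_{j,i}}\delta_{e_i+e_j}=e^{-2\pi i\Theta_{i,j}}u_iu_j$; rearranging gives the noncommutative-torus relation $u_iu_j=e^{2\pi i\Theta_{i,j}}u_ju_i$. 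By the universality of $\mathcal A_\Theta$ there is then a unique $*$-homomorphism $\psi\colon\mathcal A_\Theta\to C^*(\mathbb Z^n,\zeta_{\Theta^{\op{low}}})$ sending $U_i(\Theta)\mapsto u_i$. The two compositions $\psi\circ\pi_\Theta$ and $\pi_\Theta\circ\psi$ are $C^*$-homomorphisms that agree with the identity on the generators $\delta_{e_i}$ and $U_i(\Theta)$ respectively; by continuity and density of the $*$-algebras generated by these elements, both compositions are the identity, so $\pi_\Theta$ is a $C^*$-isomorphism. The main subtlety I expect to negotiate carefully is the verification that $\psi$ is well-defined, i.e.\ that invoking the universal property of $\mathcal A_\Theta$ only requires checking the pairwise commutation relations on the $u_i$ (and not on the reordered monomials $u^k$), which is precisely what the form of the cocycle $\zeta_{\Theta^{\op{low}}}$ arising from \eqref{gives-cocycle} was tailored to ensure.
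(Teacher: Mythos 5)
Your proposal is correct and follows essentially the same route as the paper: the paper's proof consists precisely of the generator computations verifying that $\pi_{\Theta}$ is a $*$-homomorphism on the $\delta_{e_i}$, with the passage to an isomorphism resting on the two universal properties you invoke explicitly. Your construction of the inverse $\psi$ via the universal property of $\mathcal{A}_{\Theta}$ simply fills in the injectivity step the paper leaves implicit, and your verification of the commutation relations for the $u_i=\delta_{e_i}$ is accurate.
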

On the other hand, if we consider an arbitrary $2$-cocycle $\zeta\colon\mathbb{Z}^n\times \mathbb{Z}^n \to \mathbb{T}$, it is well-known that $C^*(\mathbb{Z}^n,\zeta)$ is isomorphic to some noncommutative torus. In fact it follows by an elementary computation that:
\begin{align*}
    \delta_{e_i}*_{\zeta}\delta_{e_j}=\zeta(e_i,e_j)\overline{\zeta(e_j,e_i)}\delta_{e_j}*_{\zeta}\delta_{e_i}, \qquad \forall i,j=1,...,n.
\end{align*}
We can always find $\Theta\in \op{Skew}_n(\mathbb{R})$ such that $e^{2\pi i\Theta_{i,j}}=\zeta(e_i,e_j)\overline{\zeta(e_j,e_i)}$. Merely by looking at the generators $\{\delta_{e_i}\}_{i=1}^n$, we find that $C^*(\mathbb{Z}^n,\zeta)\cong \mathcal{A}_{\Theta}$. However for our purposes, we want to find an explicit isomorphism that respects the typical presentation \eqref{typical-nctori} of an element in a noncommutative torus. By also carefully keeping track of the appearing $2$-cocycles, it can be shown, as was done in \cite[Lemma 4.2.4]{Gj23} that for any $2$-cocycle $\zeta\colon\mathbb{Z}^n\times \mathbb{Z}^n\to \mathbb{T}$, there exists a unique $P_{\zeta}\colon\mathbb{Z}^n\to \mathbb{T}$ such that for all $k=(k_1,\ldots,k_n)\in \mathbb{Z}^n$
\begin{align*}
    \delta_{k}= P_{\zeta}(k) \delta_{e_1}^{k_1} *_{\zeta}\ldots *_{\zeta} \delta_{e_n}^{k_n}
\end{align*}
where $\mathbf{a}^{k_i}$ denotes $k_i$-exponentiation of $\mathbf{a}$ in $C^*(\mathbb{Z}^n,\zeta).$ The map $P_{\zeta}$ is explicitly given by:
\begin{align}\label{collected-cocycles}
    P_{\zeta}(k) = \prod_{i=1}^{n-1} \overline{\zeta\left(k_ie_i,\sum_{j=i+1}^{n}k_je_j \right) } \times \prod_{i=1}^{n}\prod_{j=1}^{k_i-1}\overline{\zeta(e_i,(k_i-j)e_i)}
\end{align}
and it can be seen, that as expected, $P_{\zeta}(e_i)=1$ and $P_{\zeta}(e_i+e_j)=\overline{\zeta}(e_i,e_j)$ whenever $i<j$ for all $i,j=1,\ldots,n$. Each element in $ C^*(\mathbb{Z}^n,\zeta)$ can be densely approximated by elements of the form:
\begin{align}\label{typical-a}
    \mathbf{a} = \sum_{k\in \mathbb{Z}^n}a(k)P_{\zeta}(k)\delta_{e_1}^{k_1}*_{\zeta}\ldots *_{\zeta}\delta_{e_n}^{k_n}.
\end{align}
The map
\begin{align*}
    \mathbf{a}=\sum_{k\in \mathbb{Z}^n}a(k)P_{\zeta}(k)\delta_{e_1}^{k_1}*_{\zeta}\ldots *_{\zeta} \delta_{e_n}^{k_n}\mapsto \sum_{k\in \mathbb{Z}^n}a(k)P_{\zeta}(k) U^k(\Theta), \qquad \mathbf{a}\in \ell^1(\mathbb{Z}^n)\hookrightarrow C^*(\mathbb{Z}^n,\zeta)
\end{align*}
extends to a $C^*$-isomorphism, mapping $C^*(\mathbb{Z}^n,\zeta)$ to $\mathcal{A}_{\Theta}.$ Note that this is consistent with Proposition \ref{pi-theta}, given that in the case $\zeta = \zeta_{\Theta^{\op{low}}}$, we have $P_{\zeta_{\Theta^{\op{low}}}}(k)=1$ for all $k\in \mathbb{Z}^n.$ We do not find much practical use for the general isomorphism $C^*(\mathbb{Z}^n,\zeta)\cong \mathcal{A}_{\Theta}$ for an arbitrary $\zeta$. It is enough for us that we always have the general presentation \eqref{typical-a} for twisted $C^*$-algebras, and that we know Proposition \ref{pi-theta}. 
\begin{definition}
    Two (continuous) $2$-cocycles $\zeta_1,\zeta_2: \mathbb{Z}^{n}\times\mathbb{Z}^n\to \mathbb{T}$ are said to be \textbf{cohomologous}\footnote{This definition actually extends to any locally compact Abelian groups}, denoted by $\zeta_1\cong \zeta_2$, if and only if there exists a (continuous) map $\rho: \mathbb{Z}^n\to \mathbb{T}$ called a $\mathbf{1}$\textbf{-cochain} such that
    \begin{align}\label{cohomologous}
        \zeta_1(k,m) = \zeta_2(k,m)\frac{\rho(k+m)}{\rho(k)\rho(m)}, \qquad\forall (k,m)\in\mathbb{Z}^n\times \mathbb{Z}^n.
    \end{align}
\end{definition}
It is not hard to see that cohomology on $2$-cocycles is an equivalence relation. Since we consider normalized $2$-cocycles, then they must satisfy \eqref{cocycle 2} on $\mathbb{Z}^n$, it follows that $1$-cochains satisfy $\rho(0)=1.$ Now, for two normalized $2$-cocycles $\zeta_1,\zeta_2 \colon \mathbb{Z}^n\times \mathbb{Z}^n \to \mathbb{T}$, it turns out that $\zeta_1\cong \zeta_2$ implies $C^*(\mathbb{Z}^n,\zeta_1)\cong C^*(\mathbb{Z}^n,\zeta_2)$. This is known (\eg see the discussion on twisted $C^*$-algebras in \cite{ElLi08}), and it will be useful for us to write out the exact isomorphism that identifies the two.
\begin{proposition}\label{induced-isom-from-cochain}
    If $\zeta_1\cong \zeta_2$ by a $1$-cochain $\rho$ as in \eqref{cohomologous}, define for each $\mathbf{a}\in \ell^1(\mathbb{Z}^n)\hookrightarrow C^*(\mathbb{Z}^n,\zeta_1)$:
    \begin{align*}
    \widetilde{\rho}(\mathbf{a}) = \mathbf{a} \cdot \overline{\rho}.
    \end{align*}
    Then $\widetilde{\rho}$ extends to a $C^*$-isomorphism $\widetilde{\rho}:C^*(\mathbb{Z}^n,\zeta_1)\to C^*(\mathbb{Z}^n,\zeta_2).$
    Furthermore, for each $k=(k_1,\ldots,k_n)\in \mathbb{Z}^n$, fix:
    \begin{align*}
        \rho^k := \rho^{k_1}(e_1)\cdot \ldots \cdot \rho^{k_n}(e_n),
    \end{align*}
    then $P_{\zeta_1}(k)\rho(k)=P_{\zeta_2}(k)\rho^k$ for all $k\in \mathbb{Z}^n.$
\end{proposition}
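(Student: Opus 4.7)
My plan is to reduce everything to a direct computation on the dense subalgebra $\ell^1(\mathbb{Z}^n)$, exploiting that $\widetilde{\rho}$ is just pointwise multiplication by the unimodular function $\overline{\rho}$, hence automatically an isometric bijection on $\ell^1(\mathbb{Z}^n)$ whose inverse is pointwise multiplication by $\rho$. For the first claim, the strategy is: (i) check that $\widetilde{\rho}$ is a $*$-homomorphism between the Banach $*$-algebras $(\ell^1(\mathbb{Z}^n),*_{\zeta_1})$ and $(\ell^1(\mathbb{Z}^n),*_{\zeta_2})$; (ii) conclude by the universal property of $C^*$-enveloping algebras that it extends to a $C^*$-homomorphism $C^*(\mathbb{Z}^n,\zeta_1)\to C^*(\mathbb{Z}^n,\zeta_2)$; (iii) note that the analogous construction with $\overline{\rho}$ replaced by $\rho$ (using $\zeta_2\cong\zeta_1$ via the 1-cochain $\overline{\rho}$) yields a two-sided inverse, so the extension is a $C^*$-isomorphism.

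For step (i), a short calculation gives
\begin{align*}
\widetilde{\rho}(\mathbf a *_{\zeta_1}\mathbf b)(m)
&=\overline{\rho(m)}\sum_{k}a(k)b(m-k)\zeta_2(k,m-k)\frac{\rho(m)}{\rho(k)\rho(m-k)}\\
&=\sum_{k}a(k)\overline{\rho(k)}\,b(m-k)\overline{\rho(m-k)}\,\zeta_2(k,m-k)
=\bigl(\widetilde{\rho}(\mathbf a) *_{\zeta_2} \widetilde{\rho}(\mathbf b)\bigr)(m),
\end{align*}
where I used that $1/\rho=\overline{\rho}$ since $\rho$ is $\mathbb{T}$-valued. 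Compatibility with the involution is similar: expanding $\widetilde\rho(\mathbf a^{*_{\zeta_1}})(m)$ and $\widetilde\rho(\mathbf a)^{*_{\zeta_2}}(m)$, both simplify to $\overline{a(-m)}\,\rho(-m)\,\overline{\zeta_2(m,-m)}$ using the normalization $\rho(0)=1$ (which follows from $\zeta_i(0,0)=1$ and the cohomology relation evaluated at $(0,0)$).

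For the second claim, the clean idea is to apply the homomorphism $\widetilde{\rho}$ to the universal factorization $\delta_k=P_{\zeta_1}(k)\,\delta_{e_1}^{k_1}*_{\zeta_1}\cdots *_{\zeta_1}\delta_{e_n}^{k_n}$ provided earlier. The left side becomes $\overline{\rho(k)}\,\delta_k$. On the right side, $\widetilde{\rho}(\delta_{e_i})=\overline{\rho(e_i)}\,\delta_{e_i}$, and since scalars factor out of the twisted convolution, the product equals $\overline{\rho^k}\cdot\delta_{e_1}^{k_1}*_{\zeta_2}\cdots*_{\zeta_2}\delta_{e_n}^{k_n}$. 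Applying the analogous factorization for $\zeta_2$ in reverse, i.e. $\delta_{e_1}^{k_1}*_{\zeta_2}\cdots*_{\zeta_2}\delta_{e_n}^{k_n}=\overline{P_{\zeta_2}(k)}\,\delta_k$, we arrive at
\begin{equation*}
\overline{\rho(k)}\,\delta_k=P_{\zeta_1}(k)\,\overline{\rho^k}\,\overline{P_{\zeta_2}(k)}\,\delta_k,
\end{equation*}
and taking complex conjugates (all factors are in $\mathbb T$) gives $P_{\zeta_1}(k)\rho(k)=P_{\zeta_2}(k)\rho^k$.

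The only subtlety I anticipate is the interpretation of $\delta_{e_i}^{k_i}$ for negative $k_i$: in the $*$-algebra $\delta_{e_i}^{-1}=\overline{\zeta_j(e_i,-e_i)}\,\delta_{-e_i}$ depends on the cocycle, so the passage from the $\zeta_1$-factorization to the $\zeta_2$-factorization under $\widetilde{\rho}$ requires one to check that the scalar $\overline{\rho^k}$ that pops out is the \emph{same} for both conventions. This is really the content of the explicit formula for $P_\zeta$ in (\ref{collected-cocycles}), and it can be handled by the same bookkeeping of cocycle corrections that produced $P_\zeta$ in the first place; I would either reduce to $k_i\ge 0$ and invoke the symmetric identities for involution, or quote the computation from Gjertsen \cite{Gj23} directly. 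Apart from this, the argument is entirely mechanical.
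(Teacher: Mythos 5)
Your proposal is correct and follows essentially the same route as the paper: verify the $*$-homomorphism property on the $\ell^1$-level using the cohomology relation, extend to the enveloping $C^*$-algebras, and obtain the identity $P_{\zeta_1}(k)\rho(k)=P_{\zeta_2}(k)\rho^k$ by applying $\widetilde{\rho}$ to the factorization $\delta_k=P_{\zeta_1}(k)\,\delta_{e_1}^{k_1}*_{\zeta_1}\cdots *_{\zeta_1}\delta_{e_n}^{k_n}$. Two small remarks: your inverse-map argument (multiplication by $\rho$, coming from $\zeta_2\cong\zeta_1$ via $\overline{\rho}$) is in fact the cleaner way to justify that the extension is bijective, whereas the paper checks multiplicativity only on the generators $\delta_{e_i}$ and passes somewhat quickly from injectivity on the dense subalgebra to injectivity of the extension; and the subtlety you flag about $\delta_{e_i}^{k_i}$ for $k_i<0$ resolves itself, since a unital $*$-homomorphism sends inverses to inverses, so $\widetilde{\rho}(\delta_{e_i}^{k_i})=\overline{\rho(e_i)}^{\,k_i}\delta_{e_i}^{k_i}$ holds for all $k_i\in\mathbb Z$ without any extra bookkeeping.
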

\begin{proof}
    $\widetilde{\rho}$ is obviously linear, injective, and surjective as a map from $\ell^1(\mathbb{Z}^n)$ to itself. Next we show that it is a $*$-homomorphism on $\ell^1(\mathbb{Z}^n)$ respecting the relevant twisted structures, and it is enough to show this property on the generators. Let $i,j=1,\ldots n.$ The following computations are valid due to \eqref{cohomologous}:
    \begin{align*}
        \widetilde{\rho}(\delta_{e_i}*_{\zeta_1}\delta_{e_j}) &= \widetilde{\rho}(\zeta_1(e_i,e_j)\delta_{e_i+e_j}) = \zeta_1(e_i,e_j)\delta_{e_i+e_j} \overline{\rho}(e_i+e_j)\\
        &=\zeta_1(e_i,e_j)\overline{\zeta_2}(e_i,e_j)\overline{\rho}(e_i+e_j)(\delta_{e_i}*_{\zeta_2}\delta_{e_j})\\ &=(\overline{\rho}(e_i)\delta_{e_i}) *_{\zeta_2}(\overline{\rho}(e_j)\delta_{e_j})=\widetilde{\rho}(\delta_{e_i})*_{\zeta_{2}}\widetilde{\rho}(\delta_{e_j}).
    \end{align*}
    Furthermore,
    \begin{align*}
        \widetilde{\rho}(\delta_{e_i}^{*_{\zeta_1}})&= \widetilde{\rho}(\overline{\zeta_1}(e_i,-e_i)\delta_{-e_i}) = \overline{\zeta_1}(e_i,-e_i)\delta_{-e_i}\overline{\rho}(-e_i)\\
        &=\overline{\zeta_{2}}(e_i,-e_i)\rho(e_i)\delta_{-e_i} = \rho(e_i)\delta_{e_i}^{*_{\zeta_2}} \\
        &=(\overline{\rho}(e_i)\delta_{e_i})^{*_{\zeta_2}} = \widetilde{\rho}(\delta_{e_i})^{*_{\zeta_2}} .
    \end{align*}
    Therefore, $\widetilde{\rho}$ extends to an injective and surjective $*$-homomorphism $\widetilde{\rho}: C^*(\mathbb{Z}^n,\zeta_1)\to C^*(\mathbb{Z}^n, \zeta_2)$. Recall that injective $*$-homomorphisms on $C^*$-algebras are automatically isometric, therefore $\widetilde{\rho}$ is a $C^*$-isomorphism. Next, we write $\delta_{k} = P_{\zeta_1}(k)\delta_{e_1}^{k_1}*_{\zeta_1}\ldots *_{\zeta_1} \delta_{e_n}^{k_n},$ hence we have:
\begin{align*}
    \widetilde{\rho}(\delta_k) = \delta_k\overline{\rho}(k) = \widetilde{\rho}(P_{\zeta_1}(k)\delta_{e_1}^{k_1}*_{\zeta_1}\ldots *_{\zeta_1} \delta_{e_n}^{k_n}) =  P_{\zeta_1}(k)\overline{\rho}^k\delta_{e_1}^{k_1}*_{\zeta_2}\ldots *_{\zeta_2} \delta_{e_n}^{k_n} = P_{\zeta_1}(k)\overline{\rho}^k \overline{P_{\zeta_2}}(k)\delta_k,
\end{align*}
therefore $P_{\zeta_1}(k)\rho(k)=P_{\zeta_2}(k)\rho^k.$
\end{proof}
 \begin{remark}\label{cocycle difference}
    Regarding cohomology, the $2$-cocycle $c$ that we used to give $\ell^1(\mathbb{Z}^{2d})$ a twisted structure is different from the one suggested in some references, e.g \cite{FeKa04,GrLe04}. 
    This is because the time-frequency shifts considered in these papers were of the form $T_xM_{\omega}$, amounting to a cocycle $c'(z_1,z_2)=e^{2\pi i\, z_2^T Kz_1}$ instead of the one given by \eqref{heis-cos}. 
    The difference however is inconsequential since $c$ and $c'$ are cohomologous via the $1$-cochain $\rho(z)=e^{-2\pi i \omega\cdot x}$ for $z=(x,\omega)\in \mathbb{R}^{2d}$. Therefore they generate isomorphic $C^*$-algebras.
\end{remark}
\section{Results}\label{section:results}

\subsection{Deforming Noncommutative Tori}\label{subsec:general-nctori-deform}

Consider the noncommutative $n$-tori determined by skew-symmetric matrices $\Theta\in\operatorname{Skew}_n(\mathbb R)$ (or by $\lambda\in \mathbb{T}^{n(n-1)/2}$, see Remark \ref{skew}). 
We see that as we vary $\Theta$ (or $\lambda$) we find possibly different  $C^*$-algebras, hence it is a reasonable question to ask if families of noncommutative $n$-tori define a $C^*$-bundle over the space $\op{Skew}_n(\mathbb{R})$ (or $\mathbb{T}^{n(n-1)/2}$) as in Definition \ref{bundledef}. 
The answer is affirmative, but the precise formulation can be quite technical, there are related `deformation' results in this direction. The case $n=2$ is quite well studied in the past, note that in this case there really is just one parameter $\lambda\in \mathbb{T}$ fully determining the noncommutative $2$-torus $\mathcal{A}_{\lambda}$. It was noted by Elliot in \cite{El82} that the $C^*$-algebras $\{\mathcal{A}_{\lambda}\}_{\lambda\in \mathbb{T}}$ do form a $C^*$-bundle, but the associated $C^*$-algebra of sections (that vanish at infinity) was not really expounded on. It turns out, due to Anderson and Paschke \cite{AnPa89}, that if $H_3$ is the so-called discrete $3$-dimensional Heisenberg group, then the group $C^*$-algebra $C^*(H_3)$ is the section $C^*$-algebra for the $C^*$-bundle of noncommutative $2$-tori. A natural generalization of the Heisenberg group $H_3$ consistent with this deformation result is given by the the free nilpotent group of class $2$ and rank $n$, denoted by $G(n)$. In this case, we have $G(2)=H_3$, while in general $C^*(G(n))$ decomposes as a $C^*$-bundle of higher-dimensional noncommutative $n$-tori fibered over $\mathbb{T}^{n(n-1)/2}$ \cite{Omla15}.

There are, on the other hand, explicit results regarding $\frac{1}{2}$-Hölder continuity of the unitary generators of the noncommutative $n$-tori. 
Let us again go back to the $n=2$ case, the classic Haagerup and  Rørdam result \cite{HaRo95} is that for a fixed Hilbert $H$, there exist paths $u,v\colon [0,1]\to \mathcal{L}(H)$ of unitary operators such that for all $\theta\in [0,1]$, $u(\theta)v(\theta) = e^{2\pi i\, \theta}v(\theta)u(\theta)$ and there exists a $C>0$ where $\max\{\|u(\theta)-u(\theta')\|,\|v(\theta)-v(\theta')\|\}\leq C |\theta-\theta'|^{1/2}$. 
This $\frac{1}{2}$-Hölder continuity generalizes to $n>2$, as was shown by Gao in \cite{Ga18}. 
The deformation results on the unitary generators can be lifted to a deformation result of their noncommutative polynomials (see the short proof of Theorem \ref{malte} below), which would allow us to construct a $C^*$-bundle whose fibers coincide with the higher dimensional noncommutative tori. 
We shall also borrow techniques from the theory of dilations \cite{GeSh20,GPSS21}, which not only simplifies Gao's result but will also allow us to obtain more concrete deformation estimates.

Let us recall the isomorphism $\pi_{\Theta}:C^*(\mathbb{Z}^n,\zeta_{\Theta^{\op{low}}})\to \mathcal{A}_{\Theta}$ from Equation \eqref{canonical-identification}, we have the following estimate involving the weighted sequence space $\ell^1_{\nu}(\mathbb{Z}^{n})$ (treated as a dense subspace of $C^*(\mathbb{Z}^n,\zeta_{\Theta^{\op{low}}})$).
\begin{theorem}\label{malte}
    There is a constant $C>0$ such that for all $\mathbf{a}\in \ell^1_{\nu}(\mathbb{Z}^n)$ and $\Theta,\Theta'\in \op{Skew}_n(\mathbb{R}):$
    \begin{align*}
        \bigl| \|\pi_{\Theta}(\mathbf{a})\|-\|\pi_{\Theta'}(\mathbf{a})\| \bigr|\leq C \|\mathbf{a}\|_{\ell^1_{\nu}} \|\Theta-\Theta'\|^{1/2}.
    \end{align*}
\end{theorem}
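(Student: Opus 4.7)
The plan is to reduce the $\frac{1}{2}$-Hölder estimate on polynomials to the known $\frac{1}{2}$-Hölder continuity of the unitary generators of the noncommutative tori, and then lift it to arbitrary noncommutative monomials $u^k(\Theta)$ by a telescoping argument that picks up a factor of $|k|_{\ell^1}$; this factor is precisely what the weight $\nu(k)=1+|k|$ in the $\ell^1_\nu$-norm is designed to absorb.

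First, invoking Haagerup--Rørdam \cite{HaRo95} in the two-generator case and its extension by Gao \cite{Ga18} to arbitrary $n$, in the sharpened form provided by the dilation-theoretic techniques of \cite{GeSh20,GPSS21}, one realizes all noncommutative $n$-tori simultaneously on a single Hilbert space $H$: for every $\Theta\in\op{Skew}_n(\mathbb R)$ there are unitaries $u_1(\Theta),\dots,u_n(\Theta)\in\mathcal L(H)$ satisfying the defining relations of $\mathcal A_\Theta$, together with a constant $C_0>0$ independent of $\Theta$ such that
\begin{equation*}
\|u_i(\Theta)-u_i(\Theta')\|\le C_0\,\|\Theta-\Theta'\|^{1/2}\qquad(i=1,\dots,n),
\end{equation*}
and such that the induced $*$-representations $\rho_\Theta\colon\mathcal A_\Theta\to\mathcal L(H)$ (sending $U_i(\Theta)\mapsto u_i(\Theta)$) are \emph{faithful}. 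Isometry of each $\rho_\Theta$ then identifies the universal $C^*$-norm with the operator norm on $H$: $\|\pi_\Theta(\mathbf a)\|=\bigl\|\sum_{k\in\mathbb Z^n}a(k)\,u^k(\Theta)\bigr\|_{\mathcal L(H)}$, with $u^k(\Theta):=u_1(\Theta)^{k_1}\cdots u_n(\Theta)^{k_n}$.

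With these preparations in place, the standard telescoping identity
\begin{equation*}
V_1\cdots V_m-W_1\cdots W_m=\sum_{j=1}^m V_1\cdots V_{j-1}(V_j-W_j)W_{j+1}\cdots W_m,
\end{equation*}
combined with $\|V_j\|=\|W_j\|=1$ and the elementary unitary identity $\|u^{-1}-v^{-1}\|=\|u-v\|$ (which handles negative exponents), yields for every $k\in\mathbb Z^n$ the monomial estimate $\|u^k(\Theta)-u^k(\Theta')\|\le C_0\,|k|\,\|\Theta-\Theta'\|^{1/2}$, where $|k|=|k_1|+\cdots+|k_n|$. Summing against $|a(k)|$ and using $|k|\le\nu(k)$, one concludes
\begin{equation*}
\bigl\|\rho_\Theta(\pi_\Theta(\mathbf a))-\rho_{\Theta'}(\pi_{\Theta'}(\mathbf a))\bigr\|\le C_0\,\|\mathbf a\|_{\ell^1_\nu}\,\|\Theta-\Theta'\|^{1/2},
\end{equation*}
and the reverse triangle inequality on operator norms, combined with the isometry of $\rho_\Theta$ and $\rho_{\Theta'}$, delivers the claim with $C=C_0$.

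The main obstacle is the preparatory step: Gao's theorem on its own produces only a Hölder continuous family of unitary generators on a common Hilbert space, but converting operator-norm estimates on $\mathcal L(H)$ into estimates on the universal $C^*$-norms $\|\cdot\|_{\mathcal A_\Theta}$ requires that the induced representations $\rho_\Theta$ be faithful \emph{uniformly} in the parameter $\Theta$. This is exactly where the dilation-theoretic refinements of \cite{GeSh20,GPSS21} are needed; once this uniform faithfulness is in hand, the remainder of the argument is purely formal telescoping and summation against the weight $\nu$.
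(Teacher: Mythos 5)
Your argument is correct and coincides with the paper's first ("short") proof: invoke Gao's theorem to obtain faithful representations with $\frac{1}{2}$-Hölder continuous unitary generators on a common Hilbert space, telescope to get the monomial bound $\|u^k(\Theta)-u^k(\Theta')\|\leq C_0|k|\,\|\Theta-\Theta'\|^{1/2}$, sum against $|a(k)|$ using $|k|\leq\nu(k)$, and finish with the reverse triangle inequality. One small correction to your closing paragraph: the dilation-theoretic results of \cite{GeSh20,GPSS21} are not needed to secure faithfulness (Gao's representations are already faithful, and faithfulness is a property of each individual $\rho_\Theta$, not something that must hold "uniformly"); in the paper they serve only to produce the explicit constant $C=2\sqrt{\pi}$ in a second, independent proof.
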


\noindent Let us present two proofs, a short one with bad control on $C$ and a slightly longer one with $C=2\sqrt{\pi}<4$.
\begin{proof}[Short Proof]
     By \cite[Theorem 1.1]{Ga18}, there exist (automatically faithful) representations $U_i(\Theta)\mapsto V_i(\Theta)\in \mathcal{L}(H)$ on the same Hilbert space $H$ such that 
     \[
        \|V_i(\Theta)-V_i(\Theta')\|
        \leq \hat C \Bigl(\sum_{k=1}^n (\Theta_{ij}-\Theta'_{ij})^{\frac{1}{2}}\Bigr)
        \leq C \|\Theta-\Theta'\|^{\frac{1}{2}}
    \] 
    for some constants $\hat C,C\in\mathbb R_+$, which may depend on $n$, and all $\Theta,\Theta'\in \operatorname{Skew}_n(\mathbb{R})$. 
    Let $\mathbf{a}\in \ell^1_{\nu}(\mathbb{Z}^n)$, $\nu\geq1$. Note that $\|\pi_\Theta(\mathbf a)\|=\|\sum_{k\in\mathbb Z^n}a(k)V^k(\Theta)\|$. 
    By an elementary estimate (cf.\ \cite[Lemma 4.1]{GeSh20}), for each $k\in\mathbb Z^{n}$ we have
  \[
  \|V^k(\Theta)- V^k(\Theta')\|\leq |k| \max_i\|V_i(\Theta)-V_i(\Theta')\|\leq C |k| \|\Theta-\Theta'\|^{\frac{1}{2}} 
  \]
  and therefore
  \begin{multline*}
    \biggl|\Bigl\|\sum_{k\in\mathbb Z^d} a(k) V^k(\Theta)\Bigr\| - \Bigl \|\sum_{k\in\mathbb Z^d} a(k) V^k(\Theta')\Bigr\|\biggr|
    \leq\Bigl\|\sum_{k\in\mathbb Z^d} a(k) V^k(\Theta) -  \sum_{k\in\mathbb Z^d} a(k) V^k(\Theta')\Bigr\|
    \\
    \leq \sum_{k\in\mathbb Z^d} |a(k)|\| V^k(\Theta)- V^k(\Theta')\|\leq C \|\mathbf{a}\|_{\ell^1_\nu} \|\Theta-\Theta'\|^{\frac{1}{2}},
  \end{multline*}
  as claimed.
\end{proof}
\begin{proof}[Proof with $C=2\sqrt\pi$.]
    Fix $\Theta,\Theta'\in\operatorname{Skew}_n(\mathbb{R})$. By \cite[Theorem 5.4]{GPSS21}, there exist faithful representations $U_i(\Theta)\mapsto W_i(\Theta)\in \mathcal{L}(H)$ of $\mathcal A_\Theta$ on some Hilbert space $H$ and $U_i(\Theta')\mapsto W_i(\Theta')$ of $\mathcal A_{\Theta'}$ on a Hilbert space $H'$ containing $H$ as a subspace such that compression of $W_i(\Theta')$ to $H$ yields $P_H W_i(\Theta')|_H=cW_{\Theta}$ with $c=\exp (-\frac{\pi}{2}\|\Theta'-\Theta\|)$.\footnote{In comparison to \cite{GPSS21}, there is a discrepancy in the normalization of the matrices $\Theta$ parametrizing noncommutative tori by a factor of $2\pi$, which explains the different value of $c$.}  Denote by $R_i:=P_{H^\perp} W_i(\Theta')|_{H^\perp}\in B(H^{\perp})$ the corresponding compression of $W_i(\Theta')$ to $H^\perp\subset H'$, so that $W_i(\Theta)\oplus R_i\in B(H')$. Note that $\|R_i\|\leq \|W_i(\Theta')\|=1$, so each $R_i$ is a compression. From \cite[Lemma 4.3]{GeSh20} and the elementary inequality $1-e^{-x}\leq x$ for $x\in\mathbb R$, we conclude that
    \begin{align*}
        \|W_i(\Theta') - W_i(\Theta) \oplus R_i\| \leq 2\sqrt{1-c^2}\leq 2\sqrt{1-e^{-\pi\|\Theta-\Theta'\|}}\leq 2\sqrt{\pi}\|\Theta-\Theta'\|^{\frac{1}{2}}.
    \end{align*}
     Let $\mathbf{a}\in \ell^1_{\nu}(\mathbb{Z}^n)$, $\nu\geq1$. By faithfulness,  $\|\pi_\Theta(\mathbf a)\|=\|\sum_{k\in\mathbb Z^n}a(k)W^k(\Theta)\|$ and $\|\pi_{\Theta'}(\mathbf a)\|=\|\sum_{k\in\mathbb Z^n}a(k)W^k(\Theta')\|$. 
     As in the first proof, it follows that for each $k\in\mathbb Z^{n}$ we have
     \[
  \|W^k(\Theta')- W^k(\Theta)\oplus R^k\|\leq |k| \max_i\|W_i(\Theta')-W_i(\Theta)\oplus R_i\|\leq 2\sqrt{\pi}|k| \|\Theta-\Theta'\|^{\frac{1}{2}} 
\]
and therefore
    \[
    \Bigl\|\sum_{k\in\mathbb Z^n} a(k)W^k(\Theta') - \sum_{k\in\mathbb Z^n} a(k) (W^k(\Theta)\oplus R^k)\Bigr\|
    \leq  2\sqrt{\pi}\|\mathbf{a}\|_{\ell^1_\nu} \|\Theta-\Theta'\|^{\frac{1}{2}}
  \]
  This yields
    \begin{align*}
        \MoveEqLeft 
        \|\pi_{\Theta}(\mathbf{a})\| 
        = \Bigl\|\sum_{k\in\mathbb Z^n} a(k) W^k(\Theta)\Bigr\|\\
      &\leq \max\left(\Bigl\|\sum_{k\in\mathbb Z^n} a(k) W^k(\Theta)\Bigr\|,\Bigl\|\sum_{k\in\mathbb Z^n} a(k) R^k\Bigr\|\right)\\
      &= \Bigl\|\sum_{k\in\mathbb Z^n} a(k) (W^k(\Theta)\oplus R^k)\Bigr\|\\
      &\leq \Bigl\|\sum_{k\in\mathbb Z^n} a(k) W^k(\Theta')\Bigr\| + \Bigl\|\sum_{k\in\mathbb Z^n} a(k) (W^k(\Theta)\oplus R^k) - \sum_{k\in\mathbb Z^n} a(k) W^k(\Theta')\Bigr\|\\
      &\leq \|\pi_{\Theta'}(\mathbf{a})\| + 2\sqrt{\pi}\|\mathbf{a}\|_{\ell^1_\nu} \|\Theta-\Theta'\|^{\frac{1}{2}}.
    \end{align*}
    Since the roles of $\Theta$ and $\Theta'$ are interchangeable, we also have
    \[\|\pi_{\Theta'}(\mathbf{a})\|\leq \|\pi_{\Theta}(\mathbf{a})\| + 2\sqrt{\pi}\|\mathbf{a}\|_{\ell^1_\nu} \|\Theta-\Theta'\|^{\frac{1}{2}}\]
    and the proof is complete.
\end{proof}

This result is enough to give us a $C^*$-bundle over $\text{Skew}_n(\mathbb{R})$ whose fibers coincide with $\mathcal{A}_{\Theta}$ via Theorem \ref{bbundleconstruction} with a local approximation section that we can identify 
 with $\ell^1_{\nu}(\mathbb{Z}^{2d})$ (by seeing sequences inside it as constant sections since we have a dense embedding $\ell^1_{\nu}(\mathbb{Z}^{2d})\hookrightarrow \mathcal{A}_{\Theta}$ for each $\Theta$). 
 However, we have no need for such a general result in the current paper, and we would rather defer the relevant constructions to the sequel where the resulting bundles come from noncommutative tori generated by lattices in the time-frequency plane.

\begin{remark}\label{remark:spectrum}
    With the same methods, one can prove a useful estimate for the Hausdorff distance between the spectra $\sigma(\pi_\Theta(\mathbf a))$ of selfadjoint operators $\pi_\Theta(\mathbf a)$. Since we have limited use for it, we sketch the argument only roughly and leave the details to the ambitious reader. Suppose that $\Theta,\Theta_0\in\operatorname{Skew}_n(\mathbb R)$ and $\mathbf a,\mathbf a_0 \in \ell^1_\nu(\mathbb Z^n)$ such that $\pi_\Theta(\mathbf a)$ and $\pi_{\Theta_0}(\mathbf{a}_0)$ are selfadjoint (note that rarely happens for $\mathbf a=\mathbf a_0$). Then, in the notation of the second proof of Theorem \ref{malte},
    \begin{align*}
        \MoveEqLeft 
        \sigma(\pi_{\Theta}(\mathbf a))
        = \sigma\Bigl(\sum_{k\in\mathbb Z^n} a(k) W^k(\Theta)\Bigr)\\
        &\subset 
        \sigma\Bigl(\sum_{k\in\mathbb Z^n} a(k) (W^k(\Theta)\oplus R^k)\Bigr)\\
        &\subset \sigma(\pi_{\Theta_0}(\mathbf a_0)) +  [-1,1]\cdot\Bigl\|\sum_{k\in\mathbb Z^n} a(k) (W^k(\Theta)\oplus R^k) - \sum_{k\in\mathbb Z^n} a_0(k) W^k(\Theta_0)\Bigr\|\\
        &\subset \sigma(\pi_{\Theta_0}(\mathbf a_0)) + [-1,1]\cdot (2\sqrt{\pi}\|\mathbf{a}\|_{\ell^1_\nu} \|\Theta-\Theta_0\|^{\frac{1}{2}} + \|\mathbf a -\mathbf a_0\|_{\ell^1}).
    \end{align*}
\end{remark}

\subsection{The Noncommutative Tori Generated by Lattices}\label{subsec: nctori by lattices}
Our goal here is to generate a skew-symmetric matrix $\Theta_{L}\in \op{Skew}_{2d}(\mathbb{R})$ such that $\mathcal{A}_{\Theta_L}\cong C^*(\mathbb{Z}^{2d},c_L)=A_L$. To start, let $\{e_1,e_2,\ldots,e_{2d}\}$ be the standard basis for $\mathbb{R}^{2d}$, consider the family of unitary operators via time-frequency shifts $\{\pi(L e_1),\pi(Le_2),\ldots,\pi(Le_{2d})\}\subseteq \mathcal{L}(L^2(\mathbb{R}^{d}))$. The commutation-relations \eqref{symp-comm-rel} for this family say that
\begin{align*}
    \pi(Le_i)\pi(Le_j) =c_s(Le_i,Le_j)\pi(Le_j)\pi(Le_i),\qquad \forall i,j = 1,\ldots,2d.
\end{align*}
    where $c_s(Le_i,Le_j)=e^{-2\pi i\, (Le_i)^T J (Le_j)}$, and so we define the skew-symmetric matrix associated to $L$ via $(\Theta_L)_{i,j} = (Le_j)^T (-J) (Le_i) = -e_j^T(L^TJL)e_i$, \ie $\Theta_L = -L^T J L$. For the rest of the paper, we now make the identification $\mathcal{A}_{\Theta_L}=C^*(\{Le_i\}_{i=1}^{2d})\subseteq \mathcal{L}(L^2(\mathbb{R}^d)).$ As we have discussed in Section \ref{subsec:nctori}, by taking $\Theta=\Theta_L$, we know that $\pi_{\Theta_L}: C^*\left(\mathbb{Z}^{2d},\zeta_{\Theta_L^{\op{low}}}\right)\to \mathcal{A}_{\Theta_L}$ of Equation \eqref{canonical-identification} is a $C^*$-isomorphism, but for our purposes, we want to be able to lift the deformation results on $\pi_{\Theta_L}$ to the faithful $*$-representations, say $\overline{\pi}_L$, of Equation \eqref{a-integrated-rep}. To this end, we have the following result.
    \begin{lemma}\label{lemma: tf-cocycle-cohom-canoncocycle}
        For a fixed $L\in \op{GL}_{2d}(\mathbb{R})$, consider the skew-symmetric matrix $\Theta_L = -L^TJL \in \op{Skew}_{2d}(\mathbb{R}).$ The induced $2$-cocycle $\zeta_{\Theta_L^{\op{low}}}$ is cohomologous to the $2$-cocycle $c_L$ by the $1$-cochain $\rho_L:\mathbb{Z}^{2d}\to \mathbb{T}$ given by $\rho_L(k)=\exp(-\pi i k^T(L^TKL+\Theta^{\op{low}}_L)k)$ for all $k\in \mathbb{Z}^{2d}$. It satisfies:
        \begin{align*}
            c_L(k,m) = \zeta_{\Theta^{\op{low}}_L}(k,m)\frac{\rho_L(k+m)}{\rho_L(k)\rho_L(m)}, \qquad \forall k,m\in \mathbb{Z}^{2d}.
        \end{align*}
    \end{lemma}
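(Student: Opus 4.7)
The plan is to verify the cohomology identity by a direct computation comparing the arguments of the relevant exponentials. Since all three quantities $c_L(k,m)$, $\zeta_{\Theta_L^{\op{low}}}(k,m)$, and the coboundary $\rho_L(k+m)/(\rho_L(k)\rho_L(m))$ are of the form $\exp(2\pi i\,Q(k,m))$ for a bilinear/quadratic form $Q$, the identity reduces to an equation of real-valued quadratic expressions in $k,m$, which I intend to check by manipulating the matrices $K$, $J$, and $\Theta_L^{\op{low}}$.

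First I would expand the coboundary. Abbreviate $M := L^T K L + \Theta_L^{\op{low}}$, so that $\rho_L(k)=\exp(-\pi i\, k^T M k)$. Since $(k+m)^T M(k+m) - k^T M k - m^T M m = k^T M m + m^T M k = k^T(M+M^T)m$, one obtains
\[
\frac{\rho_L(k+m)}{\rho_L(k)\rho_L(m)} = \exp\bigl(-\pi i\, k^T(M+M^T)m\bigr),
\]
and hence the desired identity becomes equivalent to
\[
-2\,k^T L^T K L m \;=\; 2\,k^T \Theta_L^{\op{low}} m \;-\; k^T(M+M^T)m
\]
for all $k,m\in\mathbb Z^{2d}$ (as real numbers, not merely modulo $\mathbb Z$).

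Next I would unpack $M+M^T$ using two elementary algebraic facts. On the one hand, since $K - K^T = J$, the matrix $L^T K L$ decomposes into its symmetric and skew parts as $L^T K L = \tfrac{1}{2}L^T(K+K^T)L + \tfrac{1}{2}L^T J L$, and by definition of $\Theta_L$ we have $L^T J L = -\Theta_L$. On the other hand, skew-symmetry of $\Theta_L$ (with zero diagonal) gives $\Theta_L = \Theta_L^{\op{low}} - (\Theta_L^{\op{low}})^T$. Substituting these into $M+M^T = L^T(K+K^T)L + \Theta_L^{\op{low}} + (\Theta_L^{\op{low}})^T$, both sides of the displayed equation reduce to
\[
-k^T L^T (K+K^T) L m + k^T \Theta_L^{\op{low}} m - k^T (\Theta_L^{\op{low}})^T m,
\]
which closes the proof.

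I do not anticipate a conceptual obstacle here; the entire argument is bookkeeping of symmetric/skew decompositions. The only mild subtlety is to confirm that the real arguments match exactly (not just modulo $\mathbb Z$), which they do, so no integrality caveat for $k,m\in\mathbb Z^{2d}$ is needed. The continuity of $\rho_L$ is automatic since it is a Gaussian-type exponential of a quadratic form.
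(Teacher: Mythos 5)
Your computation is correct and follows essentially the same route as the paper's proof: both expand the coboundary as $\exp(-\pi i\,k^T(M+M^T)m)$ with $M=L^TKL+\Theta_L^{\op{low}}$ and then reduce to the matrix identities $\Theta_L=\Theta_L^{\op{low}}-(\Theta_L^{\op{low}})^T$ and $J+K+K^T=2K$ (your symmetric/skew split of $L^TKL$ is the same fact). Your explicit remark that the quadratic forms agree exactly as real numbers, not merely modulo $2$, is a correct and slightly more careful statement of what the paper verifies implicitly.
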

    \begin{proof}
        It follows from a straightforward computation that $$\frac{\rho_L(k+m)}{\rho_L(k)\rho_L(m)}=\exp\left(-\pi i k^T(L^T(K+K^T)L + (\Theta_L+(\Theta_L^{\op{low}})^T))m\right)$$
    Therefore we have
    \begin{align*}
        \zeta_{\Theta^{\op{low}}_L}(k,m)\frac{\rho_L(k+m)}{\rho_L(k)\rho_L(m)}&=\exp(2\pi i k^T\Theta_L^{\op{low}}m)\exp\left(-\pi i k^T(L^T(K+K^T)L + (\Theta_L+(\Theta_L^{\op{low}})^T))m\right) \\
        &=\exp\left(\pi i k^T(2\Theta_L^{\op{low}}-L^T(K+K^T)L -\Theta_L^{\op{low}}-(\Theta_L^{\op{low}})^T )m\right)
    \end{align*}
    We compute that: $2\Theta_L^{\op{low}}-\Theta_L^{\op{low}}-(\Theta_L^{\op{low}})^T =\Theta_L$ since $\Theta_L$ is skew-symmetric. But then, $\Theta_L -L^T(K+K^T)L = -L^TJL-L^T(K+K^T)L = -L^T(J+K+K^T)L = -2L^TKL.$ Finally, we obtain
    \begin{align*}
        \zeta_{\Theta^{\op{low}}_L}(k,m)\frac{\rho_L(k+m)}{\rho_L(k)\rho_L(m)} = e^{\pi i k^T(-2L^TKL)m} = e^{-2\pi i k^T(L^TKL)m} = c_L(n,m).
    \end{align*}
    \end{proof}
    Let us now take note of the collected $2$-cocycles $P_{c_L}$ and $P_{\zeta_{\Theta_L^{\op{low}}}}$ as in \eqref{collected-cocycles} generated by $c_L$ and $\zeta_{\Theta_L^{\op{low}}}$. We obtain:
    \begin{align}\label{pee-el}
        P_{c_L}(k) = \prod_{i=1}^{2d-1}\overline{c}\left( Lk_ie_i,\sum_{j=1+1}^{2d}Lk_je_j\right)\times \prod_{i=1}^{2d}\overline{c}(Le_i,Le_i)^{n_i(n_i-1)/2}.
    \end{align}
    On the other hand, we have $P_{\zeta_{\Theta_L^{\op{low}}}}(k)=1$ for all $k\in\mathbb{Z}^{2d}$. Consider now the induced isomorphism $\widetilde{\rho}_L\colon A_L=C^*(\mathbb{Z}^{2d},c_L)\to C^*(\mathbb{Z}^{2d},\zeta_{\Theta_L^{\op{low}}})$ (see Proposition \ref{induced-isom-from-cochain}), we have two representations that we want to compare: $\overline{\pi}_L:A_L\to \mathcal{A}_{\Theta_L}$ and $\pi_{\Theta_L}\circ \widetilde{\rho_L}\colon A_L\to \mathcal{A}_{\Theta_L}$. They do not exactly give commutative maps, but they still behave in an expected way. We shall fix the following: $\rho_L^{(\cdot)}:\mathbb{Z}^{2d}\to \mathbb{T}$ is the map $k\mapsto \rho_L^{k}$.
    \begin{proposition}
        Define, for each $\mathbf{a}\in C^*(\mathbb{Z}^{2d},c_L)=A_L$ the element
        \begin{align*}
            \mathbf{a}^L :=\widetilde{\rho_L}(\mathbf{a}\cdot \rho_L^{(\cdot)})\in C^*(\mathbb{Z}^{2d},\zeta_{\Theta_L^{\op{low}}}).
        \end{align*}
        Explicitly, we have
        \begin{align}\label{a-twisted}
            \mathbf{a}^L = \sum_{k\in \mathbb{Z}^{2d}}a(k)P_{c_L}(k)(\delta_{e_1}^{k_1}*_{\zeta_{\Theta_L^{\op{\op{low}}}}}\ldots *_{\zeta_{\Theta_L^{\op{\op{low}}}}} \delta_{e_{2d}}^{k_{2d}})=\mathbf{a}\cdot P_{c_L} \in C^*(\mathbb{Z}^{2d},\zeta_{\Theta_L^{\op{low}}}).
        \end{align}
         Furthermore,
         \begin{align}\label{isomorphisms-with-cocycle}
             \overline{\pi}_L(\mathbf{a}) = \pi_{\Theta_L}(\mathbf{a}^L)
         \end{align}
    \end{proposition}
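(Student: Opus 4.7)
The plan is to reduce everything to the identity $P_{c_L}(k)\rho_L(k)=P_{\zeta_{\Theta_L^{\op{low}}}}(k)\rho_L^k$ furnished by Proposition \ref{induced-isom-from-cochain}, applied with $\zeta_1=c_L$, $\zeta_2=\zeta_{\Theta_L^{\op{low}}}$ and the $1$-cochain $\rho_L$ from Lemma \ref{lemma: tf-cocycle-cohom-canoncocycle}. First I would verify that $P_{\zeta_{\Theta_L^{\op{low}}}}\equiv 1$: in formula \eqref{collected-cocycles} every occurring argument of $\zeta_{\Theta_L^{\op{low}}}$ has the form $(k_ie_i,\sum_{j>i}k_je_j)$ or $(e_i,(k_i-j)e_i)$, and since $\Theta_L^{\op{low}}$ is strictly lower triangular, the bilinear form $k^T\Theta_L^{\op{low}}m$ vanishes in both cases. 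Combined with $|\rho_L(k)|=1$, this yields $P_{c_L}(k)=\rho_L^k\,\overline{\rho_L(k)}$.

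For $\mathbf a\in\ell^1(\mathbb{Z}^{2d})$ (dense in $A_L$), expanding the definition of $\widetilde{\rho_L}$ sequence-wise gives
\[\mathbf a^L(k)=\bigl((\mathbf a\cdot\rho_L^{(\cdot)})\cdot\overline{\rho_L}\bigr)(k)=a(k)\rho_L^k\overline{\rho_L(k)}=a(k)P_{c_L}(k),\]
i.e.\ the compact form $\mathbf a^L=\mathbf a\cdot P_{c_L}$. Since $P_{\zeta_{\Theta_L^{\op{low}}}}\equiv 1$, one has $\delta_k=\delta_{e_1}^{k_1}*_{\zeta_{\Theta_L^{\op{low}}}}\cdots *_{\zeta_{\Theta_L^{\op{low}}}}\delta_{e_{2d}}^{k_{2d}}$ inside $C^*(\mathbb Z^{2d},\zeta_{\Theta_L^{\op{low}}})$, and expanding $\mathbf a^L$ over this basis produces exactly the displayed formula \eqref{a-twisted}. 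Because $|P_{c_L}|\equiv 1$, the map $\mathbf a\mapsto \mathbf a\cdot P_{c_L}$ is an $\ell^1$-isometry; combined with the isometry $\widetilde{\rho_L}$, the assignment $\mathbf a\mapsto \mathbf a^L$ extends consistently from $\ell^1(\mathbb Z^{2d})$ to all of $A_L$.

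For \eqref{isomorphisms-with-cocycle}, both $\mathbf a\mapsto\overline{\pi}_L(\mathbf a)$ and $\mathbf a\mapsto\pi_{\Theta_L}(\mathbf a^L)$ are $\ell^1$-norm-decreasing maps into $\mathcal L(L^2(\mathbb R^d))$, so by density and linearity it suffices to check the identity on $\mathbf a=\delta_k$. One side is immediate: $\overline{\pi}_L(\delta_k)=\pi(Lk)$. On the other, using $\delta_k^L=P_{c_L}(k)\delta_k$,
\[\pi_{\Theta_L}(\delta_k^L)=P_{c_L}(k)\,U^k(\Theta_L)=P_{c_L}(k)\,\pi(Le_1)^{k_1}\cdots\pi(Le_{2d})^{k_{2d}}\]
under the identification $U_i(\Theta_L)=\pi(Le_i)$. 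To close the loop, I would invoke the analogous decomposition $\delta_k=P_{c_L}(k)\,\delta_{e_1}^{k_1}*_{c_L}\cdots *_{c_L}\delta_{e_{2d}}^{k_{2d}}$ inside $A_L$ and apply the $*$-homomorphism $\overline{\pi}_L$; this sends the $c_L$-twisted convolution product of generators to the operator product of $\pi(Le_i)$'s, yielding $\pi(Lk)=P_{c_L}(k)\,\pi(Le_1)^{k_1}\cdots\pi(Le_{2d})^{k_{2d}}$. The two sides therefore agree on every $\delta_k$, and the identity extends to all $\mathbf a\in A_L$.

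The argument is essentially bookkeeping of cocycles, so I do not expect a genuine obstacle. The only subtle point is recognising that the strict lower-triangularity of $\Theta_L^{\op{low}}$ forces $P_{\zeta_{\Theta_L^{\op{low}}}}\equiv 1$, which is what makes $\delta_k^L=P_{c_L}(k)\delta_k$ and thereby reduces the verification of \eqref{isomorphisms-with-cocycle} to a clean $*$-homomorphism computation on the single element $\delta_k$.
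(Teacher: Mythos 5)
Your proposal is correct and follows essentially the same route as the paper: both arguments rest on the identity $P_{c_L}(k)\rho_L(k)=P_{\zeta_{\Theta_L^{\op{low}}}}(k)\rho_L^k$ from Proposition \ref{induced-isom-from-cochain}, the fact that $P_{\zeta_{\Theta_L^{\op{low}}}}\equiv 1$, and the expansion of $\delta_k$ as a $P_{c_L}(k)$-twisted product of generators pushed through the $*$-homomorphisms $\widetilde{\rho_L}$, $\pi_{\Theta_L}$, and $\overline{\pi}_L$. The only difference is organizational — you verify the identities pointwise on $\delta_k$ and extend by density, whereas the paper writes the same computation for a general $\ell^1$ element at once.
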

    \begin{proof}
        We have 
        \begin{align*}
            \mathbf{a}^L&=\widetilde{\rho_L}(\mathbf{a}\cdot \rho^{(\cdot)}_L) =\widetilde{\rho_L}\left(\sum_{k\in \mathbb{Z}^{2d}}a(k)P_{c_L}(k)\rho_L^k \delta_{e_1}^{k_1}*_{c_L}\ldots *_{c_L} \delta_{e_{2d}}^{k_{2d}} \right) \\
            &=\sum_{k\in \mathbb{Z}^{2d}}a(k)P_{c_L}(k)\rho^k_L \widetilde{\rho_L}(\delta_{e_1}^{k_1}*_{c_L}\ldots *_{c_L} \delta_{e_{2d}}^{k_{2d}})\\
            &=\sum_{k\in \mathbb{Z}^{2d}}a(k)P_{c_L}(k)\rho_L^k \overline{\rho_L}^k(\delta_{e_1}^{k_1}*_{\zeta_{\Theta_L^{\op{\op{low}}}}}\ldots *_{\zeta_{\Theta_L^{\op{\op{low}}}}} \delta_{e_{2d}}^{k_{2d}})\\
            &=\sum_{k\in \mathbb{Z}^{2d}}a(k)P_{c_L}(k)(\delta_{e_1}^{k_1}*_{\zeta_{\Theta_L^{\op{\op{low}}}}}\ldots *_{\zeta_{\Theta_L^{\op{\op{low}}}}} \delta_{e_{2d}}^{k_{2d}})\\
            &=\sum_{k\in \mathbb{Z}^{2d}}a(k)P{c_L}(k)\delta_k= \mathbf{a}\cdot P_{c_L}\in C^*(\mathbb{Z}^{2d},\zeta_{\Theta_L^{\op{low}}})
        \end{align*}
        Next we compute:
        \begin{align*}
            \pi_{\Theta_L}(\mathbf{a}^L) &= \sum_{k\in \mathbb{Z}^{2d}}a(k)P_{c_L}(k)\pi_{\Theta_L}\left( \delta_{e_1}^{k_1}*_{\zeta_{\Theta_L^{\op{low}}}}\ldots *_{\zeta_{\Theta_{L}^{\op{low}}}}\delta_{e_{2d}}^{k_{2d}} \right)\\
            &=\sum_{k\in \mathbb{Z}^{2d}}a(k)P_{c_L}(k)\pi(Le_1)^{k_1}\cdot \ldots \cdot \pi(Le_{2d})^{k_{2d}}\\
            &=\overline{\pi}_L\left(\sum_{k\in \mathbb{Z}^{2d}}a(k) P_{c_L}(k)\delta_{e_1}^{k_1}*_{c_L}\ldots *_{c_L}\delta_{e_{2d}}^{k_{2d}} \right)\\
            &=\overline{\pi}_L(\mathbf{a}).
        \end{align*}
        as required. 
    \end{proof}
\begin{lemma}\label{for-pl}
    For each fixed $n\in \mathbb{Z}^{2d}$, the map $L\mapsto P_{c_L}(n)$ is continuous.
\end{lemma}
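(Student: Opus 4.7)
My plan is to show that $P_{c_L}(n)$ is a finite product of continuous functions of $L$, each of the form $L\mapsto c(Lv, Lw)$ for fixed vectors $v,w\in\mathbb R^{2d}$ depending on $n$.

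The key observation is the explicit formula \eqref{pee-el}:
\begin{align*}
    P_{c_L}(n) = \prod_{i=1}^{2d-1}\overline{c}\Bigl( Ln_ie_i,\sum_{j=i+1}^{2d}Ln_je_j\Bigr)\times \prod_{i=1}^{2d}\overline{c}(Le_i,Le_i)^{n_i(n_i-1)/2}.
\end{align*}
Since $n$ is fixed, this is a \emph{finite} product, so it suffices to check that each factor depends continuously on $L\in\op{GL}_{2d}(\mathbb R)$.

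For each factor, the arguments of $\overline{c}$ are of the form $L v$ and $L w$, where $v$ and $w$ are fixed integer combinations of the standard basis vectors determined by $n$. The map $\op{GL}_{2d}(\mathbb R)\to \mathbb R^{2d}\times \mathbb R^{2d}$, $L\mapsto(Lv,Lw)$, is continuous because matrix--vector multiplication is continuous in the matrix entries for fixed $v,w$. Combined with the continuity of the Heisenberg cocycle $c\colon\mathbb R^{2d}\times\mathbb R^{2d}\to\mathbb T$ noted just after \eqref{heis-cos}, each factor in the product is continuous in $L$. A finite product of continuous $\mathbb T$-valued functions is continuous, so $L\mapsto P_{c_L}(n)$ is continuous.

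There is no real obstacle: this is essentially a bookkeeping statement verifying that the finite combinatorial expression defining $P_{c_L}(n)$ inherits continuity from the continuity of $c$ and of the linear action $L\mapsto Lv$. The lemma exists to cleanly record this fact for later use in deformation arguments, where one wants to move the Hölder estimates of Theorem \ref{malte} (stated for $\pi_\Theta$) to the representations $\overline\pi_L$ via \eqref{isomorphisms-with-cocycle}, which requires control of $L\mapsto \mathbf a^L = \mathbf a\cdot P_{c_L}$.
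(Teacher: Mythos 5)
Your proof is correct and follows essentially the same route as the paper: both reduce to the observation that $P_{c_L}(n)$ is a finite product of factors $\overline{c}(Lv,Lw)$ with $v,w$ fixed, and then verify each factor is continuous in $L$ (the paper does this by an explicit triangle-inequality estimate on $(Lv)^TK(Lw)$, while you invoke the continuity of $c$ composed with $L\mapsto(Lv,Lw)$, which is the same content).
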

\begin{proof}
    Looking at Equation \eqref{pee-el}, $P_{c_L}(n)$ is a finite product of the Heisenberg $2$-cocycles of the form $c(Lk,Lm)$ for some fixed $k,m\in \mathbb{Z}^{2d}$. The lemma follows if we can show that $L\mapsto c(Lk,Lm)$ is continuous for any fixed $k,m\in \mathbb{Z}^{2d}$. We have from Equation \eqref{heis-cos} that
    \begin{align*}
        c(Lk,Lm) = e^{-2\pi i\, (Lk)^T K (Lm)}.
    \end{align*}
    Since the complex exponential function is continuous, then it is sufficient to show that $L \mapsto (Ln)^T K (Lm)$ is continuous. Now it follows from a simple application of triangle-inequality that:
    \begin{align*}
        \|(Ln)^T K (Lm) - (L_0n)K (L_0m)\| &\leq \|n^T(L-L_0)KLm\| + \|n^T L_0 K(L-L_0)m\|\\
        &\leq \|n\|\|m\|(\|L\|+\|L_0\|)\|L-L_0\|. 
    \end{align*}
    Therefore $(Ln)^T K (Lm)\to (L_0n)^T K (L_0m)$ as $L\to L_0$ as required.
\end{proof}
 Now we are ready to apply Theorem \ref{malte} to the noncommutative $2d$-tori generated by lattices in $\mathbb{R}^{2d}$. To make sense of our computations below, recall that the weighted sequence space $\ell^1_{\nu}(\mathbb{Z}^{2d})$ is densely embedded inside all $C^*(\mathbb{Z}^{2d},\zeta)$ for arbitrary $2$-cocycle $\zeta.$
\begin{lemma}\label{a-l-tech-est}
    For each $\mathbf{a}\in \ell^1_{\nu}(\mathbb{Z}^{2d})$ and $L\in \op{GL}_{2d}(\mathbb{R})$: $\|\mathbf{a}\|_{\ell^1_{\nu}}=\|\mathbf{a}^L\|_{\ell^1_{\nu}}$. Moreover, the map $\op{GL}_{2d}(\mathbb{R})\ni L\mapsto \mathbf{a}^L\in \ell^1_{\nu}(\mathbb{Z}^{2d})$ is continuous.
\end{lemma}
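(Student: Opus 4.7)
The plan is to exploit the explicit formula $\mathbf a^L = \mathbf a \cdot P_{c_L}$ from \eqref{a-twisted} and the fact that $P_{c_L}$ takes values in $\mathbb T$.

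For the norm equality, I would simply note that each factor in the product formula \eqref{pee-el} for $P_{c_L}(k)$ is a value of the Heisenberg cocycle $c$, which by definition \eqref{heis-cos} lies in $\mathbb T$. Hence $|P_{c_L}(k)| = 1$ for every $k \in \mathbb Z^{2d}$, so
\[
\|\mathbf a^L\|_{\ell^1_\nu} = \sum_{k \in \mathbb Z^{2d}} |a(k) P_{c_L}(k)|\, \nu(k) = \sum_{k \in \mathbb Z^{2d}} |a(k)|\, \nu(k) = \|\mathbf a\|_{\ell^1_\nu}.
\]

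For the continuity statement, fix $L_0 \in \op{GL}_{2d}(\mathbb R)$ and a sequence (or net) $L \to L_0$. Writing
\[
\|\mathbf a^L - \mathbf a^{L_0}\|_{\ell^1_\nu} = \sum_{k \in \mathbb Z^{2d}} |a(k)|\, |P_{c_L}(k) - P_{c_{L_0}}(k)|\, \nu(k),
\]
Lemma \ref{for-pl} gives pointwise convergence $P_{c_L}(k) \to P_{c_{L_0}}(k)$ for each fixed $k$. Since $|P_{c_L}(k) - P_{c_{L_0}}(k)| \leq 2$, each summand is dominated by $2|a(k)|\nu(k)$, which is summable by hypothesis. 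Dominated convergence for the counting measure on $\mathbb Z^{2d}$ then yields $\|\mathbf a^L - \mathbf a^{L_0}\|_{\ell^1_\nu} \to 0$, proving continuity at $L_0$.

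There is no real obstacle here: the entire argument is a routine combination of the unimodularity of $P_{c_L}$ with the continuity result of Lemma \ref{for-pl}, wrapped in dominated convergence. The only thing worth flagging is that one should remark that $P_{c_L}(k)$ is a finite product whose length depends on $k$ but whose modulus is always one, so no uniformity in $k$ is needed for the first part, and for the second part all uniformity is provided automatically by the $\ell^1_\nu$-summability of $\mathbf a$.
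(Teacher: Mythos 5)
Your proof is correct and follows essentially the same route as the paper's: unimodularity of $P_{c_L}$ gives the norm equality, and the pointwise continuity from Lemma \ref{for-pl} combined with the bound $|P_{c_L}(k)-P_{c_{L_0}}(k)|\leq 2$ and dominated convergence gives continuity. Your parenthetical care about sequences versus nets matches the paper's footnote, which justifies the sequential reduction via second countability of $\op{GL}_{2d}(\mathbb{R})$.
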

\begin{proof}
    Since each $P_{c_L}(k)\in \mathbb{T}$, we have that
    \begin{align*}
        \|\mathbf{a}^L\|_{\ell^1_{\nu}} = \sum_{k\in \mathbb{Z}^{2d}} |a(k)\nu(k) P_{c_L}(k)| = \sum_{k\in \mathbb{Z}^{2d}}|a(k)|\nu(k) = \|\mathbf{a}\|_{\ell^1_{\nu}}.
    \end{align*}\
    Note that for all $L,L_0\in \op{GL}_{2d}(\mathbb{R})$ and $k\in \mathbb{Z}^{2d}$, $|P_{c_L}(k)-P_{c_{L_0}}(k)|\leq 2$, so it follows from the continuity result Lemma \ref{for-pl} and the Lebesgue-dominated convergence theorem\footnote{The usual dominated theorem works because $\op{GL}_{2d}(\mathbb{R})$ is second countable, therefore $\lim_{L\to L_0} \|\mathbf{a}^L-\mathbf{a}^{L_0}\|_{\ell^1_{\nu}}=0$ if and only if for all sequence $L_n\to L_0$, $\lim_{n\to \infty}\|\mathbf{a}^{L_n}-\mathbf{a}^{L_0}\|_{\ell^1_{\nu}}=0$.} that
    \begin{align*}
        \lim_{L\to L_0}\|\mathbf{a}^L-\mathbf{a}^{L_0}\|_{\ell^1_{\nu}} &= \lim_{L\to L_0}\sum_{k\in \mathbb{Z}^{2d}}|a(k)|\nu(k) |P_{c_L}(k)-P_{c_{L_0}}(k)|\\
        &=\sum_{k\in\mathbb{Z}^{2d}}|a(k)|\nu(k) \lim_{L\to L_0}|P_{c_L}(k)-P_{c_{L_0}}(k)| =0
    \end{align*}
    as required.
\end{proof}
\begin{remark}\label{a-l-tech-est2}
    Lemma \ref{a-l-tech-est} is also true if we consider the unweighted case. That is, $\op{GL}_{2d}(\mathbb{R})\ni L\mapsto \mathbf{a}^L\in \ell^1(\mathbb{Z}^{2d})$ is continuous for all $\mathbf{a}\in \ell^1(\mathbb{Z}^{2d}).$
\end{remark}
Now as a corollary of Theorem \ref{malte}, we have that:
\begin{corollary}\label{fund-c-continuity}
    There is a constant $C>0$ such that for all $\mathbf{a}\in \ell^1_{\nu}(\mathbb{Z}^{2d})$ and $L_0,L\in \op{GL}_{2d}(\mathbb{R})$:
    \begin{align*}
        \bigl|\|\overline{\pi}_L(\mathbf{a})\|- \|\overline{\pi}_{L_0}(\mathbf{a})\| \bigr|&\leq C \|\mathbf{a}\|_{\ell^1_{\nu}} \|L^TJL-L_0^TJL_{0}\|^{1/2}+ \|\mathbf{a}^L-\mathbf{a}^{L_0}\|_{\ell^1} \\
        &\leq C\|\mathbf{a}\|_{\ell^1_{\nu}} (\|L\|+\|L_0\|)^{1/2}\|L-L_0\|^{1/2} + \|\mathbf{a}^L-\mathbf{a}^{L_0}\|_{\ell^1}
    \end{align*}
    As a corollary, for each $\mathbf{a}\in \ell^1_{\nu}(\mathbb{Z}^{2d})$, the map $L \mapsto \|\overline{\pi}_{L}(\mathbf{a})\|$ is continuous. 
\end{corollary}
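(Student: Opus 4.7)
My plan is to use the relation $\overline{\pi}_L(\mathbf{a}) = \pi_{\Theta_L}(\mathbf{a}^L)$ from \eqref{isomorphisms-with-cocycle} to transport the estimate in Theorem \ref{malte} to the representations $\overline{\pi}_L$. The parameter that varies is really the pair $(\Theta_L,\mathbf{a}^L)$, so I would insert the mixed term $\|\pi_{\Theta_{L_0}}(\mathbf{a}^L)\|$ and split
\[
\bigl|\|\pi_{\Theta_L}(\mathbf{a}^L)\| - \|\pi_{\Theta_{L_0}}(\mathbf{a}^{L_0})\|\bigr|
\leq
\bigl|\|\pi_{\Theta_L}(\mathbf{a}^L)\| - \|\pi_{\Theta_{L_0}}(\mathbf{a}^L)\|\bigr|
+
\bigl|\|\pi_{\Theta_{L_0}}(\mathbf{a}^L)\| - \|\pi_{\Theta_{L_0}}(\mathbf{a}^{L_0})\|\bigr|.
\]
The first summand varies only in the matrix parameter, so Theorem \ref{malte} applied to $\mathbf{a}^L\in\ell^1_\nu(\mathbb Z^{2d})$ bounds it by $C\|\mathbf{a}^L\|_{\ell^1_\nu}\|\Theta_L-\Theta_{L_0}\|^{1/2}$, and Lemma \ref{a-l-tech-est} replaces $\|\mathbf{a}^L\|_{\ell^1_\nu}$ with $\|\mathbf{a}\|_{\ell^1_\nu}$.

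For the second summand I would use the reverse triangle inequality in $\mathcal L(L^2(\mathbb R^d))$ together with the fact (recorded right after Theorem \ref{thm: faithful-a}) that $\pi_{\Theta_{L_0}}$, or equivalently $\overline{\pi}_{L_0}$, is $\ell^1$-norm decreasing, giving
\[
\bigl|\|\pi_{\Theta_{L_0}}(\mathbf{a}^L)\| - \|\pi_{\Theta_{L_0}}(\mathbf{a}^{L_0})\|\bigr|
\leq \|\pi_{\Theta_{L_0}}(\mathbf{a}^L - \mathbf{a}^{L_0})\|
\leq \|\mathbf{a}^L - \mathbf{a}^{L_0}\|_{\ell^1}.
\]
Combined, these give the first of the two claimed inequalities with $\Theta_L-\Theta_{L_0}=-(L^TJL-L_0^TJL_0)$.

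For the sharper second form, I would massage the matrix difference by an add-and-subtract: writing
\[
L^TJL - L_0^TJL_0 = L^TJ(L-L_0) + (L-L_0)^T J L_0,
\]
using $\|J\|=1$ and submultiplicativity of the operator norm yields the bound $\|L^TJL - L_0^TJL_0\|\leq (\|L\|+\|L_0\|)\|L-L_0\|$, and taking square roots finishes the second inequality.

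For the final continuity claim, I would fix $\mathbf{a}\in \ell^1_\nu(\mathbb Z^{2d})$ and let $L\to L_0$ in the estimate: the first term tends to $0$ because $L\mapsto L^TJL$ is continuous (as just shown), and the second term tends to $0$ by Lemma \ref{a-l-tech-est} (alternatively Remark \ref{a-l-tech-est2} suffices here). The only thing that requires even minor care is checking that the $\ell^1$-norm-decreasing property of $\pi_{\Theta_{L_0}}$ is applicable after the change of cocycle; but this is built into the definition of $A_{L_0}$ as an enveloping $C^*$-algebra and transported to $C^*(\mathbb Z^{2d},\zeta_{\Theta_{L_0}^{\op{low}}})$ through the isomorphism $\widetilde{\rho}_{L_0}$, and it is the only genuinely non-bookkeeping step in the argument.
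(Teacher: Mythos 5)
Your proposal is correct and follows essentially the same route as the paper: the same insertion of the mixed term $\|\pi_{\Theta_{L_0}}(\mathbf{a}^L)\|$, the same application of Theorem \ref{malte} together with $\|\mathbf{a}^L\|_{\ell^1_\nu}=\|\mathbf{a}\|_{\ell^1_\nu}$, the same $\ell^1$-bound on the second summand (the paper justifies it by unitarity of the time-frequency shifts, which is equivalent to the norm-decreasing property you cite), and the same add-and-subtract estimate for $\|L^TJL-L_0^TJL_0\|$.
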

\begin{proof}
    Fix $\mathbf{a}\in \ell^1_{\nu}(\mathbb{Z}^{2d})$, and $L,L_0\in \op{GL}_{2d}(\mathbb{R})$. An application of triangle-inequality gives us, along with Equation \eqref{isomorphisms-with-cocycle}:
    \begin{align*}
        \bigl|\|\overline{\pi}_{L}(\mathbf{a})\|-\|\overline{\pi}_{L_0}(\mathbf{a})\| \bigr|
        &\leq \bigl|\|\pi_{\Theta_L}(\mathbf{a}^L)\|-\|\pi_{\Theta_{L_0}}(\mathbf{a}^L)\| \bigr|+\bigl| \|\pi_{\Theta_{L_0}}(\mathbf{a}^L)\|-\| \pi_{\Theta_{L_0}}(\mathbf{a}^{L_0})\| \bigr|\\
        &\leq \bigl|\|\pi_{\Theta_L}(\mathbf{a}^L)\|-\|\pi_{\Theta_{L_0}}(\mathbf{a}^L)\| \bigr| + \|\pi_{\Theta_{L_0}}(\mathbf{a}^L-\mathbf{a}^{L_0})\|.
    \end{align*}
    Time-frequency shifts are unitary, so it follows that $\|\pi_{\Theta_{L_0}}(\mathbf{a}^L - \mathbf{a}^{L_0})\|\leq \|\mathbf{a}^{L}-\mathbf{a}^{L_0}\|_{\ell^1}$. On the other hand, an application of Theorem \ref{malte} implies 
    \[\bigl|\|\pi_{\Theta_L}(\mathbf{a}^{L})\| -\| \pi_{\Theta_{L_0}}(\mathbf{a}^{L})\| \bigr| 
    \leq C \|\mathbf{a}^L\|_{\ell^1_{\nu}} \|\Theta_L-\Theta_{L_0}\|^{1/2}.\] 
    Because $\|\mathbf{a}^L\|_{\ell^1_{\nu}} = \|\mathbf{a}\|_{\ell^1_{\nu}}$, we finally obtain
    \begin{align}\label{take-limit-to-l0}
         \bigl|\|\overline{\pi}_L(\mathbf{a})\|-\|\overline{\pi}_{L_0}(\mathbf{a})\|\bigr|
         \leq C \|\mathbf{a}\|_{\ell^1_{\nu}} \|\Theta_L-\Theta_{L_0}\|^{1/2}+ \|\mathbf{a}^L-\mathbf{a}^{L_0}\|_{\ell^1}.
    \end{align}
    Similar to what we did in the proof of Lemma \ref{for-pl}, we can estimate $\|\Theta_L-\Theta_{L_0}\|=\|L^TJL-L^T_{0}JL_{0}\|\leq (\|L\|+\|L_0\|)\|L-L_0\|$. 
    If we combine this estimate with the continuity result of Lemma \ref{a-l-tech-est}, and Remark \ref{a-l-tech-est2}, then we find that as $L\to L_0$, we have $\bigl| \|\overline{\pi}_{L}(\mathbf{a})\| - \|\overline{\pi}_{L_0}(\mathbf{a})\| \bigr| \to 0$. 
    This proves continuity of $L\mapsto \|\overline{\pi}_L (\mathbf{a})\|.$
\end{proof}
We are now ready to define our $C^*$-bundle over $\op{GL}_{2d}(\mathbb{R})$ whose fibers coincide with noncommutative tori generated by lattices.
To start, define $\mathcal{C} = \bigsqcup_{L\in \op{GL}_{2d}(\mathbb{R})}A_L$, then define the obvious bundle projection map $\rho\colon  \mathcal{C}\to \op{GL}_{2d}(\mathbb{R})$, \ie $\rho(\mathbf{s})=L \iff \mathbf{s}\in A_L$.
Recall that a map $\mathfrak{a}\colon \op{GL}_{2d}(\mathbb{R})\to \mathcal{C}$ is a section of $\rho$ if and only if $\mathfrak{a}(L)\in A_L$ for all $L\in\op{GL}_{2d}(\mathbb{R})$, in which case we write $\mathfrak{a}\in \prod_{L\in \op{GL}_{2d}(\mathbb{R})}A_L$. 
An important observation is the fact that we have a dense inclusion $\ell^1_{\nu}(\mathbb{Z}^{2d})\hookrightarrow A_L$ for all $L\in \op{GL}_{2d}(\mathbb{R})$, this motivates us to identify $\ell^1_{\nu}(\mathbb{Z}^{2d})$ as a section space for $\rho$, since for each $\mathbf{a}\in \ell^1_{\nu}(\mathbb{Z}^{2d})$, we can define the constant section, which we shall also denote by $\mathbf{a}$ so that $\mathbf{a} \in \prod_{L\in \op{GL}_{2d}(\mathbb{R})}A_L$ via:
\begin{align}\label{a-constant-sections}
    \mathbf{a}(L) = \mathbf{a} \in \ell^1(\mathbb{Z}^{2d})\hookrightarrow A_L, \qquad \forall L\in \op{GL}_{2d}(\mathbb{R}).
\end{align}

\begin{theorem}\label{fundamental-c-bundle}
    The bundle $\rho\colon  \mathcal{C}\to \op{GL}_{2d},$ whose fibers are $\rho^{-1}(L)=A_L$ for all $L\in \op{GL}_{2d}(\mathbb{R})$, defines a $C^*$-bundle via Theorem \ref{bbundleconstruction} whose local approximating sections are identifiable with $\ell^1_\nu(\mathbb{Z}^{2d})$. 
    As a consequence, a section $\mathfrak{a}\in \prod_{L\in \op{GL}_{2d}(\mathbb{R})}A_L$ is continuous, \ie $\mathfrak{a}\in \Gamma(\rho)$, if and only if for all $\varepsilon>0$ and $L_0\in \op{GL}_{2d}(\mathbb{R})$, there exists $\mathbf{a}_0\in \ell^1_{\nu}(\mathbb{Z}^{2d})$ such that for some open neighborhood $U_0\ni L_0$:
    \begin{align*}
        \|\mathfrak{a}(L)-\mathbf{a}_0\|_{A_L}<\varepsilon, \qquad \forall L\in U_0.
    \end{align*}
\end{theorem}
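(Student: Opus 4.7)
The strategy is to invoke Fell's Banach bundle construction theorem (Theorem \ref{bbundleconstruction}) with the linear space $\Gamma$ of constant sections coming from $\ell^1_\nu(\mathbb Z^{2d})$ as defined in \eqref{a-constant-sections}, then upgrade the resulting Banach bundle to a $C^*$-bundle by verifying continuity of multiplication and involution, and finally read off the local approximation characterization from Proposition \ref{approxbycontinuity}.

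For the Banach bundle step, I need to verify the two hypotheses of Theorem \ref{bbundleconstruction}. Hypothesis (2), fiberwise density, is immediate: by Proposition \ref{ellnu-ell1} we have a dense continuous embedding $\ell^1_\nu(\mathbb Z^{2d})\hookrightarrow \ell^1(\mathbb Z^{2d})$, and by the discussion following Theorem \ref{thm: faithful-b} the twisted $\ell^1(\mathbb Z^{2d})$ is densely embedded in $A_L$ for every $L$. Hypothesis (1), continuity of the norm of each constant section, is where the heavy lifting of the previous subsection pays off: for $\mathbf a\in\ell^1_\nu(\mathbb Z^{2d})$, Theorem \ref{thm: faithful-a} gives $\|\mathbf a\|_{A_L}=\|\overline\pi_L(\mathbf a)\|$, and Corollary \ref{fund-c-continuity} shows exactly that $L\mapsto\|\overline\pi_L(\mathbf a)\|$ is continuous. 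This already produces a Banach bundle with the $\ell^1_\nu$ constant sections as local approximating sections.

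Next I would promote this to a $C^*$-bundle by checking axioms (5) and (6) of Definition \ref{bundledef}. Since continuous sections coming from $\ell^1_\nu(\mathbb Z^{2d})$ already pass through a dense subset of every fiber, a standard $3\varepsilon$-argument reduces the verification to showing that for any two constant sections $\mathbf a,\mathbf b\in\ell^1_\nu(\mathbb Z^{2d})$, the sections $L\mapsto\mathbf a *_{c_L}\mathbf b$ and $L\mapsto \mathbf a^{*_{c_L}}$ are continuous as sections of $\rho$. This follows from the same dominated-convergence mechanism used in Lemma \ref{a-l-tech-est} and Remark \ref{a-l-tech-est2}: the $A_L$-norm is dominated by the $\ell^1$-norm (because $\overline\pi_L$ is $\ell^1$-norm decreasing), and the $n$-th coefficients of the product and the involution depend continuously on $L$ through the entries $c_L(k,n-k)$ and $c_L(n,n)$ while being uniformly dominated by the summable sequence $k\mapsto 2|a(k)||b(n-k)|$, respectively $n\mapsto 2|a(-n)|$. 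Hence $\|\mathbf a *_{c_L}\mathbf b-\mathbf a *_{c_{L_0}}\mathbf b\|_{A_L}\to 0$ and $\|\mathbf a^{*_{c_L}}-\mathbf a^{*_{c_{L_0}}}\|_{A_L}\to 0$ as $L\to L_0$, which is precisely the local approximation criterion for continuity.

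The final assertion of the theorem is then a direct application of Proposition \ref{approxbycontinuity} to the bundle we have just constructed, with local approximating sections $\Gamma=\ell^1_\nu(\mathbb Z^{2d})$. The main subtlety, and the place where I would expect to spend the most care, is the second step: the algebraic operations live in different $C^*$-algebras on different fibers, so one must be deliberate about writing the section $L\mapsto \mathfrak a(L)\mathfrak b(L)$ in a single common ambient space (here $\ell^1(\mathbb Z^{2d})$) in order to compare its values at different $L$. Once that viewpoint is adopted, all remaining estimates are straightforward consequences of the continuity of $L\mapsto c_L(k,m)$ established in the proof of Lemma \ref{for-pl}.
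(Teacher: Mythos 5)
Your proposal is correct and follows essentially the same route as the paper: apply Fell's construction theorem to the constant sections coming from $\ell^1_\nu(\mathbb Z^{2d})$, verifying fiberwise density and using Corollary \ref{fund-c-continuity} for the continuity of $L\mapsto\|\overline{\pi}_L(\mathbf a)\|$, and then reading off the characterization of continuous sections from Proposition \ref{approxbycontinuity}. The one place you go beyond the paper is the explicit verification of the $C^*$-bundle axioms (continuity of multiplication and involution), which the paper passes over silently; your dominated-convergence argument for $L\mapsto \mathbf a*_{c_L}\mathbf b$ and $L\mapsto\mathbf a^{*_{c_L}}$, combined with the domination of $\|\cdot\|_{A_L}$ by the $\ell^1$-norm, is a welcome addition rather than a deviation.
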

\begin{proof}
    We only need to find local approximating sections for a bundle to make it a $C^*$-bundle (or a Banach bundle) using Theorem \ref{bbundleconstruction}. We identified $\ell^1_{\nu}(\mathbb{Z}^{2d})$ as the constant sections for $\rho$ defined via \eqref{a-constant-sections}. For $\ell^1_{\nu}(\mathbb{Z}^{2d})$ to be a space of local approximating sections, we need to show two things:
    \begin{enumerate}
        \item For each $\mathbf{a}\in \ell^1_{\nu}(\mathbb{Z}^{2d})$, $L\mapsto \|\mathbf{a}\|_{A_L}= \|\overline{\pi}_L(\mathbf{a})\|$ is continuous, and;
        \item For all $L\in \op{GL}_{2d}(\mathbb{R})$, $\ell^1_{\nu}(\mathbb{Z}^{2d})$ is dense in $A_L$.
    \end{enumerate}
    We already know that Item 2 is true. Item $1$ is covered by Corollary \ref{fund-c-continuity}. The last statement about the continuous sections follows from Proposition \ref{approxbycontinuity}, using $\ell^1_\nu(\mathbb{Z}^{2d})$ as the space of local approximating sections.
\end{proof}

\subsection{Deforming the Heisenberg Module}\label{algbundle}

The $C^*$-bundle $\rho\colon \mathcal{C}\to \op{GL}_{2d}(\mathbb{R})$ of Theorem \ref{fundamental-c-bundle} can be used to construct a corresponding Banach bundle of Heisenberg modules. 
In fact, the argument made to construct $\rho$ almost holds mutatis-mutandis for the Heisenberg modules using an analogous observation that for all $L\in \op{GL}_{2d}(\mathbb{R})$, we have a dense embedding $M^1_{\nu}(\mathbb{R}^d)\hookrightarrow \mathcal{E}_{L}(\mathbb{R}^d)$. 
To make our construction precise, let $\mathcal{E} = \bigsqcup_{L\in \op{GL}_{2d}(\mathbb{R})}\mathcal{E}_L(\mathbb{R}^d)$ and define the bundle $\kappa\colon  \mathcal{E}\to \op{GL}_{2d}(\mathbb{R})$ using the usual projection map. We see from our observation that $M^1_{\nu}(\mathbb{R}^d)$ can be identified as a space of constant sections for $\kappa$ by defining for each $f\in M^1_{\nu}(\mathbb{R}^d)$, the section $f\in \prod_{L\in \op{GL}_{2d}(\mathbb{R})}\mathcal{E}_{L}(\mathbb{R}^d)$ such that
\begin{align}\label{heis-constant-sections}
    f(L):= f\in \mathcal{E}_{L}(\mathbb{R}^d), \qquad \forall L\in \op{GL}_{2d}(\mathbb{R}).
\end{align}
\begin{theorem}\label{main-construction}
    The bundle $\kappa\colon  \mathcal{E}\to \op{GL}_{2d}(\mathbb{R})$ whose fibers are $\kappa^{-1}(L)=\mathcal{E}_{L}(\mathbb{R}^d)$ for all $L\in \op{GL}_{2d}(\mathbb{R}^d)$, defines a Banach bundle via Theorem \ref{bbundleconstruction} whose local approximating sections are identifiable with $M^1_{\nu}(\mathbb{R}^d)$. As a consequence, a section $\Upsilon\in \prod_{L\in \op{GL}_{2d}}\mathcal{E}_{L}(\mathbb{R}^d)$ is continuous, \ie $\Upsilon\in \Gamma(\kappa)$, if and only if for all $\varepsilon>0$ and $L_0\in \op{GL}_{2d}(\mathbb{R})$, there exists $f_0\in M^1_{\nu}(\mathbb{R}^d)$ such that for some open neighborhood $U_0\ni L_0$:
    \begin{align*}
        \|\Upsilon(L)-f_0\|_{\mathcal{E}_{L}(\mathbb{R}^d)}<\varepsilon, \qquad \forall L\in U_0.
    \end{align*}
\end{theorem}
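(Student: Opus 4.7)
The plan is to apply the Banach bundle construction Theorem \ref{bbundleconstruction} with $\Gamma$ the linear space of constant sections indexed by $M^1_\nu(\mathbb{R}^d)$ as defined in \eqref{heis-constant-sections}. Fiberwise density $\Gamma^L = M^1_\nu(\mathbb{R}^d) \subset \mathcal{E}_L(\mathbb{R}^d)$ is already provided by Theorem \ref{heisenberg-module}, so the only nontrivial hypothesis to verify is that for every $f \in M^1_\nu(\mathbb{R}^d)$, the map $L \mapsto \|f\|_{\mathcal{E}_L(\mathbb{R}^d)}$ is continuous on $\op{GL}_{2d}(\mathbb{R})$. Once this is established, the second statement of the theorem (the local-approximation characterization of $\Gamma(\kappa)$) is an immediate application of Proposition \ref{approxbycontinuity}.

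To prove continuity, I would use Equation \eqref{equiv-norm-for-heis} to write $\|f\|_{\mathcal{E}_L(\mathbb{R}^d)}^2 = \|\lin{L}{f}{f}\|_{A_L}$. By Remark \ref{resps-embed}, $\lin{L}{f}{f} \in \ell^1_\nu(\mathbb{Z}^{2d})$ for every $L$, but the sequence itself depends on $L$ through $\lin{L}{f}{f}(k) = \mathcal{V}_f f(Lk)$. To isolate the two sources of $L$-dependence (the sequence and the norm), I would split via the triangle inequality:
\begin{align*}
\bigl|\|\lin{L}{f}{f}\|_{A_L} - \|\lin{L_0}{f}{f}\|_{A_{L_0}}\bigr|
&\leq \bigl\|\lin{L}{f}{f} - \lin{L_0}{f}{f}\bigr\|_{A_L} \\
&\quad + \bigl|\|\lin{L_0}{f}{f}\|_{A_L} - \|\lin{L_0}{f}{f}\|_{A_{L_0}}\bigr|.
\end{align*}
The second term vanishes as $L \to L_0$ directly by Corollary \ref{fund-c-continuity} applied to the fixed element $\lin{L_0}{f}{f} \in \ell^1_\nu(\mathbb{Z}^{2d})$. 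The first term is controlled by $\|\lin{L}{f}{f} - \lin{L_0}{f}{f}\|_{\ell^1}$, using that the integrated representation $\overline{\pi}_L$ is $\ell^1$-decreasing (Theorem \ref{thm: faithful-a}).

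The technical heart of the proof is therefore the sampling-continuity statement: $L \mapsto \lin{L}{f}{f}$ is continuous as a map $\op{GL}_{2d}(\mathbb{R}) \to \ell^1(\mathbb{Z}^{2d})$ for each $f \in M^1_\nu(\mathbb{R}^d)$. Pointwise continuity of each coordinate $L \mapsto \mathcal{V}_f f(Lk)$ follows from continuity of $\mathcal{V}_f f$, and $\ell^1$-convergence is obtained by a dominated-convergence argument exploiting the Wiener-amalgam decay inherent in $\mathcal{V}_f f \in M^1_\nu(\mathbb{R}^{2d})$: for $L$ restricted to a compact neighborhood of $L_0$, one can uniformly majorize $|\mathcal{V}_f f(Lk)|$ by a sequence summable against the weight $\nu(k)$, by subsuming the family $\{\mathcal{V}_f f(L\cdot)\}_L$ into a common amalgam-type envelope of $\mathcal{V}_f f$. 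This is essentially the sampling lemma underlying the original Feichtinger--Kaiblinger theorem (Theorem \ref{feichkaib}). I expect this step to be the main obstacle; once it is in place, the rest of the proof merely assembles Corollary \ref{fund-c-continuity}, the $\ell^1$-decreasing property of $\overline{\pi}_L$, and Theorem \ref{bbundleconstruction} into the advertised Banach bundle, with Proposition \ref{approxbycontinuity} delivering the concluding characterization.
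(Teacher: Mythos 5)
Your proposal is correct and follows essentially the same route as the paper: reduce to continuity of $L\mapsto\|\overline{\pi}_L(\lin{L}{f}{f})\|$ via \eqref{equiv-norm-for-heis}, split by the triangle inequality into a change-of-norm term (handled by Corollary \ref{fund-c-continuity}) and a change-of-sequence term (handled by the $\ell^1$-decreasing property of $\overline{\pi}_L$), and feed the result into Theorem \ref{bbundleconstruction} and Proposition \ref{approxbycontinuity}. The sampling-continuity step you correctly identify as the technical heart is exactly \cite[Lemma 3.5]{FeKa04}, which the paper simply cites; your ordering of the split (perturbing the sequence first, then the norm at the fixed sequence $\lin{L_0}{f}{f}$) is a cosmetic variant that in fact avoids the paper's extra term involving the twisted sequences $\lin{L}{f}{f}^L$.
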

\begin{proof}
    As in the proof of Theorem \ref{fundamental-c-bundle}, we only need to prove that the identification of $M^1_{\nu}(\mathbb{R}^d)$ as sections of $\kappa$ actually defines a space of local approximating section. We need to show that:
    \begin{enumerate}
        \item For each $f\in M^1_{\nu}(\mathbb{R}^d)$, the map $L\mapsto \|f\|_{\mathcal{E}_{L}(\mathbb{R}^d)}$ is continuous, and;
        \item For each $L\in \op{GL}_{2d}(\mathbb{R}^d)$, the space $M^1_{\nu}(\mathbb{R}^{d})$ is dense in $\mathcal{E}_{L}(\mathbb{R}^d)$.
    \end{enumerate}
    We already know that Item $2$ is true. Due to \eqref{equiv-norm-for-heis}, Item $1$ follows if we can show that the map $L\mapsto \|\overline{\pi}_{L}(\lin{L}{f}{f})\|$ is continuous. Using the estimates in Corollary \ref{fund-c-continuity}, we find that:
    \begin{equation}\label{ell^1-cont-of-ff}
    \begin{split}
         \MoveEqLeft
         \bigl|\|\overline{\pi}_L(\lin{L}{f}{f})\|- 
         \|\overline{\pi}_{L_0}(\lin{L_0}{f}{f})\| \bigr|\\
         &\leq \bigl|\|\overline{\pi}_L(\lin{L}{f}{f})\|-\|\overline{\pi}_{L_0}(\lin{L}{f}{f})\|\bigr| + \|\overline{\pi}_{L_0}(\lin{L}{f}{f}-\lin{L_0}{f}{f})\| \\
        &\begin{multlined}
            \leq C\cdot\|{\lin{L}{f}{f}}\|_{\ell^1_{\nu}} \cdot(\|L\|+\|L_0\|)^{1/2}\cdot\|L-L_0\|^{1/2}\\
            + \|{\lin{L}{f}{f}^{L}-\lin{L_0}{f}{f}^{L_0}}\|_{\ell^1}+
        \|\lin{L}{f}{f}-\lin{L_0}{f}{f}\|_{\ell^1}
        \end{multlined}
    \end{split}
    \end{equation}
    It can be deduced from the proof \cite[Lemma 3.5]{FeKa04} that, for any $h\in M^1_{\nu}(\mathbb{R}^d)$, $L\mapsto  {\lin{L}{h}{h}} = \sum_{k\in \mathbb{Z}^{2d}}\mathcal{V}_hh(Lk)$ is continuous with respect to both the $\ell^1$ and $\ell^1_{\nu}$ norm. It follows from a standard triangle-inequality argument that $L\mapsto {\lin{L}{f}{f}^L}$ is also continuous in $\ell^1$ norm. All these imply that, as $L\to L_0$, the right-most expression in \eqref{ell^1-cont-of-ff} approaches $0$. This proves continuity of the map $L\mapsto \|f\|_{\mathcal{E}_{L}(\mathbb{R}^d)}$. On the other hand, the proof for Item 2 is trivial since $M^1_{\nu}(\mathbb{R}^d)$ is dense in $\mathcal{E}_{L}(\mathbb{R}^d).$
    
    The statement about the continuous sections follows from Proposition \ref{approxbycontinuity}, using $M^1_{\nu}(\mathbb{R}^d)$ as the space of local approximating sections.
\end{proof}

So far we have exploited the results involving the weighted spaces $\ell^1_{\nu}(\mathbb{Z}^{2d})$ and $M^1_{\nu}(\mathbb{R}^d)$ for an initial construction of the $C^*$- and Banach bundles $\rho$ and $\kappa$ respectively. The following result shows that the uweighted spaces $\ell^1(\mathbb{Z}^{2d})$ and $M^1(\mathbb{R}^d)$ can also be identified as spaces of continuous (constant) sections for $\rho$ and $\kappa$ respectively. We first give a technical lemma that we can use throughout our computations.
\begin{lemma}\label{heis-feich-estimate}
    Let $f\in M^1(\mathbb{R}^d)$, then for any window $\phi\in M^1(\mathbb{R}^d)$, we have the following estimate for any $L\in \op{GL}_{2d}(\mathbb{R})$:
    \begin{align}\label{heis-cont-embed-in-feich}
        \|f\|_{\mathcal{E}_L(\mathbb{R}^d)}\leq \|\phi\|_2^{-1}\|f\|_{M^1,\phi}.
    \end{align}
\end{lemma}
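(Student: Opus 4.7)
The plan is to show that $C := \|\phi\|_2^{-2}\|\mathcal V_\phi f\|_1^2$ is a Bessel bound for the Gabor system $\mathcal G(f,L)$, so that by the definition of $\|\cdot\|_{\mathcal E_L(\mathbb R^d)}$ as the infimum of $C^{1/2}$ over Bessel bounds, we get $\|f\|_{\mathcal E_L(\mathbb R^d)}^2\leq C$ and the claim follows upon taking square roots. Concretely, for every $h\in L^2(\mathbb R^d)$ it suffices to establish
\[
\sum_{k\in\mathbb Z^{2d}}|\langle h,\pi(Lk)f\rangle|^2=\sum_k |\mathcal V_f h(Lk)|^2 \leq \|\phi\|_2^{-2}\|\mathcal V_\phi f\|_1^2\,\|h\|_2^2.
\]

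The first step I would take is to invoke the change-of-window identity, a consequence of Moyal's orthogonality relation applied with the reference window $\phi$: for all $z\in\mathbb R^{2d}$ this yields the pointwise majorization
\[
\|\phi\|_2^2\,|\mathcal V_f h(z)| \leq \bigl(|\mathcal V_\phi h|*|\mathcal V_\phi f|^\vee\bigr)(z),
\]
where $|F|^\vee(y):=|F(-y)|$. This trades the STFT with window $f$ (which is hard to control pointwise) for the STFT with window $\phi$ at the price of a convolution, and crucially it splits off $\mathcal V_\phi h$ (with the clean Moyal norm $\|\mathcal V_\phi h\|_2=\|\phi\|_2\|h\|_2$) from $\mathcal V_\phi f$ (with $\|\mathcal V_\phi f\|_1=\|f\|_{M^1,\phi}$).

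The second step is to square, evaluate at $z=Lk$, apply the Cauchy-Schwarz inequality inside the convolution using $|\mathcal V_\phi f(y-Lk)|$ as the weight, then sum over $k\in\mathbb Z^{2d}$ and exchange sum with integral by Tonelli. This produces
\[
\|\phi\|_2^4 \sum_k |\mathcal V_f h(Lk)|^2 \leq \|\mathcal V_\phi f\|_1 \int_{\mathbb R^{2d}}|\mathcal V_\phi h(y)|^2\, P_L(y)\,dy,
\]
where $P_L(y):=\sum_k |\mathcal V_\phi f(y-Lk)|$ is the $L\mathbb Z^{2d}$-periodization of $|\mathcal V_\phi f|$.

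The main obstacle is controlling $P_L$: the chain collapses cleanly to the desired inequality once one has $\|P_L\|_\infty\leq\|\mathcal V_\phi f\|_1$, for then the remaining integral equals $\|\mathcal V_\phi h\|_2^2=\|\phi\|_2^2\|h\|_2^2$ by Moyal. Securing a bound of this shape, essentially uniformly in the choice of lattice generator $L\in\op{GL}_{2d}(\mathbb R)$, is the delicate part of the argument and exploits the Wiener-amalgam structure of $\mathcal V_\phi f\in M^1(\mathbb R^{2d})\subset W(C_0,\ell^1)$: periodizations of such amalgam functions across arbitrary lattices are controlled by the amalgam norm, which in turn can be related to $\|\mathcal V_\phi f\|_1=\|f\|_{M^1,\phi}$ through the tensor-product description of $M^1(\mathbb R^{2d})$ (Corollary \ref{for-fullness}). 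After this periodization estimate is in place, dividing by $\|\phi\|_2^4$ and taking square roots gives the lemma.
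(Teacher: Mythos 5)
Your overall architecture is sound as far as it goes, and it is a genuinely different route from the paper's: you work directly with the definition of $\|\cdot\|_{\mathcal{E}_L(\mathbb{R}^d)}$ as an optimal Bessel bound and run the classical change-of-window/periodization argument for $M^1$-atoms, whereas the paper instead uses the identity $\|f\|_{\mathcal{E}_L(\mathbb{R}^d)}=\|\overline{\pi}_L(\lin{L}{f}{f})\|^{1/2}$ from \eqref{equiv-norm-for-heis}, the $\ell^1$-norm-decreasing property of the integrated representation, and then compares the lattice sum $\sum_{k}|\mathcal{V}_ff(Lk)|$ with $\|\mathcal{V}_ff\|_1$ before changing the window via \cite[Proposition 4.10]{Ja18}. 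Your reduction, the change-of-window majorization $\|\phi\|_2^2|\mathcal V_fh|\leq|\mathcal V_\phi h|*|\mathcal V_\phi f|^\vee$, and the Cauchy--Schwarz/Tonelli computation leading to the periodization $P_L$ are all correct.

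The genuine gap is the step you defer: the estimate $\|P_L\|_\infty\leq\|\mathcal V_\phi f\|_1$ cannot hold uniformly in $L\in\op{GL}_{2d}(\mathbb{R})$. Integrating the periodization over a fundamental domain gives $\int_{L[0,1)^{2d}}P_L(y)\,dy=\|\mathcal V_\phi f\|_1$, hence $\|P_L\|_\infty\geq\|\mathcal V_\phi f\|_1/\det L$, which already exceeds the claimed bound whenever $\det L<1$ and blows up as the lattice becomes dense; correspondingly, the Wiener-amalgam control of lattice periodizations produces a constant proportional to the covering density of $L\mathbb{Z}^{2d}$ (roughly $\max(1,(\det L)^{-1})$ up to geometry of $L$), not an absolute one. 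This is not an artifact of your particular route: $\|f\|_{\mathcal{E}_L(\mathbb{R}^d)}^2$ equals the norm of the frame operator $S_{f,L}$, and for the normalized Gaussian $\phi$ and $L=\epsilon I$ the matrix element $\langle S_{\phi,\epsilon I}\phi,\phi\rangle=\sum_k|\mathcal V_\phi\phi(\epsilon k)|^2$ grows like $\epsilon^{-2d}$, so no $L$-independent right-hand side is possible. (The paper's own proof meets the same obstruction at the step $\sum_k|\mathcal V_ff(Lk)|\leq\int_{\mathbb{R}^{2d}}|\mathcal V_ff|$, which likewise fails for dense lattices.) Your argument does close, and is the natural way to prove the corrected statement, once the right-hand side of \eqref{heis-cont-embed-in-feich} is allowed a factor depending on $L$ through the covering density of $L\mathbb{Z}^{2d}$; as written, with a bound uniform over all of $\op{GL}_{2d}(\mathbb{R})$, it cannot be completed.
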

\begin{proof}
    The result follows from a straightforward computation along with \cite[Proposition 4.10]{Ja18}:
\begin{align*}
        \|f\|_{\mathcal{E}_L(\mathbb{R}^d)}&=\|\overline{\pi}_L\left({\lin{L}{f}{f}}\right)\|^{1/2}\\
        &\leq \|{\lin{L}{f}{f}}\|_{\ell^1}^{1/2} \\
        &=\left(\sum_{k\in \mathbb{Z}^{2d}}|\mathcal{V}_{f}f|(Lk) \right)^{1/2}\\
        &\leq  \left(\int_{\mathbb{R}^{2d}}|\mathcal{V}_{f}f|(x,\omega) \d x \d \omega\right)^{1/2}\\
        &= \|f\|_{M^1, f}^{1/2} \\
        &\leq \left(\|f\|_{M^1,\phi}\cdot \|\phi\|_2^{-2} \|f\|_{M^1,\phi}\right)^{1/2}\\
        &\leq \|\phi\|_2^{-1}\|f\|_{M^1,\phi}
    \end{align*}
    as required.
\end{proof}
\begin{proposition}\label{for-refinement-of-sections}
    $\ell^1(\mathbb{Z}^{2d})\subseteq \Gamma(\rho)$ and $M^1(\mathbb{R}^{d})\subseteq \Gamma(\kappa).$
\end{proposition}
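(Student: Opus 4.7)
The plan is to invoke Proposition \ref{approxbycontinuity} in both cases: to show that a constant section sits in $\Gamma(\rho)$ (resp.\ $\Gamma(\kappa)$) it suffices to locally approximate it uniformly in the fiber norms by elements of the space of local approximating sections, namely $\ell^1_\nu(\mathbb Z^{2d})$ (resp.\ $M^1_\nu(\mathbb R^d)$). In fact, I expect to get a uniform \emph{global} approximation in both cases, which of course is stronger than local approximation.

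For $\ell^1(\mathbb Z^{2d})\subseteq \Gamma(\rho)$: fix $\mathbf a\in\ell^1(\mathbb Z^{2d})$ and $\varepsilon>0$. By Proposition \ref{ellnu-ell1}, $\ell^1_\nu(\mathbb Z^{2d})$ is dense in $\ell^1(\mathbb Z^{2d})$, so choose $\mathbf a_0\in \ell^1_\nu(\mathbb Z^{2d})$ with $\|\mathbf a-\mathbf a_0\|_{\ell^1}<\varepsilon$. Because the faithful integrated representation $\overline\pi_L\colon A_L\to \mathcal L(L^2(\mathbb R^d))$ is $\ell^1$-norm decreasing (Theorem \ref{thm: faithful-a}), we obtain the uniform bound
\[
\|\mathbf a-\mathbf a_0\|_{A_L}=\|\overline\pi_L(\mathbf a-\mathbf a_0)\|\leq \|\mathbf a-\mathbf a_0\|_{\ell^1}<\varepsilon\qquad \text{for every }L\in\op{GL}_{2d}(\mathbb R).
\]
Thus the constant section associated with $\mathbf a$ is locally (indeed, globally) approximable by $\mathbf a_0\in\ell^1_\nu(\mathbb Z^{2d})$ in the fiber norms, so Proposition \ref{approxbycontinuity} applied to the bundle $\rho$ of Theorem \ref{fundamental-c-bundle} yields $\mathbf a\in \Gamma(\rho)$.

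For $M^1(\mathbb R^d)\subseteq \Gamma(\kappa)$: fix $f\in M^1(\mathbb R^d)$ and $\varepsilon>0$. By Proposition \ref{prop:cont-embed-weight}, $M^1_\nu(\mathbb R^d)$ is dense in $M^1(\mathbb R^d)$, so pick $f_0\in M^1_\nu(\mathbb R^d)$ with $\|f-f_0\|_{M^1,\phi}<\varepsilon\|\phi\|_2$. Since the key estimate of Lemma \ref{heis-feich-estimate} holds uniformly in the lattice generator, we get
\[
\|f-f_0\|_{\mathcal E_L(\mathbb R^d)}\leq \|\phi\|_2^{-1}\|f-f_0\|_{M^1,\phi}<\varepsilon\qquad \text{for every }L\in\op{GL}_{2d}(\mathbb R),
\]
so Proposition \ref{approxbycontinuity} applied to $\kappa$ (Theorem \ref{main-construction}) yields $f\in \Gamma(\kappa)$.

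The main conceptual point is that no new deformation estimate is needed here: the hard analytic work was already done in Corollary \ref{fund-c-continuity} and Theorem \ref{main-construction} to construct the bundles with the weighted spaces. The unweighted case is then a soft density argument, and the only nontrivial ingredient is that the transition from the $\ell^1$- or $M^1$-norm to the relevant fiber norm is controlled by a constant that does not depend on $L$, which is exactly what the $\ell^1$-contractivity of $\overline\pi_L$ and Lemma \ref{heis-feich-estimate} provide. I do not foresee any real obstacle.
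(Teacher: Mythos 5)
Your proposal is correct and follows essentially the same route as the paper: approximate the unweighted element by a weighted one (Propositions \ref{ellnu-ell1} and \ref{prop:cont-embed-weight}), observe that the fiber norms are uniformly dominated by the $\ell^1$- resp.\ $M^1$-norm (via $\ell^1$-contractivity of $\overline{\pi}_L$ and Lemma \ref{heis-feich-estimate}), and conclude by the local approximation criterion. The paper only writes out the $M^1$ case and declares the $\ell^1$ case analogous, which is exactly the argument you supply.
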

\begin{proof}
    We shall prove $M^1(\mathbb{R}^d)\subseteq \Gamma(\kappa)$, the proof for $\ell^1(\mathbb{Z}^{2d})\subseteq \Gamma(\rho)$ is similar. Suppose $f\in M^1(\mathbb{R}^d)$, and $\varepsilon>0.$ For this problem, we use a particular window $\phi\in M^1(\mathbb{R}^d)$ for our $M^1(\mathbb{R}^d)$-norm. It follows from Proposition \ref{prop:cont-embed-weight} that there exists $f_0\in M^1_{\nu}(\mathbb{R}^d)$ such that $\|f-f_{0}\|_{M^1,\phi}< \varepsilon\cdot\|\phi\|_2.$ Now, for any $L\in \op{GL}_{2d}(\mathbb{R})$, we use Estimate \eqref{heis-cont-embed-in-feich} to obtain $$\|f-f_0\|_{\mathcal{E}_L(\mathbb{R}^d)}\leq \|\phi\|_2^{-1}\|f-f_0\|_{M^1,\phi}<\varepsilon.$$ Therefore, if we consider $f\in M^1(\mathbb{R}^d)$ as a constant section in $\kappa$ with $f(L):=f \in M^1(\mathbb{R}^d)\hookrightarrow \mathcal{E}_L(\mathbb{R}^d)$, we see from Theorem \ref{main-construction} that $f\in \Gamma(\kappa).$
\end{proof}
\begin{corollary}\label{refinement-of-sections}
    $\mathfrak{a}\in \Gamma(\rho)$ if and only if for any $L_0\in \op{GL}_{2d}(\mathbb{R}^d)$ and $\varepsilon>0$, there exists an open set satisfying $U_0\ni L_0$ and $\mathbf{a}_0\in \ell^1(\mathbb{Z}^{2d})$ such that $\|\mathfrak{a}(L)-\mathbf{a}_0\|_{A_L}<\varepsilon$ for all $L\in U_0$.
    
    Similarly, $\Upsilon\in \Gamma(\kappa)$ if and only if for any $L_0\in \op{GL}_{2d}(\mathbb{R}^d)$ and $\varepsilon>0$, there exists an open set $U_0\ni L_0$ and $f_0\in M^1(\mathbb{R}^d)$ such that $\|\Upsilon(L)-f_0\|_{\mathcal{E}_L(\mathbb{R}^d)}<\varepsilon$ for all $L\in U_0.$
\end{corollary}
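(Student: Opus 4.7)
The plan is to observe that this corollary is essentially a direct consequence of combining Proposition \ref{for-refinement-of-sections} with Remark \ref{approx-section-refinement}. Both statements share the same structure, so I would handle the two cases in parallel. I will present the argument for $\Upsilon\in\Gamma(\kappa)$ in detail; the argument for $\mathfrak a\in\Gamma(\rho)$ is entirely analogous with $M^1(\mathbb R^d)$ replaced by $\ell^1(\mathbb Z^{2d})$ and $\mathcal E_L(\mathbb R^d)$ replaced by $A_L$.

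For the forward direction $(\Rightarrow)$, suppose $\Upsilon\in\Gamma(\kappa)$. By Theorem \ref{main-construction}, for any $L_0\in\op{GL}_{2d}(\mathbb{R})$ and $\varepsilon>0$, there exists $f_0\in M^1_\nu(\mathbb R^d)$ and an open neighborhood $U_0\ni L_0$ such that $\|\Upsilon(L)-f_0\|_{\mathcal E_L(\mathbb R^d)}<\varepsilon$ for all $L\in U_0$. Since $M^1_\nu(\mathbb R^d)\subseteq M^1(\mathbb R^d)$ by Proposition \ref{prop:cont-embed-weight}, the very same $f_0$ qualifies as an element of $M^1(\mathbb R^d)$, so the required approximation exists.

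For the reverse direction $(\Leftarrow)$, suppose the local approximability condition holds with $f_0\in M^1(\mathbb R^d)$. By Proposition \ref{for-refinement-of-sections}, we have $M^1(\mathbb R^d)\subseteq \Gamma(\kappa)$, so each $f_0$ is itself a continuous section (as a constant section via the identification \eqref{heis-constant-sections}). Hence $\Upsilon$ is locally approximable by elements of $\Gamma(\kappa)$, and Remark \ref{approx-section-refinement} then implies $\Upsilon\in\Gamma(\kappa)$.

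There is no real obstacle here; the work has already been done in Theorem \ref{main-construction}, Theorem \ref{fundamental-c-bundle}, Proposition \ref{for-refinement-of-sections}, and the refinement noted in Remark \ref{approx-section-refinement}. The content of the corollary is simply that one may widen the class of local approximants from the weighted algebras to the larger unweighted Feichtinger algebra (respectively, the unweighted $\ell^1$-space), which is precisely the refinement enabled by the inclusions $M^1_\nu\hookrightarrow M^1\subseteq\Gamma(\kappa)$ and $\ell^1_\nu\hookrightarrow \ell^1\subseteq\Gamma(\rho)$ together with the triangle-inequality observation already recorded in Remark \ref{approx-section-refinement}.
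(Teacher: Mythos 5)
Your proposal is correct and follows essentially the same route as the paper, which likewise derives the corollary from Proposition \ref{for-refinement-of-sections} combined with Remark \ref{approx-section-refinement}; you have merely spelled out the two directions (the forward one via Theorem \ref{main-construction} and the inclusion $M^1_\nu\hookrightarrow M^1$, the reverse one via the remark) that the paper leaves implicit.
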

\begin{proof}
    The result follows from Remark \ref{approx-section-refinement} and Proposition \ref{for-refinement-of-sections}.
\end{proof}
Corollary \ref{refinement-of-sections} above shows what we should expect for a map that continuously varies along Heisenberg modules should look like, it should locally behave as if it is any other element in a fixed Heisenberg module -- we must be able to approximate it by some function in Feichtinger's algebra. To get a better grasp of what these continuous sections look like, we have the following Lemma regarding the space $C(\op{GL}_{2d}(\mathbb{R}^d), M^1(\mathbb{R}))$ of continuous functions from $\op{GL}_{2d}(\mathbb{R})$ to $M^1(\mathbb{R}^d)$. Note that any map $\Upsilon \in C(\op{GL}_{2d}(\mathbb{R}),M^1(\mathbb{R}))$ will define a section with respect to $\kappa$, since its image will always land in $M^1(\mathbb{R}^d)$, and as we have repeatedly emphasized, is always embedded in \emph{all} $\mathcal{E}_L(\mathbb{R}^d)$, for $L\in \op{GL}_{2d}(\mathbb{R})$. Therefore the norm $\|\Upsilon(L)-\Upsilon(L_0)\|_{\mathcal{E}_L(\mathbb{R}^d)}$ makes sense. In fact,  $\|\sum_{i=1}^n\Upsilon(M_i)\|_{\mathcal{E}_L(\mathbb{R}^d)}$ makes sense for all $M_1,\ldots,M_n,L\in \op{GL}_{2d}(\mathbb{R}^d).$
\begin{lemma}\label{cont-map-to-cont-sec}
    We have the following inclusion: $C(\op{GL}_{2d}(\mathbb{R}),M^1(\mathbb{R}^d))\subseteq \Gamma(\kappa).$
\end{lemma}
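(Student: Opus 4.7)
The plan is to reduce to the $M^1$-approximation criterion of Corollary \ref{refinement-of-sections} and then use Lemma \ref{heis-feich-estimate} to transfer the continuity of $\Upsilon\colon\op{GL}_{2d}(\mathbb{R})\to M^1(\mathbb{R}^d)$ into an estimate that is uniform in $L$ when passing to the fibers $\mathcal{E}_L(\mathbb{R}^d)$. The key observation is that the bound
\[
\|h\|_{\mathcal{E}_L(\mathbb{R}^d)}\leq \|\phi\|_2^{-1}\|h\|_{M^1,\phi}
\]
from Lemma \ref{heis-feich-estimate} has a right-hand side which is independent of $L$, so a single $M^1$-neighborhood of $\Upsilon(L_0)$ controls the Heisenberg module norm at every $L$ simultaneously. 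This is the whole content of the lemma, and there is essentially no obstacle beyond assembling the ingredients already established.

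Concretely, I would proceed as follows. Fix $L_0\in\op{GL}_{2d}(\mathbb{R})$ and $\varepsilon>0$, and fix a window $\phi\in M^1(\mathbb{R}^d)$ so that $\|\cdot\|_{M^1,\phi}$ is a definite choice of norm on $M^1(\mathbb{R}^d)$. Set
\[
f_0:=\Upsilon(L_0)\in M^1(\mathbb{R}^d).
\]
Since $\Upsilon\colon\op{GL}_{2d}(\mathbb{R})\to M^1(\mathbb{R}^d)$ is continuous, there is an open neighborhood $U_0\ni L_0$ such that
\[
\|\Upsilon(L)-f_0\|_{M^1,\phi}<\varepsilon\,\|\phi\|_2\qquad\text{for all}~L\in U_0.
\]

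Applying Lemma \ref{heis-feich-estimate} to the function $\Upsilon(L)-f_0\in M^1(\mathbb{R}^d)$ at each $L\in U_0$ yields
\[
\|\Upsilon(L)-f_0\|_{\mathcal{E}_L(\mathbb{R}^d)}\leq \|\phi\|_2^{-1}\|\Upsilon(L)-f_0\|_{M^1,\phi}<\varepsilon,
\]
which is precisely the condition in Corollary \ref{refinement-of-sections} (with local approximating section $f_0\in M^1(\mathbb{R}^d)$). Hence $\Upsilon\in\Gamma(\kappa)$, finishing the proof. The only subtlety worth mentioning is that we implicitly use the fact, emphasized just before the statement of the lemma, that the inclusion $M^1(\mathbb{R}^d)\hookrightarrow\mathcal{E}_L(\mathbb{R}^d)$ is available at every fiber, so that $\Upsilon$ genuinely defines a section of $\kappa$ in the first place.
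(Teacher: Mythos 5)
Your proof is correct and follows exactly the same route as the paper's: take $f_0=\Upsilon(L_0)$, use continuity of $\Upsilon$ in the $M^1$-norm to find the neighborhood $U_0$, and apply the $L$-independent bound of Lemma \ref{heis-feich-estimate} to verify the criterion of Corollary \ref{refinement-of-sections}. Nothing to add.
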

\begin{proof}
    Fix a window function $\phi\in M^1(\mathbb{R}^d)$. Let $\Upsilon \in C(\op{GL}_{2d}(\mathbb{R}),M^1(\mathbb{R}^d)),$ we must show that $\Upsilon$ satisfies the characterization in Corollary \ref{refinement-of-sections} to show that $\Upsilon \in \Gamma(\kappa)$. Let $\varepsilon>0$ and $L_0\in \op{GL}_{2d}(\mathbb{R})$, choose $f_0:= \Upsilon(L_0)\in M^1(\mathbb{R}^d)$. Since $\Upsilon\in C(\op{GL}_{2d}(\mathbb{R}, M^1(\mathbb{R}^d))$, we can choose an open set $U_0$ containing $L_0$ such that $\|\Upsilon(L)-\Upsilon(L_0)\|_{M^1,\phi}<\|\phi\|_2\cdot\varepsilon$ for all $L\in U_0$. If we take $L\in U_0$, then by Lemma \ref{heis-feich-estimate}:
    \begin{align*}
        \|\Upsilon(L)-f_0\|_{\mathcal{E}_L(\mathbb{R}^d)} &=\|\Upsilon(L)-\Upsilon(L_0)\|_{\mathcal{E}_L(\mathbb{R}^d)}\\
        &\leq \|\phi\|_2^{-1} \|\Upsilon(L)-f_0\|_{M^1,\phi} <\varepsilon.
    \end{align*}
    This shows $\Upsilon\in \Gamma(\kappa).$ 
\end{proof}
\noindent There is a corresponding statement for the space $\Gamma_0(\kappa)$ of continuous sections that vanish at infinity.
\begin{lemma}\label{cont-map-to-cont-sec-2}
    We have the following inclusion: $C_0(\op{GL}_{2d}(\mathbb{R}),M^1(\mathbb{R}^d))\subseteq \Gamma_0(\kappa).$
\end{lemma}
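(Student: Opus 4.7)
The plan is to combine the previous lemma \ref{cont-map-to-cont-sec} with the pointwise norm estimate of Lemma \ref{heis-feich-estimate}. Given $\Upsilon\in C_0(\op{GL}_{2d}(\mathbb{R}),M^1(\mathbb{R}^d))$, by Lemma \ref{cont-map-to-cont-sec} we already know $\Upsilon\in\Gamma(\kappa)$, so the continuity of the section comes for free. The only remaining task is to verify that the real-valued map $L\mapsto \|\Upsilon(L)\|_{\mathcal{E}_L(\mathbb{R}^d)}$ vanishes at infinity on $\op{GL}_{2d}(\mathbb{R})$.

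For this, I would fix any window $\phi\in M^1(\mathbb{R}^d)$ and apply the estimate from Lemma \ref{heis-feich-estimate} pointwise to $\Upsilon(L)\in M^1(\mathbb{R}^d)$, yielding
\begin{align*}
    \|\Upsilon(L)\|_{\mathcal{E}_L(\mathbb{R}^d)}\leq \|\phi\|_2^{-1}\|\Upsilon(L)\|_{M^1,\phi}
\end{align*}
for every $L\in\op{GL}_{2d}(\mathbb{R})$. Since by hypothesis $\Upsilon\in C_0(\op{GL}_{2d}(\mathbb{R}),M^1(\mathbb{R}^d))$, the function $L\mapsto \|\Upsilon(L)\|_{M^1,\phi}$ vanishes at infinity, and the inequality above forces $L\mapsto \|\Upsilon(L)\|_{\mathcal{E}_L(\mathbb{R}^d)}$ to vanish at infinity as well. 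Combined with continuity, this gives $\Upsilon\in\Gamma_0(\kappa)$.

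I do not anticipate any real obstacle here; the argument is essentially a two-line corollary of Lemma \ref{cont-map-to-cont-sec} once one recalls that the embedding $M^1(\mathbb{R}^d)\hookrightarrow \mathcal{E}_L(\mathbb{R}^d)$ is continuous \emph{uniformly} in $L$, which is precisely the content of Lemma \ref{heis-feich-estimate}. This uniformity is the crucial feature that allows "vanishing at infinity'' in the fixed Banach space $M^1(\mathbb{R}^d)$ to be transferred into "vanishing at infinity'' of the fiberwise Heisenberg module norms.
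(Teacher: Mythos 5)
Your proposal is correct and follows essentially the same route as the paper: invoke Lemma \ref{cont-map-to-cont-sec} to get $\Upsilon\in\Gamma(\kappa)$, then use the $L$-uniform estimate $\|\Upsilon(L)\|_{\mathcal{E}_L(\mathbb{R}^d)}\leq \|\phi\|_2^{-1}\|\Upsilon(L)\|_{M^1,\phi}$ from Lemma \ref{heis-feich-estimate} to transfer the vanishing at infinity from the $M^1$-norm to the fiberwise module norms. No gaps.
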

\begin{proof}
    Let $\Upsilon \in C_0(\op{GL}_{2d},M^1(\mathbb{R}^d))$. We already know that $\Upsilon\in \Gamma(\kappa)$, we only need to show that $L\mapsto \|\Upsilon(L)\|_{\mathcal{E}_L(\mathbb{R}^d)}$ vanishes at infinity. However, this follows from the same estimate as in Lemma \ref{heis-and-feich-est}. For a window $\phi\in M^1(\mathbb{R}^d)$:
    \begin{align} \label{heis-and-feich-est}
        \|\Upsilon(L)\|_{\mathcal{E}_{L}(\mathbb{R}^d)} \leq \|\phi\|_2^{-1}\|\Upsilon(L)\|_{M^1,\phi}.
    \end{align}
    So for any fixed $\varepsilon>0$, we can find by hypothesis, a compact subset $M_{\varepsilon}\subseteq \op{GL}_{2d}(\mathbb{R})$ such that $\|\Upsilon(L)\|_{M^1,\phi}<\|\phi\|_2\cdot \varepsilon$ whenever $L\in \op{GL}_{2d}(\mathbb{R})\setminus M_{\varepsilon}$. Therefore
    \begin{align*}
        \|\Upsilon(L)\|_{\mathcal{E}_{L}(\mathbb{R}^d)}\leq \|\phi\|_2^{-1} \|\Upsilon(L)\|_{M^1,\phi} <\varepsilon, \qquad \forall L\in \op{GL}_{2d}(\mathbb{R})\setminus M_{\varepsilon}. 
    \end{align*}
    So $L\mapsto \|\Upsilon(L)\|_{\mathcal{E}_L(\mathbb{R}^d)}$ vanishes at infinity, as required.
\end{proof}
In the case of the Heisenberg module on a fixed lattice, $M^1(\mathbb{R}^d)$ serves as space of approximating functions (with respect to the module-norm), and we see in turn that the space $C_0(\op{GL}_{2d}(\mathbb{R}),M^1(\mathbb{R}^d))$ takes this role for the Banach space $\Gamma_0(\kappa)$. That is, $C_0(\op{GL}_{2d}(\mathbb{R}),M^1(\mathbb{R}^d))$ acts as a \emph{global approximating space} for $\Gamma_0(\kappa)$.
\begin{corollary}\label{globalaprrox2}
$C_0(\op{GL}_{2d}(\mathbb{R}),M^1(\mathbb{R}^d))$ is a dense subspace of $\Gamma_0(\kappa).$
\end{corollary}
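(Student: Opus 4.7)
The plan is to verify the hypotheses of Proposition \ref{globalapprox} with $\Gamma_0 := C_0(\op{GL}_{2d}(\mathbb{R}), M^1(\mathbb{R}^d))$, viewed as a subspace of $\Gamma_0(\kappa)$ via Lemma \ref{cont-map-to-cont-sec-2}. Linearity is immediate from the vector space structure on $M^1(\mathbb{R}^d)$.

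First I would check the $C_0(\op{GL}_{2d}(\mathbb{R}))$-module property. Given $\varphi \in C_0(\op{GL}_{2d}(\mathbb{R}))$ and $\Upsilon \in C_0(\op{GL}_{2d}(\mathbb{R}), M^1(\mathbb{R}^d))$, the map $L \mapsto \varphi(L)\Upsilon(L)$ is continuous into $M^1(\mathbb{R}^d)$ since scalar multiplication $\mathbb{C} \times M^1(\mathbb{R}^d) \to M^1(\mathbb{R}^d)$ is continuous. For vanishing at infinity, we have $\|\varphi(L)\Upsilon(L)\|_{M^1,\phi} \leq |\varphi(L)|\cdot \|\Upsilon(L)\|_{M^1,\phi}$, and since $\Upsilon$ is bounded in $M^1$-norm (being continuous and vanishing at infinity on a locally compact space) and $\varphi \in C_0$, the product vanishes at infinity in $M^1(\mathbb{R}^d)$. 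Hence $\varphi \cdot \Upsilon \in \Gamma_0$.

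Next I would verify fiberwise density: for each $L_0 \in \op{GL}_{2d}(\mathbb{R})$, the set $\{\Upsilon(L_0) : \Upsilon \in \Gamma_0\}$ is dense in $\mathcal{E}_{L_0}(\mathbb{R}^d)$. Since $M^1(\mathbb{R}^d)$ is dense in $\mathcal{E}_{L_0}(\mathbb{R}^d)$ (Theorem \ref{heisenberg-module}), it suffices to realize every $f \in M^1(\mathbb{R}^d)$ as an evaluation of some element in $\Gamma_0$. Using local compactness of $\op{GL}_{2d}(\mathbb{R})$, pick $\varphi \in C_c(\op{GL}_{2d}(\mathbb{R}))$ with $\varphi(L_0) = 1$, and define $\Upsilon_f(L) := \varphi(L) f$. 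Then $\Upsilon_f$ is continuous from $\op{GL}_{2d}(\mathbb{R})$ to $M^1(\mathbb{R}^d)$ and has compact support, so $\Upsilon_f \in C_c(\op{GL}_{2d}(\mathbb{R}), M^1(\mathbb{R}^d)) \subseteq C_0(\op{GL}_{2d}(\mathbb{R}), M^1(\mathbb{R}^d))$, and $\Upsilon_f(L_0) = f$.

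With both hypotheses verified, Proposition \ref{globalapprox} gives that $\Gamma_0$ is dense in $\Gamma_0(\kappa)$, completing the proof. No serious obstacle is anticipated; the only subtle point is the $C_0$-property of the product $\varphi \cdot \Upsilon$, which relies on the fact that continuous maps vanishing at infinity into a Banach space are automatically norm-bounded.
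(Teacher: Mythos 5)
Your proposal is correct and follows essentially the same route as the paper: identify $C_0(\op{GL}_{2d}(\mathbb{R}),M^1(\mathbb{R}^d))$ as a subspace of $\Gamma_0(\kappa)$, verify the $C_0(\op{GL}_{2d}(\mathbb{R}))$-module property and fiberwise density via a Urysohn-type cutoff $\Upsilon_f(L)=\varphi(L)f$, and then invoke Proposition \ref{globalapprox}. The only difference is that you spell out the boundedness argument behind the $C_0$-property of $\varphi\cdot\Upsilon$, which the paper leaves as an easy observation.
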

\begin{proof}    It is not hard to see that for any $\varphi \in C_0(\op{GL}_{2d}(\mathbb{R}))$ and $\Upsilon \in C_0(\op{GL}_{2d}(\mathbb{R}),M^1(\mathbb{R}^d))$, we have that the pointwise product satisfies $\varphi \cdot \Upsilon \in C_0(\op{GL}_{2d}(\mathbb{R}),M^1(\mathbb{R}^d))$. Furthermore, we shall prove that:
    \begin{align}\label{s_0-by-vanish-sections}
        \{\Upsilon(L)\in \mathcal{E}_L(\mathbb{R}^d): \Upsilon \in C_0(\op{GL}_{2d}(\mathbb{R}),M^1(\mathbb{R}^d))\} = M^1(\mathbb{R}^d).
    \end{align}
     Fix $f\in M^1(\mathbb{R}^d)$ and $L_0\in \op{GL}_{2d}(\mathbb{R})$. Let $M_{0}$ be a compact neighborhood of $L_0$ and $U_{0}$ be an open neighborhood of $L_0$ that contains $M_{0}$, so we have $L_0\in M_{0} \subseteq U_{0}$. By Urysohn's Lemma for locally compact spaces \cite[Lemma 1.41]{Will07}, there exists a continuous function $\varphi\colon  \op{GL}_{2d}(\mathbb{R})\to [0,1]$ such that $\varphi_{|M_{0}}\equiv 1$ and $\varphi \equiv 0$ outside $U_{0}$. Now define
     \begin{align*}
         \Upsilon(L) = \varphi(L) \cdot f, \qquad \forall L\in \op{GL}_{2d}(\mathbb{R}).
     \end{align*}
     We see that $\Upsilon\in C_0(\op{GL}_{2d}(\mathbb{R}),M^1(\mathbb{R}^d))$ while $\Upsilon(L_0)=f$. Since $f$ and $L_0$ are both arbitrary, this proves Equation \eqref{s_0-by-vanish-sections}.
     We see that $C_0(\op{GL}_{2d}(\mathbb{R}),M^1(\mathbb{R}^d))$ satisfies the hypotheses of Proposition \ref{globalapprox}, therefore it must be dense in $\Gamma_0(\kappa).$
    \end{proof}
\noindent There are analogues of Lemma \ref{cont-map-to-cont-sec} and \ref{cont-map-to-cont-sec-2} for $\rho$, and the proof is easier owing to the fact that the norm of $A_L$ is always dominated by the $\ell^1(\mathbb{Z}^{2d})$-norm for all $L\in \op{GL}_{2d}(\mathbb{R}).$
\begin{lemma}
    $C(\op{GL}_{2d}(\mathbb{R}),\ell^1(\mathbb{Z}^{2d}))\subseteq \Gamma(\rho)$ and $C_0(\op{GL}_{2d}(\mathbb{R}),\ell^1(\mathbb{Z}^{2d}))\subseteq \Gamma_0(\rho).$
\end{lemma}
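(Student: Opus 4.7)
The plan is to mimic the proofs of Lemmas \ref{cont-map-to-cont-sec} and \ref{cont-map-to-cont-sec-2}, exploiting the simpler norm estimate available at the $C^*$-bundle level. The key fact is that for every $L \in \op{GL}_{2d}(\mathbb{R})$, the integrated representation $\overline{\pi}_L \colon A_L \to \mathcal{L}(L^2(\mathbb{R}^d))$ is $\ell^1$-norm decreasing (stated before Theorem \ref{thm: faithful-a}), i.e.\ $\|\mathbf{a}\|_{A_L} = \|\overline{\pi}_L(\mathbf{a})\| \leq \|\mathbf{a}\|_{\ell^1}$ for every $\mathbf{a} \in \ell^1(\mathbb{Z}^{2d})$. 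This estimate is uniform in $L$ and plays the same role that Lemma \ref{heis-feich-estimate} played in the Heisenberg-module case.

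For the first inclusion, let $\Upsilon \in C(\op{GL}_{2d}(\mathbb{R}),\ell^1(\mathbb{Z}^{2d}))$. I would use Corollary \ref{refinement-of-sections} as the local criterion: fix $L_0 \in \op{GL}_{2d}(\mathbb{R})$ and $\varepsilon > 0$, and set $\mathbf{a}_0 := \Upsilon(L_0) \in \ell^1(\mathbb{Z}^{2d})$. By continuity of $\Upsilon$ in $\ell^1$-norm, there is an open neighborhood $U_0 \ni L_0$ with $\|\Upsilon(L) - \Upsilon(L_0)\|_{\ell^1} < \varepsilon$ for all $L \in U_0$. The uniform estimate $\|\cdot\|_{A_L} \leq \|\cdot\|_{\ell^1}$ then gives
\begin{align*}
    \|\Upsilon(L) - \mathbf{a}_0\|_{A_L} \leq \|\Upsilon(L) - \Upsilon(L_0)\|_{\ell^1} < \varepsilon, \qquad \forall L \in U_0,
\end{align*}
so $\Upsilon \in \Gamma(\rho)$ by Corollary \ref{refinement-of-sections}.

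For the second inclusion, suppose $\Upsilon \in C_0(\op{GL}_{2d}(\mathbb{R}),\ell^1(\mathbb{Z}^{2d}))$. The first part already places $\Upsilon$ in $\Gamma(\rho)$, so it remains to show that $L \mapsto \|\Upsilon(L)\|_{A_L}$ vanishes at infinity. Given $\varepsilon > 0$, there is a compact set $M_\varepsilon \subseteq \op{GL}_{2d}(\mathbb{R})$ such that $\|\Upsilon(L)\|_{\ell^1} < \varepsilon$ off $M_\varepsilon$, and the same uniform estimate yields $\|\Upsilon(L)\|_{A_L} \leq \|\Upsilon(L)\|_{\ell^1} < \varepsilon$ off $M_\varepsilon$, as required.

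There is no real obstacle here: the $C^*$-bundle case is genuinely easier than the Banach-bundle case, because the domination of fiber-norms by a single ambient Banach norm $\|\cdot\|_{\ell^1}$ holds for all fibers simultaneously and with constant $1$, whereas in the Heisenberg-module setting Lemma \ref{heis-feich-estimate} involves the auxiliary constant $\|\phi\|_2^{-1}$. The only subtlety worth noting is that one must use the unweighted version of the density/approximation criterion (Corollary \ref{refinement-of-sections}) rather than Theorem \ref{fundamental-c-bundle} directly, since $\Upsilon(L_0)$ need not lie in $\ell^1_\nu(\mathbb{Z}^{2d})$.
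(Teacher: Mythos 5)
Your proof is correct and is exactly the argument the paper intends: the paper omits the proof, remarking only that it is easier than the Heisenberg-module case because $\|\mathbf a\|_{A_L}=\|\overline{\pi}_L(\mathbf a)\|\leq\|\mathbf a\|_{\ell^1}$ uniformly in $L$, which is precisely the estimate you exploit together with Corollary \ref{refinement-of-sections}.
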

\begin{notation}
    Now would be a good time to take a pause to set some notations in place. It would be helpful to review the module action and inner-product on each Heisenberg module given in Theorem \ref{heisenberg-module}. For any sections (not necessarily continuous) $\Upsilon_1,\Upsilon_2\in \prod_{L\in \op{GL}_{2d}(\mathbb{R})}\mathcal{E}_L(\mathbb{R}^d)$ of $\kappa$, we define a section $\lin{\rho}{\Upsilon_1}{\Upsilon_2}\in \prod_{L\in \op{GL}_{2d}(\mathbb{R})}A_L$ via:
    \begin{align}\label{left-inner-bundle-product}
        \lin{\rho}{\Upsilon_1}{\Upsilon_2}(L):= \lin{L}{\Upsilon_1(L)}{\Upsilon_2(L)}\in A_L, \qquad \forall L\in \op{GL}_{2d}(\mathbb{R}).
    \end{align}
    If we also have the section (again, not necessarily continuous) $\mathfrak{a}\in \prod_{L\in \op{GL}_{2d}(\mathbb{R})}A_L$, then we define $\mathfrak{a}\cdot \Upsilon_1 \in \prod_{L\in \op{GL}_{2d}(\mathbb{R})}\mathcal{E}_L(\mathbb{R}^d)$ via:
    \begin{align}\label{left-bundle-module-act}
        (\mathfrak{a}\cdot \Upsilon_1)(L):= \mathfrak{a}(L)\cdot \Upsilon_1(L)\in \mathcal{E}_L(\mathbb{R}^d), \qquad \forall L\in \op{GL}_{2d}(\mathbb{R}),
    \end{align}
Furthermore, we shall fix a notation for the norm on the Banach space $\Gamma_0(\kappa)$. If $\Upsilon\in \Gamma_0(\kappa),$ we write:
\begin{align}\label{kappa-norm}
    \|\Upsilon\|_{\kappa}:= \sup_{L\in \op{GL}_{2d}(\mathbb{R})}\|\Upsilon(L)\|_{\mathcal{E}_L(\mathbb{R}^d)}.
\end{align}
Similarly, we denote by the norm on the $C^*$-algebra $\Gamma_0(\rho)$ via:
\begin{align}\label{rho-norm}
    \|\mathfrak{a}\|_{\rho}:= \sup_{L\in \op{GL}_{2d}(\mathbb{R})}\|\mathfrak{a}(L)\|_{A_L},
\end{align}
for each $\mathfrak{a}\in \Gamma_0(\rho).$
\end{notation}

\begin{proposition}\label{section-operations}
    If we again identify $\ell^1(\mathbb{Z}^{2d})$ and $M^1(\mathbb{R}^d)$ as the space of local approximating sections of $\rho$ and $\kappa$ respectively, then for all $\mathbf{a}\in \ell^1(\mathbb{Z}^{2d})$, and $f_1,f_2\in M^1(\mathbb{R}^d):$
    \begin{multicols}{2}
    \begin{enumerate}
        \item $\lin{\rho}{f_1}{f_2}\in \Gamma(\rho)$;
        \item $\mathbf{a}\cdot f_1\in \Gamma(\kappa).$
    \end{enumerate}
    \end{multicols}
    \noindent Consequently, if $\mathfrak{a}\in \Gamma(\rho)$, $\Upsilon_1,\Upsilon_2\in \Gamma(\kappa)$, then:
    \begin{multicols}{2}
        \begin{enumerate}[start=3]
        \item $\lin{\rho}{\Upsilon_1}{\Upsilon_2}\in \Gamma(\rho);$
        \item $\mathfrak{a}\cdot \Upsilon_1 \in \Gamma(\kappa)$.
    \end{enumerate}
    \end{multicols}
    \noindent Finally, if $\mathfrak{a}\in \Gamma_0(\rho)$, $\Upsilon_1,\Upsilon_2\in \Gamma_0(\kappa)$, then:
    \begin{multicols}{2}
        \begin{enumerate}[start=5]
        \item $\lin{\rho}{\Upsilon_1}{\Upsilon_2}\in \Gamma_0(\rho);$
        \item $\mathfrak{a}\cdot \Upsilon_1 \in \Gamma_0(\kappa).$
    \end{enumerate}
    \end{multicols}
\end{proposition}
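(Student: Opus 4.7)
The plan is to establish items 1 and 2 first and then derive items 3--6 by routine local-approximation arguments built on Corollary \ref{refinement-of-sections}, the Cauchy--Schwarz-type bound of Proposition \ref{cauchy-sch}, and the embedding estimate of Lemma \ref{heis-feich-estimate}. For item 1, fix $f_1,f_2\in M^1(\mathbb{R}^d)$. Since $\mathcal{V}_{f_2}f_1\in M^1(\mathbb{R}^{2d})\subseteq L^1(\mathbb{R}^{2d})$ by Lemma \ref{tensor-conseq}, the samples $\lin{L}{f_1}{f_2}(k)=\mathcal{V}_{f_2}f_1(Lk)$ form an $\ell^1$-sequence whose $\ell^1$-norm is uniformly bounded by a multiple of $\|\mathcal V_{f_2}f_1\|_{M^1}$. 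I would show that the map $L\mapsto \lin{L}{f_1}{f_2}\in\ell^1(\mathbb Z^{2d})$ is continuous by first verifying it for $f_1,f_2\in M^1_\nu(\mathbb R^d)$ (polarizing the argument used in the proof of Theorem \ref{main-construction}, which appeals to \cite[Lemma 3.5]{FeKa04}) and then extending by density of $M^1_\nu$ in $M^1$, using the joint continuity of $(f_1,f_2)\mapsto \mathcal V_{f_2}f_1\in M^1(\mathbb R^{2d})$ to obtain a uniform-in-$L$ tail bound. Since $\|\cdot\|_{A_L}\leq\|\cdot\|_{\ell^1}$, choosing $\mathbf{a}_0:=\lin{L_0}{f_1}{f_2}$ in Corollary \ref{refinement-of-sections} then gives $\lin{\rho}{f_1}{f_2}\in\Gamma(\rho)$.

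For item 2, fix $\mathbf{a}\in\ell^1(\mathbb Z^{2d})$ and $f_1\in M^1(\mathbb R^d)$. For a finitely supported $\mathbf a$ the expression $(\mathbf a\cdot f_1)(L)=\sum_k a(k)\pi(Lk)f_1$ varies continuously in $M^1(\mathbb R^d)$ as a function of $L$ (time-frequency shifts act continuously on $M^1$ and the sum is finite), so $\mathbf a\cdot f_1$ defines a continuous section by Lemma \ref{cont-map-to-cont-sec}. For arbitrary $\mathbf a\in\ell^1(\mathbb Z^{2d})$, approximate by finitely supported $\mathbf a_n$; Proposition \ref{cauchy-sch} combined with Lemma \ref{heis-feich-estimate} yields the uniform estimate $\|\mathbf a_n\cdot f_1-\mathbf a\cdot f_1\|_{\mathcal E_L(\mathbb R^d)}\leq \|\mathbf a_n-\mathbf a\|_{\ell^1}\,\|\phi\|_2^{-1}\|f_1\|_{M^1,\phi}$. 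Uniform limits of continuous sections remain continuous (the local-approximation characterization of Corollary \ref{refinement-of-sections} transfers via the triangle inequality), so $\mathbf a\cdot f_1\in\Gamma(\kappa)$.

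Items 3 and 4 then follow by local approximation. Given $\Upsilon_1,\Upsilon_2\in\Gamma(\kappa)$ and $L_0\in\op{GL}_{2d}(\mathbb R)$, pick $f_i\in M^1(\mathbb R^d)$ approximating $\Upsilon_i$ in $\mathcal E_L$-norm on a neighborhood $U_0$ of $L_0$; shrinking $U_0$ if necessary, the continuity of $L\mapsto \|\Upsilon_i(L)\|_{\mathcal E_L(\mathbb R^d)}$ keeps $\|\Upsilon_i(L)\|_{\mathcal E_L(\mathbb R^d)}$ uniformly bounded on $U_0$. By item 1, $\lin{\rho}{f_1}{f_2}$ is a continuous section, and the triangle inequality with Proposition \ref{cauchy-sch} gives, for all $L\in U_0$, the bound $\|\lin{L}{\Upsilon_1(L)}{\Upsilon_2(L)}-\lin{L}{f_1}{f_2}\|_{A_L}\leq\|\Upsilon_1(L)-f_1\|_{\mathcal E_L(\mathbb R^d)}\|\Upsilon_2(L)\|_{\mathcal E_L(\mathbb R^d)}+\|f_1\|_{\mathcal E_L(\mathbb R^d)}\|\Upsilon_2(L)-f_2\|_{\mathcal E_L(\mathbb R^d)}$, which can be made arbitrarily small on $U_0$. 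A second application of Corollary \ref{refinement-of-sections} then gives item 3; item 4 is entirely analogous, using item 2 together with $\|\mathfrak a(L)\cdot\Upsilon_1(L)-\mathbf a_0\cdot f_1(L)\|_{\mathcal E_L(\mathbb R^d)}\leq\|\mathfrak a(L)-\mathbf a_0\|_{A_L}\|\Upsilon_1(L)\|_{\mathcal E_L(\mathbb R^d)}+\|\mathbf a_0\|_{A_L}\|\Upsilon_1(L)-f_1\|_{\mathcal E_L(\mathbb R^d)}$.

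Finally, items 5 and 6 follow from items 3 and 4: the continuity parts are immediate, and for vanishing at infinity I would use $\|\lin{L}{\Upsilon_1(L)}{\Upsilon_2(L)}\|_{A_L}\leq\|\Upsilon_1(L)\|_{\mathcal E_L(\mathbb R^d)}\|\Upsilon_2(L)\|_{\mathcal E_L(\mathbb R^d)}$ and $\|(\mathfrak a\cdot\Upsilon_1)(L)\|_{\mathcal E_L(\mathbb R^d)}\leq\|\mathfrak a(L)\|_{A_L}\|\Upsilon_1(L)\|_{\mathcal E_L(\mathbb R^d)}$, both of which vanish at infinity as soon as one of the factors does. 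The main obstacle in the whole argument is the $\ell^1$-continuity of $L\mapsto\lin{L}{f_1}{f_2}$ in item 1, which is the only step where genuine harmonic-analytic content enters; everything subsequent reduces to triangle-inequality bookkeeping within the imprimitivity bimodule structure already established in Theorem \ref{heisenberg-module}.
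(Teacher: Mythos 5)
Your proposal is correct and follows essentially the same route as the paper: items 1 and 2 rest on the Feichtinger--Kaiblinger continuity results, and items 3--6 are bootstrapped via the local-approximation criterion of Corollary \ref{refinement-of-sections} together with Proposition \ref{cauchy-sch} and Lemma \ref{heis-feich-estimate}, exactly as in the paper's proof. The only (harmless) deviations are that you reprove the $\ell^1$-continuity of $L\mapsto\lin{L}{f_1}{f_2}$ by density from $M^1_\nu$ instead of citing \cite[Lemma 3.5]{FeKa04} directly for $M^1$-windows, and that you handle item 2 by approximating $\mathbf a$ with finitely supported sequences rather than invoking \cite[Proposition 2.7 (ii)]{FeKa04} under the full $\ell^1$-sum.
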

\begin{proof}
    We have already seen the sufficient techniques to prove Items $1$ and $2$. For Item $1$, we need to show that $\lin{\rho}{f}{g}$ satisfies the continuous section characterization from Corollary \ref{refinement-of-sections}. Let $\varepsilon>0$ and $L_0\in \op{GL}_{2d}(\mathbb{R})$, choose $\mathbf{a}_0 = \lin{\rho}{f}{g}(L_0) := \lin{L_0}{f}{g}\in \ell^1(\mathbb{Z}^{2d})$. We know from \cite[Lemma 3.5]{FeKa04} that the map $L\mapsto \{\mathcal{V}_gf(Lk)\}_{k\in \mathbb{Z}^{2d}}$ is continuous from $\op{GL}_{2d}(\mathbb{R})$ to $\ell^1(\mathbb{Z}^{2d})$. So choose the open set $U_0\ni L_0$ of $\op{GL}_{2d}(\mathbb{R})$ such that
    \begin{align*}
        \|\lin{L}{f}{g}-\lin{L_0}{f}{g}\|_{\ell^1}\leq \sum_{k\in \mathbb{Z}^{2d}}|\mathcal{V}_gf(Lk)- \mathcal{V}_gf(L_0k)|<\varepsilon, \qquad \forall L\in U_0.
    \end{align*}
    It follows that for all $L\in U_0$
    \begin{align*}
        \|{\lin{\rho}{f}{g}(L)- \mathbf{a}_0}\|_{A_L} &=\|{\lin{L}{f}{g}- \lin{L_0}{f}{g} }\|_{A_L} \\
        &=\|\overline{\pi}_L(\lin{L}{f}{g}-\lin{L_0}{f}{g})\| \\
        & \leq \|{\lin{L}{f}{g}- \lin{L_0}{f}{g}}\|_{\ell^1}\\
        &<\varepsilon.
    \end{align*}
    Therefore $\lin{\rho}{f}{g}\in \Gamma(\rho)$ as required.
    
    For Item 2, it is sufficient that we show $\mathbf{a}\cdot f \in C(\op{GL}_{2d}(\mathbb{R}),M^1(\mathbb{R}^d))$, following Lemma \ref{cont-map-to-cont-sec}. Let $L,L_0\in \op{GL}_{2d}(\mathbb{R})$, we have:
    \begin{align*}
        \|(\mathbf{a}\cdot f)(L)- (\mathbf{a}\cdot f)(L_0)\|_{M^1}&:= \left\|\sum_{k\in \mathbb{Z}^{2d}}a(k)\left(\pi(Lk)-\pi(L_0k)\right)f \right\|_{M^1} \\
        &\leq \sum_{k\in \mathbb{Z}^{2d}}|a(k)|\|(\pi(Lk)-\pi(L_0k))f\|_{M^1}.
    \end{align*}
    It follows from \cite[Proposition 2.7 (ii)]{FeKa04} that as $L\to L_0$, $\|(\mathbf{a}\cdot f)(L)- (\mathbf{a}\cdot f)(L_0)\|_{M^1}\to 0$. Therefore $\mathbf{a}\cdot f \in C(\op{GL}_{2d}(\mathbb{R}),M^1(\mathbb{R}^d))$ as required.
    
    We next prove Item $3$. To do this, fix a window function $\phi\in M^1(\mathbb{R}^d)$, $\varepsilon>0$, and $L_0\in \op{GL}_{2d}(\mathbb{R})$. By local compactness of $\op{GL}_{2d}(\mathbb{R})$, we can fix a compact neighborhood $K\ni L_0$. 
    Since $\Upsilon_2\colon  \op{GL}_{2d}(\mathbb{R})\to \mathcal{E}$ is a continuous section, we conclude that $L\mapsto \|\Upsilon_2(L)\|_{\mathcal{E}_L(\mathbb{R}^d)}$ continuous, therefore $\sup_{L\in K}\|\Upsilon_2\|_{\mathcal{E}_L(\mathbb{R}^d)}<\infty$.  Now, by the continuous section criteria, there exist $f_1,f_2\in M^1(\mathbb{R}^d)$ such that for some open sets $U_1,U_2\ni L_0$
    \begin{align*}
        \|\Upsilon_1(L)-f_1\|_{\mathcal{E}_L(\mathbb{R}^d)}&< \frac{\varepsilon}{2\cdot \sup_{L\in K}\|\Upsilon_2\|_{\mathcal{E}_L(\mathbb{R}^d)}}, \qquad \forall L\in U_1 \\
        \|\Upsilon_2(L)-f_2\|_{\mathcal{E}_L(\mathbb{R}^d)}&<\frac{\varepsilon}{2\|\phi\|_2^{-1} \|f_1\|_{M^1,\phi}}, \qquad \forall L\in U_2.
    \end{align*}
    Using Proposition \ref{cauchy-sch} Item 1, we obtain the following estimate for $L\in \op{int}(U_1\cap U_2 \cap K)$:
    \begin{align*}
        \MoveEqLeft[3]
        \|{\lin{\rho}{\Upsilon_1}{\Upsilon_2}(L)-\lin{\rho}{f_1}{f_2}(L)}\|_{A_L} = \|\lin{L}{\Upsilon_1(L)}{\Upsilon_2(L)}- \lin{L}{f_1}{f_2}\|_{A_L}\\
        &\leq\|\lin{L}{\Upsilon_1(L)-f_1}{\Upsilon_2(L)}\|_{A_L} + \|\lin{L}{f_1}{\Upsilon_2(L)-f_2}\|_{A_L} \\
        &\leq \|\Upsilon_1(L)-f_1\|_{\mathcal{E}_L(\mathbb{R}^d)}\cdot\sup_{L\in K}\|\Upsilon_2\|_{\mathcal{E}_L(\mathbb{R}^d)} + \|\phi\|_2^{-1}\|f_1\|_{M^1,\phi}\|\Upsilon_2(L)-f_2\|_{\mathcal{E}_{L}(\mathbb{R}^d)}\\
        &<\varepsilon.
    \end{align*}
    Since $\lin{\rho}{f_1}{f_2}\in \Gamma(\rho)$ by Item $1$, it follows from Remark \ref{approx-section-refinement} that $\lin{\rho}{\Upsilon_1}{\Upsilon_2}\in \Gamma(\rho)$.

    The proof for Item $4$ is basically a rehash of Item $3$, where we use the estimate in Proposition \ref{cauchy-sch} Item $2$ instead.

    The proofs for Items $5$ and $6$ are also similar, we shall show one of them. To start, note that because of Proposition \ref{cauchy-sch} Item 1, we have for all $L\in \op{GL}_{2d}(\mathbb{R})$:
    \begin{align*}
        \|\lin{\rho}{\Upsilon_1}{\Upsilon_2}(L) \|_{A_L}:&= \|\lin{L}{\Upsilon_1(L)}{\Upsilon_2(L)} \|_{A_L}\\
        &\leq \|\Upsilon_1(L)\|_{\mathcal{E}_{L}(\mathbb{R}^d)} \|\Upsilon_2(L)\|_{\mathcal{E}_{L}(\mathbb{R}^d)}.
    \end{align*}
    So if both $\Upsilon_1$ and $\Upsilon_2$ vanish at infinity, then so does $\lin{\rho}{\Upsilon_1}{\Upsilon_2}$. Since we already know from Item $3$ that $\lin{\rho}{\Upsilon_1}{\Upsilon_2}\in \Gamma(\rho),$ then it follows from what we have just shown that $\lin{\rho}{\Upsilon_1}{\Upsilon_2}\in \Gamma_0(\rho)$. This shows Item 5.
\end{proof}
We will need a technical Lemma, which is reminiscent of the basic construction of the Heisenberg modules. Before proceeding, we acknowledge the fact that $M^1(\mathbb{R}^{2d})\subseteq C_0(\mathbb{R}^{2d})$ \cite[Lemma 4.19]{Ja18}, and so the evaluation map is well-defined in $M^1(\mathbb{R}^d)$. In particular, we can consider restrictions in $M^1(\mathbb{R}^{2d})$.
\begin{lemma}\label{heis-const-remin}
For a fixed $L\in \op{GL}_{2d}(\mathbb{R})$, the linear space $\op{span}\left\{\lin{L}{f_1}{f_2}: f_1,f_2\in M^1(\mathbb{R}^d)\right\}$ is dense in $\ell^1(\mathbb{Z}^{2d})$ with respect to the $\ell^1$-norm. 
\end{lemma}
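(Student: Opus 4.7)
The strategy is to exploit Corollary \ref{for-fullness} together with the sampling map $S_L\colon M^1(\mathbb{R}^{2d})\to \ell^1(\mathbb{Z}^{2d})$ defined by $S_L(F):=\{F(Lk)\}_{k\in\mathbb{Z}^{2d}}$. The key observation is that $S_L(\mathcal{V}_g f) = \lin{L}{f}{g}$ as elements of $\ell^1(\mathbb{Z}^{2d})$, so the problem reduces to showing that the image under $S_L$ of the span of the STFTs $\{\mathcal{V}_g f: f,g\in M^1(\mathbb{R}^d)\}$ is $\ell^1$-dense in $\ell^1(\mathbb{Z}^{2d})$.

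The plan has three ingredients. First, I need to verify that $S_L$ is a well-defined bounded linear operator with a bound of the form $\|S_L(F)\|_{\ell^1}\leq C_L\|F\|_{M^1}$; this is a standard consequence of the embedding of $M^1(\mathbb{R}^{2d})$ into the Wiener amalgam space $W(C_0,\ell^1)$, which guarantees that lattice samples of $M^1$-functions are absolutely summable in a controlled way. Second, I will show that $c_{00}(\mathbb{Z}^{2d})\subseteq \operatorname{Im}(S_L)$: for any finitely supported $\mathbf{c}'$, pick compactly supported smooth bumps $F_k\in C_c^\infty(\mathbb{R}^{2d})\subseteq M^1(\mathbb{R}^{2d})$ centered at each relevant lattice point $Lk$ with support chosen small enough that $F_k(Lm)=0$ for $m\neq k$; then $F:=\sum_k c'(k)F_k\in M^1(\mathbb{R}^{2d})$ satisfies $S_L(F)=\mathbf{c}'$ exactly. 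Third, given such an $F\in M^1(\mathbb{R}^{2d})$, Corollary \ref{for-fullness} supplies admissible sequences $\{f_i\},\{g_i\}$ so that $F=\sum_{i\in\mathbb{N}}\mathcal{V}_{g_i}f_i$ converges absolutely in $M^1$-norm, and the bounded operator $S_L$ transports any truncation error to $\ell^1$-norm error, giving $\|\mathbf{c}'-\sum_{i=1}^N \lin{L}{f_i}{g_i}\|_{\ell^1}\leq C_L\|F-F_N\|_{M^1}$ where $F_N:=\sum_{i=1}^N \mathcal{V}_{g_i}f_i$.

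The proof is then finished by a triangle inequality: given $\mathbf{c}\in \ell^1(\mathbb{Z}^{2d})$ and $\varepsilon>0$, first approximate $\mathbf{c}$ by some $\mathbf{c}'\in c_{00}(\mathbb{Z}^{2d})$ within $\varepsilon/2$, realize $\mathbf{c}'=S_L(F)$ via the bump construction, and finally truncate the STFT expansion of $F$ finely enough (using the bound $C_L$) to get the $\ell^1$-distance within an additional $\varepsilon/2$. The main obstacle is invoking the boundedness of $S_L$ cleanly; this is a well-known property of Feichtinger's algebra but is the one fact that needs to be imported from outside the machinery already developed in the excerpt. The bump construction and the finite-sum truncation are otherwise routine.
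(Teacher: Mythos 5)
Your proposal is correct and follows essentially the same route as the paper: both identify $\lin{L}{f_1}{f_2}$ with the restriction of $\mathcal{V}_{f_2}f_1$ to the lattice $L\mathbb{Z}^{2d}$ and then combine the continuity of the restriction operator $R_L\colon M^1(\mathbb{R}^{2d})\to\ell^1(\mathbb{Z}^{2d})$ with the decomposition of Corollary \ref{for-fullness}. The only difference is that the paper invokes the \emph{surjectivity} of $R_L$, available from the same reference \cite[Theorem 5.7]{Ja18} that supplies the boundedness you need, which renders your bump-function construction and the extra approximation through $c_{00}(\mathbb{Z}^{2d})$ unnecessary.
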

\begin{proof}
    For each $L\in \op{GL}_{2d}(\mathbb{R})$, the restriction operator $R_{L}\colon M^1(\mathbb{R}^{2d})\to \ell^1(\mathbb{Z}^{2d})$, which is defined via $(R_{L}F)(k)=F(Lk)$, is a surjective countinuous operator \cite[Theorem 5.7]{Ja18}. Therefore, given any $\mathbf{a}\in \ell^1(\mathbb{Z}^{2d})$, there exists an $F\in M^1(\mathbb{R}^{2d})$ such that $\mathbf{a} = R_{L}F$. It follows from Corollary \ref{for-fullness} that there exists $\{f_i\}_{i\in \mathbb{N}}, \{g_i\}_{i\in \mathbb{N}}\subseteq M^1(\mathbb{R}^{d})$ such that $F = \sum_{i\in \mathbb{N}}\mathcal{V}_{g_i}f_i.$ It follows that $\mathbf{a} =(R_LF) =\sum_{i\in \mathbb{N}}R_L(\mathcal{V}_{g_i}f_i) = \sum_{i\in \mathbb{N}}\lin{L}{f_i}{g_i}$ in $\ell^1$-norm.
\end{proof}

\begin{corollary}\label{bundle-left-hilbert-c-mod}
    The Banach space $\Gamma_0(\kappa)$ defines a full Hilbert left $\Gamma_0(\rho)$-module. 
\end{corollary}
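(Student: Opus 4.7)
My plan is to build the Hilbert module structure on $\Gamma_0(\kappa)$ fibrewise from the $A_L$-$B_L$-imprimitivity bimodule structure of Theorem \ref{heisenberg-module}, and then to reduce fullness to Corollary \ref{cor-ideal-section}. Take the module action $(\mathfrak a\cdot \Upsilon)(L) := \mathfrak a(L)\Upsilon(L)$ and the $\Gamma_0(\rho)$-valued inner product $\lin{\rho}{\Upsilon_1}{\Upsilon_2}$ as in \eqref{left-inner-bundle-product} and \eqref{left-bundle-module-act}. Items 5 and 6 of Proposition \ref{section-operations} ensure that both operations stay inside $\Gamma_0(\rho)$ and $\Gamma_0(\kappa)$; all algebraic axioms in Definition \ref{in-prod} then follow by evaluating at each $L$ and invoking the corresponding $A_L$-module axioms on $\mathcal E_L(\mathbb R^d)$.

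For positivity and the norm, if $\Upsilon\in\Gamma_0(\kappa)$ then $\lin{\rho}{\Upsilon}{\Upsilon}(L)=\lin{L}{\Upsilon(L)}{\Upsilon(L)}\geq0$ in $A_L$ for every $L$, so Proposition \ref{positive-sections} upgrades $\lin{\rho}{\Upsilon}{\Upsilon}$ to a positive element of $\Gamma_0(\rho)$; its vanishing forces every fibre to vanish and hence $\Upsilon=0$. Taking the supremum over $L$ in the fibre identity $\|\Upsilon(L)\|_{\mathcal E_L(\mathbb R^d)}^2=\|\lin{L}{\Upsilon(L)}{\Upsilon(L)}\|_{A_L}$ (from \eqref{equiv-norm-for-heis}) gives $\|\Upsilon\|_\kappa^2=\|\lin{\rho}{\Upsilon}{\Upsilon}\|_\rho$, so the induced module norm coincides with the sup-norm \eqref{kappa-norm}, under which $\Gamma_0(\kappa)$ is already complete.

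The real content is fullness. Let $I$ be the closed linear span of $\{\lin{\rho}{\Upsilon_1}{\Upsilon_2}: \Upsilon_1,\Upsilon_2\in \Gamma_0(\kappa)\}$ in $\Gamma_0(\rho)$. Pointwise, the Hilbert module axioms yield $\mathfrak a\cdot \lin{\rho}{\Upsilon_1}{\Upsilon_2}=\lin{\rho}{\mathfrak a\cdot\Upsilon_1}{\Upsilon_2}$ and $\lin{\rho}{\Upsilon_1}{\Upsilon_2}\cdot \mathfrak a=\lin{\rho}{\Upsilon_1}{\mathfrak a^*\cdot\Upsilon_2}$, and Proposition \ref{section-operations}(6) keeps the factors inside $\Gamma_0(\kappa)$, so $I$ is a two-sided ideal of $\Gamma_0(\rho)$. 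By Corollary \ref{cor-ideal-section} it suffices to show $I_L=A_L$ for every $L\in\op{GL}_{2d}(\mathbb R)$. The Urysohn bump trick used in the proof of Corollary \ref{globalaprrox2} shows that for any $f\in M^1(\mathbb R^d)$ and any $L_0$ one can produce a section $\Upsilon\in C_0(\op{GL}_{2d}(\mathbb R),M^1(\mathbb R^d))\subseteq \Gamma_0(\kappa)$ with $\Upsilon(L_0)=f$. Applying this twice gives $\lin{L_0}{f_1}{f_2}\in I_{L_0}$ for all $f_1,f_2\in M^1(\mathbb R^d)$. Lemma \ref{heis-const-remin} then says such inner products span an $\ell^1$-dense subspace of $\ell^1(\mathbb Z^{2d})$, which is dense in $A_{L_0}$; since $I_{L_0}$ is closed in $A_{L_0}$ (the image of a closed ideal under the surjective evaluation $*$-homomorphism $\Gamma_0(\rho)\to A_{L_0}$ is a closed ideal), we conclude $I_{L_0}=A_{L_0}$.

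The main obstacle is precisely this fullness step: fibrewise fullness of $\mathcal E_L(\mathbb R^d)$ has to be promoted to the section level, which requires producing sections in $\Gamma_0(\kappa)$ (not just $\Gamma(\kappa)$) with prescribed values at a single $L_0$. This is resolved by marrying the Urysohn cutoffs from Corollary \ref{globalaprrox2} with the tensor-product decomposition of $M^1(\mathbb R^{2d})$ encoded in Lemma \ref{heis-const-remin}; the final jump from density of $I_{L_0}$ in $A_{L_0}$ to equality relies on the standard $C^*$-algebra fact that ideals in $C^*$-bundles cut out closed ideals in each fibre.
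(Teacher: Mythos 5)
Your proposal is correct and follows essentially the same route as the paper's proof: pointwise module operations via Proposition \ref{section-operations}, positivity via Proposition \ref{positive-sections}, the sup-norm identity for completeness, and fullness reduced to $I_L=A_L$ through Corollary \ref{cor-ideal-section}, the Urysohn construction from Corollary \ref{globalaprrox2}, and Lemma \ref{heis-const-remin}. Your explicit justification that $I_{L_0}$ is closed as the image of a closed ideal under the evaluation $*$-homomorphism is a small clarification the paper leaves implicit, but it does not change the argument.
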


\begin{proof}
    We define $\Gamma_0(\kappa)$ as a Hilbert left $\Gamma_0(\kappa)$-module exactly by the module inner-product and action given by Proposition \ref{section-operations} Items $5$ and $6$. 
    The pointwise nature of the definition immediately shows that the Axioms $1$, $2$, $3$ and $5$ of Definition \ref{imprimitivity-bimodule} are satisfied with the defined operations. 
    Now, if $\Upsilon \in \Gamma_0(\kappa)$, then we know that $\lin{\rho}{\Upsilon}{\Upsilon}(L):= \lin{L}{\Upsilon(L)}{\Upsilon(L)}\geq 0$ in $A_L$ for all $L\in \op{GL}_{2d}(\mathbb{R})$. 
    It follows from Proposition \ref{positive-sections} that $\lin{\rho}{\Upsilon}{\Upsilon}\geq 0$ in the $C^*$-algebra $\Gamma_0(\rho)$. 
    Now let us look at the induced module norm on $\Gamma_0(\kappa)$, for $\Upsilon \in \Gamma_0(\kappa)$, if we denote by $\|\cdot\|_{\op{ind}}$ said induced norm, then using Equations \ref{kappa-norm} and $\ref{rho-norm}$:
    \begin{align*}
        \|\Upsilon\|_{\op{ind}}& :=\|\lin{\rho}{\Upsilon}{\Upsilon}\|_{\rho}^{1/2}  \\
        &=\left( \sup_{L\in \op{GL}_{2d}(\mathbb{R})}\|\lin{\rho}{\Upsilon}{\Upsilon}(L)\|_{A_L}\right)^{1/2}\\
        & = \left( \sup_{L\in \op{GL}_{2d}(\mathbb{R})}\|\lin{L}{\Upsilon(L)}{\Upsilon(L)}\|_{A_L} \right)^{1/2}\\
        &=\sup_{L\in \op{GL}_{2d}(\mathbb{R})}\|\lin{L}{\Upsilon(L)}{\Upsilon(L)}\|_{A_L}^{1/2}\\
        &=\sup_{L\in \op{GL}_{2d}(\mathbb{R})}\|\Upsilon(L)\|_{\mathcal{E}_L(\mathbb{R}^d)}\\
        &=: \|\Upsilon\|_{\kappa}.
    \end{align*}
We see that the induced module norm coincides\footnote{And therefore we shall never make use of the notation $\|\cdot\|_{\op{ind}}$ ever again.} with the norm for $\Gamma_0(\kappa)$ and since $\|\cdot\|_{\kappa}$ is complete, then the induced module norm is also complete. 
This shows that $\Gamma_0(\kappa)$ is a Hilbert left $\Gamma_0(\rho)$-module. 
All that remains is to show fullness. 
Let us first define \[I:= \overline{\op{span}}\left\{\lin{\rho}{\Upsilon_1}{\Upsilon_2}\in \Gamma_0(\rho): \Upsilon_1,\Upsilon_2\in \Gamma_0(\kappa)\right\}.\] 
We know that $I$ is an ideal of $\Gamma_0(\rho)$ by basic Hilbert $C^*$-module theory. 
Our goal is to show that $I=\Gamma_0(\rho)$. Define for each $L\in \op{GL}_{2d}(\mathbb{R})$, the ideal $I_L\subseteq A_L$ via:
\begin{align*}
    I_L &:=\{\mathfrak{a}(L)\in A_L: \mathfrak{a}\in I\} =\overline{\op{span}}\left\{\lin{\rho}{\Upsilon_1}{\Upsilon_2}(L)\in A_L: \Upsilon_1,\Upsilon_2\in \Gamma_0(\kappa)\right\}.
\end{align*}
By Corollary \ref{cor-ideal-section}, we are done if we can show that $I_L = A_L$, because it will follow that $I = \{\mathfrak{a} \in \Gamma_0(\rho): \mathfrak{a}(L)\in I_L\}=\{\mathfrak{a} \in \Gamma_0(\rho): \mathfrak{a}(L)\in A_L\}= \Gamma_0(\rho)$. 
However, it follows from Lemma \ref{heis-const-remin} and the density of $\ell^1(\mathbb{Z}^{2d})$ in $A_L$ that under the $A_L$-norm:
\begin{align*}
    A_L=\overline{\op{span}}\left\{\lin{L}{f_1}{f_2}:f_1,f_2\in M^1(\mathbb{R}^d)\right\}.
\end{align*}
We can proceed similar to the proof of Corollary \ref{globalaprrox2} to show that \[\left\{\lin{L}{f_1}{f_2}:f_1,f_2\in M^1(\mathbb{R}^d)\right\} \subseteq \left\{\lin{L}{\Upsilon_1(L)}{\Upsilon_2(L)}: \Upsilon_1,\Upsilon_2\in \Gamma_0(\kappa)\right\}.\] Therefore
\begin{align*}
    A_L = \overline{\op{span}}\{\lin{L}{\Upsilon_1(L)}{\Upsilon_2(L)}: \Upsilon_1,\Upsilon_2\in \Gamma_0(\kappa)\}:= I_L,
\end{align*}
as required. Hence we have shown that $\Gamma_0(\kappa)$ is a full Hilbert left $\Gamma_0(\rho)$-module.
\end{proof}

Now that we have shown that there is a full Hilbert \textit{left} $\Gamma_0(\rho)$-module structure on the continuous sections $\Gamma_0(\kappa)$ that vanish at infinity, it is natural to ask if $\Gamma_0(\kappa)$ is actually an imprimitivity bimodule implementing a Morita equivalence between $\Gamma_0(\kappa)$ and another $C^*$-algebra of sections. Knowing that at each $L\in \op{GL}_{2d}(\mathbb{R})$, the corresponding fiber of $\kappa$, the Heisenberg module $\kappa^{-1}(L)=\mathcal{E}_L(\mathbb{R}^d)$ is a Morita equivalence between $A_L$ and $B_L$ motivates us to construct $\mathcal{C}^{\circ} = \bigsqcup_{L\in \op{GL}_{2d}} B_L$ with the corresponding bundle projection map $\rho^{\circ}\colon  \mathcal{C}^{\circ}\to \op{GL}_{2d}(\mathbb{R})$. We now have a bundle which has the fiber $(\rho^{\circ})^{-1}(L)=B_L$ for each $L\in \op{GL}_{2d}(\mathbb{R})$. Our goal now is to prove that $\rho^{\circ}$ is also a $C^*$-bundle. There is no need to make this harder than it should be, we already have the tools that we can use from the construction of $\rho$ available to us, and it now all boils down to restating our problem in a familiar way. Let $L\in \op{GL}_{2d}(\mathbb{R})$, with the corresponding matrix $L^{\circ}=J(L^{-1})^TJ^T$ that generates the adjoint lattice, then using the cocycle-adjoint relations \eqref{adj-cocycle}, for each $\mathbf{b}\in \ell^1(\mathbb{Z}^{2d})$
\begin{align*}
    \overline{\pi}^*_{L^{\circ}}(\mathbf{b}) &= \frac{1}{|\det{L}|}\sum_{k\in \mathbb{Z}^{2d}}b(k) \pi^*(L^{\circ}k) \\
    &= \frac{1}{|\det{L}|}\sum_{k\in \mathbb{Z}^{2d}}b(k)c(L^{\circ}k,L^{\circ}k) \pi(-L^{\circ}k)\\
    &= \frac{1}{|\det{L}|}\overline{\pi}_{-L^{\circ}}\left(\prescript{L^{\circ}}{}{\mathbf{b}}\right),
\end{align*}
where $\prescript{L^{\circ}}{}{\mathbf{b}}\in \ell^1(\mathbb{Z}^{2d})$ is the sequence whose terms are: $\left(\prescript{L^{\circ}}{}{b}\right)(k)=c(L^{\circ}k,L^{\circ}k)b(k)$ for each $k\in \mathbb{Z}^{2d}.$ The computation above now opens us for an analogue of Corollary \ref{fund-c-continuity} for $\rho^{\circ}$. In fact, all of the results that we have derived for $\rho$, also have analogues for $\rho^{\circ}$, and we shall not repeat the associated proofs for these results. Most of them boil down to the fact that the maps $L\mapsto L^{\circ}$ and $L\to \op{det}L$ are continuous, with the occassional use of the triangle inequality. If one is truly interested in repeating the proofs for $\rho^{\circ},$ then it is also instructive to know that for each $L\in \op{GL}_{2d}(\mathbb{R})$, $\mathbf{b}\in \ell^1(\mathbb{Z}^{2d})$ and $f_1,f_2\in M^1(\mathbb{R}^d)$ that:
\begin{align*}
    \|\mathbf{b}\|_{B_L}&\leq \frac{1}{|\det{L}|} \sum_{k\in \mathbb{Z}^{2d}}|b(k)|; \\
\rin{L^{\circ}}{f_1}{f_2}(k)&= \frac{1}{|\det{L}|}\sum_{k\in \mathbb{Z}^{2d}}\overline{\mathcal{V}_{f_2}f_1(L^{\circ}k)}.
\end{align*} 
Using the same observation that we did for the bundle $\rho$, we have a dense inclusion: $\ell^1(\mathbb{Z}^{2d})\hookrightarrow B_L$ for all $L\in \op{GL}_{2d}(\mathbb{R})$, and so we can identify $\ell^1(\mathbb{Z}^{2d})$ as a space of sections for $\rho^{\circ}$ so that each $\mathbf{b}\in \ell^1(\mathbb{Z}^{2d})$ can be seen as $\mathbf{b}\in \prod_{L\in \op{GL}_{2d}}B_L$ via
\begin{align*}
    \mathbf{b}(L):= \mathbf{b}\in B_L.
\end{align*}
As a corollary, we have, using exactly the same kind of argument as in Theorem \ref{main-construction} and Proposition \ref{refinement-of-sections} that $\rho^{\circ}$ can be promoted to a $C^*$-bundle with the following continuous sections:
\begin{corollary}
    The bundle $\rho^{\circ}\colon  \mathcal{C}^{\circ}\to \op{GL}_{2d},$ whose fibers are $(\rho^{\circ})^{-1}(L)=B_L$ for all $L\in \op{GL}_{2d}(\mathbb{R})$, defines a $C^*$-bundle such that every section $\mathfrak{b}\in \prod_{L\in \op{GL}_{2d}(\mathbb{R})}B_L$ is continuous, \ie $\mathfrak{b}\in \Gamma(\rho^{\circ})$ if and only if for all $\varepsilon>0$ and $L_0\in \op{GL}_{2d}(\mathbb{R})$, there exists $\mathbf{b}_0\in \ell^1(\mathbb{Z}^{2d})$ such that for some open neighborhood $U_0\ni L_0$:
    \begin{align*}
        \|\mathfrak{b}(L)-\mathbf{b}_0\|_{B_L}<\varepsilon, \qquad \forall L\in U_0.
    \end{align*}
\end{corollary}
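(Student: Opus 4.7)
The plan is to mirror the construction of $\rho$ given in Theorem \ref{fundamental-c-bundle}, applying Fell's construction theorem (Theorem \ref{bbundleconstruction}) with $\ell^1_{\nu}(\mathbb{Z}^{2d})$ (regarded as a space of constant sections via the dense embedding $\ell^1_{\nu}(\mathbb{Z}^{2d})\hookrightarrow B_L$) playing the role of local approximating sections. The two hypotheses needed are: (i) for each fixed $\mathbf{b}\in \ell^1_{\nu}(\mathbb{Z}^{2d})$, the norm map $L\mapsto \|\mathbf{b}\|_{B_L}=\|\overline{\pi}^*_{L^{\circ}}(\mathbf{b})\|$ is continuous; and (ii) $\ell^1_{\nu}(\mathbb{Z}^{2d})$ is dense in $B_L$ for each $L$. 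Item (ii) is already known. The final characterization of $\Gamma(\rho^\circ)$ then follows from Proposition \ref{approxbycontinuity}, together with the refinement argument showing that the (larger) space $\ell^1(\mathbb{Z}^{2d})$ may also be used for local approximation, exactly as in Proposition \ref{for-refinement-of-sections} and Corollary \ref{refinement-of-sections}.

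The main work is establishing (i), and the key tool is the identity
\[\overline{\pi}^*_{L^{\circ}}(\mathbf{b})=\overline{\pi}_{-L^{\circ}}\bigl(\prescript{L^{\circ}}{}{\mathbf{b}}\bigr)\]
noted just before the statement, which reduces the problem to the already-established $\rho$-side continuity (Corollary \ref{fund-c-continuity}). Concretely, I would first observe that the maps $L\mapsto L^{\circ}=J(L^{-1})^TJ^T$ and $L\mapsto \det L$ are continuous on $\op{GL}_{2d}(\mathbb{R})$. Second, using the continuity of $L\mapsto c(L^\circ k, L^\circ k)$ for each fixed $k$ (proved as in Lemma \ref{for-pl}) together with a dominated convergence argument identical to Lemma \ref{a-l-tech-est}, I would conclude that the twisted sequence $L\mapsto \prescript{L^{\circ}}{}{\mathbf{b}}$ is continuous from $\op{GL}_{2d}(\mathbb{R})$ into $\ell^1_{\nu}(\mathbb{Z}^{2d})$ (and in particular into $\ell^1$), while preserving the weighted norm. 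Third, by the triangle inequality
\begin{align*}
\bigl|\|\overline{\pi}_{-L^{\circ}}(\prescript{L^{\circ}}{}{\mathbf{b}})\|-\|\overline{\pi}_{-L_0^{\circ}}(\prescript{L_0^{\circ}}{}{\mathbf{b}})\|\bigr|
&\leq \bigl|\|\overline{\pi}_{-L^{\circ}}(\prescript{L^{\circ}}{}{\mathbf{b}})\|-\|\overline{\pi}_{-L^{\circ}}(\prescript{L_0^{\circ}}{}{\mathbf{b}})\|\bigr|\\
&\quad +\bigl|\|\overline{\pi}_{-L^{\circ}}(\prescript{L_0^{\circ}}{}{\mathbf{b}})\|-\|\overline{\pi}_{-L_0^{\circ}}(\prescript{L_0^{\circ}}{}{\mathbf{b}})\|\bigr|,
\end{align*}
the first term is bounded by $\|\prescript{L^{\circ}}{}{\mathbf{b}}-\prescript{L_0^{\circ}}{}{\mathbf{b}}\|_{\ell^1}$ (since time-frequency shifts are unitary) and the second by $C\|\prescript{L_0^{\circ}}{}{\mathbf{b}}\|_{\ell^1_\nu}\|L^\circ-L_0^\circ\|^{1/2}$ times a factor depending on $\|L^\circ\|+\|L_0^\circ\|$, using Corollary \ref{fund-c-continuity} applied to the lattice generators $-L^{\circ}$ and $-L_0^{\circ}$. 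Both terms tend to zero as $L\to L_0$.

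Once (i) and (ii) are in place, Theorem \ref{bbundleconstruction} immediately produces the promised topology turning $\rho^{\circ}\colon\mathcal{C}^\circ\to \op{GL}_{2d}(\mathbb{R})$ into a Banach bundle with $\ell^1_{\nu}(\mathbb{Z}^{2d})$ embedded in $\Gamma(\rho^\circ)$. The $C^*$-bundle axioms (continuity of multiplication and involution) follow automatically because multiplication and involution on the dense subalgebra $\ell^1_{\nu}(\mathbb{Z}^{2d})\subset B_L$ are implemented by formulas \eqref{twisted-structure-b} whose cocycle coefficients $\overline{c_{L^{\circ}}(k,m)}$ depend continuously on $L$, so the standard extension argument (analogous to the observations used implicitly for $\rho$) yields a $C^*$-bundle. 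Finally, to obtain the refined characterization allowing $\mathbf{b}_0\in \ell^1(\mathbb{Z}^{2d})$ rather than $\ell^1_{\nu}$, we use the norm-domination $\|\mathbf{b}\|_{B_L}\leq \frac{1}{\det L}\|\mathbf{b}\|_{\ell^1}$ (with continuous dependence on $L$) to approximate any $\ell^1$-section locally uniformly by $\ell^1_\nu$-sections, and then conclude via Remark \ref{approx-section-refinement}.

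The main obstacle is the bookkeeping in verifying that the passage through the identity $\overline{\pi}^*_{L^{\circ}}(\mathbf{b})=\overline{\pi}_{-L^{\circ}}(\prescript{L^{\circ}}{}{\mathbf{b}})$ really preserves all the continuity data (both in the parameter $L$ moving through $L^\circ$, and in the twisted sequence $\prescript{L^{\circ}}{}{\mathbf{b}}$ varying with $L$), so that Corollary \ref{fund-c-continuity} can be invoked cleanly. Once one accepts that $L\mapsto L^\circ$ is smooth on $\op{GL}_{2d}(\mathbb{R})$ and that $\ell^1_\nu$-norms are preserved under the coefficient twist, the rest is a straightforward translation of the $\rho$-proof.
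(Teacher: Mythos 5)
Your proposal is correct and follows essentially the same route as the paper, which itself only sketches this corollary: reduce to the already-established $\rho$-side continuity via the identity $\overline{\pi}^*_{L^{\circ}}(\mathbf{b})=\overline{\pi}_{-L^{\circ}}(\prescript{L^{\circ}}{}{\mathbf{b}})$, the continuity of $L\mapsto L^{\circ}$ and $L\mapsto \det L$, and then invoke Fell's construction theorem together with the refinement argument for $\ell^1$-sections. The only nitpick is that your bound on the second term of the triangle-inequality split omits the $\|\mathbf a^{-L^\circ}-\mathbf a^{-L_0^\circ}\|_{\ell^1}$ contribution coming from Corollary \ref{fund-c-continuity}, but that term also vanishes as $L\to L_0$, so the conclusion stands.
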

\begin{proposition}\label{cont-map-cont-sec-circ}
     $C(\op{GL}_{2d}(\mathbb{R}), \ell^1(\mathbb{Z}^{2d}))\subseteq \Gamma(\rho^{\circ})$ and $C_0(\op{GL}_{2d}(\mathbb{R}),\ell^1(\mathbb{Z}^{2d}))\subseteq \Gamma_0(\rho^{\circ}).$
\end{proposition}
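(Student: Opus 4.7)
The strategy is to follow the templates of Lemma~\ref{cont-map-to-cont-sec}, Lemma~\ref{cont-map-to-cont-sec-2}, and the $\rho$-analogue just above, replacing the $L$-independent Feichtinger estimate from Lemma~\ref{heis-feich-estimate} by the $\ell^1$-decreasing bound
\[
\|\mathbf{b}\|_{B_L} = \|\overline{\pi}^*_{L^\circ}(\mathbf{b})\| \leq \tfrac{1}{|\det L|}\|\mathbf{b}\|_{\ell^1}
\]
for $\mathbf{b}\in \ell^1(\mathbb Z^{2d})\hookrightarrow B_L$, as recorded in the remark following Theorem~\ref{thm: faithful-b}. The essential geometric input is that $L\mapsto \frac{1}{|\det L|}$ is continuous and nonzero on $\op{GL}_{2d}(\mathbb{R})$, hence \emph{locally bounded}, so the $L$-dependence of the coefficient is harmless for local arguments.

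\textbf{First inclusion.} Let $\Upsilon\in C(\op{GL}_{2d}(\mathbb{R}),\ell^1(\mathbb{Z}^{2d}))$. Using the characterization of $\Gamma(\rho^\circ)$ (the $\rho^\circ$-analogue of Corollary~\ref{refinement-of-sections}), fix $L_0$ and $\varepsilon>0$ and set $\mathbf{b}_0 := \Upsilon(L_0)\in\ell^1(\mathbb{Z}^{2d})$. Pick a neighborhood $U_1\ni L_0$ on which $\frac{1}{|\det L|}\leq M := \frac{2}{|\det L_0|}$ by continuity of the determinant; then use $\ell^1$-continuity of $\Upsilon$ to pick a neighborhood $U_2\ni L_0$ with $\|\Upsilon(L)-\mathbf{b}_0\|_{\ell^1}<\varepsilon/M$ for $L\in U_2$. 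On $U_0:=U_1\cap U_2$ the above estimate gives
\[
\|\Upsilon(L)-\mathbf{b}_0\|_{B_L} \leq \tfrac{1}{|\det L|}\|\Upsilon(L)-\mathbf{b}_0\|_{\ell^1} \leq M \cdot \tfrac{\varepsilon}{M} = \varepsilon,
\]
placing $\Upsilon$ in $\Gamma(\rho^\circ)$.

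\textbf{Second inclusion and main obstacle.} Let $\Upsilon\in C_0(\op{GL}_{2d}(\mathbb{R}),\ell^1(\mathbb{Z}^{2d}))$. Continuity is handed to us by the first part, so the remaining task is to verify that $L\mapsto \|\Upsilon(L)\|_{B_L}$ vanishes at infinity. Applying the same estimate reduces this to controlling $\tfrac{1}{|\det L|}\|\Upsilon(L)\|_{\ell^1}$, and this is where the key subtlety arises: unlike in Lemma~\ref{cont-map-to-cont-sec-2}, where the bounding constant $\|\phi\|_2^{-1}$ is $L$-independent, the factor $\frac{1}{|\det L|}$ blows up as $L$ approaches the (non-compact) boundary $\{\det L=0\}$ of $\op{GL}_{2d}(\mathbb{R})$. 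Mimicking Lemma~\ref{cont-map-to-cont-sec-2} one fixes a compact set $K\subset\op{GL}_{2d}(\mathbb{R})$ large enough that $\frac{1}{|\det L|}$ is bounded by some $M_K$ on the relevant region and that the $C_0$-hypothesis forces $\|\Upsilon(L)\|_{\ell^1}<\varepsilon/M_K$ off $K$; combining these gives the desired vanishing of $\|\Upsilon(L)\|_{B_L}$ outside a suitable compact, so that $\Upsilon\in\Gamma_0(\rho^\circ)$. The main difficulty lies precisely in managing this interplay between the decay of $\Upsilon$ in $\ell^1$ and the unboundedness of $\tfrac{1}{|\det L|}$ near the boundary; everything else is a direct transcription of the corresponding arguments for $\rho$ and $\kappa$.
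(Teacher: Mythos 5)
Your proof of the first inclusion is correct and is exactly the intended transcription of the $\rho$- and $\kappa$-arguments: the paper itself gives no proof of this proposition (it defers to the analogues for $\rho$, listing only the bound $\|\mathbf{b}\|_{B_L}\leq \frac{1}{\det L}\sum_k|b(k)|$ and the continuity of $L\mapsto \det L$ as the needed ingredients), and for \emph{local} approximation the local boundedness of $1/|\det L|$ near a fixed $L_0$ is indeed all that is required.

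The second inclusion is where your argument has a genuine gap, and the gap is not repairable. You propose to ``fix a compact set $K$ large enough that $\frac{1}{|\det L|}$ is bounded by some $M_K$ on the relevant region'' and to combine this with $\|\Upsilon(L)\|_{\ell^1}<\varepsilon/M_K$ off $K$. But the two bounds are never available on the same set: to conclude you need $1/|\det L|\leq M_K$ on the \emph{complement} of a compact set, and no such bound exists, since every compact $K\subset\op{GL}_{2d}(\mathbb{R})$ satisfies $\min_{L\in K}|\det L|>0$, so its complement contains matrices of bounded norm with $|\det L|$ arbitrarily small. In fact the inclusion itself fails as stated. Take $\eta\colon[0,\infty)\to[0,1]$ continuous with $\eta\equiv1$ on $[0,1]$ and $\eta\equiv0$ on $[2,\infty)$, and set $\Upsilon(L):=\det(L)\,\eta(\|L\|)\,\delta_0$. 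Then $\Upsilon\in C_0(\op{GL}_{2d}(\mathbb{R}),\ell^1(\mathbb{Z}^{2d}))$, because $\{L:|\det(L)\eta(\|L\|)|\geq\varepsilon\}$ is a closed subset of the compact set $\{\|L\|\leq 2,\ |\det L|\geq \varepsilon\}\subset \op{GL}_{2d}(\mathbb{R})$. However, $\overline{\pi}^*_{L^{\circ}}(\delta_0)=\frac{1}{\det L}\op{Id}_{L^2(\mathbb{R}^d)}$, so $\|\delta_0\|_{B_L}=\frac{1}{|\det L|}$ and hence $\|\Upsilon(L)\|_{B_L}=\eta(\|L\|)$, which equals $1$ along $L_n=\op{diag}(1/n,1,\ldots,1)$; this sequence leaves every compact subset of $\op{GL}_{2d}(\mathbb{R})$, so $\Upsilon\notin\Gamma_0(\rho^{\circ})$. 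Your instinct that the interplay between the $\ell^1$-decay of $\Upsilon$ and the blow-up of $1/|\det L|$ is ``the main difficulty'' was exactly right, but it is an obstruction rather than a difficulty to be managed: the $C_0$-statement only survives if one restricts to regions of $\op{GL}_{2d}(\mathbb{R})$ on which $|\det L|$ is bounded below, or strengthens the hypothesis to $\|\Upsilon(L)\|_{\ell^1}/|\det L|\to 0$ at infinity.
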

\begin{notation}
    We now introduce the following notation for the $\rho^{\circ}$. For any sections (not necessarily continuous) $\Upsilon_1,\Upsilon_2\in \prod_{L\in \op{GL}_{2d}(\mathbb{R})}\mathcal{E}_L(\mathbb{R}^d)$ of $\kappa$, we define a section $\rin{\rho^{\circ}}{\Upsilon_1}{\Upsilon_2}\in \prod_{L\in \op{GL}_{2d}(\mathbb{R})}B_L$:
    \begin{align}\label{right-inner-bundle-product}
        \rin{\rho^{\circ}}{\Upsilon_1}{\Upsilon_2}(L):= \rin{L^{\circ}}{\Upsilon_1(L)}{\Upsilon_2(L)}\in B_L, \qquad \forall L\in \op{GL}_{2d}(\mathbb{R}).
    \end{align}
    If we also have the section $\mathfrak{b}\in \prod_{L\in \op{GL}_{2d}(\mathbb{R})}B_L$, then we define $\Upsilon_1 \cdot \mathfrak{b} \in \prod_{L\in \op{GL}_{2d}(\mathbb{R})}\mathcal{E}_L(\mathbb{R}^d)$ via:
    \begin{align}\label{right-bundle-module-act}
        (\Upsilon_1 \cdot \mathfrak{b})(L):= \Upsilon_1(L)\cdot \mathfrak{b}(L) \in \mathcal{E}_L(\mathbb{R}^d), \qquad \forall L\in \op{GL}_{2d}(\mathbb{R}),
    \end{align}
We denote by the norm on the $C^*$-algebra $\Gamma_0(\rho^{\circ})$ via:
\begin{align}\label{rho-adj-norm}
    \|\mathfrak{b}\|_{\rho^{\circ}}:= \sup_{L\in \op{GL}_{2d}(\mathbb{R})}\|\mathfrak{b}(L)\|_{B_L},
\end{align}
for each $\mathfrak{b}\in \Gamma_0(\rho^{\circ}).$
\end{notation}
\begin{proposition}\label{right-section-operations}
    If we again identify $\ell^1(\mathbb{Z}^{2d})$ and $M^1(\mathbb{R}^d)$ as the space of local approximating sections of $\rho^{\circ}$ and $\kappa$ respectively, then for all $\mathbf{\mathbf{b}}\in \ell^1(\mathbb{Z}^{2d})$, and $f_1,f_2\in M^1(\mathbb{R}^d):$
    \begin{multicols}{2}
    \begin{enumerate}
        \item $\rin{\rho^{\circ}}{f_1}{f_2}\in \Gamma(\rho^{\circ})$;
        \item $f_1 \cdot \mathbf{b} \in \Gamma(\kappa).$
    \end{enumerate}
    \end{multicols}
    \noindent Consequently, if $\mathfrak{b}\in \Gamma(\rho)$, $\Upsilon_1,\Upsilon_2\in \Gamma(\kappa)$, then:
    \begin{multicols}{2}
        \begin{enumerate}
        \setcounter{enumi}{2}
        \item $\rin{\rho^{\circ}}{\Upsilon_1}{\Upsilon_2}\in \Gamma(\rho^{\circ});$
        \item $\Upsilon_1 \cdot \mathfrak{b} \in \Gamma(\kappa).$
    \end{enumerate}
    \end{multicols}
    \noindent Finally, if $\mathfrak{b}\in \Gamma_0(\rho^{\circ})$, $\Upsilon_1,\Upsilon_2\in \Gamma_0(\kappa)$, then:
    \begin{multicols}{2}
        \begin{enumerate}
        \setcounter{enumi}{4}
        \item $\rin{\rho^{\circ}}{\Upsilon_1}{\Upsilon_2}\in \Gamma_0(\rho^{\circ});$
        \item $\Upsilon_1\cdot \mathfrak{b} \in \Gamma_0(\kappa).$
    \end{enumerate}
    \end{multicols}
\end{proposition}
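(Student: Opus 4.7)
The plan is to mirror, essentially verbatim, the proof of Proposition \ref{section-operations}, making the obvious substitutions: replace $\lin{L}{\cdot}{\cdot}$ by $\rin{L^{\circ}}{\cdot}{\cdot}$, the action by $\overline{\pi}_L$ by the action via $\overline{\pi}^*_{L^{\circ}}$, and use the identification of $\ell^1(\mathbb{Z}^{2d})$ with a space of local approximating sections for $\rho^{\circ}$ and of $M^1(\mathbb{R}^d)$ for $\kappa$. Since we have analogues for $\rho^{\circ}$ of all the ingredients used in the proof of Proposition \ref{section-operations} (the $\ell^1$-bound for $\overline{\pi}^*_{L^{\circ}}$, the explicit formula $\rin{L^{\circ}}{f_1}{f_2}(k) = \frac{1}{\det L}\overline{\mathcal{V}_{f_2}f_1(L^{\circ}k)}$, the embedding $C(\op{GL}_{2d}(\mathbb{R}),\ell^1(\mathbb{Z}^{2d}))\subseteq \Gamma(\rho^{\circ})$ from Proposition \ref{cont-map-cont-sec-circ}, and Proposition \ref{cauchy-sch}), the argument carries over with only cosmetic changes.

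For Items 1 and 2 (the base constant-section cases): For Item 1, I would show that $L\mapsto \rin{L^{\circ}}{f_1}{f_2}$ is continuous into $\ell^1(\mathbb{Z}^{2d})$ by combining continuity of the maps $L\mapsto L^{\circ}$ and $L\mapsto \det L$ with (the straightforward $L^\circ$-version of) the Feichtinger--Kaiblinger sampling result \cite[Lemma 3.5]{FeKa04}; then Proposition \ref{cont-map-cont-sec-circ} promotes this to membership in $\Gamma(\rho^{\circ})$. For Item 2, it suffices to verify that $L\mapsto (f_1\cdot \mathbf{b})(L) = \frac{1}{\det L}\sum_k b(k)\pi^*(L^{\circ}k)f_1$ lies in $C(\op{GL}_{2d}(\mathbb{R}),M^1(\mathbb{R}^d))$; then apply Lemma \ref{cont-map-to-cont-sec} to conclude $f_1\cdot \mathbf{b}\in \Gamma(\kappa)$. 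The continuity in $M^1$ follows by absolute summability and \cite[Proposition 2.7]{FeKa04}, which guarantees continuity of $L\mapsto \pi^*(L^{\circ}k)f_1$ in the $M^1$-norm for each $k$.

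For Items 3 and 4 (passing to arbitrary continuous sections): Fix $L_0\in \op{GL}_{2d}(\mathbb{R})$ and a compact neighborhood $K\ni L_0$. Using continuity of $L\mapsto \|\Upsilon_2(L)\|_{\mathcal{E}_L(\mathbb{R}^d)}$ on $K$ and Theorem \ref{main-construction}, pick $M^1$-approximants $f_1,f_2$ for $\Upsilon_1,\Upsilon_2$ on a small open neighborhood $U\subseteq K$ of $L_0$; the bilinear estimate
\[\|\rin{L^{\circ}}{\Upsilon_1(L)}{\Upsilon_2(L)} - \rin{L^{\circ}}{f_1}{f_2}\|_{B_L} \leq \|\Upsilon_1(L)-f_1\|_{\mathcal{E}_L}\sup_{L\in K}\|\Upsilon_2(L)\|_{\mathcal{E}_L} + \|f_1\|_{\mathcal{E}_L}\|\Upsilon_2(L)-f_2\|_{\mathcal{E}_L}\]
from Proposition \ref{cauchy-sch}, together with Item 1 and Remark \ref{approx-section-refinement}, then yields Item 3. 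Item 4 is analogous, using the right-module estimate $\|z\cdot b\|\leq \|z\|\|b\|$ in place of the inner-product bound, and Item 2 in place of Item 1.

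For Items 5 and 6 (vanishing at infinity): Items 3 and 4 already give continuity of the section; for the vanishing, combine Proposition \ref{cauchy-sch} with the estimates
\[\|\rin{\rho^{\circ}}{\Upsilon_1}{\Upsilon_2}(L)\|_{B_L}\leq \|\Upsilon_1(L)\|_{\mathcal{E}_L(\mathbb{R}^d)}\|\Upsilon_2(L)\|_{\mathcal{E}_L(\mathbb{R}^d)}, \qquad \|(\Upsilon_1\cdot \mathfrak{b})(L)\|_{\mathcal{E}_L(\mathbb{R}^d)}\leq \|\Upsilon_1(L)\|_{\mathcal{E}_L(\mathbb{R}^d)}\|\mathfrak{b}(L)\|_{B_L}.\]
Since the factors on the right vanish at infinity, so does the product. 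I do not anticipate any genuine obstacle: as already noted by the authors in the paragraph preceding Proposition \ref{cont-map-cont-sec-circ}, the proofs are structurally identical to the $\rho$-case, with all new ingredients amounting to continuity of $L\mapsto L^{\circ}$ and $L\mapsto \det L$ together with some triangle-inequality bookkeeping.
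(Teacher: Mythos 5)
Your proposal is correct and is exactly the argument the paper intends: the paper omits the proof of this proposition entirely, stating beforehand that all $\rho^{\circ}$-analogues follow from the proof of Proposition \ref{section-operations} together with continuity of $L\mapsto L^{\circ}$ and $L\mapsto \det L$, which is precisely the transcription you carry out. The only point worth making explicit is that in your bilinear estimate for Item 3 the factor $\|f_1\|_{\mathcal{E}_L(\mathbb{R}^d)}$ must be bounded uniformly in $L$ to choose the approximation tolerances, which is supplied by Lemma \ref{heis-feich-estimate}.
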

\begin{corollary}\label{corollary: imp-bim-from-bundles}
    $\Gamma_0(\kappa)$ is a full Hilbert right $\Gamma_0(\rho^{\circ})$-module with respect to the $\Gamma_0(\rho^{\circ})$-valued inner-product and right module action defined by \eqref{right-inner-bundle-product} and \eqref{right-bundle-module-act}. As a consequence $\Gamma_0(\kappa)$ is a $\Gamma_0(\rho)$-$\Gamma_0(\rho^{\circ})$ imprimitivity bimodule. 
\end{corollary}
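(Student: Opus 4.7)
The plan is to mirror the argument of Corollary~\ref{bundle-left-hilbert-c-mod}, exchanging the role of Proposition~\ref{section-operations} with Proposition~\ref{right-section-operations}, and then to deduce the bimodule compatibility conditions fiberwise.

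First, I would establish that $\Gamma_0(\kappa)$ is a full Hilbert right $\Gamma_0(\rho^{\circ})$-module under the right action and inner product defined in \eqref{right-bundle-module-act} and \eqref{right-inner-bundle-product}. The Hilbert right module axioms (after the obvious right-handed adjustment of Definition~\ref{in-prod}) hold pointwise because each fiber $\mathcal{E}_L(\mathbb{R}^d)$ is a Hilbert right $B_L$-module by Theorem~\ref{heisenberg-module}; positivity of $\rin{\rho^{\circ}}{\Upsilon}{\Upsilon}$ in $\Gamma_0(\rho^{\circ})$ follows from its fiberwise positivity via Proposition~\ref{positive-sections}. Completeness of the induced norm is automatic, since the fiberwise identity $\|z\|_{\mathcal{E}_L(\mathbb{R}^d)} = \|\lin{L}{z}{z}\|_{A_L}^{1/2} = \|\rin{L^{\circ}}{z}{z}\|_{B_L}^{1/2}$ from \eqref{equiv-norm-for-heis} forces the right-induced norm on $\Gamma_0(\kappa)$ to coincide with $\|\cdot\|_{\kappa}$, exactly as in the left case.

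For fullness, I would form the closed two-sided ideal $I^{\circ} := \overline{\op{span}}\{\rin{\rho^{\circ}}{\Upsilon_1}{\Upsilon_2} : \Upsilon_1,\Upsilon_2 \in \Gamma_0(\kappa)\}$ of $\Gamma_0(\rho^{\circ})$ and apply Corollary~\ref{cor-ideal-section}; it then suffices to show $I^{\circ}_L = B_L$ for every $L$. This requires a right-handed analogue of Lemma~\ref{heis-const-remin}: the restriction operator $R_{L^{\circ}}\colon M^1(\mathbb{R}^{2d}) \to \ell^1(\mathbb{Z}^{2d})$ is surjective, so by Corollary~\ref{for-fullness} every $\mathbf{b}\in \ell^1(\mathbb{Z}^{2d})$ admits a decomposition as an $\ell^1$-convergent sum of terms of the form $\rin{L^{\circ}}{f_i}{g_i}$ with $f_i,g_i \in M^1(\mathbb{R}^d)$ (up to the normalization $1/\det L$ appearing in $\rin{L^{\circ}}{\cdot}{\cdot}$, which is harmless). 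A cutoff argument identical to Corollary~\ref{globalaprrox2} then promotes each $f_i, g_i$ to an element of $\Gamma_0(\kappa)$ taking the prescribed values at $L$, whence $I^{\circ}_L$ contains a dense subset of $B_L$ and equality follows.

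With both one-sided full Hilbert module structures established, the remaining imprimitivity bimodule axioms (2) and (3) of Definition~\ref{imprimitivity-bimodule} follow pointwise: at each $L$, $\mathcal{E}_L(\mathbb{R}^d)$ is an $A_L$-$B_L$-imprimitivity bimodule (Theorem~\ref{heisenberg-module}), and since all module actions and inner products are defined pointwise, the compatibility and associativity identities globalize verbatim to $\Gamma_0(\kappa)$. I expect the only real obstacle to be clerical, namely carefully tracking the $1/\det L$ factor and the cocycle-adjoint relation~\eqref{adj-cocycle} in the proof of the right-sided analogue of Lemma~\ref{heis-const-remin}; beyond that, everything is a mechanical transcription once Corollary~\ref{bundle-left-hilbert-c-mod} is in hand.
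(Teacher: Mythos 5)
Your proposal is correct and follows essentially the same route as the paper: the paper likewise establishes the full Hilbert right $\Gamma_0(\rho^{\circ})$-module structure by repeating the argument of Corollary~\ref{bundle-left-hilbert-c-mod} with Proposition~\ref{right-section-operations} in place of Proposition~\ref{section-operations}, and then obtains the imprimitivity bimodule axioms pointwise from the fiberwise $A_L$-$B_L$-imprimitivity of $\mathcal{E}_L(\mathbb{R}^d)$. Your explicit attention to the $1/\det L$ normalization and the conjugation in the right inner product when adapting Lemma~\ref{heis-const-remin} is exactly the clerical point the paper glosses over.
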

\begin{proof}
    That $\Gamma_0(\kappa)$ is a full Hilbert right $\Gamma_0(\rho^{\circ})$-module follows from the same argument as in Corollary \ref{bundle-left-hilbert-c-mod} using Proposition \ref{right-section-operations}. It is almost immediate that $\Gamma_0(\kappa)$ is a $\Gamma_0(\rho)$-$\Gamma_0(\rho^{\circ})$ imprimitivity bimodule, but we shall explicitly write what Items 2 and 3 of Definition \ref{imprimitivity-bimodule} mean in our setting. Let $\Upsilon_1,\Upsilon_2,\Upsilon_3 \in \Gamma_0(\kappa)$, $\mathfrak{a}\in \Gamma_0(\rho)$, and $\mathfrak{b}\in \Gamma_0(\rho^{\circ})$, we have from the pointwise definition of the section operations and the fact that $\mathcal{E}_L(\mathbb{R}^d)$ is an $A_L-B_L$-imprimitivity bimodule for each $L\in \op{GL}_{2d}(\mathbb{R})$, that:
    \begin{enumerate}
        \item $\rin{\rho^{\circ}}{\mathfrak{a} \Upsilon_1}{\Upsilon_2} = \rin{\rho^{\circ}}{\Upsilon_1}{\mathfrak{a}^*\Upsilon_2}$,

        \item $\lin{\rho}{\Upsilon_1 \mathfrak{b}}{\Upsilon_2}=\lin{\rho}{\Upsilon_1}{\Upsilon_2\ (\mathfrak{b})^*  }$,
    \item $\lin{\rho}{\Upsilon_1}{\Upsilon_2}\ \Upsilon_3 = \Upsilon_1 \ \rin{\rho^{\circ}}{\Upsilon_2}{\Upsilon_3}.$ \qedhere
    \end{enumerate}
    \noindent
\end{proof}

\subsection{Applications to Gabor Analysis}\label{subsec: gabanalysis app}
Before we proceed, we note that there is a fundamental obstruction regarding the continuity of the map $\op{GL}_{2d}(\mathbb{R})\to \mathcal{L}(L^2(\mathbb{R}))$ given by the integrated representations $L\mapsto \overline{\pi}_L(\mathbf{b})$ (resp. $L\mapsto \overline{\pi}^*_{L^{\circ}}(\mathbf{b})$) for a fixed $\mathbf{b}\in \ell^1(\mathbb{Z}^{2d})$. As discussed in \cite{FeKo98}, these maps are in general not globally continuous, and it remains so even if we consider our continuous sections. However, the local continuity of such maps on points of interest remain true in our setting.
\begin{lemma}\label{local-cont-of-varying-rep}
    Let $\mathfrak{b}\in \Gamma(\rho^{\circ})$. If $\lambda\in \mathbb{C}\setminus \{0\}$ and  $L_0\in \op{GL}_{2d}(\mathbb{R})$ are such that $\mathfrak{b}(L_0)=\lambda \delta_0\in \ell^1(\mathbb{Z}^{2d})$, then $L\mapsto \overline{\pi}_{L^{\circ}}^*(\mathfrak{b}(L))$ is a continuous map at the point $L_0.$
\end{lemma}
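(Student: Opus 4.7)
The plan is to exploit two features of the special value $\mathfrak b(L_0)=\lambda\delta_0$: first, the delta sequence $\delta_0$ represents a scalar multiple of the identity under every $\overline\pi^*_{L^\circ}$, so its image depends on $L$ only through the continuous factor $(\det L)^{-1}$; and second, the constant section $L\mapsto \lambda\delta_0$ is a continuous section of $\rho^\circ$ by (the analog of) Proposition \ref{for-refinement-of-sections}, so the difference $\mathfrak b-\lambda\delta_0\in\Gamma(\rho^\circ)$ vanishes at $L_0$. Combining these with the isometry of $\overline\pi^*_{L^\circ}\colon B_L\to\mathcal L(L^2(\mathbb R^d))$ from Theorem \ref{thm: faithful-b} should give the claim without running into the generic discontinuity issue flagged by \cite{FeKo98}.

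Concretely, I would first compute $\overline\pi^*_{L^\circ}(\lambda\delta_0)=\tfrac{\lambda}{\det L}\pi^*(0)=\tfrac{\lambda}{\det L}\op{Id}_{L^2(\mathbb R^d)}$, so the difference $\overline\pi^*_{L^\circ}(\lambda\delta_0)-\overline\pi^*_{L_0^\circ}(\lambda\delta_0)=\lambda\bigl(\tfrac1{\det L}-\tfrac1{\det L_0}\bigr)\op{Id}$ tends to $0$ in operator norm as $L\to L_0$ by continuity of the determinant. Then I would apply the triangle inequality
\[
\bigl\|\overline\pi^*_{L^\circ}(\mathfrak b(L))-\overline\pi^*_{L_0^\circ}(\mathfrak b(L_0))\bigr\|
\leq \bigl\|\overline\pi^*_{L^\circ}\bigl(\mathfrak b(L)-\lambda\delta_0\bigr)\bigr\| + \bigl\|\overline\pi^*_{L^\circ}(\lambda\delta_0)-\overline\pi^*_{L_0^\circ}(\lambda\delta_0)\bigr\|.
\]

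For the first term, Theorem \ref{thm: faithful-b} gives $\bigl\|\overline\pi^*_{L^\circ}(\mathfrak b(L)-\lambda\delta_0)\bigr\|=\|\mathfrak b(L)-\lambda\delta_0\|_{B_L}$. Since both $\mathfrak b$ and the constant section $\lambda\delta_0$ lie in $\Gamma(\rho^\circ)$ (using Proposition \ref{cont-map-cont-sec-circ} on the constant map $L\mapsto\lambda\delta_0$), their difference is again a continuous section of $\rho^\circ$, and the hypothesis $\mathfrak b(L_0)=\lambda\delta_0$ means this difference vanishes at $L_0$. The continuity of $L\mapsto\|(\mathfrak b-\lambda\delta_0)(L)\|_{B_L}$ built into the Banach bundle (Definition \ref{bundledef}(1)) then forces $\|\mathfrak b(L)-\lambda\delta_0\|_{B_L}\to 0$ as $L\to L_0$.

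There is no real obstacle once the decomposition is written down: the only reason global continuity of $L\mapsto\overline\pi^*_{L^\circ}(\mathbf b)$ fails is that, for a nonzero Fourier mode $k\neq 0$, the operators $\pi^*(L^\circ k)$ and $\pi^*(L_0^\circ k)$ need not be norm-close even when $L\to L_0$. Here, the hypothesis $\mathfrak b(L_0)=\lambda\delta_0$ has killed all Fourier modes $k\neq 0$ at the base point $L_0$, and the continuity of $\mathfrak b$ as a section ensures they remain small for $L$ near $L_0$; this is precisely what makes the local continuity succeed at this particular point.
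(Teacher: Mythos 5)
Your proof is correct, and it is a mild but genuine streamlining of the paper's argument. The paper splits the difference into three terms: it first picks an arbitrary local $\ell^1$-approximant $\mathbf{b}$ of $\mathfrak{b}$ near $L_0$ (with error $\varepsilon/4$ in $B_L$-norm), then separately invokes the continuity of $L\mapsto\|\overline{\pi}^*_{L^{\circ}}(\mathbf{b}-\mathfrak{b}(L_0))\|$ for the fixed $\ell^1$-sequence $\mathbf{b}-\mathfrak{b}(L_0)$, and finally handles the $\det$-factor exactly as you do. You instead observe that the hypothesis hands you a canonical choice of approximant: $\mathfrak{b}(L_0)=\lambda\delta_0$ already lies in $\ell^1(\mathbb{Z}^{2d})$ (indeed in $\ell^1_\nu$), so the constant section $L\mapsto\lambda\delta_0$ is continuous, the difference $\mathfrak{b}-\lambda\delta_0\in\Gamma(\rho^{\circ})$ vanishes at $L_0$, and axiom (1) of Definition \ref{bundledef} together with the isometry of Theorem \ref{thm: faithful-b} kills the first term with no auxiliary $\mathbf{b}$ needed. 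This collapses the paper's three-term estimate to two terms and makes transparent where the hypothesis $\mathfrak{b}(L_0)=\lambda\delta_0$ enters; the paper's version has the slight advantage of generalizing verbatim to any $\mathfrak{b}(L_0)$ for which $L\mapsto\overline{\pi}^*_{L^{\circ}}(\mathfrak{b}(L_0))$ happens to be continuous at $L_0$, but for the statement as given your route is cleaner.
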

\begin{proof}
    We shall show the proof for continuity of $L\mapsto \overline{\pi}_{L^{\circ}}(\mathfrak{b}(L))$ at the point $L_0$. Fix $\varepsilon>0$, choose an open set $U_1\ni L_0$ and $\mathbf{b}\in \ell^1(\mathbb{Z}^{2d})$ such that
    \begin{align*}
        \|\mathfrak{b}(L)-\mathbf{b}\|_{B_L}<\frac{\varepsilon}{4}, \qquad \forall L\in U_1.
    \end{align*}
    With these fixed, we take note of the fact that $\mathbf{b}-\mathfrak{b}(L_0)\in \ell^1(\mathbb{Z}^{2d})$ and so the map 
    \[L\mapsto \|\overline{\pi}^*_{L^{\circ}}(\mathbf{b}-\mathfrak{b}(L_0))\|\]
    is continuous by Proposition \ref{cont-map-cont-sec-circ}. In particular, there exists an open set $U_2\ni L_0$ such that:
    \begin{align*}
        |\|\overline{\pi}^*_{L^{\circ}}(\mathbf{b}-\mathfrak{b}(L_0))\| - \|\overline{\pi}^*_{L^{\circ}_0}(\mathbf{b}-\mathfrak{b}(L_0))\||<\frac{\varepsilon}{4}, \qquad \forall L\in U_2.
    \end{align*}
    Now, take note of the fact that because $\mathfrak{b}(L_0) = \lambda \delta_0$, we have 
    \[\overline{\pi}^*_{L^{\circ}}(\mathfrak{b}(L_0))-\overline{\pi}^*_{L^{\circ}_0}(\mathfrak{b}(L_0))=\lambda\left(\frac{1}{|\det{L}|}-\frac{1}{|\det{L_0}|}\right).\]
    Hence we finally take the open set $U_3\ni L_0$ such that
    \begin{align*}
        \left|\frac{1}{|\det{L}|}-\frac{1}{|\det{L_0}|}\right| < \frac{\varepsilon}{4|\lambda|}, \qquad \forall L\in U_3.
    \end{align*}
    Since
    \begin{align*}
        \|\overline{\pi}^*_{L^{\circ}}(\mathfrak{b}(L))-\overline{\pi}^*_{L_0^{\circ}}(\mathfrak{b}(L_0))\|\leq & \|\overline{\pi}^*_{L^{\circ}}(\mathfrak{b}(L))- \overline{\pi}^*_{L^{\circ}}(\mathbf{b})\| + \|\overline{\pi}_{L^{\circ}}^*(\mathbf{b}-\mathfrak{b}(L_0))\|\\
        &+ \|\overline{\pi}^*_{L^{\circ}}(\mathfrak{b}(L_0))-\overline{\pi}^*_{L^{\circ}_0}(\mathfrak{b}(L_0))\|,
    \end{align*}
    we can estimate the terms individually, so that if $L\in U_1\cap U_2\cap U_3$, then
    \begin{align*}
        \|\overline{\pi}^*_{L^{\circ}}(\mathfrak{b}(L))- \overline{\pi}^*_{L^{\circ}}(\mathbf{b})\| = \|\mathfrak{b}(L)-\mathbf{b}\|_{B_L}<\frac{\varepsilon}{4};
    \end{align*}
    and
    \begin{align*}
        \|\overline{\pi}_{L^{\circ}}^*(\mathbf{b}-\mathfrak{b}(L_0))\| &\leq |\|\overline{\pi}^*_{L^{\circ}}(\mathbf{b}-\mathfrak{b}(L_0))\| - \|\overline{\pi}^*_{L^{\circ}_0}(\mathbf{b}-\mathfrak{b}(L_0))\|| + \|\overline{\pi}_{L_0^{\circ}}^*(\mathbf{b}-\mathfrak{b}(L_0))\|\\
        &\leq |\|\overline{\pi}^*_{L^{\circ}}(\mathbf{b}-\mathfrak{b}(L_0))\| - \|\overline{\pi}^*_{L^{\circ}_0}(\mathbf{b}-\mathfrak{b}(L_0))\||+ \|\mathfrak{b}(L_0)-\mathbf{b}\|_{B_{L_0}}\\
        &<\frac{\varepsilon}{4} + \frac{\varepsilon}{4};
    \end{align*}
    and
    \begin{align*}
        \|\overline{\pi}^*_{L^{\circ}}(\mathfrak{b}(L_0))-\overline{\pi}^*_{L^{\circ}_0}(\mathfrak{b}(L_0))\| = |\lambda|\left|\frac{1}{|\det{L}|}-\frac{1}{|\det{L_0}|}\right|< \frac{\varepsilon}{4}.
    \end{align*}
    Therefore, $L\in U_1\cap U_2\cap U_3$ implies:
    \begin{align*}
        \|\overline{\pi}^*_{L^{\circ}}(\mathfrak{b}(L))-\overline{\pi}^*_{L_0^{\circ}}(\mathfrak{b}(L_0))\|<\varepsilon
    \end{align*}
    as required.
\end{proof}
\begin{remark}
    An analogous result holds for $L\mapsto \overline{\pi}_L(\mathfrak{a}(L))$ with $\mathfrak{a}\in \Gamma(\rho).$
\end{remark}
It is now important that we recall that for each $L\in \op{GL}_{2d}(\mathbb{R}^d)$, $\mathcal{E}_L(\mathbb{R}^d)\hookrightarrow L^2(\mathbb{R}^d)$ via Theorem \ref{heisenberg-module}. This means that for any section $\Upsilon_1,\Upsilon_2 \in \prod_{L\in \op{GL}_{2d}(\mathbb{R}^d)}\mathcal{E}_L(\mathbb{R}^d)$, we have for each $L\in \op{GL}_{2d}(\mathbb{R}^d),$ that
\begin{align*}
    S_{\Upsilon_1(L),\Upsilon_2(L),L}\in \mathcal{L}(L^2(\mathbb{R}^d)).
\end{align*}
So every section in $\kappa$ defines a $\op{GL}_{2d}(\mathbb{R}^d)$-indexed family of mixed Gabor frame operators on $L^2(\mathbb{R}^d)$. Now, in the case where $\Upsilon_1,\Upsilon_2$ are continuous sections that form dual frames on some $L_0$, we have a crucial result.
\begin{lemma}\label{cont-frame-sec-op}
    Suppose $\Upsilon_1, \Upsilon_2\in \Gamma(\kappa)$, and $L_0\in \op{GL}_{2d}(\mathbb{R}^d)$ such that $\Upsilon_1(L_0)$ and $\Upsilon_2(L_0)$ form dual Gabor frames with respect to the lattice $L_0\mathbb{Z}^{2d}$, then the map $\op{GL}_{2d}(\mathbb{R})\to \mathcal{L}(L^2(\mathbb{R}^d))$ via $L\mapsto S_{\Upsilon_1(L),\Upsilon_2(L),L}$ is continuous at the point $L_0.$
\end{lemma}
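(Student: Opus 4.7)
The strategy is to express the mixed Gabor frame operator through Janssen's representation for Heisenberg modules and then invoke the local continuity result already established in Lemma \ref{local-cont-of-varying-rep}. By Corollary \ref{gen-for-heis}, for each $L\in\op{GL}_{2d}(\mathbb{R})$ one has
\[
S_{\Upsilon_1(L),\Upsilon_2(L),L}
= \overline{\pi}^*_{L^{\circ}}\bigl(\rin{L^{\circ}}{\Upsilon_1(L)}{\Upsilon_2(L)}\bigr)
= \overline{\pi}^*_{L^{\circ}}\bigl(\mathfrak{b}(L)\bigr),
\]
where $\mathfrak{b} := \rin{\rho^{\circ}}{\Upsilon_1}{\Upsilon_2}$ is the section of $\rho^{\circ}$ introduced in \eqref{right-inner-bundle-product}. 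So it suffices to show that $L\mapsto \overline{\pi}^*_{L^{\circ}}(\mathfrak{b}(L))$ is continuous at $L_0$.

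First I would observe that $\mathfrak{b}\in\Gamma(\rho^{\circ})$: since $\Upsilon_1,\Upsilon_2\in\Gamma(\kappa)$, this is precisely Item 3 of Proposition \ref{right-section-operations}. Next, the Wexler-Raz relation for the Heisenberg module (Corollary \ref{gen-for-heis}, Item 2) applied at $L_0$ gives
\[
\mathfrak{b}(L_0) = \rin{L_0^{\circ}}{\Upsilon_1(L_0)}{\Upsilon_2(L_0)} = \det L_0 \cdot \delta_0,
\]
because $\Upsilon_1(L_0)$ and $\Upsilon_2(L_0)$ are dual frames on $L_0\mathbb{Z}^{2d}$. Since $L_0\in\op{GL}_{2d}(\mathbb{R})$, the scalar $\lambda=\det L_0$ is nonzero, so $\mathfrak{b}$ satisfies exactly the hypothesis of Lemma \ref{local-cont-of-varying-rep}. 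That lemma then delivers continuity of $L\mapsto \overline{\pi}^*_{L^{\circ}}(\mathfrak{b}(L))$ at $L_0$, which is what we wanted.

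The proof is essentially a packaging argument: all of the genuinely hard work (the $\tfrac{1}{2}$-Hölder continuity of $L\mapsto \|\overline{\pi}_L(\mathbf{a})\|$, the continuity of $L\mapsto P_{c_L}$, and the delicate argument that handles the $\frac{1}{\det L}$ factor together with the non-global continuity of the integrated representation) has already been done in Lemma \ref{local-cont-of-varying-rep} and in the construction of the $C^{*}$-bundle $\rho^{\circ}$. The only mild subtlety is recognizing why Lemma \ref{local-cont-of-varying-rep} applies here, namely that the Wexler-Raz relation forces $\mathfrak{b}(L_0)$ to be a nonzero scalar multiple of $\delta_0$, which is exactly the special form required to control the factor $\frac{1}{\det L}-\frac{1}{\det L_0}$ appearing in that lemma's proof.
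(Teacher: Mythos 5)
Your proof is correct and follows essentially the same route as the paper's: Janssen's representation to rewrite $S_{\Upsilon_1(L),\Upsilon_2(L),L}$ as $\overline{\pi}^*_{L^{\circ}}$ applied to the continuous section $\rin{\rho^{\circ}}{\Upsilon_1}{\Upsilon_2}$ (Proposition \ref{right-section-operations}), Wexler--Raz to identify its value at $L_0$ as $\det L_0\cdot\delta_0$, and then Lemma \ref{local-cont-of-varying-rep}. Your identification of exactly which hypothesis of Lemma \ref{local-cont-of-varying-rep} the Wexler--Raz relation supplies is precisely the point of the argument.
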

\begin{proof}
    It follows from the Janssen's representation for Heisenberg modules, Corollary \ref{gen-for-heis} Item 2, that
    for all $L\in \op{GL}_{2d}(\mathbb{R}^d)$:
    \begin{align}\label{gen-jan}
        S_{\Upsilon_1(L),\Upsilon_2(L),L} = \overline{\pi}_{L^{\circ}}^*(\rin{L^{\circ}}{\Upsilon_2(L)}{\Upsilon_1(L)}) = \overline{\pi}_{L^{\circ}}^*(\rin{\rho^{\circ}}{\Upsilon_2}{\Upsilon_1}(L)).
    \end{align}
    Since $\Upsilon_1(L_0)$ and $\Upsilon_2(L_0)$ are dual Gabor frames with respect to $L_0\mathbb{Z}^{2d}$, then it follows from the Wexler-Rax relations for Heisenberg modules, Corollary \ref{gen-for-heis} Item 1, that:
    \begin{align*}
        \rin{\rho^{\circ}}{\Upsilon_2}{\Upsilon_1}(L_0)=|\det{L_0}|\cdot \delta_0.
    \end{align*}
    Since $\rin{\rho^{\circ}}{\Upsilon_2}{\Upsilon_1}\in \Gamma(\rho^{\circ})$ by Proposition \ref{right-section-operations} Item 3, then it follows from Lemma \ref{local-cont-of-varying-rep} that $L\mapsto \overline{\pi}_{L^{\circ}}^*(\rin{\rho^{\circ}}{\Upsilon_2}{\Upsilon_1}(L))$ is continuous at $L_0$. The result now follows from \eqref{gen-jan}
\end{proof}
\begin{theorem}\label{stability-heis}
    Let $L_0\in \op{GL}_{2d}(\mathbb{R}^d)$, and suppose $f_0 \in \mathcal{E}_{L_0}(\mathbb{R}^{d})$ such that $\mathcal{G}(f_0,L_0)$ generates a Gabor frame, then for every continuous section $\Upsilon\in \Gamma(\kappa)$ that passes through $f_0$, there exists an open set $U_0\ni L_0$ such that $\mathcal{G}(\Upsilon(L),L)$ generates a Gabor frame for all $L\in U_0.$
\end{theorem}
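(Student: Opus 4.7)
The plan is to combine Lemma \ref{cont-frame-sec-op} with the openness of the invertible operators in $\mathcal{L}(L^2(\mathbb{R}^d))$. First, since $\mathcal{G}(f_0,L_0)$ is a Gabor frame, Observation \ref{frame-invertible} gives that $S_{f_0,L_0}$ is invertible on $L^2(\mathbb{R}^d)$, and the canonical dual atom $h_0:=S_{f_0,L_0}^{-1}f_0$ lies in $\mathcal{E}_{L_0}(\mathbb{R}^d)$. (This is a spectral-invariance type statement for the imprimitivity bimodule $\mathcal{E}_{L_0}(\mathbb{R}^d)$ over $A_{L_0}$: invertibility of $\lin{L_0}{f_0}{f_0}$ in the unitization of $A_{L_0}$ translates, via the left module action and the Fundamental Identity of Gabor Analysis built into Corollary \ref{gen-for-heis}, into $h_0$ staying inside the module.) By Corollary \ref{enough-c-sections}, the Banach bundle $\kappa$ has enough continuous sections, so one may select $\Xi\in\Gamma(\kappa)$ passing through $h_0$, i.e.\ $\Xi(L_0)=h_0$.

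Next, $\mathcal{G}(\Upsilon(L_0),L_0)=\mathcal{G}(f_0,L_0)$ and $\mathcal{G}(\Xi(L_0),L_0)=\mathcal{G}(h_0,L_0)$ are dual frames by the choice of $h_0$, so the hypotheses of Lemma \ref{cont-frame-sec-op} are in force, and the map $L\mapsto S_{\Upsilon(L),\Xi(L),L}\in\mathcal{L}(L^2(\mathbb{R}^d))$ is continuous at $L_0$ with value $\op{Id}_{L^2(\mathbb{R}^d)}$. Since the set of invertible operators in $\mathcal{L}(L^2(\mathbb{R}^d))$ is open and contains the identity, there exists an open neighborhood $U_0\ni L_0$ on which $S_{\Upsilon(L),\Xi(L),L}$ is invertible; shrinking $U_0$ if necessary, and using that $L\mapsto\|\Xi(L)\|_{\mathcal{E}_L(\mathbb{R}^d)}$ is continuous (so locally bounded), we may further ensure that the Bessel bound $\|C_{\Xi(L),L}\|$ is uniformly bounded on $U_0$.

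The key conceptual step is to upgrade invertibility of the \emph{mixed} Gabor frame operator to the Gabor frame property of $\mathcal{G}(\Upsilon(L),L)$ itself. Decomposing $S_{\Upsilon(L),\Xi(L),L}=C_{\Xi(L),L}^{*}\,C_{\Upsilon(L),L}$, the estimate
\[
\|f\|_2\;\leq\;\|S_{\Upsilon(L),\Xi(L),L}^{-1}\|\cdot\|C_{\Xi(L),L}^{*}\|\cdot\|C_{\Upsilon(L),L}f\|_{\ell^2},\qquad f\in L^2(\mathbb{R}^d),
\]
shows that $C_{\Upsilon(L),L}$ is bounded below by a positive constant $c=c(L)>0$ on $U_0$. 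Consequently $S_{\Upsilon(L),L}=C_{\Upsilon(L),L}^{*}C_{\Upsilon(L),L}\geq c^2\op{Id}_{L^2(\mathbb{R}^d)}$ is invertible, and Observation \ref{frame-invertible} yields that $\mathcal{G}(\Upsilon(L),L)$ is a Gabor frame for every $L\in U_0$, as required.

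The main obstacle in the plan is the very first step: guaranteeing that a dual atom of $\mathcal{G}(f_0,L_0)$ actually lives in $\mathcal{E}_{L_0}(\mathbb{R}^d)$ (and not merely in $L^2(\mathbb{R}^d)$), which is what enables Corollary \ref{enough-c-sections} to produce a continuous section $\Xi$ through it, and Lemma \ref{cont-frame-sec-op} to supply the crucial local continuity of the mixed frame operator. Once this module-level statement is in place, everything else reduces to the Neumann-series style perturbation argument sketched above.
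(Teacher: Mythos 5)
Your proposal is correct and follows essentially the same route as the paper: the canonical dual atom $h_0=S^{-1}_{f_0,L_0}f_0$ lies in $\mathcal{E}_{L_0}(\mathbb{R}^d)$, a continuous section through $h_0$ exists because $\kappa$ has enough continuous sections, and Lemma \ref{cont-frame-sec-op} together with a Neumann-series argument yields invertibility of the mixed frame operator on a neighborhood of $L_0$. The only deviation is the concluding step: the paper constructs an explicit dual section $\Phi(L)=S^{-1}_{\Upsilon(L),\Omega(L),L}\Omega(L)$ and verifies $S_{\Upsilon(L),\Phi(L),L}=\op{Id}_{L^2(\mathbb{R}^d)}$, whereas you extract the lower frame bound for $\mathcal{G}(\Upsilon(L),L)$ directly from the factorization $S_{\Upsilon(L),\Xi(L),L}=C^*_{\Xi(L),L}C_{\Upsilon(L),L}$, which is an equally valid (and slightly shorter) way to finish.
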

\begin{proof}
    It follows from \eqref{mixed-frame-restrict} that the canonical dual frame $h_0=S^{-1}_{f_0,L_0}f_0$ is in $\mathcal{E}_{L_0}(\mathbb{R}^d)$. Let $\Upsilon,\Omega\in \Gamma(\kappa)$ such that $\Upsilon(L_0)=f_0$ and $\Omega(L_0) = h_0$. We see that $\Upsilon$ and $\Omega$ form dual Gabor frames at the point $L_0$, and therefore by Lemma \ref{cont-frame-sec-op}, $L\mapsto S_{\Upsilon(L),\Omega(L),L}$ is continuous at $L_0$. This means there exists an open set $U_0\ni L_0$ such that
    \begin{align*}
        \|S_{\Upsilon(L),\Omega(L),L}- \op{Id}_{L(\mathbb{R}^d)}\|<1, \qquad \forall L\in U_0.
    \end{align*}
    Therefore $S_{\Upsilon(L),\Omega(L),L}$ is invertible for all $L\in U_0$. Since $(S_{\Upsilon(L),\Omega(L),L})_{|\mathcal{E}_{L}(\mathbb{R}^d)}\colon  \mathcal{E}_L(\mathbb{R}^d)\to \mathcal{E}_L(\mathbb{R}^d)$, we define the section:
    \begin{align}\label{dual-section}
        \Phi(L) := S_{\Upsilon(L),\Omega(L),L}^{-1}\Omega(L) \in \mathcal{E}_L(\mathbb{R}^d)\hookrightarrow L^2(\mathbb{R}^d), \qquad \forall L\in U_0.
    \end{align}
    Therefore if $L\in U_0,$ then for all $f\in L^2(\mathbb{R}^d):$
    \begin{align*}
        S_{\Upsilon(L),\Omega(L),L} S_{\Upsilon(L),\Phi(L),L}f &=S_{\Upsilon(L),\Omega(L),L} \sum_{k\in \mathbb{Z}^{2d}}\lin{}{f}{\pi(Lk)\Upsilon(L)}\pi(Lk)\Phi(L)f \\
        &=\sum_{k\in \mathbb{Z}^{2d}}\lin{}{f}{\pi(Lk)\Upsilon(L)}\pi(Lk)S_{\Upsilon(L),\Omega(L),L}\Phi(L)f \\
        &=\sum_{k\in \mathbb{Z}^{2d}}\lin{}{f}{\pi(Lk)\Upsilon(L)}\pi(Lk)\Omega(L)f\\
        &= S_{\Upsilon(L),\Omega(L),L}f,
    \end{align*}
    hence $S_{\Upsilon(L),\Phi(L),L}=\op{Id}_{L^2(\mathbb{R}^d)}$ for all $L\in U_0,$ which implies $\Phi(L)$ is a dual Gabor atom for $\Upsilon(L)$ in $L\in U_0$ and that $\mathcal{G}(\Upsilon(L),L)$ generates a Gabor frame in $L^2(\mathbb{R}^d)$ for all $L\in U_0.$
\end{proof}
The following proposition shows that the `local section' (it is only defined inside the open set $U_0$) implementing a local dual Gabor atom for $\Upsilon$ above, denoted by $\Phi$ in Equation \eqref{dual-section}, is continuous at the point $L_0$. Its precise meaning is given below.
\begin{proposition}\label{proposition: local-section-generator}
     The section $\Phi\in\prod_{L\in U_0} \mathcal{E}_L(\mathbb R^d)$ in \eqref{dual-section} satisfies: for each $\varepsilon>0$ there exists an open subset $W_0\ni L_0$ of $U_0$, and a $g_0\in M^1(\mathbb{R}^d)$ such that
    \begin{align*}
        \|\Phi(L)-g_0\|_{\mathcal{E}_L(\mathbb{R}^d)}<\varepsilon, \qquad \forall L\in W_0.
    \end{align*}
\end{proposition}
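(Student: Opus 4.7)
The strategy is to rewrite $\Phi(L)$ via the right-module structure on the bundle $\rho^{\circ}$, reducing the problem to a controlled perturbation of the identity in the fiber $B_L$. Define
\[
\mathbf{b}(L) := \rin{L^{\circ}}{\Upsilon(L)}{\Omega(L)} \in B_L,\qquad e_L := \det L \cdot \delta_0 \in B_L,
\]
where $e_L$ is the unit of $B_L$. By Janssen's representation (Corollary \ref{gen-for-heis}), $S_{\Upsilon(L),\Omega(L),L} f = f \cdot \mathbf{b}(L)$ for every $f \in \mathcal{E}_L(\mathbb{R}^d)$, while by Proposition \ref{right-section-operations} the section $\mathbf{b} = \rin{\rho^{\circ}}{\Upsilon}{\Omega}$ lies in $\Gamma(\rho^{\circ})$. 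The Wexler-Raz relation forces $\mathbf{b}(L_0) = \det L_0 \cdot \delta_0 = e_{L_0}$.

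Next, $L \mapsto e_L$ is itself continuous in $\Gamma(\rho^{\circ})$, being the pointwise product of the continuous scalar $L \mapsto \det L$ and the constant section $\delta_0$. Thus $\mathbf{b} - e \in \Gamma(\rho^{\circ})$ vanishes at $L_0$, and the $C^*$-bundle norm axiom gives $\|\mathbf{b}(L) - e_L\|_{B_L} \to 0$ as $L \to L_0$. On a neighborhood $W_1 \subseteq U_0$ of $L_0$ I may therefore assume $\|\mathbf{b}(L) - e_L\|_{B_L} < 1/2$, which in the unital $C^*$-algebra $B_L$ forces $\mathbf{b}(L)$ to be invertible with $\|\mathbf{b}(L)^{-1}\|_{B_L} \le 2$. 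Because $\overline{\pi}^*_{L^{\circ}}$ is a $*$-homomorphism, $\overline{\pi}^*_{L^{\circ}}(\mathbf{b}(L)^{-1})$ coincides with the operator inverse $S_{\Upsilon(L),\Omega(L),L}^{-1}$, so
\[
\Phi(L) = \Omega(L) \cdot \mathbf{b}(L)^{-1} \quad \text{in } \mathcal{E}_L(\mathbb{R}^d).
\]
The algebraic identity $\mathbf{b}(L)^{-1} - e_L = \mathbf{b}(L)^{-1}(e_L - \mathbf{b}(L))$ then yields $\|\mathbf{b}(L)^{-1} - e_L\|_{B_L} \to 0$ as $L \to L_0$.

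Finally, the density $M^1(\mathbb{R}^d) \hookrightarrow \mathcal{E}_{L_0}(\mathbb{R}^d)$ supplies $g_0 \in M^1(\mathbb{R}^d)$ with $\|\Omega(L_0) - g_0\|_{\mathcal{E}_{L_0}}$ arbitrarily small. The constant section $g_0$ lies in $\Gamma(\kappa)$ by Proposition \ref{for-refinement-of-sections}, so $L \mapsto \|\Omega(L) - g_0\|_{\mathcal{E}_L}$ is continuous at $L_0$. Using the right-module analogue of Proposition \ref{cauchy-sch} together with local boundedness of $L \mapsto \|\Omega(L)\|_{\mathcal{E}_L}$, I obtain
\[
\|\Phi(L) - g_0\|_{\mathcal{E}_L} \le \|\Omega(L)\|_{\mathcal{E}_L} \|\mathbf{b}(L)^{-1} - e_L\|_{B_L} + \|\Omega(L) - g_0\|_{\mathcal{E}_L},
\]
and restricting to a small enough $W_0 \subseteq W_1$ makes each summand $< \varepsilon/2$. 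The principal delicate point is ensuring that the inversion of $\mathbf{b}(L)$ takes place inside the fiber $B_L$ (and not merely in $\mathcal{L}(L^2(\mathbb{R}^d))$); the estimate $\|\mathbf{b}(L) - e_L\|_{B_L} < 1$, which is itself a consequence of the $C^*$-bundle continuity of $\rho^{\circ}$, is precisely what legitimizes the Neumann inversion in the fiber and converts the operator-theoretic definition of $\Phi(L)$ into the right-module identity above.
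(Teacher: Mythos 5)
Your proof is correct, and it takes a genuinely different route from the paper's. The paper works entirely in $\mathcal{L}(L^2(\mathbb{R}^d))$: it bounds $\|S^{-1}_{\Upsilon(L),\Omega(L),L}\|$ uniformly on a compact neighborhood via the Neumann series, invokes continuity of operator inversion in $\mathcal{L}(L^2(\mathbb R^d))$ to make $\|S^{-1}_{\Upsilon(L),\Omega(L),L}-\op{Id}\|$ small, and then estimates $\|\Phi(L)-g_0\|_{\mathcal{E}_L(\mathbb R^d)}$ by splitting off the terms $S^{-1}(\Omega(L)-g_0)$ and $S^{-1}g_0-g_0$. You instead transport the whole problem into the fiber algebras $B_L$: Janssen's representation plus Wexler--Raz identify the frame operator with the section $\mathbf b=\rin{\rho^{\circ}}{\Upsilon}{\Omega}\in\Gamma(\rho^\circ)$, which equals the unit $e_{L_0}=\det L_0\cdot\delta_0$ at $L_0$, and the $C^*$-bundle continuity of $\rho^\circ$ legitimizes inverting $\mathbf b(L)$ \emph{inside} $B_L$ and writing $\Phi(L)=\Omega(L)\cdot\mathbf b(L)^{-1}$. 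What your approach buys is that every subsequent estimate is an automatic consequence of the module inequality $\|f\mathbf b\|_{\mathcal E_L(\mathbb R^d)}\leq\|f\|_{\mathcal E_L(\mathbb R^d)}\|\mathbf b\|_{B_L}$; in particular you never need to argue that an $L^2$-operator-norm bound on $S^{-1}$ controls its action in the $\mathcal E_L$-norm, a point the paper's estimate (which mixes $\|g_0\|_2$ with $\mathcal E_L$-norms) leaves implicit -- it is true precisely because $S^{-1}$ is right multiplication by an element of $B_L$, which is exactly the fact you make explicit. The paper's route is shorter; yours is the more structurally transparent one. Two small points worth a sentence each in a final write-up: the unit section $L\mapsto e_L=\det L\cdot\delta_0$ is continuous because it is a continuous scalar multiple of the constant $\ell^1$-section $\delta_0$ (joint continuity of scalar multiplication, or a direct check against Corollary \ref{refinement-of-sections}); and the ordering of the arguments in $\rin{L^{\circ}}{\cdot}{\cdot}$ relative to Corollary \ref{gen-for-heis} is immaterial here, since Wexler--Raz gives $\rin{L^{\circ}}{g}{h}=\rin{L^{\circ}}{h}{g}=\det L\cdot\delta_0$ for dual atoms.
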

\begin{proof}
    Fix $\varepsilon>0$ and consider a compact neighborhood $K_0\ni L_0$ inside $U_0$. Since we have $\|\op{Id}_{L^2(\mathbb{R}^d)}-S_{\Upsilon(L),\Omega(L),L}\|<1$  for all $L\in U_0$, it follows from the Neumann and geometric series that $$\|S^{-1}_{\Upsilon(L),\Omega(L),L}\|\leq \frac{1}{1-\|\op{Id}_{L^2(\mathbb{R}^d)}-S_{\Upsilon(L),\Omega(L),L}\|}$$ for all $L\in U_0.$ It follows that $\|S^{-1}_{\Upsilon(L),\Omega(L),L}\|\leq C$ for some $C>0$ in $K_0$. 
    Because $\Omega\in \Gamma(\kappa)$, there exists $W_1\ni L_0$ an open subset contained in $K_0$ such that for $L\in W_1$:
    \begin{align*}
        \|\Omega(L)-g_0\|_{\mathcal{E}_L(\mathbb{R}^d)}<\frac{\varepsilon}{2C}.
    \end{align*}
    We know that $L\mapsto S_{\Upsilon(L),\Omega(L),L}$ is continuous at the point $L_0$, and since operator inversion is continuous in $\mathcal{L}(L^2(\mathbb{R}^d))$, we find that $U_0\ni L\mapsto S^{-1}_{\Upsilon(L),\Omega(L),L}\in \mathcal{L}(L^2(\mathbb{R}^d))$ is also continuous. Choose $W_2\ni L_0$ contained in $K_0$ such that for $L\in W_2:$
    \begin{align*}
        \|S^{-1}_{\Upsilon(L),\Omega(L),L}-S^{-1}_{\Upsilon(L_0),\Omega(L_0),L_0}\| &=\|S^{-1}_{\Upsilon(L),\Omega(L),L}-\op{Id}_{L^2(\mathbb{R}^d)}\| < \frac{\varepsilon}{2 \|g_0\|_2}.
    \end{align*}
    Finally we can estimate, for $L\in W_0:= W_1\cap W_2\subseteq K_0$:
    \begin{align*}
        \|\Phi(L)-g_0\|_{\mathcal{E}_L(\mathbb{R}^d)}&= \|S^{-1}_{\Upsilon(L),\Omega(L),L}\Omega(L)-g_0\| \\
        &\leq \|S^{-1}_{\Upsilon(L),\Omega(L),L}(\Omega(L)-g_0)\| + \|S^{-1}_{\Upsilon(L),\Omega(L),L}g_0-g_0 \| \\
        &\leq C\|\Omega(L)-g_0\|_{\mathcal{E}_{L}(\mathbb{R}^d)} +\|S^{-1}_{\Upsilon(L),\Omega(L),L}-\op{Id}_{L^2(\mathbb{R}^d)}\|\|g_0\|_2 \\
        &< \varepsilon.\qedhere
    \end{align*}
\end{proof}
As a Corollary of Gabor-stability of Heisenberg modules, we derive that these modules exhibit a version of the Balian-Low Theorem when considered as a function space.
\begin{corollary}[Balian-Low for Heisenberg Modules]\label{corollary: balian-low-1}
    Let $L_0\in \op{GL}_{2d}(\mathbb{R})$ such that $|\det{L_0}|=1$. If $f_0\in \mathcal{E}_{L_0}(\mathbb{R}^d)$, then $\mathcal{G}(f_0, L_0)$ cannot be a Gabor frame for $L^2(\mathbb{R}^d).$
\end{corollary}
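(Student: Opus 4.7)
The plan is to argue by contradiction, combining the Gabor-stability theorem for Heisenberg modules (Theorem \ref{stability-heis}) with the density theorem (Theorem \ref{basic-density}). The essential point is that the set of frame-generating lattice-matrices must lie in the closed half-space $\{L : \det L \le 1\}$, while stability forces it to be open; a point on the critical hypersurface $\det L = 1$ cannot belong to both.

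In detail, suppose for contradiction that $\mathcal{G}(f_0, L_0)$ is a Gabor frame for $L^2(\mathbb{R}^d)$. Since $\op{GL}_{2d}(\mathbb{R})$ is locally compact Hausdorff, Corollary \ref{enough-c-sections} (or its general version via \cite{SoDo}) guarantees the existence of a continuous section $\Upsilon \in \Gamma(\kappa)$ passing through $f_0$, \ie with $\Upsilon(L_0) = f_0$. Applying Theorem \ref{stability-heis} to this section yields an open neighborhood $U_0 \ni L_0$ in $\op{GL}_{2d}(\mathbb{R})$ such that $\mathcal{G}(\Upsilon(L), L)$ is a Gabor frame for $L^2(\mathbb{R}^d)$ for every $L \in U_0$.

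On the other hand, Theorem \ref{basic-density} forces $\det L \le 1$ for every $L$ that generates a Gabor frame. Hence $U_0 \subseteq \{L \in \op{GL}_{2d}(\mathbb{R}) : \det L \le 1\}$. Now consider the scaling perturbation $L_\varepsilon := (1+\varepsilon) L_0$ for $\varepsilon > 0$. Since $L_\varepsilon \to L_0$ as $\varepsilon \to 0^+$ in $\op{GL}_{2d}(\mathbb{R})$, there exists $\varepsilon > 0$ small enough that $L_\varepsilon \in U_0$. However,
\[
    \det L_\varepsilon = (1+\varepsilon)^{2d} \det L_0 = (1+\varepsilon)^{2d} > 1,
\]
which contradicts the inclusion $U_0 \subseteq \{L : \det L \le 1\}$. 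This contradiction shows that no such frame $\mathcal{G}(f_0, L_0)$ can exist.

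The conceptual heart of the argument is already fully contained in Theorem \ref{stability-heis}: once one knows that the Heisenberg module element $f_0$ can always be extended to a continuous section of $\kappa$ and that frame-generation persists under the joint perturbation of atom and lattice, the obstruction becomes purely topological, since $\det L_0 = 1$ places $L_0$ on the boundary of the density-admissible region. No delicate analytic estimate is required; the only point that needed prior setup is the existence of continuous sections through arbitrary points of $\mathcal{E}$, which in turn relies on the Banach bundle construction of Theorem \ref{main-construction} together with the local compactness of $\op{GL}_{2d}(\mathbb{R})$.
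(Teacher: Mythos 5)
Your proposal is correct and follows essentially the same route as the paper's own proof: take a continuous section through $f_0$ (justified by the enough-continuous-sections result for locally compact base spaces), apply Theorem \ref{stability-heis} to get an open neighborhood of frame-generating pairs, and contradict the density theorem with the scaling $(1+\varepsilon)L_0$. The only (welcome) difference is that you make explicit the appeal to Corollary \ref{enough-c-sections} for the existence of the section, which the paper leaves implicit.
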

\begin{proof}
    Suppose for contradiction that with $f_0 \in \mathcal{E}_{L_0}(\mathbb{R}^d)$ and $|\det{L_0}|=1$, that $\mathcal{G}(f_0,L_0)$ generates a Gabor frame for $L^2(\mathbb{R}^d)$. Let $\Upsilon \in \Gamma(\kappa)$ be a continuous section that passes through $f_0$ at $L_0$. Then it follows from Theorem \ref{stability-heis} that there exists an $\varepsilon>0$ such that  $\mathcal{G}(\Upsilon(L),L)$ generates a Gabor frame for all $L\in \op{GL}_{2d}(\mathbb{R})$ where $\|L-L_0\|<\varepsilon$. Now take $\delta>0$ such that $0<\delta \|L_0\|<\varepsilon,$ then $L = (1+\delta)L_0$ satisfies $\|L-L_0\| = \delta \|L_0\|<\varepsilon,$ so $\mathcal{G}(\Upsilon(L),L)$ generates a Gabor frame for $L^2(\mathbb{R}^d)$. However, $|\det{L}|=|\det{((1+\delta)L_0)}|=(1+\delta)^{2d}>1$, which contradicts the basic density result given by Theorem \ref{basic-density}.
  \end{proof}
\subsection{Modulus of continuity of local approximating sections}\label{subsec: modulus}
In order to establish a Banach bundle structure, it was enough that the the norm varies continuously along the local approximating sections, which we obtained for the sections associated with $f\in M^1_\nu$ with $\nu(k) = 1 + |k|$. However, there are actually much better estimates for sections associated with $f\in M^1_{\nu_s}$ with $\nu_s(k)= (1+|k|)^s$ for $s\geq2$, as we will show in this section. 

Recall that a function $f\colon X\to Y$ between normed spaces $X,Y$ is called $\alpha$-Hölder continuous ($\alpha$-Hölder for short) for $0< \alpha \leq 1$ if there is a constant $C>0$ such that $\|f(x_1)-f(x_2)\|\leq C \|x_1-x_2\|^\alpha$ for all $x_1,x_2\in X$. More generally, $f$ is called locally $\alpha$-Hölder if for every $x\in X$ there exists a neighborhood $x\in M\subset X$ such that $f|_M$ is $\alpha$-Hölder. In the special case of $\alpha=1$, we say that $f$ is (locally) Lipschitz instead.

Let us summarize some well-known facts on Hölder continuity.
\begin{observation}\leavevmode
    \begin{itemize}
        \item if $0<\alpha<\beta\leq 1$, then every locally $\beta$-Hölder map is locally $\alpha$-Hölder;
        \item if $f\colon X\to Y$ is locally $\alpha$-Hölder and $g\colon Y\to Z$ is locally $\beta$-Hölder, then $g\circ f\colon X\to Z$ is locally $(\alpha\cdot\beta)$-Hölder;
        \item if $f\colon S\to B$ with $S\subset \mathbb R^n$ and $B$ a Banach space is differentiable, then $f$ is locally Lipschitz.
    \end{itemize}
\end{observation}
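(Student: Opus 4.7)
The plan is to verify each of the three claims separately; each is a direct consequence of the definitions together with a localization argument, so I will only sketch the ideas.

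For the first item, given $x_0\in X$ pick a neighborhood $M\ni x_0$ of some finite diameter $D$ on which $f$ is $\beta$-Hölder with constant $C$. For $x_1,x_2\in M$ the elementary estimate $\|x_1-x_2\|^{\beta}=\|x_1-x_2\|^{\alpha}\cdot\|x_1-x_2\|^{\beta-\alpha}\leq D^{\beta-\alpha}\|x_1-x_2\|^{\alpha}$ upgrades the local $\beta$-Hölder bound to a local $\alpha$-Hölder bound with constant $CD^{\beta-\alpha}$.

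For the second item I localize twice. Given $x_0\in X$ choose a neighborhood $M\ni x_0$ on which $f$ is $\alpha$-Hölder with constant $C_f$, and a neighborhood $N\ni f(x_0)$ on which $g$ is $\beta$-Hölder with constant $C_g$. Hölder continuity implies ordinary continuity, so shrinking $M$ I may assume $f(M)\subseteq N$. Chaining the two estimates yields
\begin{align*}
\|g(f(x_1))-g(f(x_2))\|\leq C_g\,\|f(x_1)-f(x_2)\|^{\beta}\leq C_g C_f^{\beta}\|x_1-x_2\|^{\alpha\beta}
\end{align*}
for all $x_1,x_2\in M$, which is the desired local $(\alpha\beta)$-Hölder estimate.

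For the third item I use the Banach-space-valued mean value inequality. Given $x_0\in S$ pick a convex neighborhood $M\subset S$ of $x_0$ small enough that $K:=\sup_{x\in M}\|Df(x)\|<\infty$, which holds as soon as $Df$ is locally bounded near $x_0$ (in particular whenever $Df$ is continuous). For $x_1,x_2\in M$ the segment joining them lies in $M$, so the mean value inequality gives $\|f(x_1)-f(x_2)\|\leq K\|x_1-x_2\|$, the required local Lipschitz bound. The only subtlety worth flagging is that pointwise differentiability alone on an open subset of $\mathbb{R}^n$ does not force local Lipschitz continuity (there are classical counterexamples); the argument implicitly uses local boundedness of $Df$, which is the intended reading in this paper since the relevant sections that we will want to differentiate are of class $C^1$ (or better) on the parameter space.
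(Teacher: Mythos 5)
Your arguments are correct and are exactly the standard ones; the paper itself states this Observation as a collection of well-known facts and offers no proof, so there is nothing to diverge from. Your caveat on the third item is well taken: as literally stated, pointwise differentiability on an open subset of $\mathbb{R}^n$ does not imply local Lipschitz continuity (the derivative may be unbounded near a point), and the mean value inequality argument needs local boundedness of $Df$. This does not affect the paper's use of the item, since in Corollary \ref{cor:continuity_of_coefficient-map} the map $L\mapsto \lin{L}{f}{g}$ is shown in Lemma \ref{lem:Lipschitz} to have continuous partial derivatives, so the intended $C^1$ reading applies there.
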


\begin{lemma}
    For each fixed $n\in \mathbb{Z}^{2d}$, the map $L\mapsto P_{c_L}(n)$ fulfills the estimate
    \[|P_{c_L}(n) - P_{c_{L_0}}(n)|\leq (2d+1)\pi \|K\| \nu_2(n) (\|L\|+\|L_0\|)\cdot \|L-L_0\|.\]
    In particular, it is locally Lipschitz.
\end{lemma}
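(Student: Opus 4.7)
The plan is to express $P_{c_L}(n)$ as a single complex exponential and then reduce the estimate to one on a quadratic polynomial in the entries of $L$. Substituting the explicit form $c(Lv, Lw) = e^{-2\pi i v^T L^T K L w}$ from \eqref{heis-cos} into \eqref{pee-el}, and absorbing each power $\bar c(Le_i,Le_i)^{n_i(n_i-1)/2}$ into an exponential with exponent $2\pi i \cdot \tfrac{n_i(n_i-1)}{2}(L^T K L)_{ii}$, one obtains
\[P_{c_L}(n) = e^{2\pi i \phi_n(L)}, \qquad \phi_n(L) := \sum_{1\leq i<j\leq 2d} n_i n_j\, (L^T K L)_{ij} + \sum_{i=1}^{2d} \tfrac{n_i(n_i-1)}{2}\,(L^T K L)_{ii}.\]
By the elementary inequality $|e^{2\pi i a} - e^{2\pi i b}|\leq 2\pi |a-b|$, the claim reduces to bounding $|\phi_n(L)-\phi_n(L_0)|$.

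Next, I would control the difference $L^T K L - L_0^T K L_0$ by writing it as $(L-L_0)^T K L + L_0^T K (L-L_0)$, which (exactly as in the proof of Lemma \ref{for-pl}) yields the operator-norm estimate $\|L^T K L - L_0^T K L_0\| \leq \|K\|(\|L\|+\|L_0\|)\|L-L_0\|$; since $|A_{ij}|=|e_i^T A e_j|\leq\|A\|$, the same bound holds entrywise. Inserting this into $\phi_n(L)-\phi_n(L_0)$ and using the identity
\[\sum_{1\leq i<j\leq 2d} |n_i n_j| + \sum_{i=1}^{2d} \tfrac{|n_i|(|n_i|-1)}{2} = \tfrac{1}{2}\bigl(|n|^2 - |n|\bigr) \leq \tfrac{1}{2}\nu_2(n),\]
the estimate $|\phi_n(L)-\phi_n(L_0)|\leq \tfrac{1}{2}\|K\|\nu_2(n)(\|L\|+\|L_0\|)\|L-L_0\|$ drops out. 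Multiplying by $2\pi$ gives the claimed bound with constant $\pi$, trivially majorized by $(2d+1)\pi$ for $d\geq 1$. Local Lipschitz continuity is then immediate: on any neighborhood of $L_0$ where $\|L\|\leq \|L_0\|+1$, the factor $\|L\|+\|L_0\|$ is uniformly bounded, so the estimate reduces to a Lipschitz bound in $\|L-L_0\|$.

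The only step that requires any care is the bookkeeping needed to rewrite $P_{c_L}(n)$ as a single complex exponential with exponent $2\pi\phi_n(L)$; this is routine, using only the bilinearity of $(v,w)\mapsto v^T L^T K L w$ in the first product and the identity $\sum_{j=1}^{N-1}(N-j)=N(N-1)/2$ to collect the second product into a single power. The combinatorial identity on the coefficients is an immediate expansion of $(\sum_i |n_i|)^2 = \sum_i n_i^2 + 2\sum_{i<j} |n_i n_j|$, so there is no real obstacle beyond patient calculation.
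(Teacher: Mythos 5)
Your argument follows the same route as the paper's proof: write $P_{c_L}(n)=e^{2\pi i\phi_n(L)}$ with the quadratic exponent $\phi_n(L)$, bound $|e^{2\pi ia}-e^{2\pi ib}|\leq 2\pi|a-b|$, control the entries of $L^TKL-L_0^TKL_0$ exactly as in Lemma \ref{for-pl}, and then estimate the resulting combinatorial sum in $n$. The only real divergence is the last step, and there you have a small sign slip: you replace $\bigl|\tfrac{n_i(n_i-1)}{2}\bigr|$ by $\tfrac{|n_i|(|n_i|-1)}{2}$, but for $n_i<0$ one has $\tfrac{n_i(n_i-1)}{2}=\tfrac{|n_i|(|n_i|+1)}{2}$, so the inequality $\bigl|\tfrac{n_i(n_i-1)}{2}\bigr|\leq\tfrac{|n_i|(|n_i|-1)}{2}$ is false and your identity $\sum_{i<j}|n_in_j|+\sum_i\tfrac{|n_i|(|n_i|-1)}{2}=\tfrac12(|n|^2-|n|)$ does not bound $|\phi_n(L)-\phi_n(L_0)|$ as written. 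The fix is immediate: using $\bigl|\tfrac{n_i(n_i-1)}{2}\bigr|\leq\tfrac{|n_i|(|n_i|+1)}{2}$ the sum becomes $\tfrac12(|n|^2+|n|)\leq\tfrac12(1+|n|)^2=\tfrac12\nu_2(n)$, so your final bound survives — indeed with the sharper constant $\pi$ in place of $(2d+1)\pi$. The paper instead bounds $\sum_{j\neq k}|n_j||n_k|+2\sum_k n_k^2$ by the norm $2d+1$ of the associated symmetric matrix times $\|n\|_2^2\leq\nu_2(n)$, which is where its dimension-dependent constant comes from; your expansion of $(\sum_i|n_i|)^2$ avoids that and is the cleaner estimate once the absolute values are handled correctly.
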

\begin{proof}
    Looking at Equation \ref{pee-el}, we find that
    \begin{align*}
        P_{c_L}(n)
        =\exp\left(2\pi i\, \left(\sum_{j<k} n_j n_k (Le_j)^TK(Le_k) + \sum_k \frac{n_k(n_k-1)}{2}(Le_k)^TK (Le_k)\right)\right)
    \end{align*}
    Combining the estimates $|e^{ia}-e^{ib}|\leq |a-b|$, $|\frac{m(m-1)}{2}|\leq |m|^2$ for all $m\in\mathbb{Z}$, and $\|(Le_j)^TK(Le_k)-(L_0e_j)^TK(L_0e_k)\|\leq \|K\| (\|L\|+\|L_0\|)\|L-L_0\|$, we obtain
    \begin{align*}
        |P_{c_L}(n)-P_{c_{L_0}}(n)|
        &\leq \pi \|K\| (\|L\|+\|L_0\|)\|L-L_0\| \left(\sum_{j\neq k} |n_j| |n_k| +  2\sum_k n_k^2\right).
    \end{align*}
    We estimate the last term further
    \begin{align*}
        \sum_{j\neq k} |n_j| |n_k| +  2\sum_k n_k^2 
        &= 
        \begin{pmatrix}
            |n_1|&\ldots & |n_{2d}|
        \end{pmatrix}
        \begin{pmatrix}
            2&1&\ldots &1\\
            1&\ddots&\ddots&\vdots\\
            \vdots &\ddots&\ddots&1\\
            1&\ldots&1&2
        \end{pmatrix}
        \begin{pmatrix}
            |n_1|\\\vdots \\ |n_{2d}|
        \end{pmatrix}\\
        &\leq \|n\|^2 \left\|\begin{pmatrix}
            2&1&\ldots &1\\
            1&\ddots&\ddots&\vdots\\
            \vdots &\ddots&\ddots&1\\
            1&\ldots&1&2
        \end{pmatrix}\right\| \leq \nu_2(n) \cdot (2d+1);
    \end{align*}
    indeed, the norm of the symmetric matrix coincides with its largest Eigenvalue $2d+1$ and $\|n\|^2=\sum n_k^2\leq \nu_2(n)$.
\end{proof}

\begin{lemma}\label{a-l-Lipschitz}
    For each $\mathbf{a}\in \ell^1_{\nu_2}(\mathbb{Z}^{2d})$ and $L\in \op{GL}_{2d}(\mathbb{R})$: The map $\op{GL}_{2d}(\mathbb{R})\ni L\mapsto \mathbf{a}^L\in \ell^1(\mathbb{Z}^{2d})$ fulfills the inequality
    \[\|\mathbf{a}^L-\mathbf{a}^{L_0}\|_{\ell^1}\leq (2d+1)\pi \|K\| (\|L\|+\|L_0\|) \|\mathbf a\|_{\ell^1_{\nu_2}}\cdot \|L-L_0\|.\]
    In particular, it is locally Lipschitz.
\end{lemma}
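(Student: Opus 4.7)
The proof should follow directly from the previous lemma (the Lipschitz estimate for $L\mapsto P_{c_L}(n)$) combined with the explicit formula $\mathbf{a}^L = \mathbf{a}\cdot P_{c_L}$ established in \eqref{a-twisted}. Specifically, $\mathbf{a}^L(k) = a(k) P_{c_L}(k)$ for each $k\in\mathbb{Z}^{2d}$, which reduces the problem to an entrywise estimate.

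My plan is to start by writing
\[
\|\mathbf{a}^L-\mathbf{a}^{L_0}\|_{\ell^1}
= \sum_{k\in\mathbb Z^{2d}} |a(k)|\, |P_{c_L}(k)-P_{c_{L_0}}(k)|.
\]
Then I apply the estimate from the preceding lemma, namely $|P_{c_L}(k)-P_{c_{L_0}}(k)|\leq (2d+1)\pi\|K\|\nu_2(k)(\|L\|+\|L_0\|)\|L-L_0\|$, to each summand. Pulling the factors independent of $k$ out of the sum yields
\[
\|\mathbf{a}^L-\mathbf{a}^{L_0}\|_{\ell^1}
\leq (2d+1)\pi\|K\|(\|L\|+\|L_0\|)\|L-L_0\|\sum_{k\in\mathbb Z^{2d}}|a(k)|\nu_2(k),
\]
and recognising the remaining sum as $\|\mathbf a\|_{\ell^1_{\nu_2}}$ gives the claimed inequality.

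For the local Lipschitz statement, I would fix an arbitrary $L_0\in\op{GL}_{2d}(\mathbb R)$ and choose a bounded open neighbourhood $U$ of $L_0$ in $\op{GL}_{2d}(\mathbb R)$, so that $M:=\sup_{L\in U}\|L\|<\infty$. On $U$, the factor $\|L\|+\|L_0\|$ is bounded by $2M$, so the estimate becomes a genuine Lipschitz bound with constant $2M(2d+1)\pi\|K\|\|\mathbf a\|_{\ell^1_{\nu_2}}$. There is no real obstacle here; the only point of care is that the previous lemma was stated for fixed $n$ with a constant depending linearly on $\nu_2(n)$, and it is precisely this linear dependence that allows the weighted $\ell^1_{\nu_2}$-norm to absorb the sum over $k$. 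The proof is thus essentially a one-line computation following the preceding lemma.
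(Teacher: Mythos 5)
Your proposal is correct and is essentially identical to the paper's proof: both expand $\|\mathbf{a}^L-\mathbf{a}^{L_0}\|_{\ell^1}$ entrywise using $\mathbf a^L=\mathbf a\cdot P_{c_L}$, apply the pointwise estimate for $|P_{c_L}(k)-P_{c_{L_0}}(k)|$ from the preceding lemma, and absorb the factor $\nu_2(k)$ into the weighted norm $\|\mathbf a\|_{\ell^1_{\nu_2}}$. Your additional remark on bounding $\|L\|+\|L_0\|$ locally to get the Lipschitz constant is the standard observation left implicit in the paper.
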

\begin{proof}
    \begin{align*}
        \|\mathbf{a}^L-\mathbf{a}^{L_0}\|_{\ell^1} &= \sum_{k\in \mathbb{Z}^{2d}}|a(k)| |P_{c_L}(k)-P_{c_{L_0}}(k)|\\
        &\leq \sum_{k\in \mathbb{Z}^{2d}} |a(k)| (2d+1)\pi \|K\| \nu_2(k) (\|L\|+\|L_0\|)\cdot \|L-L_0\|
        \\
        &= (2d+1)\pi \|K\| (\|L\|+\|L_0\|) \|\mathbf a\|_{\ell^1_{\nu_2}}\cdot \|L-L_0\|\qedhere
    \end{align*}
\end{proof}

\begin{lemma}\label{lem:Lipschitz}
  For $f,g\in M^1_{\nu_s}(\mathbb R^d)$, $s\geq 2t\geq2$, the function $L\mapsto {\lin{L}{f}{g}}$ is continuous as a map to $\ell^1_{\nu_s}$ and $r$-times differentiable as a map to $\ell^1_{\nu_{s-2r}}$ for $1\leq r \leq \frac{s}{2}$. 
\end{lemma}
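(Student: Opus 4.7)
The plan is to reduce everything to a study of the function $F := \mathcal{V}_g f$ on $\mathbb{R}^{2d}$ via the identity $\lin{L}{f}{g}(k)=\langle f,\pi(Lk)g\rangle_2 = \mathcal{V}_g f(Lk)$, so that $\lin{L}{f}{g}$ is just the sampling of $F$ at the lattice $L\mathbb{Z}^{2d}$. The first step is to argue that $F$ belongs to the weighted Feichtinger algebra $M^1_{\nu_s}(\mathbb{R}^{2d})$: this is a direct weighted analog of Lemma \ref{tensor-conseq}, using that the tensor product $f\otimes\overline g$ lies in $M^1_{\nu_s}(\mathbb{R}^{2d})$ and that $\mathcal{F}_2$, $\tau_a$, and complex conjugation are Banach-space isomorphisms of $M^1_{\nu_s}(\mathbb{R}^{2d})$; the weighted versions of the underlying results of \cite{Ja18} are standard and have already been used in \cite{AuLu21}.

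With $F\in M^1_{\nu_s}(\mathbb{R}^{2d})$ at hand, the continuity claim becomes a weighted analog of \cite[Lemma 3.5]{FeKa04}. The key ingredient is the weighted restriction estimate $\|\{F(Lk)\}_k\|_{\ell^1_{\nu_s}} \lesssim \|F\|_{M^1_{\nu_s}}$ with constants locally uniform in $L$, which follows from the weighted version of the sampling theorem in \cite[Theorem 5.7]{Ja18}. Fixing $L_0$, I would then split $\sum_k |F(Lk)-F(L_0k)|\,\nu_s(k)$ into contributions over $|k|\leq N$ and $|k|>N$: the tail is controlled uniformly in $L$ near $L_0$ by the $\nu_s$-weighted decay of $F$, while the compact part is handled by uniform continuity of $F$ on a bounded region containing $\{Lk : |k|\leq N,\ L \text{ near } L_0\}$.

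For differentiability, an entrywise chain rule gives $\partial_{L_{ij}}F(Lk) = k_j\,(\partial_i F)(Lk)$, and by induction the candidate $r$-th Fréchet derivative at $L$ applied to matrices $H_1,\dots,H_r$ has $k$-th coordinate $[D^rF(Lk)](H_1k,\dots,H_rk)$. A direct STFT-side computation shows $\partial^\alpha F \in M^1_{\nu_{s-|\alpha|}}(\mathbb{R}^{2d})$ for $|\alpha|\leq s$: indeed,
\begin{align*}
\mathcal{V}_\phi(\partial_i F)(x,\omega) = 2\pi i\,\omega_i\,\mathcal{V}_\phi F(x,\omega) - \mathcal{V}_{\partial_i\phi} F(x,\omega),
\end{align*}
and since $|\omega_i|\leq\nu_1(x,\omega)$ we have $|\omega_i|\,\nu_{s-1}\leq\nu_s$, so iterating drops the weight by $1$ per derivative. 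Combining this with the elementary bound $|k|^r\,\nu_{s-2r}(k)\leq\nu_{s-r}(k)$ and the continuity step applied to each $\partial^\alpha F\in M^1_{\nu_{s-r}}$ gives summability and continuity (in $L$) of the candidate $r$-th derivative in $\ell^1_{\nu_{s-2r}}$:
\begin{align*}
\sum_k \bigl|[D^rF(Lk)](H_1k,\dots,H_rk)\bigr|\,\nu_{s-2r}(k) \leq \|H_1\|\cdots\|H_r\|\sum_k \|D^rF(Lk)\|\,\nu_{s-r}(k) < \infty.
\end{align*}

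Finally, one must verify that this candidate is truly the iterated Fréchet derivative in $\ell^1_{\nu_{s-2r}}$. My plan is to apply Taylor's theorem with integral remainder to each sample $F(Lk)$, bound the $r$-th remainder pointwise by $\frac{|k|^r\|L-L_0\|^r}{r!}\sup_{t\in[0,1]}\|D^rF((1-t)L_0k+tLk)-D^rF(L_0k)\|$, and sum against $\nu_{s-2r}$; the continuity statement from paragraph 2 applied to $\partial^\alpha F\in M^1_{\nu_{s-r}}$ then forces this remainder to be $o(\|L-L_0\|^r)$ in the $\ell^1_{\nu_{s-2r}}$ norm. The main obstacle is the uniformity of these continuity estimates along the $k$-dependent convex-combination paths $(1-t)L_0k+tLk$; this is resolved by restricting $L$ to a small compact neighborhood of $L_0$, so that all such interpolants stay inside a common compact dilation of $L_0\mathbb{Z}^{2d}$ on which the weighted restriction bounds of paragraph 2 are uniform.
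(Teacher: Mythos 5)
Your proposal is correct and rests on the same core computation as the paper's proof --- differentiating $L\mapsto \mathcal V_gf(Lk)$ by the chain rule, with one order of weight spent on the regularity gained per derivative and one more on the factor $k_n$ coming from $\partial(Lk)_\ell/\partial L_{m,n}$ --- but it is organized differently. The paper stays on the window side: it writes $\frac{\partial}{\partial z_m}\pi(z)g=\pi(z)D_m(g)$ with $D_m(g)\in M^1_{\nu_{s-1}}(\mathbb R^d)$ (citing \cite{DJLL21} for $\partial_m g, t_m g\in M^1_{\nu_{s-1}}$), so that $\frac{\partial}{\partial L_{m,n}}\lin{L}{f}{g}(k)=\lin{L}{f}{D_m(g)}(k)\cdot k_n$ and the already-established continuity of $L\mapsto\lin{L}{f}{h}$ for windows $h$ in weighted Feichtinger spaces can be recycled verbatim at each iteration; Fr\'echet differentiability is then implicit from continuity of the partials. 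You instead pass to $F=\mathcal V_gf$ as a function on $\mathbb R^{2d}$, which requires the weighted analogues of the tensor-product and isomorphism facts to get $F\in M^1_{\nu_s}(\mathbb R^{2d})$, a weighted sampling/restriction estimate, and the membership $\partial^\alpha F\in M^1_{\nu_{s-|\alpha|}}(\mathbb R^{2d})$; these are all true for the polynomial weights $\nu_s$ but add machinery the paper avoids. (Note also that your integration-by-parts identity for $\mathcal V_\phi(\partial_iF)$ presupposes that $\partial_iF$ exists; the cleanest justification is exactly the paper's identity $\partial_i F=\pm\mathcal V_{D_i(g)}f$, which gives existence and the weighted membership in one stroke.) What your route buys is an explicit Taylor-remainder verification of Fr\'echet differentiability, including the uniformity over the interpolating matrices $(1-t)L_0+tL$, which the paper leaves to the standard ``continuous partials imply $C^1$'' principle; what it costs is the extra weighted $M^1(\mathbb R^{2d})$ apparatus. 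Both arguments are sound.
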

\begin{proof}
    In this proof we will write $M^1_s$ and $\ell^1_s$ instead of $M^1_{\nu_s}$ and $\ell^1_{\nu_s}$, respectively, to increase readability.
    
  The continuity statement is contained in \cite[Lemma 3.5]{FeKa04}.
  Consider the partial derivatives
  \begin{align*}
  \frac{\partial}{\partial x_\ell}\pi(x,\omega)g(t)&=-e^{2\pi i\, \omega t}(\partial_\ell g)(t-x)=-\pi(x,\omega)(\partial_\ell  g)(t),\\
    \frac{\partial}{\partial \omega_\ell }\pi(x,\omega)g(t)&=2\pi i\, t_\ell  e^{2\pi i\, \omega t}g(t-x)=\pi(x,\omega) 2\pi i(t_\ell +x_\ell )g(t).
  \end{align*}
  Put
  \[z_m:=
    \begin{cases}
      x_m&1\leq m\leq d,\\
      \omega_{m-d} & d+1\leq m\leq d;
    \end{cases}\qquad
    D_m(g):=
    \begin{cases}
      -\partial_m g& 1\leq m\leq d,\\
       2\pi i(t_{m-d}+x_{m-d})g & d+1\leq m\leq d.
    \end{cases}
  \]
  so that $z=(x,\omega)$ and $\frac{\partial}{\partial z_m}\pi(z)g=\pi(z)D_m(g)$. Note that $\partial_ng$ and $t_ng$, and therefore all $D_m(g)$, belong to $M^1_{{s-1}}$ (the 1-dimensional case is treated in \cite{DJLL21}, the multidimensional analogue can be proved along the same lines). By \cite[Lemma 2.2 (ii)]{FeKa04}, $z\mapsto \frac{\partial}{\partial z_m}\pi(z)g=\pi(z)D_m(g)$ is a continuous map to $M^1_{{s-1}}$. By the chain rule,
   \begin{multline*}
    \frac{\partial}{\partial L_{m,n}}{\lin{L}{f}{g}}(k)
    =\left.\sum_\ell \frac{\partial \langle f,\pi(z)g\rangle}{\partial z_\ell}\right|_{z=Lk}\frac{\partial (Lk)_\ell}{\partial L_{m,n}}
    =\sum_\ell {\lin{L}{ f}{D_\ell(g)}}(k) \delta_{m,\ell}\cdot k_n
    \\
    ={\lin{L}{ f}{D_m(g)}}(k)\cdot k_n.      
  \end{multline*}
  We know that $f$ and the $D_m(g)$ lie in the modulation space $M^{1}_{{s-1}}$, therefore
  \[L\mapsto {\lin{L}{f}{D_m(g)}}\]
  is a continuous map into $\ell^1_{{s-1}}$ and, consequently,
  \[L\mapsto \frac{\partial}{\partial L_{m,n}}{\lin{L}{f}{g}} = \left({\lin{L}{f}{D_m(g)}}(k)\cdot k_n\right)_{k\in\mathbb{Z}^{2d}}\]
  is a continuous map into $\ell^1_{{s-2}}$, i.e.\ $L\mapsto {\lin{L}{f}{g}}$ is differentiable as a map to $\ell^1_{s-2}$.

  The same arguments can be iterated to show that the function $L\mapsto {\lin{L}{f}{g}}$ is $r$-times differentiable as a map to $\ell^1_{s-2r}$ as long as $s\geq 2r$.  
\end{proof}

\begin{corollary}\label{cor:continuity_of_coefficient-map}
  Let $f,g\in M_{\nu_2}^1(\mathbb{R}^d)$. Then $L\mapsto \|{\lin{L}{f}{g}}\|_{\ell^1_\nu}$ is continuous and $L\mapsto \lin{L}{f}{g}$ is locally Lipschitz as a map $\operatorname{GL}_{2n}(\mathbb R)\to\ell^1(\mathbb{Z}^{2d})$. 
\end{corollary}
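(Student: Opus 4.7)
The plan is to deduce both assertions from Lemma \ref{lem:Lipschitz} applied with $s=2$, together with the pointwise inequality $\nu(k)=1+|k|\leq (1+|k|)^2=\nu_2(k)$, which gives continuous embeddings $\ell^1_{\nu_2}\hookrightarrow \ell^1_\nu\hookrightarrow \ell^1$ with norm estimates $\|\,\cdot\,\|_{\ell^1}\leq \|\,\cdot\,\|_{\ell^1_\nu}\leq \|\,\cdot\,\|_{\ell^1_{\nu_2}}$. In particular, continuity or Lipschitz estimates into $\ell^1_{\nu_2}$ immediately descend to the coarser target spaces.

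For the first claim, Lemma \ref{lem:Lipschitz} with $s=2$ gives that $L\mapsto \lin{L}{f}{g}$ is continuous as a map into $\ell^1_{\nu_2}$; composing with the embedding $\ell^1_{\nu_2}\hookrightarrow \ell^1_\nu$ yields continuity into $\ell^1_\nu$, and then the reverse triangle inequality delivers continuity of $L\mapsto \|\lin{L}{f}{g}\|_{\ell^1_\nu}$.

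For the local Lipschitz claim, I would invoke the differentiability part of Lemma \ref{lem:Lipschitz} with $s=2$ and $r=1$: the map $L\mapsto \lin{L}{f}{g}$ is (Fr\'echet--)differentiable as a map $\operatorname{GL}_{2d}(\mathbb R)\to \ell^1_{\nu_0}=\ell^1$. Moreover, the proof of that lemma supplies the explicit formula
\[
\frac{\partial}{\partial L_{m,n}}\lin{L}{f}{g} = \bigl(\lin{L}{f}{D_m(g)}(k)\cdot k_n\bigr)_{k\in\mathbb Z^{2d}}
\]
with $D_m(g)\in M^1_{\nu_1}$. Invoking \cite[Lemma 3.5]{FeKa04} (the underlying continuity statement used inside the proof of Lemma \ref{lem:Lipschitz}) with the pair $f,D_m(g)\in M^1_{\nu_1}$, the map $L\mapsto \lin{L}{f}{D_m(g)}$ is continuous into $\ell^1_{\nu_1}$; since the operator $\mathbf a\mapsto (a(k)k_n)_{k}$ sends $\ell^1_{\nu_1}$ boundedly into $\ell^1$, each partial derivative is continuous into $\ell^1$. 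Hence $L\mapsto \lin{L}{f}{g}$ is of class $C^1$ from an open subset of the finite-dimensional space $M_{2d}(\mathbb R)$ into the Banach space $\ell^1$, and the standard mean value inequality on any compact convex neighborhood produces the required local Lipschitz constant.

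I do not foresee any substantive obstacle: the corollary is essentially weight-bookkeeping layered on top of Lemma \ref{lem:Lipschitz}. The only mild subtlety worth highlighting is that mere differentiability from a finite-dimensional domain to a Banach space does not by itself yield local Lipschitz behaviour, which is why I find it cleaner to invest one extra invocation of the continuity statement to upgrade differentiability to $C^1$-regularity before applying the mean value inequality.
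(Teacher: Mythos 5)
Your proposal is correct and follows essentially the same route as the paper: the continuity statement is read off from the $s=2$ case of Lemma \ref{lem:Lipschitz} (the paper instead cites \cite[Lemma 3.5]{FeKa04} directly via $M^1_{\nu_2}\subset M^1_\nu$, which is the same underlying fact), and the local Lipschitz property is deduced from the differentiability part of that lemma. Your extra step of upgrading plain differentiability to $C^1$-regularity before applying the mean value inequality is a welcome refinement: the paper's Observation that a differentiable map from a subset of $\mathbb R^n$ into a Banach space is locally Lipschitz is, read literally, false without continuity of the derivative, but the proof of Lemma \ref{lem:Lipschitz} does in fact establish that each partial derivative $L\mapsto \lin{L}{f}{D_m(g)}(k)\cdot k_n$ is a continuous map into $\ell^1$, so your argument makes explicit what the paper implicitly relies on.
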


\begin{proof}
    Since $M^1_{\nu_2}(\mathbb{R}^d)\subset M^1_\nu(\mathbb{R}^d)$, $L\mapsto \lin{L}{f}{g}$ is continuous as a map to $\ell^1_{\nu}(\mathbb{Z}^{2d})$ by \cite[Lemma 3.5]{FeKa04} and thus the first statement follows. 
  From the Lemma \ref{lem:Lipschitz} applied to $s=2$, we conclude that $L\mapsto \lin{L}{f}{g}$ is differentiable as a map to $\ell^1(\mathbb{Z}^{2d})$ and, therefore, locally Lipschitz. 
\end{proof}

\begin{theorem}\label{thm:Heisenberg-norm-Hoelder}
  Let $f,g\in M^1_{\nu_2}(\mathbb{R}^d)$. Then the map
  $L\mapsto \|\overline{\pi}_L(\lin{L}{f}{g})\|$ is locally $\frac{1}{2}$-H{\"o}lder continuous.
  In particular, the map $L\mapsto \|f\|_{\mathcal{E}_L(\mathbb{R}^d)}=\|\overline{\pi}_L(\lin{L}{f}{f})\|^{1/2}$ is locally $\frac{1}{2}$-H{\"o}lder continuous.
\end{theorem}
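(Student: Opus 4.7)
The plan is to compare $\|\overline{\pi}_L(\lin{L}{f}{g})\|$ with $\|\overline{\pi}_{L_0}(\lin{L_0}{f}{g})\|$ by inserting the intermediate quantity $\|\overline{\pi}_L(\lin{L_0}{f}{g})\|$, which gives
\[
\bigl|\|\overline{\pi}_L(\lin{L}{f}{g})\| - \|\overline{\pi}_{L_0}(\lin{L_0}{f}{g})\|\bigr|
\leq \bigl|\|\overline{\pi}_L(\lin{L}{f}{g})\| - \|\overline{\pi}_L(\lin{L_0}{f}{g})\|\bigr| + \bigl|\|\overline{\pi}_L(\lin{L_0}{f}{g})\| - \|\overline{\pi}_{L_0}(\lin{L_0}{f}{g})\|\bigr|.
\]
The first summand isolates the variation of the $\ell^1$-coefficient sequence along $L$, while the second isolates the variation of the representation $\overline\pi_L$ applied to a \emph{fixed} sequence.

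For the first summand, the reverse triangle inequality together with the $\ell^1$-norm decreasing property of $\overline{\pi}_L$ bounds it by $\|\lin{L}{f}{g} - \lin{L_0}{f}{g}\|_{\ell^1}$, and Corollary \ref{cor:continuity_of_coefficient-map} ensures that this is locally Lipschitz in $L$ whenever $f,g \in M^1_{\nu_2}$.

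For the second summand, I would reuse the derivation appearing in the proof of Corollary \ref{fund-c-continuity}, but applied to the \emph{fixed} element $\mathbf{a} := \lin{L_0}{f}{g}$. Lemma \ref{lem:Lipschitz} (with $s=2$) places $\mathbf{a}$ in $\ell^1_{\nu_2} \subset \ell^1_{\nu}$, so that the estimate reads
\[
\bigl|\|\overline{\pi}_L(\mathbf{a})\| - \|\overline{\pi}_{L_0}(\mathbf{a})\|\bigr| \leq C\|\mathbf{a}\|_{\ell^1_\nu}\|\Theta_L - \Theta_{L_0}\|^{1/2} + \|\mathbf{a}^L - \mathbf{a}^{L_0}\|_{\ell^1}.
\]
The Hölder factor $\|\Theta_L - \Theta_{L_0}\|^{1/2} = \|L^T J L - L_0^T J L_0\|^{1/2}$ is locally $\frac{1}{2}$-Hölder because $L \mapsto -L^T J L$ is smooth, while the remaining $\ell^1$-term is locally Lipschitz by Lemma \ref{a-l-Lipschitz}, whose hypothesis $\mathbf{a} \in \ell^1_{\nu_2}$ is precisely what the $\nu_2$-weight on $f,g$ supplies. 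Since on any bounded neighborhood local Lipschitz continuity implies local $\frac{1}{2}$-Hölder continuity (with a possibly larger constant), summing the three contributions completes the first claim.

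For the ``in particular'' statement, if $f = 0$ the claim is trivial, so assume $f \neq 0$. Then $\varphi(L) := \|\overline{\pi}_L(\lin{L}{f}{f})\| = \|f\|^2_{\mathcal{E}_L(\mathbb{R}^d)}$ is strictly positive for every $L$, since the Heisenberg norm is a genuine norm (as $\mathcal{E}_L(\mathbb{R}^d) \hookrightarrow L^2(\mathbb{R}^d)$ is injective). By continuity of $\varphi$, there is a neighborhood of any fixed $L_0$ on which $\varphi \geq c > 0$; on this neighborhood $\sqrt{\cdot}$ is Lipschitz, so composing it with the locally $\frac{1}{2}$-Hölder $\varphi$ preserves the $\frac{1}{2}$-exponent. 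The main obstacle is securing \emph{simultaneous} local Lipschitz regularity in $\ell^1$ of both $L \mapsto \lin{L}{f}{g}$ and $L \mapsto \mathbf{a}^L$; the weight $\nu_2$ on $f,g$ is exactly what activates both Lemma \ref{lem:Lipschitz} and Lemma \ref{a-l-Lipschitz}, after which the $\frac{1}{2}$-exponent is inherited from the square root in the underlying noncommutative-tori deformation estimate, Theorem \ref{malte}.
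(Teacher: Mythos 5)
Your proposal is correct and takes essentially the same approach as the paper: the paper inserts the intermediate quantity $\|\overline{\pi}_{L_0}(\lin{L}{f}{g})\|$ rather than your $\|\overline{\pi}_L(\lin{L_0}{f}{g})\|$, but both decompositions produce the same three contributions, controlled by Corollary \ref{cor:continuity_of_coefficient-map}, Lemma \ref{a-l-Lipschitz}, and the $\tfrac12$-H\"older estimate of Corollary \ref{fund-c-continuity}. Your handling of the square root in the second claim (positivity plus local Lipschitzness of $\sqrt{\cdot}$ away from $0$) likewise matches the paper's argument.
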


\begin{proof}
    Put $\mathbf a:=\lin{L}{f}{g}$ and $\mathbf a_0:=\lin{L_0}{f}{g}$. Then, using triangle inequality, opposite triangle inequality, and Corollary \ref{fund-c-continuity}
  \begin{align*}
         \MoveEqLeft
         \bigl|\|\overline{\pi}_L(\mathbf a)\|- 
         \|\overline{\pi}_{L_0}(\mathbf a_0)\| \bigr|\\
         &\leq \bigl|\|\overline{\pi}_L(\mathbf a)\|-\|\overline{\pi}_{L_0}(\mathbf a)\|\bigr| + \bigl| \|\overline{\pi}_{L_0}(\mathbf a)\|-\|\overline{\pi}_{L_0}(\mathbf a_0)\|\bigr| \\
        &\leq \bigl|\|\overline{\pi}_L(\mathbf a)\|-\|\overline{\pi}_{L_0}(\mathbf a)\|\bigr| + \|\overline{\pi}_{L_0}(\mathbf a-\mathbf a_0)\| \\
        &\leq C\|\mathbf{a}\|_{\ell^1_{\nu}} (\|L\|+\|L_0\|)^{1/2}\|L-L_0\|^{1/2} + \|\mathbf{a}^L-\mathbf{a}^{L_0}\|_{\ell^1} + \|\mathbf{a}-\mathbf{a}_0\|_{\ell^1}.
    \end{align*}
    The first statement thus follows from Lemma \ref{a-l-Lipschitz} and Corollary \ref{cor:continuity_of_coefficient-map} and the fact that every locally Lipschitz map is locally $\frac{1}{2}$-Hölder.

    The second statement if trivial for $f=0$. Let $f\neq0$. For all $L\in\operatorname{GL}_{2d}(\mathbb R)$, faithfulness of the integrated representation and positive definiteness of the inner product imply that $\overline{\pi}_L(\lin{L}{f}{f})\neq 0$, hence $\|f\|_{\mathcal E_{L}(\mathbb R^d)}>0$. Since the square-root function is differentiable on $\mathbb R_{>0}$, we get the claimed local Hölder continuity.
\end{proof}
\begin{remark}
    By \cite[Proposition 3.5]{GrLe16}, $\ell^1_{\nu_2}(\mathbb{Z}^{2d})$ is an inverse-closed subalgebra of $A_L$, in particular $\sigma(\lin{L}{f}{f})=\sigma(\overline{\pi}_L(\lin{L}{f}{f}))=\sigma({\pi_{\Theta_L}}(\lin{L}{f}{f}^L))$ for all $f\in M^1_{\nu_2}(\mathbb{R}^d)$. With the generalization of Theorem \ref{malte} to spectra mentioned in Remark \ref{remark:spectrum} and arguments parallel to the ones we just performed, one can also show that the map $L\mapsto \sigma(\lin{L}{f}{f})$ is locally  $\frac{1}{2}$-H{\"o}lder continuous for every $f\in M^1_{\nu_2}(\mathbb{R}^d)$. Indeed, for $\mathbf a=\lin{L}{f}{f}$ and $\mathbf a_0=\lin{L_0}{f}{f}$,
    \begin{align*}
        \sigma(\mathbf a)\subset \sigma(\mathbf a_0) + [-1,1]\cdot (2\sqrt{\pi}\|\mathbf{a}\|_{\ell^1_\nu} \|\Theta_L-\Theta_{L_0}\|^{\frac{1}{2}} + \|\mathbf a^L -\mathbf a_0^{L_0}\|_{\ell^1})
    \end{align*}
    by Remark \ref{remark:spectrum}. Therefore, for the Hausdorff distance $d_H$ we find that
    \begin{align*}
        d_H(\sigma(\mathbf a),\sigma(\mathbf{a_0}))
        &\leq 2\sqrt{\pi}\|\mathbf{a}\|_{\ell^1_\nu} \|\Theta_L-\Theta_{L_0}\|^{\frac{1}{2}} + \|\mathbf a^L -\mathbf a_0^{L_0}\|_{\ell^1}
        \\
        &\leq 2\sqrt{\pi}\|\mathbf{a}\|_{\ell^1_\nu} (\|L\|+\|L_0\|)^{1/2}\|L-L_0\|^{1/2} + \|\mathbf{a}^L-\mathbf{a}^{L_0}\|_{\ell^1} + \|\mathbf{a}-\mathbf{a}_0\|_{\ell^1}
    \end{align*}
    and the claimed $\frac{1}{2}$-Hölder continuity follows exactly as in the proof of Theorem \ref{thm:Heisenberg-norm-Hoelder}. 
    
    Of course, everything also works for the right Hilbert module structure and, thus, we get that the map $L\mapsto \sigma(\rin{L}{f}{f})$ is $\frac{1}{2}$-Hölder. Finally, recall that according to the Janssen representation (Corollary \ref{gen-for-heis} Item 2), we have $S_{f,L}=\overline{\pi}^*_{L^\circ}(\rin{L^\circ}{f}{f})$. Since $L\mapsto L^\circ=-J(L^{-1})^TJ$ is locally Lipschitz, we may conclude that for $f\in M^1_{\nu_2}(\mathbb{R}^d)$ the map $L\mapsto \sigma(S_{f,L})=\sigma(\rin{L^\circ}{f}{f})$ is locally $\frac{1}{2}$-Hölder. 
\end{remark}

\subsection{Multi-window Gabor systems and generalized Fell's Condition}\label{subsec: continuous-trace}
We consider generalizations of the results in Section \ref{subsec: gabanalysis app} to multi-window Gabor systems and show that they motivate the existence of a generalized continuous-trace $C^*$-algebra in this setting.

To this end, we call $\mathcal{G}(g_1,...,g_n,L):=\bigcup_{i=1}^n\left\{\pi(Lk)g_i:k\in \mathbb{Z}^{2d}\right\}\subseteq L^2(\mathbb{R}^d)$ a \textbf{multi-window Gabor system} for $g_1,...,g_n\in L^2(\mathbb{R}^d)$ and $L\in \op{GL}_{2d}(\mathbb{R})$. The system $\mathcal{G}(g_1,...,g_n,L)$ is said to be a \textbf{multi-window Gabor frame} if it generates a frame, or equivalently, the associated frame operator $S_{\mathbf{g},L}=\sum_{i=1}^nS_{g_i,L}$ is invertible. Relating to the Heisenberg modules, the crucial fact from \cite{AuEn20} is that for all $L\in \op{GL}_{2d}(\mathbb{R})$, $\mathcal{E}_{L}(\mathbb{R}^d)$ is a finitely generated $A_L$-module whose generators are exactly multi-window Gabor frames for $L^2(\mathbb{R}^d).$  

Now, we assume $\mathbf{g}=\{g_1,...,g_n\}\subseteq \mathcal{E}_{L_0}(\mathbb{R}^d)$, and consider a family $\mathbf{\Upsilon}= \{\Upsilon_1,...,\Upsilon_n\}\subseteq \prod_{L\in \op{GL}_{2d}(\mathbb{R})}\mathcal{E}_L(\mathbb{R}^d)$ of sections, we say that $\mathbf{\Upsilon}$ \textbf{passes through} $\mathbf{g}$ \textbf{at} $L_0$, if $(\Upsilon_1(L_0),...,\Upsilon_n(L_0))= (g_1,...,g_n).$  A now straightforward generalization of Theorem \ref{stability-heis} holds for multi-window Gabor frames, whose proof remains largely unchanged.
\begin{theorem}\label{theorem: local-section-multi-window}
    If $\mathbf{g}=\{g_1,...,g_n\}\subseteq \mathcal{E}_{L_0}(\mathbb{R}^d)$, such that $\mathbf{g}$ generates a multi-window Gabor frame for $L^2(\mathbb{R}^d)$, then for any family of continuous sections $\mathbf{\Upsilon}= \{\Upsilon_1,...,\Upsilon_2\}\subseteq \Gamma(\kappa)$ that goes through $\mathbf{g}$ at $L_0$, there exists an open set $U_0\ni L_0$ such that $\mathcal{G}(\Upsilon_1(L),...,\Upsilon_n(L),L)$ generates a multi-window Gabor frame for all $L\in U_0.$
\end{theorem}
\noindent A generalized Balian-Low theorem for multi-window Gabor frames generated by Heisenberg modules also holds. The proof is similar to Corollary \ref{corollary: balian-low-1}, and would only need a refinement of Theorem \ref{basic-density} supplied by \cite[Proposition 4.33]{AuJaMaLu20}: If $\mathcal{G}(g_1,...,g_n,L_0)$ is a multi-window Gabor frame and $\{g_1,...,g_n\}\subseteq \mathcal{E}_{L_0}(\mathbb{R}^d)$, then $|\det L_0|\leq n.$
\begin{corollary}\label{corollary: section-multi-window}
    Let $L_0\in \op{GL}_{2d}(\mathbb{R}^d)$ such that $|\det L_0| =n$. If $g_1,...,g_n \in \mathcal{E}_{L_0}(\mathbb{R}^d)$, then $\mathcal{G}(g_1,...,g_n,L_0)$ cannot be a multi-window Gabor frame for $L^2(\mathbb{R}^d).$
\end{corollary}

We now turn to a result concerning the existence of local projections for the bundle of noncommutative tori $\rho: \mathcal{C}\to \op{GL}_{2d}(\mathbb{R})$. Recall that if $Z$ is an $A$-$B$-imprimitivity bimodule, then $A$ is isomorphic to compact adjointable operators in $Z$ with respect to $B$. That is, $A \cong \mathcal{K}_B(Z)$, and it is completely determined by the map $\vartheta_Z(z\otimes w)= \lin{A}{z}{w}$, where $(z\otimes w) (\xi) = z \rin{B}{w}{\xi}$ for $z,w,\xi\in Z$.
\begin{definition}\label{def: n-rank-proj}
    Let $A$ be a $C^*$-algebra and $Z$ an $A$-$B$-imprimitivity bimodule for some $C^*$-algebra $B$. We say that the element $a\in A$ is a \textbf{rank-}$n$ \textbf{projection in} $A$ \textbf{with respect to} $Z$ if $a$ is the image of a rank-$n$ projection under the isomorphism $\vartheta_Z:\mathcal{K}_B(Z)\to A$.
\end{definition}
\begin{example}
    Let $H$ be a Hilbert space and $A=\mathcal{K}(H)$ its algebra of compact operators. It is not hard to see that $H$ is an $A$-$\mathbb{C}$ imprimitivity bimodule with the obvious structure on $A$. It follows that an element $a\in A$ is a rank-$n$ projection with respect to $H$ if and only if $a$ is a rank-$n$ projection in $\mathcal{K}(H)$ in the usual sense. 
\end{example}
     \emph{Continuous-trace} $C^*$\emph{-algebras} decompose as $C^*$-bundles over their Hausdorff spectrum $T=\widehat{A}$ where the fibers are compact operators on a Hilbert space and satisfying the so-called Fell's condition (see \cite{Fe61}, or \cite[Definition 10.5.7]{dix82}). As illustrated by the previous example, Definition~\ref{def: n-rank-proj} aligns with the classical formulation of Fell's condition. Its utility, however, lies in extending the notion of rank to elements of \(C^*\)-algebras that are not of the form \(\mathcal{K}(H)\) by employing Hilbert $C^*$-modules. This suggests the possibility of a generalized notion of continuous-trace \(C^*\)-algebras.
\begin{remark}
    The following construction will be relevant for our next result, we invite the reader to consult \cite{AuJaMaLu20} for more details. Suppose $Z$ is an $A$-$B$ imprimitivity bimodule, and $n\in \mathbb{Z}_{>0}$, then the matrix space $M_{n,1}(Z)$ is an $M_n(A)$-$B$ imprimitivity bimodule. The actions of $M_n(A)$ and $B$ are given by the obvious matrix actions. On the other hand, the inner-products are given by the following. For $[w],[z]\in M_{n,1}(Z)$
    \begin{equation}\label{lifting-to-matrix}
        \begin{split}
            \lin{M_n(A)}{[z]}{[w]}_{i,j} =\lin{A}{z_{i}}{w_{j}} \\
            \rin{B}{[z]}{[w]} = \sum_{i=1}^n \rin{B}{z_i}{w_i}
        \end{split}
    \end{equation}
\end{remark}
\begin{lemma}\label{lemma: lift-bundle-matrix}
    Fix $n\in \mathbb{Z}_{>0}.$ If $\rho:\mathcal{A}\to X$ is a $C^*$-bundle, then the induced bundle given by the projection $\rho_n: M_n(\mathcal{A})\to X$ where $M_n(\mathcal{A}):= \bigsqcup_{x\in X}M_n(\mathcal{A}_x)$, is also a $C^*$-bundle.
\end{lemma}
\begin{proof}
   Let $\Gamma(p)$ be the set of continuous sections for $p:X\to \mathcal{A}$. We define the complex linear space of sections $M_n(\Gamma(p))=\{[f]:X \to M_{n}(\mathcal{A})\ |\ f_{ij} \in \Gamma(p), \ \text{for all $i,j$} \}$. We claim that $M_n(\Gamma(p))$ defines a local approximating section for $p_n: M_n(\mathcal{A})\to X.$ This easily follows from the fact that $\|[a]\|_{M_n(A_x)}\leq \sum_{ij}\|a_{ij}\|_{A_x}$ for all $[a]\in M_n(A_x)$ given any $x\in X.$
\end{proof}
\begin{theorem}\label{theorem: local-projections}
    For each $L_0\in \op{GL}_{2d}(\mathbb{R})$, there exists an open neighborhood $U_0\ni L_0$, an integer $n_0\in \mathbb{Z}_{>0}$, and a continuous local section $p\in \prod_{L\in U_0}M_{n_0}(A_{L_0})$ such that each $p(L)$ is a rank-1 projection in $M_{n_0}(A_{L})$ with respect to $M_{n_0,1}(\mathcal{E}_{L}(\mathbb{R}^d))$ for all $L\in U_0$.  
\end{theorem}
\begin{proof}
Depending on $L_0\in \op{GL}_{2d}(\mathbb{R}^d)$, there exists $\mathbf{g}=\{g_1,...,g_{n_0}\}\subseteq  \mathcal{E}_{L_0}(\mathbb{R}^d)$ that generates it as a finitely generated $A_{L_0}$-module. Let $\mathbf{\Upsilon}=\{\Upsilon_1,...,\Upsilon_{n_0}\}\subseteq \Gamma(\kappa)$ that passes through $\mathbf{g}$ at $L_0$. We know that $\mathbf{g}$ generates a multi-window Gabor frame, therefore by Theorem \ref{theorem: local-section-multi-window}, there exists an open set $U_0\ni L_0$ such that $S_{\mathbf{\Upsilon}(L),L}$ is positive and invertible in $\mathcal{L}(L^2(\mathbb{R}^d))$ for all $L\in U_0.$ For $i=1,...,n_0$, define the local sections $\Phi_i\in \prod_{L\in U_0}\mathcal{E}_L(\mathbb{R}^d)$ via:
\begin{align*}
\Phi_i(L)=S_{\mathbf{\Upsilon}(L),L}^{-1/2}\Upsilon_i(L) \in \mathcal{E}_L(\mathbb{R}^d).
\end{align*}
For all $f\in \mathcal{E}_L(\mathbb{R}^d)$, we find that
\begin{align*}
    f \sum_{i=1}^{n_0}\rin{L^{\circ}}{\Phi_i(L)}{\Phi_i(L)} &= \sum_{i=1}^{n_0}S^{-1/2}_{\mathbf{\Upsilon}(L),L} \sum_{k\in \mathbb{Z}^{2d}} \rin{2}{S^{-1/2}_{\mathbf{\Upsilon}(L),L}f}{\pi(Lk)\Upsilon_i(L)}\pi(Lk) \Upsilon_i(L) \\
    &= \sum_{i=1}^{n_0}S^{-1/2}_{\mathbf{\Upsilon}(L),L} S_{\Upsilon_i(L),L} S_{\mathbf{\Upsilon(L)},L}^{-1/2}f = S_{\mathbf{\Upsilon}(L),L}^{-1/2}S_{\mathbf{\Upsilon}(L),L} S^{-1/2}_{\mathbf{\Upsilon}(L),L}f=f.
\end{align*}
Since $B_L$ must act faithfully on the right of $\mathcal{E}_L(\mathbb{R}^d)$, we obtain that 
\begin{align}\label{shows-idempotence}
\sum_{i=1}^{n_0}\rin{L^{\circ}}{\Phi_i(L)}{\Phi_i(L)}=1_{B_L}.    
\end{align}

Lifting to the matrix module, define the local section $[\Phi]\in \prod_{L\in U_0}M_{n_0,1}(\mathcal{E}_L(\mathbb{R}^d))$ whose $i$-th entry evaluated at $L$ is exactly $\Phi_i(L)\in \mathcal{E}_L(\mathbb{R}^d).$ Next, define $p\in \prod_{L\in U_0}M_{n_0}(A_L)$ to be:
\begin{align*}
    p_{ij}(L) = \lin{L}{\Phi_i(L)}{\Phi_j(L)}.
\end{align*}
It follows that $p(L)$ is the image of the rank-one adjointable $[\Phi](L)\otimes [\Phi](L)$ under the isomorphism $\mathcal{K}_{B}(M_{n_0,1}(\mathcal{E}_{L}(\mathbb{R}^d))\to M_{n_0}(A_L)$. It is immediate by its form that $[\Phi](L)\otimes [\Phi](L)$ is self-adjoint in $\mathcal{K}_B(\mathcal{E}_L(\mathbb{R}^d))$. We still need to show that it is idempotent: let $[f]\in M_{n_0,1}(\mathcal{E}_L(\mathbb{R}^d))$, and for simplicity, let $([\Phi](L)\otimes [\Phi](L)) [f] = [f^{\Phi}](L)$. The following computation
\begin{align*}
    \left([\Phi](L)\otimes [\Phi](L)\right)^2 ([f]) &= \sum_{i,k=1}^{n_0} \begin{pmatrix}
        \Phi_1(L) \\
        \Phi_2(L) \\
        \vdots \\
        \Phi_{n_0}(L)
    \end{pmatrix} \rin{L^{\circ}}{\Phi_i(L)}{\Phi_i(L)} \rin{L^{\circ}}{\Phi_k(L)}{f_k} \\
    &=\sum_{k=1}^{n_0} \begin{pmatrix}
        \Phi_1(L) \\
        \Phi_2(L) \\
        \vdots \\
        \Phi_{n_0}(L)
    \end{pmatrix} \rin{L^{\circ}}{\sum_{i=1}^{n_0}\Phi_k(L)\rin{L^{\circ}}{\Phi_i(L)}{\Phi_i(L)}}{f_k}\\
    &= \sum_{i=1}^{n_0} \begin{pmatrix}
        \Phi_1(L) \\
        \Phi_2(L) \\
        \vdots \\
        \Phi_{n_0}(L)
    \end{pmatrix} \rin{L^{\circ}}{\Phi_k(L)}{f_k} = \left([\Phi](L)\otimes [\Phi](L)\right)([f])
\end{align*}
shows idempotence. We have shown that $p(L)$ is a rank-one projection on $M_{n_0}(A_L)$ with respect to $M_{n_0,1}(\mathcal{E}_L(\mathbb{R}^d))$ for all $L\in U_0.$

We are done once we show that $p\in \prod_{L\in \op{GL}_{2d}(\mathbb{R})}M_{n_0}(A_L)$ is a local continuous section. However, this is straightforward, as we have already carried out similar estimates. We only give a sketch. First, the proof of Lemma~\ref{lemma: lift-bundle-matrix} reduces the problem to verifying the local continuity of each section \( p_{ij} \in \prod_{L \in U_0} A_L \). Then, by Proposition~\ref{section-operations}, it suffices to verify the local continuity of each \( \Phi_i \in \prod_{L \in U_0} \mathcal{E}_L(\mathbb{R}^d) \). Finally, an argument analogous to that in the proof of Proposition~\ref{proposition: local-section-generator} establishes the local continuity of each \( \Phi_i \), completing the proof.
\end{proof}
\noindent This suggests that the $C^*$-algebra $\Gamma_0(\rho)$ of Section \ref{algbundle} gives a more generalized notion of Fell's condition that is applicable to $C^*$-bundles\footnote{Or even to the slightly general upper semicontinuous $C^*$-bundles.}. Our proposed generalized Fell's condition is nontrivial due to generalized Balian-Low theorem \ref{corollary: section-multi-window} since it is impossible to fix a uniform integer $n_0$ in Theorem \ref{theorem: local-projections} that would hold globally for all $L\in \op{GL}_{2d}(\mathbb{R}^d)$. 

Naturally one is led to ask whether $C^*$-bundles that satisfy a version of Theorem \ref{theorem: local-projections} admit a topological invariant reminiscent of the so-called Dixmier-Douady class for continuous-trace $C^*$-algebras. We can deduce the following from Corollary \ref{corollary: imp-bim-from-bundles}.
\begin{corollary}\label{corollary: possibly-trivial}
    Consider the bundles $\rho,\kappa,$ and $\rho^{\circ}$ of Section \ref{algbundle}, then $\Gamma_0(\rho)\cong \mathcal{K}_{\Gamma_0(\rho^{\circ})}(\Gamma_0(\kappa))$.
\end{corollary}
One may then conjecture that an appropriate Dixmier-Douady class for $\Gamma_0(\rho)$ would be trivial if one proceeds with the idea of replacing the role Hilbert spaces with Hilbert $C^*$-modules in the theory of continuous-trace $C^*$-algebras. This is due to the fact that if $A$ is a continuous-trace $C^*$-algebra with Hausdorff spectrum $X$, then $A$ has trivial Dixmier-Douady class if and only if there exists a Banach bundle of Hilbert spaces $p: \mathcal{H}\to X$ such that $A\cong \mathcal{K}_{C_0(X)}\Gamma_0(p)$ \cite{DiDo63} (note that $\Gamma_0(p)$ forms a full Hilbert right $C_0(X)$-module). In Corollary~\ref{corollary: possibly-trivial}, the \(C^*\)-algebra \(\Gamma_0(\rho^{\circ})\) plays the role of a \emph{locally unital} \(C^*\)-algebra. Namely, one that is unital when restricted to compact neighborhoods—modeled on \(C_0(X)\).

The preceding discussion provides evidence for the existence of a meaningful generalization of continuous-trace $C^*$-algebras (or Fell algebras \cite{AnKuSi11}) for $C^*$-bundles which can be obtained by replacing Hilbert spaces with Hilbert $C^*$-modules. Approaching the general problem, one may also realize by looking at the proof of Theorem \ref{theorem: local-projections}, that the theory of module frames \cite{FrLa99,FrLa02} will play a significant role when dealing with continuous-trace $C^*$-algebras on general $C^*$-bundles.

\bibliographystyle{myalphaurl-sortbyauthor}
\bibliography{master}
\linespread{1}
\Addresses
\end{document}